\numberwithin{equation}{section}
\providecommand{\MR}{\relax\ifhmode\unskip\space\fi MR }
\theoremstyle{plain}
\newtheorem{thm}{Theorem}%[section]
\newtheorem{lem}[thm]{Lemma}
\newtheorem{prop}[thm]{Proposition}
\newtheorem{defn}[thm]{Definition}
\newtheorem{cor}[thm]{Corollary}
\theoremstyle{remark}
\newtheorem{rem}[thm]{Remark}
\numberwithin{thm}{section}
\newenvironment{pf}
{\begin{proof}} {\end{proof}}
\newcommand{\disp}{\displaystyle}
\DeclareMathOperator{\supp}{supp}
\DeclareMathOperator{\di}{div}
\newcommand{\eps}{\varepsilon}
\newcommand{\vp}{\varphi}
\newcommand{\al}{\alpha}
\newcommand{\be}{\beta}
\newcommand{\ga}{\gamma}
\newcommand{\de}{\delta}
\newcommand{\De}{\Delta}
\newcommand{\Ga}{\Gamma}
\newcommand{\te}{\theta}
\newcommand{\om}{\omega}
\newcommand{\Om}{\Omega}
\newcommand{\si}{\sigma}
\newcommand{\nid}{\noindent}
\newcommand{\iny}{\infty}
\newcommand{\del}{\partial}
\newcommand{\su}{\subset}
\newcommand{\LP}{\Delta}
\newcommand{\gr}{\nabla}
\newcommand{\dGnt}{x_0^a \, \mathcal{G}_n(X,t) dX}
\newcommand{\dGnta}{x_0^a \, \mathcal{G}_n(X,\tau) dX}
\newcommand{\dGt}{x_0^a \, \mathcal{G}(X,t) dX}
\newcommand{\dGta}{x_0^a \, \mathcal{G}(X,\tau) dX}
\newcommand{\norm}[1]{\left\| #1\right\|}
\newcommand{\abs}[1]{\left\vert #1 \right\vert}
\newcommand{\set}[1]{\left\{#1\right\}}
\newcommand{\brac}[1]{\left[#1\right]}
\newcommand{\pr}[1]{\left( #1 \right) }
\newcommand{\pb}[1]{\left( #1 \right] }
\newcommand{\BB}[1]{\ensuremath{\mathbb{#1}}}
\newcommand{\Sp}[1]{\ensuremath{\BB{S}^{#1}}}
\newcommand{\aSp}[1]{\ensuremath{\abs{\BB{S}^{#1}}}}
\newcommand{\N}{\ensuremath{\BB{N}}}
\newcommand{\R}{\ensuremath{\BB{R}}}
\newcommand{\D}{\ensuremath{\BB{D}}}
\def\div{\mathop{\operatorname{div}}}
\title[Fractional elliptic to parabolic]{Fractional Parabolic Theory as a High-Dimensional Limit of  \\ Fractional Elliptic Theory}
\author{Blair Davey}
\address[B. Davey]{Department of Mathematical Sciences, Montana State University, Bozeman, MT 59717}
\email{\textcolor{blue}{\href{mailto:}{blairdavey@montana.edu}}}
\author{Mariana Smit Vega Garcia}
\address[M. Smit Vega Garcia]{Department of Mathematics, Western Washington University, Bellingham, WA 98225}
\email{\textcolor{blue}{\href{mailto:}{smitvem@wwu.edu}}}
\date{}
\begin{document}

\begin{abstract}
This paper continues the program that was initiated in \cite{Dav18} and continued in \cite{DSVG24}, where a high-dimensional limiting technique was developed and used to prove certain parabolic theorems from their elliptic counterparts.
The articles \cite{Dav18} and \cite{DSVG24} address the constant-coefficient and variable-coefficient settings, respectively. 
Here, we focus on fractional operators. 
As shown in \cite{CS07}, \cite{NS16}, \cite{ST17}, fractional operators may be associated with certain degenerate operators via extension problems, so we study the corresponding class of degenerate operators. 
Our high-dimensional limiting technique is demonstrated through new proofs of three theorems for degenerate parabolic equations. 
Specifically, we establish the monotonicity of Almgren-type, Weiss-type, and Alt-Caffarelli-Friedman-type functionals in the degenerate parabolic setting. 
Each new parabolic proof in this article is based on a (new) related elliptic theorem and a careful limiting argument that is reminiscent of those from \cite{Dav18} and \cite{DSVG24}. 
Our proof of the degenerate parabolic Weiss-type monotonicity formula additionally uses an epiperimetric inequality for weakly $a$-harmonic functions, which we also prove.
To the best of our knowledge, our Alt-Caffarelli-Friedman monotonicity result is new.

\end{abstract}

\maketitle

\section{Introduction}

The goal of this paper is to generalize the ideas developed in \cite{Dav18} and \cite{DSVG24} to fractional operators. 
Fractional powers of the Laplacian were first introduced by Riesz in \cite{Rie38} and \cite{Rie49}. 
Since then, fractional operators such as $(-\Delta)^s$ and $(\partial_t-\Delta)^s$ have played a significant role in many applied problems in fields such as fluid dynamics, elasticity, and quantum mechanics, for example. 
In particular, the celebrated Signorini problem in elasticity is equivalent to the obstacle problem for $(-\Delta)^{1/2}$; see \cite{ACS08}, \cite{AC04}, \cite{CSS08}, \cite{CS07}, \cite{GP09}, \cite{GPSVG16}. 
Another important example is the obstacle problem for the fractional heat equation $(\partial_t - \Delta)^{1/2}$, which models osmosis; see \cite{DGPT17} and \cite{DL76}. 
We refer the interested reader to \cite{Gar19} and the references therein for a discussion of many applications of fractional operators. 

In this paper, we focus on (backward) fractional parabolic operators of the form $(-\del_t-\Delta)^s$.
The Caffarelli-Silvestre extension \cite{CS07} shows that the fractional Laplacian, $(-\LP)^s$, is related to a certain degenerate elliptic equation.
In the parabolic setting, this extension procedure, developed independently by Nystr\"{o}m-Sande \cite{NS16} and Stinga-Torrea \cite{ST17}, allows us to study parabolic fractional operators through their degenerate counterparts.
Throughout this article, we view fractional operators through their extension problems, and in doing so, we focus our study on the related degenerate equations. 
In particular, we prove theorems that hold for solutions to degenerate parabolic equations as the high-dimensional limit of the corresponding theorems for related degenerate elliptic equations. 

The idea that parabolic theory can be realized as a high-dimensional limit of elliptic theory has appeared in many other contexts.
Perelman in \cite{Per02} used this philosophy in the development of his reduced volume. 
This idea was also examined by Tao \cite{Tao08}, modified in the course notes of Sverak \cite{Sve11}, then developed and applied in \cite{Dav18} by the first-named author. 
Similar ideas have appeared in the literature; see \cite{CS05}*{Section 12.2}, for example.
Very recently, this technique has been used by other authors in \cite{BR25}, \cite{Bus25}, and \cite{Sta25}, while a similar philosophy appears in \cite{DED24}.

We now give a general overview of what we mean by a ``high-dimensional limiting argument."
Let $L$ denote an elliptic operator defined in space $\Om$, and for each $n \in \N$, let $L_n$ denote a related elliptic operator defined in some high-dimensional space $\Om_n$.
Our goal is to prove a parabolic theorem of the following form: 
$$\text{If $U(X, t)$ satisfies the parabolic equation $(\del_t + L) U = 0$ in $\Om \times (0, T)$, then $P(U)$ holds.}$$
To start, we construct transformation maps of the form $F_n : \Om_n \to \Om \times \R_+$ that send points $Y \in \Om_n$, the high-dimensional elliptic setting, to points $(X, t) \in \Om \times \R_+$, the space-time (parabolic) setting.
For each $n \in \N$, we define functions $V_n(Y)$ in $\Om_n$ by setting $V_n(Y) = U(F_n(Y)) = U(X,t)$.
If we choose $F_n$ appropriately, then $L_n V_n = K_n$, where $K_n$ is an error term.
That is, each $V_n$ solves a nonhomogeneous elliptic equation.
Then we prove elliptic theorems of the following form:
$$\text{If $V(Y)$ satisfies the nonhomogeneous elliptic equation $L_n V = K$ in $\Om_n$, then $E_n(V, K)$ holds.}$$
Using certain pushforward computations, we can transform each elliptic statement, $E_n(V_n, K_n)$, into a parabolic statement of the form $P_n(U)$.
Finally, we take limits and show that $P_n(U) \to P(U)$ as $n \to \iny$.

Our high-dimensional limiting argument has previously appeared in \cite{Dav18} and \cite{DSVG24}.
In \cite{Dav18}, the first-named author proved parabolic theorems for solutions to $\pr{\del_t + \Delta} u = 0$ by taking high-dimensional limits of the corresponding results for solutions to elliptic equations of the form $\Delta v=k$. 
In the language of the previous paragraph, $\Om = \R^d$, $\Om_n = \R^{dn}$, and $L = L_n = \De$ (in the appropriate dimensions).
In \cite{DSVG24}, we generalized this technique to $L = \di (A \gr)$ using $L_n = \di \pr{\kappa_n \gr}$, where $A$ is a matrix function and each $\kappa_n$ is a related scalar function.
More specifically, we proved theorems for solutions to variable-coefficient parabolic equations of the form $\partial_t u + \text{div}(A\nabla u) = 0$ by applying the limiting argument to the related results for elliptic equations of the form $\text{div}(\kappa \nabla v)=\kappa l$. 
In the current paper, we study degenerate parabolic equations through high-dimensional limits of their nonhomogeneous degenerate elliptic counterparts.
To do this, we build the framework described above with $\Om = \R^{d+1}_+$, $\Om_n = \R^{dn+1}_+$, and $L = L_n = L_a$ (in the appropriate dimension), where $L_a := \di \pr{x_0^a \gr }$, $a \in (-1,1)$, denotes our degenerate operator.
Compared to the results of \cite{Dav18} or \cite{DSVG24}, new ideas are needed to build this framework, and the techniques used here are substantially modified to account for the degeneracy of the operator.
As described above, the extension techniques of \cite{CS07}, \cite{NS16}, \cite{ST17} relate our results for the degenerate setting to their fractional counterparts. 

Our first main results are Almgren-type monotonicity formulas. 
To start, we prove an Almgren-type monotonicity formula for solutions to nonhomogeneous elliptic equations arising from the extension problem, see Theorem \ref{monoThm}. 
This formula was first proved by Caffarelli-Silvestre in the homogeneous elliptic setting in the groundbreaking work \cite{CS07}. 
In the context of the Signorini problem, the monotonicity of a time-independent Almgren-type frequency function was initially proved by Garofalo-Petrosyan in \cite{GP09} for the case $a=0$, and then extended to the whole range $a\in(-1,1)$ in \cite{GR19}.  
In general, in the elliptic setting, Almgren-type monotonicity formulas have been extensively used to study a wide variety of free boundary problems, see for example \cite{ACS08},  \cite{BBG22},  \cite{DT23}, \cite{GPSVG16}, \cite{GPSVG18},  \cite{GSVG14}, \cite{Gui09}, \cite{JPSVG24}. 

When $a=0$, the parabolic Almgren monotonicity formula is attributed to Poon \cite{Poon96} who used it to prove unique continuation results for caloric functions. 
An analogous Almgren monotonicity formula was generalized by Stinga-Torrea in \cite{ST17} for general $a\in(-1,1)$. 
A version of this result was proved in the setting of the parabolic Signorini problem for $a=0$ in \cite{DGPT17}, where the authors used monotonicity to establish the optimal regularity of solutions and study the free boundary. 
More recently, the authors of \cite{BDGP21} and \cite{BDGP20} studied the free boundary in the thin obstacle problem for degenerate parabolic equations. 
One of the key ingredients used in those two papers is an Almgren-Poon monotonicity formula for solutions to degenerate parabolic equations. 
In \cite{AT24}, the authors also used an Almgren-type monotonicity formula to study nodal sets of solutions to nonlocal parabolic equations. 
We also mention \cite{BG18}, where the authors used an Almgren-type monotonicity formula to study unique continuation for degenerate parabolic equations, \cite{Zil14}, where the author proved an Almgren monotonicity formula for degenerate, elliptic systems, \cite{FPS}, where the authors use an Almgren-type monotonicity formula to obtain strong unique continuation for a class of fractional heat equations, and \cite{ABDG23}, where the authors use a Poon-type monotonicity to study fractional parabolic equations.

In this paper, we use Theorem \ref{monoThm} and a high-dimensional limiting argument to prove an Almgren-type monotonicity formula for solutions to degenerate parabolic equations, see Theorem \ref{pMono}.

Our next main results are a degenerate parabolic Weiss monotonicity formula (Theorem \ref{T:parabolicweiss}) and a new epiperimetric inequality for weakly $a$-harmonic functions (Theorem \ref{T:epi}). 
Both of these results were inspired by those originally obtained by Weiss in \cite{Wei98} and \cite{Wei99} in the context of the classical obstacle problem, but our strategies differ from the original ones. 
We first prove a Weiss monotonicity formula for elliptic (spatial) functions that are not necessarily solutions to any equation, see Theorem \ref{monoWeissThm}. 
Then we use our elliptic Weiss monotonicity formula, together with our epiperimetric inequality for weakly $a$-harmonic functions, to prove a version of Weiss' monotonicity formula for degenerate parabolic equations, see Theorem \ref{T:parabolicweiss}.
Compared to our two other parabolic monotonicity formulas, the techniques here are far more subtle, as explained in more detail at the beginning of Subsection \ref{SS:paraWeiss}.

Like Almgren's monotonicity formula, Weiss' monotonicity formula has also played a fundamental role in the study of a multitude of free boundary problems. 
In the context of the Signorini problem, Garofalo and Petrosyan initially proved the monotonicity of an elliptic Weiss-type frequency function when $a=0$ in \cite{GP09}. 
This result was extended to the whole range $a\in(-1,1)$ in \cite{GR19} by Garofalo and Ros-Oton. 
In \cite{All12}, Allen proved a fractional elliptic version of a Weiss monotonicity formula and used it to study the separation of a lower-dimensional free boundary problem. 
In \cite{DGPT17}, to study the parabolic Signorini problem, Danielli, Garofalo, Petrosyan, and To proved a family of Weiss monotonicity formulas.
In \cite{ALP15}, Allen, Lindgren and Petrosyan also relied on such a formula to study an elliptic two-phase fractional obstacle problem. 
Furthermore, in \cite{BDGP21} and \cite{BDGP20}, the authors proved a family of elliptic and parabolic (respectively) Weiss monotonicity formulas and used them to study the free boundary in the thin obstacle problem for degenerate parabolic equations.

Historically, Reifenberg first proved an epiperimetric-type inequality in \cite{Rei64} for minimal surfaces. 
Similar result were subsequently proved by Taylor \cite{Tay76} for soap-films and soap-bubbles minimal surfaces, by White \cite{Whi83} for tangent cones, by Chang \cite{Cha88} while studying branch points in two-dimensional area minimizing integral currents, and by De Lellis and Spadaro \cite{DLS11} for two-dimensional multiple-valued functions minimizing the generalized Dirichlet energy. 
Since Weiss' epiperimetric inequality for the classical obstacle problem, epiperimetric inequalities have been used extensively in the study of free boundary problems. 
In the thin obstacle problem, these inequalities played a crucial role in \cite{FS16} and \cite{GPSVG16} in the study of the regular set for variable coefficient problems.
In \cite{GPPSVG17}, an epiperimetric inequality was used in the context of the obstacle problem for the fractional Laplacian. 
In \cite{SV19}, the authors used a direct approach to prove a 2-dimensional epiperimetric inequality for the Alt-Caffarelli functional, and in \cite{ESV20}, a (log-)epiperimetric inequality for the Weiss energy was used to prove the uniqueness of blow-ups at isolated singularities. 
In \cite{CSV18}, the authors proved a logarithmic epiperimetric inequality for obstacle-type problems, and in \cite{CSV20}, the same authors proved direct epiperimetric inequalities for the thin obstacle problem. 
Our approach is inspired by that of \cite{BET20}, where the authors proved an epiperimetric inequality for harmonic functions. 
Recently, using techniques that are similar to ours, Carducci \cite{Car24} obtained epiperimetric inequalities in the obstacle problem for the fractional Laplacian. 

Our last main results are Alt-Caffarelli-Friedman-type monotonicity formulas. 
In the original work of Alt-Caffarelli-Friedman \cite{ACF84}, the authors studied two-phase elliptic free boundary problems. 
A key ingredient in their study of the regularity of the free boundary, and in their proof of the Lipschitz continuity of minimizers, is the so-called ACF monotonicity formula.
Parabolic versions of this formula were proved by Caffarelli \cite{Caf93} and Caffarelli-Kenig \cite{CK98}, and then used to show the regularity of solutions to parabolic problems. 
In \cite{CJK02}, Caffarelli, Jerison, and Kenig further extended these ideas and obtained a powerful uniform bound on the monotonicity functional, instead of a monotonicity result. 
In \cite{EP08}, Edquist and Petrosyan proved a parabolic counterpart of the almost monotonicity formula of Caffarelli, Jerison and Kenig. 
In the setting of fractional elliptic problems, Terracini, Verzini and Zilio considered a class of competition diffusion nonlinear systems involving the square root of the Laplacian in \cite{TVZ2}. 
Their work relied on an ACF monotonicity formula for certain subsolutions of the one- and two-phase problems.  
We use a similar notion of subsolution in \eqref{strangecondition}, although our setting requires us to consider nonhomogeneous problems, while \cite{TVZ2} addresses homogeneous ones. 
In \cite{TVZ14}, the same authors studied another class of non-linear competition-diffusion systems for the fractional Laplacian with $s\in (0,1)$. 
As in \cite{TVZ2}, a key ingredient in their study was an ACF monotonicity formula, once again for subsolutions in a certain sense. 
In his Ph.D. thesis, \cite{Zil14}, Zilio also proved an ACF monotonicity formula for subsolutions to the extended, degenerate elliptic problem arising from the fractional Laplacian. 
In \cite{TTS18}, Terracini and Vita once again used an ACF monotonicity formula while studying s-harmonic functions on cones. 

Our final result is a parabolic counterpart of the fractional ACF-monotonicity formulas of \cite{TVZ14}, \cite{TVZ2}, \cite{Zil14}, stated below in Theorem \ref{T:parabolicACF}. 
This result is also proved as a high-dimensional limit of a family of ACF-type formulas for solutions to nonhomogeneous degenerate elliptic problems, see Theorem \ref{C:ACFellipticnonzero}.

\subsection*{Organization of the article}

In Section \ref{S:fractional}, we introduce fractional operators, the Caffarelli-Silvestre extension, and its parabolic counterpart(s). 
In Section \ref{S:elliptictopara}, we introduce the transformations that connect the elliptic and parabolic settings. 
That is, we define maps $F_{n}: \R^{dn+1}_+\rightarrow \R^{d+1}_+\times\R_+$ that take points in the high-dimensional (elliptic) space to space-time (parabolic) points. 
We use random walks to motivate our definitions of $F_n$, then examine, via the chain rule, how these maps relate degenerate elliptic and parabolic operators. 
Section \ref{S:PushfR} examines the transformation maps from a measure-theoretic perspective.
By defining an appropriate probability measure in the elliptic setting, we perform pushforward computations to arrive at parabolic probability measures that approximate weighted Gaussian distributions.
We refer to these results as the ``Bridge Lemmas."
In Section \ref{S:IntRel}, we collect a number of applications of the Bridge Lemmas. 

The Almgren-type monotonicity formulas appear in Section \ref{S:almgren}; Subsection \ref{SS:EAlmgren} presents an Almgren-type monotonicity formula for solutions to nonhomogeneous degenerate elliptic equations, Subsection \ref{SS:PAlmgren} presents an Almgren-type monotonicity formula for solutions to degenerate parabolic equations. 
In Section \ref{S:Weiss}, we discuss Weiss monotonicity formulas and the epiperimetric inequality for weakly $a$-harmonic functions. 
Subsection \ref{SS:ellipticWeiss} presents the elliptic theory: a Weiss monotonicity formula in the degenerate elliptic setting, some general elliptic theory, and the epiperimetric inequality for weakly $a$-harmonic functions. 
The results of Subsection \ref{SS:ellipticWeiss} are used in Subsection \ref{SS:paraWeiss} to prove a Weiss monotonicity formula for solutions to degenerate parabolic equations.
Section \ref{S:ACF} contains our ACF monotonicity formulas; the result for solutions to nonhomogeneous degenerate elliptic equations is in Subsection \ref{SS:EllipACF} while the result for solutions to degenerate parabolic equations is in Subsection \ref{SS:ParaACF}.  
Finally, in Appendix \ref{AppA}, we present some technical limit lemmas that are used in our elliptic-to-parabolic arguments.
\subsection*{Acknowledgments}

This material is based upon work supported by the National Science Foundation under Grant No. DMS-1928930 and the National Security Agency under Grant No. H98230-23-1-0004 while the authors participated in a program hosted by the Simons Laufer Mathematical Sciences Institute (formerly MSRI) in Berkeley, California, during the Summer of 2023.
B. D. was also partially supported by the NSF CAREER DMS-2236491, NSF LEAPS-MPS grant DMS-2137743, and a Simons Foundation Collaboration Grant 430198.
M.S.V.G was also partially supported by the NSF grants DMS-2054282, DMS-2348739 and a Karen EDGE fellowship. 
The authors thank the anonymous referees who provided useful comments on an earlier version of the manuscript.

\section{Fractional and Degenerate Operators}
\label{S:fractional}

To introduce our main operators and the notation that we use throughout, we start by discussing fractional elliptic and fractional parabolic operators.
Via the Caffarelli-Silvestre extension \cite{CS07} and its parabolic analogues in \cite{NS16} and \cite{ST17}, we also introduce the associated degenerate operators.

\subsection{Elliptic operators}
\label{SS:fractional}

For $s \in (0,1)$, the fractional Laplacian, denoted by $\pr{-\LP}^s$, acts on spatial functions $v(y)$, where $y \in \R^N$.
There are a number of different interpretations of this operator, some of which we now discuss.

The Fourier approach shows that the fractional Laplacian can be viewed as a symmetric differentiation operator of order $2s$.
In particular, denoting the Fourier transform with $\mathcal{F}$, we see that for $\eta \in \R^N$,
\begin{equation}
\label{EFourier}
\mathcal{F}(\pr{-\LP}^s v)\pr{\eta} = \abs{\eta}^{2s} \mathcal{F}(v) \pr{\eta}.
\end{equation}
As a singular integral operator,
\begin{equation}
\label{ESIO}
\pr{-\LP}^s v\pr{y} = C_{N,s} \int_{\R^N} \frac{v\pr{y} - v\pr{y'}}{\abs{y-y'}^{N+2s}} dy',
\end{equation}
where the integral is understood as a principal value. 

Another way to understand the fractional Laplacian is through the Caffarelli-Silvestre extension, \cite{CS07}.
To denote the half-space in $\R^{N+1}$, we write $\R^{N+1}_+ = \set{Y = \pr{y_0, y} : y_0 > 0,  \ y \in \R^N}$.
With $a \in (-1,1)$, let $V(Y) = V\pr{y_0,y}$ be a solution to the elliptic problem
\begin{equation*}
\left\{\begin{array}{ll}  
\LP_y V + \frac{a}{y_0} \del_{y_0} V + \del_{y_0}^2 V = 0 & \text{ for } \, Y \in \R^{N+1}_+ \\
\disp \lim_{y_0 \to 0^+} V\pr{y_0,y} = v\pr{y} & \text{ for } \, y \in \R^{N}.
\end{array}  \right.
\end{equation*}
We use the notation $\disp \del_{y_0} = \frac{\del }{\del y_0}$, $\disp \gr_y = \pr{\frac{\del }{\del y_1}, \ldots, \frac{\del }{\del y_N}}$, and $\LP_y = \gr_y \cdot \gr_y$.
The solution function $V(Y)$ is given explicitly using the Poisson formula
\begin{equation*}
V\pr{y_0, y} = \int_{\R^N} P_a\pr{y_0, y'} v\pr{y - y'} dy',
\end{equation*}
where the Poisson kernel is defined as $P_a\pr{Y} = C_{N,a} y_0^{1-a} \abs{Y}^{-(N+1-a)}$.
It follows that if $a = 1 - 2s$, then
\begin{equation}
\label{EExt}
c_{N,s} \pr{-\LP}^s v\pr{y} = - \partial_{\nu}^a V(0, y),
\end{equation}
where  
we introduced the notation
\begin{equation}
\label{flatDerivV}
    \partial_{\nu}^a V(0, y) 
    := \lim\limits_{y_0 \rightarrow 0+}y_0^a\partial_{y_0} V(y_0, y).
\end{equation}

Here and throughout, the open ball of radius $r$ in $\R^{N+1}$ is denoted by
$\BB{B}^{N+1}_r = \set{Y \in \R^{N+1} : \abs{Y} < r}$, while the open half-ball of radius $r$ in the half-space $\R^{N+1}_+$ is given by $\BB{D}^{N+1}_{r} = \set{Y \in \R^{N+1}_+ : \abs{Y} < r}$.
We identify $\BB{B}^N_r$ with $\BB{B}^{N+1}_r \cap \set{y_0 = 0}$.

In many of our results, we often assume certain boundary conditions on the thin set, $\R^{N+1} \cap \set{y_0 = 0}$.
We introduce language for those conditions here.

\begin{defn}[Elliptic boundary conditions]
    \label{eBC}
    For $V = V(Y)$ defined on $\BB{D}_R^{N+1}$, we say that $V$ satisfies the \textbf{elliptic boundary conditions} if there exists $\eps_0 > 0$ so that $\disp y_0^{a} \del_{y_0} V(y_0, y)$ is bounded whenever $(y_0, y) \in \mathbb{D}_R^{N+1} \cap \set{y_0 \in(0, \eps_0)}$ and for every $y \in \mathbb{B}_R^N$, either 
    \begin{enumerate}
        \item[(I)] $\del_\nu^a V(0,y) = 0$ as defined in \eqref{flatDerivV}, or 
        \item[(II)] $\disp \lim_{y_0 \to 0^+} V(y_0, y) = 0$ and $\disp \lim_{y_0 \to 0^+} \gr_y V(y_0, y) = 0$. 
    \end{enumerate}
    \nid If $(I)$ holds for every point $y \in \mathbb{B}_R^N$, then we say that $V$ has \textbf{elliptic boundary conditions of type I}.
    If $(II)$ holds for every point $y \in \mathbb{B}_R^N$, then we say that $V$ has \textbf{elliptic boundary conditions of type II}.
\end{defn}

\subsection{Parabolic operators}
\label{SS:backwardheat}
We consider fractional parabolic operators of the form $\pr{\del_t - \LP}^s$, where $s \in (0,1)$.
These operators act on space-time functions $u(x,t)$, where $x \in \R^d$ and $t \in \R$.
By analogy with \eqref{EFourier}, we see via the Fourier transform that for $(\xi, \rho) \in \R^d \times \R_+$,
\begin{equation}
\label{PFourier}
\mathcal{F}(\pr{\del_t - \LP}^s u)\pr{\xi, \rho} = \pr{\abs{\xi}^2 + i \rho}^s \mathcal{F}( u)\pr{\xi, \rho}.
\end{equation}
In keeping with the singular integral operator representation from \eqref{ESIO}, as shown in \cite{NS16},
\begin{equation}
\label{PSIO}
\pr{\del_t - \LP}^su\pr{x,t} = \int_{-\iny}^t \int_{\R^d}  \brac{u\pr{x, t} - u\pr{x', t'}} K_s\pr{x -x', t- t'}  dx' dt',
\end{equation}
where
\begin{equation*}
K_s\pr{x, t} = \frac{e^{-\abs{x}^2/\pr{4t}}}{\pr{4\pi t}^{d/2} } \frac{t^{-\pr{1 + s}}}{\abs{\Ga\pr{-s}}} 
\end{equation*}
and $\Ga$ denotes the Gamma function.
See also \cite{ST17}*{Theorem 1.1} for an equivalent pointwise representation.

As in the elliptic setting, we may also interpret the fractional heat operator using an extension problem.
To denote the half-space in $\R^{d+1}$, we write $\R^{d+1}_+ = \set{X = \pr{x_0, x} : x_0 > 0,  \ x \in \R^d}$.
Consider the parabolic problem
\begin{equation}
\label{PExtProb}
\left\{\begin{array}{ll}  
\LP_x U + \frac{a}{x_0} \del_{x_0} U + \del_{x_0}^2 U = \del_t U & \text{ for } \pr{X, t} \in \R^{d+1}_+ \times \R \\
\disp \lim_{x_0 \to 0^+} U\pr{x_0, x, t} = u\pr{x, t} & \text{ for } \, \pr{x, t} \in \R^d \times \R,
\end{array}  \right.
\end{equation}
where $\disp \del_{x_0} = \frac{\del }{\del x_0}$, $\disp \gr_x = \pr{\frac{\del }{\del x_1}, \ldots, \frac{\del }{\del x_d}}$, and $\LP_x = \gr_x \cdot \gr_x$.
The solution function $U\pr{X, t}$ may be defined through the Poisson formula by 
\begin{equation}
\label{PPoisson}
U\pr{{x_0}, x, t} = \int_0^\iny \int_{\R^d} P_a\pr{{x_0}, x', t'} u\pr{x - x', t - t'} dx' dt',
\end{equation}
where the parabolic Poisson kernel is $\disp P_a\pr{X, t} = \frac{\pi^{-\frac d 2}2^{-(d+1-a)}}{ \Ga\pr{\frac{1-a}{2}}  } \frac{x_0^{1-a}}{t^{\frac{d+3-a}{2}}} e^{- \frac{\abs{X}^2}{4t}}$. 
Then, in analogy with \eqref{EExt}, it can be shown that
\begin{equation}
\label{PExt}
\begin{aligned}
\frac{\abs{\Ga\pr{-s}}}{4^s \Ga\pr{s}} \pr{\del_t - \LP}^s u\pr{x, t} 
&= - \lim_{x_0 \to 0^+} \frac{U\pr{x_0, x, t} - U\pr{0, x, t}}{x_0^{1-a}} 
= - \frac{1}{1-a} \lim_{x_0 \to 0^+} x_0^{a} \del_{x_0} U\pr{x_0, x, t}.
\end{aligned}
\end{equation}
In other words, the derivative of $U$ with respect to $x_0$ gives us information about $\pr{\del_t - \LP}^s u$.

In our applications below, we use the backward fractional heat operator and its associated extension problem.
As in \eqref{PFourier}, we see via the Fourier transform that
\begin{equation*}
\mathcal{F}(\pr{-\del_t - \LP}^s u)\pr{\xi, \rho} = \pr{\abs{\xi}^2 - i \rho}^s \mathcal{F}(u)\pr{\xi, \rho}.
\end{equation*}
Applying a time reversal to \eqref{PSIO} with $\bar{u}(x,t) = u(x, - t)$ shows that $\pr{-\del_t - \LP}^s$ also has a singular integral representation,
\begin{align*}
\pr{-\del_t - \LP}^s u\pr{x,t} 
&= \pr{\del_t - \LP}^s \bar{u}\pr{x,-t} 
= \int_{-\iny}^{-t} \int_{\R^d}  \brac{\bar{u}\pr{x, -t} - \bar{u}\pr{x', t'}} K_s\pr{x -x', -t- t'}  dx' dt' \\
&= \int^{\iny}_t \int_{\R^d}  \brac{u\pr{x, t} - u\pr{x', \tau}} \overline{K_s}\pr{x -x', t - \tau}  dx' d\tau.
\end{align*}
Similarly, from \eqref{PPoisson} we get that
\begin{align*}
U\pr{{x_0}, x, t}
&= \overline{U}\pr{{x_0}, x, -t}
= \int_0^\iny \int_{\R^d} P_a\pr{{x_0}, x', t'} \bar{u}\pr{x - x', -t - t'} dx' dt' \\
&= \int_{-\iny}^0 \int_{\R^d} \overline{P_a}\pr{{x_0}, x', \tau} u\pr{x - x', t - \tau} dx' d\tau
\end{align*}
is a solution to the backward extension problem
\begin{equation}
\label{BPExtProb}
\left\{\begin{array}{ll}  
\LP_x U + \frac{a}{x_0} \del_{x_0} U + \del_{x_0}^2 U + \del_t U = 0 & \text{ for } \, \pr{X, t} \in \R^{d+1}_+ \times \R \\
\disp \lim_{x_0 \to 0^+} U\pr{x_0, x, t} = u\pr{x, t} & \text{ for } \, \pr{x, t} \in \R^d \times \R.
\end{array}  \right.
\end{equation}
By analogy with \eqref{PExt}, we get
\begin{equation}
\label{BPExt}
\frac{2s \abs{\Ga\pr{-s}}}{4^s \Ga\pr{s}} \pr{-\del_t - \LP}^s u\pr{x, t} 
= - \partial_{\nu}^a U(0, x, t),
\end{equation}
where we introduce the notation
\begin{equation}
\label{flatDerivU}
    \partial_{\nu}^a U(0, x, t) 
    := \lim\limits_{x_0 \rightarrow 0+} x_0^a\partial_{x_0} U(x_0, x, t).
\end{equation}
As with the elliptic setting, we often assume certain boundary conditions on the thin set, described with the following language.

\begin{defn}[Parabolic boundary conditions]
    \label{pBC}
    For $U = U(X,t)$ defined on $\R^{d+1}_+ \times \pr{0, T}$, we say that $U$ satisfies the \textbf{parabolic boundary conditions} if there exists $\eps_0 > 0$ so that $x_0^a \del_{x_0} U(x_0, x, t)$ and $x_0^{1+a} \del_{t} U(x_0, x, t)$ are bounded whenever $\pr{x_0, x, t} \in \pr{0, \eps_0} \times \R^d \times (0, T)$ and for every $(x, t) \in \R^d \times \pr{0, T}$ either
    \begin{itemize}
        \item[(I)] $\del_\nu^a U(0,x, t) = 0$ as defined in \eqref{flatDerivU}, or 
        \item[(II)] $\disp \lim_{x_0 \to 0^+} U(x_0, x, t) = 0$ and $\disp \lim_{x_0 \to 0^+} \gr_{(x,t)} U(x_0, x,t) = 0$.
    \end{itemize}
    \nid If $(I)$ holds for every point $(x, t) \in \R^d \times \pr{0, T}$, then we say that $U$ has \textbf{parabolic boundary conditions of type I}.
    If $(II)$ holds for every point $(x, t) \in \R^d \times \pr{0, T}$, then we say that $U$ has \textbf{parabolic boundary conditions of type II}.
\end{defn}

\section{Elliptic-to-Parabolic Transformations}
\label{S:elliptictopara}

In this section, we construct the transformations that connect space-time functions $U = U(x_0, x,t)$ defined on $\R^{d+1}_+ \times \pr{0, T}$ to a sequence of spatial functions $V_n = V_n(y_0, y)$ defined in $\R^{dn+1}_+$ for all $n \in \N$.
More specifically, fixing $d \in \N$ and $a \in (-1,1)$, for each $n \in \N$, we construct a mapping of the form
\begin{align*}
F_{n} :  \R^{dn+1}_+ &\to \R^{d+1}_+ \times \R_+ \\
\pr{y_0, y} &\mapsto (x_0, x,t)
\end{align*}
that takes elements $Y = \pr{y_0, y}$ in (high-dimensional) space $\R^{dn+1}_+$ to elements $(X,t)$ in space-time $\R^{d+1}_+ \times \R_+$.
Given a function $U = U(X,t)$ defined on a space-time domain, 
we use $F_{n}$ to define a function $V_n = V_n(Y)$ on the space $\R^{dn+1}_+$ by setting $V_n(Y) = U(F_{n}(Y))$.
As we show below via the chain rule, if $U$ is a solution to a degenerate backward parabolic equation, then each $V_n$ is a solution to some related nonhomogeneous degenerate elliptic equation.
This observation explains why we think of $U$ as a ``parabolic function" and of each $V_n$ as an ``elliptic function."
Moreover, as $n$ becomes large, the function $V_n$ behaves (heuristically) more and more like a solution to a homogeneous degenerate elliptic equation.
As such, the transformation $F_{n}$ becomes more useful to our purposes as $n \to \iny$, thereby illuminating why the notion of a high-dimensional limit is relevant here.

Using the extension technique, we are able to find relationships between functions $u = u(x, t)$ which solve fractional parabolic equations in $\R^d \times \R_+$, and associated functions $v_n = v_n(y)$ that solve nonhomogeneous fractional elliptic equations in $\R^{dn}$.

From another perspective, when we use $F_{n}$ to pushforward measures on half-spheres and half-balls in $\R^{dn+1}_+$, we produce measures in space-time that are weighted by approximations to generalized Gaussians.
This perspective is explored in Section \ref{S:PushfR}.

We can think of the transformations $F_{n} : \R^{dn+1}_+ \to \R^{d+1}_+ \times \R_+$ in a number of ways.
As mentioned above (and as we show in this section), these maps are constructed so that each $V_n$ solves an elliptic equation whenever $U$ solves a parabolic equation.
This viewpoint is purely computational as these relationships are illuminated by the chain rule.
This perspective is perhaps the most (easily) checkable, but it is somewhat mysterious.
Because random walks are the underlying mechanism used to define these mappings, we begin there.

We recall some classical ideas concerning random walks, going back to Wiener \cite{Wie23}.
An explanation of these ideas is also available in Sverak's notes \cite{Sve11}.
Consider $d+1$ particles, where $d$ of them move randomly in one spatial dimension, and $1$ of them moves randomly, but only in the positive direction.  
Let $x_1, x_2, \ldots, x_d \in \R$ and $x_0 > 0$ denote the coordinates of these particles.
Rather than imposing a condition on the step size, we instead require the following more universal condition: 
If $x_0$ makes one positive random step denoted by $y_0$, and each other $x_i$ makes $n$ random steps, denoted $y_{i,1}, y_{i,2}, \ldots, y_{i,n}$, then for some fixed $t > 0$, we have
\begin{equation}
\label{yNorm}
y_0^2 + \abs{y}^2 = y_0^2 + \sum_{i=1}^d \brac{y_{i,1}^2 + \ldots + y_{i,n}^2}= M t,
\end{equation}
where
\begin{equation}
\label{MDef}
M = M(d, n, a) 
:= \frac{2 \pr{d n + 1 + a}}{n} 
= 2d + \frac{2(1+a)}{n}.
\end{equation}
This choice for $M$ will be explained below.
Assuming that each $x_i$ starts at the origin, after these $n$ steps, the new positions will be
\begin{equation}
\label{xiDef}
x_i = y_{i,1} + y_{i,2} + \ldots + y_{i,n}.
\end{equation}
On the other hand, we make the rescaling
\begin{equation}
x_0 = \sqrt n \, y_0.
\label{x_0Def}
\end{equation}
We define the function $f_{n} : \R^{dn+1}_+ \to \R^{d+1}_+$ by
\begin{align}
f_{n}\pr{y_0, y_{1,1}, \ldots, y_{1,n}, \ldots, y_{d,1}, \ldots, y_{d,n}}
&= \pr{\sqrt n \, y_0, y_{1,1} + \ldots + y_{1,n}, \ldots, y_{d,1} + \ldots + y_{d,n}}.
\label{fnDef}
\end{align}
In other words, equation \eqref{x_0Def} holds and equation \eqref{xiDef} holds for each $i = 1, \ldots, d$.
Going further, define $F_{n} : \R^{dn+1}_+ \to \R^{d+1}_+ \times \R_+$ by
\begin{align}
\label{FnDef}
F_{n}\pr{y_0, y_{1,1}, \ldots, y_{1,n}, \ldots, y_{d,1}, \ldots, y_{d,n}}
&= \pr{f_{n}\pr{y_0, y_{1,1}, \ldots, y_{1,n}, \ldots, y_{d,1}, \ldots, y_{d,n}}, \frac{y_0^2 + \abs{y}^2}{M}}.
\end{align}
In other words, $F_{n}$ captures \eqref{yNorm}, as well as \eqref{x_0Def} and each \eqref{xiDef}.

We may sometimes abuse notation and apply $f_{n}$ and $F_{n}$ to elements in $\R^{dn}$ or $\R^{dn+1}$ instead of $\R^{dn+1}_+$.
That is, we interpret $f_{n} : \R^{dn} \to \R^d$ and $F_{n} : \R^{dn} \to \R^d \times \R_+$ to mean $f_{n}\pr{y} 
= \pr{y_{1,1} + \ldots + y_{1,n}, \ldots, y_{d,1} + \ldots + y_{d,n}}$ and $F_{n}\pr{y} 
= \pr{f_{n}(y), \frac{\abs{y}^2}{M}}$.

\begin{defn}[Half-balls and half-spheres]
In $\R^{N+1}$, the open half-ball of radius $r > 0$ is denoted by $\BB{D}^{N+1}_{r} = \set{Y \in \R^{N+1}_+ : \abs{Y} < r}$, 
$\widetilde{\BB{D}^{N+1}_{r}} = \overline{\BB{D}}^{N+1}_{r} \cap \set{y_0 > 0}$, where 
$\overline{\BB{D}}^{N+1}_{r}$ denotes the closed half-ball of radius $r$, and the half-sphere of radius $r$ is given by $\BB{H}^{N}_r = \set{Y \in \R^{N+1}_{+} : \abs{Y} = r}$.    
In all cases, the superscript denotes the dimension of the set, while the subscript denotes the radius.
If the underlying dimension is understood from the context, we may write $\BB{D}_r$, $\overline{\BB{D}}_r$, $\widetilde{\BB{D}}_r$, or $\mathbb{H}_r$.
\end{defn}

\begin{rem} 
\label{mappingObs}
Notice that by \eqref{xiDef}, \eqref{x_0Def}, standard inequalities, and \eqref{yNorm},
\begin{align*}
x_0^2 + \abs{x}^2
= x_0^2  + \sum_{i=1}^d x_i^2
= \pr{\sqrt n y_0}^2  + \sum_{i=1}^d \pr{y_{i,1} + \ldots + y_{i,n}}^2
\le n y_0^2 + n \sum_{i=1}^d \pr{y_{i,1}^2 + \ldots + y_{i,n}^2} 
= n \cdot M t.
\end{align*}
In fact, the above equality and that $y_0 > 0$ shows that $f_{n}\pr{\BB{H}^{dn}_{\sqrt{Mt}}} \subseteq \widetilde{\BB{D}}^{d+1}_{\sqrt{Mnt}}$.

Conversely, given $X \in \widetilde{\BB{D}}^{d+1}_{\sqrt{Mnt}}$,let $P(X) = f_n^{-1}(X) = \set{Y \in \R^{dn+1}_+ : f_n(Y) = X}$ and note that because $P(X)$ is determined by $d+1$ linear equations, then $P(X)$ is a $(dn - d)$-dimensional hyperplane.
As shown above, if $Y \in P(X)$, then $\abs{X}^2 \le n \abs{Y}^2$.
Since $\abs{X}^2 \le M nt$, then $P(X) \cap \BB{H}^{dn}_{\sqrt{Mt}} \ne \emptyset$.
In particular, we may choose $Y \in \BB{H}^{dn}_{\sqrt{Mt}}$ so that $f_n(Y) = X$ and it follows that $\widetilde{\BB{D}}^{d+1}_{\sqrt{Mnt}}  \subseteq f_{n}\pr{\BB{H}^{dn}_{\sqrt{Mt}}}$.

Therefore, 
\begin{equation}
\label{fnRem}
\begin{aligned}
 & f_{n}\pr{\BB{H}^{dn}_{\sqrt{Mt}}} = \widetilde{\BB{D}}^{d+1}_{\sqrt{Mnt}} \\
 & F_{n}\pr{\BB{H}^{dn}_{\sqrt{Mt}}} = \widetilde{\BB{D}}^{d+1}_{\sqrt{Mnt}} \times \set{t} .
\end{aligned}    
\end{equation}
Moreover, as $\disp \widetilde{\BB{D}}^{dn+1}_{\sqrt{M T}} = \bigcup_{0 < t \le T} \BB{H}^{dn}_{\sqrt{Mt}}$, then
\begin{equation}
\label{FnImage}    F_{n}\pr{\widetilde{\BB{D}}^{dn+1}_{\sqrt{M T}}} 
    = \bigcup_{0 < t \le T} F_{n}\pr{\BB{H}^{dn}_{\sqrt{Mt}}} 
    = \bigcup_{0 < t \le T} \widetilde{\BB{D}}^{d+1}_{\sqrt{Mnt}} \times \set{t}.
\end{equation}
\end{rem}

The next set of observations shows how the derivatives of space-time functions $U : \R^{d+1}_+ \times \pr{0, T} \to \R$ can be related to those of high-dimensional space functions $V_n: \R^{dn+1}_+ \to \R$ through composition with $F_n$.

\begin{lem}[Extension Chain Rule Lemma]
\label{ChainRuleLem}
Given $n \in \N$ and a function $U : \R^{d+1}_+ \times \pr{0, T} \to \R$, the function $V_n: \BB{D}^{dn+1}_{\sqrt{M T}} \to \R$ defined by $V_n(Y) = U(F_{n}(Y)) = U(X, t)$ satisfies the following:
\begin{align}
&\del_{y_0} V_n
= \sqrt n \, \del_{x_0} U
+ \frac{2 y_{0}}{M} \del_t U
= \sqrt n \, \del_{x_0} U
+ \frac{2 x_{0}}{M \sqrt n} \del_t U
\label{Vy0Deriv} \\
&\del_{y_{i,j}} V_n
= \del_{x_i} U
+ \frac{2 y_{i,j}}{M} \del_t U
\label{VyijDeriv}  \\
&Y \cdot \gr V_n
= (X, 2t) \cdot \gr_{(X,t)} U 
\label{YdotGrExp} \\
&\abs{\gr V_n}^2
= n \abs{\gr U}^2
+ \frac {4t} {M} \abs{\del_t U}^2
+ \frac 4 M \pr{X \cdot \gr U} \del_t U.
\label{gradExp}
\end{align}
Moreover, if $U$ is a solution to $\LP_x U + \frac{a}{x_0} \del_{x_0} U + \del^2_{x_0} U + \del_t U= 0$ in $\R^{d+1}_+ \times \pr{0, T}$, then $V_n$ solves
\begin{align}
\label{VnKnEllipEqn}
 \LP_y V_n 
+ \frac{a}{y_0}\del_{y_0} V_n 
+ \del^2_{y_0} V_n
&= K_n,
\end{align}
in $\BB{D}^{dn+1}_{\sqrt{M T}}$, where $K_n: \BB{D}^{dn+1}_{\sqrt{M T}} \to \R$ is defined by 
\begin{equation}
\label{KnDef}
K_n\pr{Y} =  \frac 4 M J\pr{F_{n}\pr{Y}}
\end{equation}
with
\begin{equation}
\label{JDefn}
J\pr{X, t} 
= \pr{X, t} \cdot \gr_{(X,t)} \del_t U.
\end{equation}
\end{lem}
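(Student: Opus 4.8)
The plan is to verify the four derivative identities by a direct chain-rule computation and then substitute them into the parabolic equation for $U$ to obtain the nonhomogeneous elliptic equation for $V_n$. First I would record the component derivatives of the map $F_n$: since $X = f_n(Y)$ has components $x_0 = \sqrt n\, y_0$ and $x_i = \sum_{j=1}^n y_{i,j}$, and $t = (y_0^2 + \abs{y}^2)/M$, we get $\del_{y_0} x_0 = \sqrt n$, $\del_{y_0} t = 2y_0/M$, $\del_{y_{i,j}} x_i = 1$, $\del_{y_{i,j}} x_{i'} = 0$ for $i' \ne i$, and $\del_{y_{i,j}} t = 2 y_{i,j}/M$. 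Applying the chain rule to $V_n(Y) = U(F_n(Y))$ immediately yields \eqref{Vy0Deriv} and \eqref{VyijDeriv}, where in \eqref{Vy0Deriv} I also rewrite $y_0 = x_0/\sqrt n$.

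For \eqref{YdotGrExp}, I would compute $Y \cdot \gr V_n = y_0 \del_{y_0} V_n + \sum_{i,j} y_{i,j} \del_{y_{i,j}} V_n$, plug in the two formulas just derived, and collect terms: the $\del_{x_0} U$ coefficient becomes $\sqrt n\, y_0 = x_0$, the $\del_{x_i} U$ coefficient becomes $\sum_j y_{i,j} = x_i$, and the $\del_t U$ coefficient becomes $\frac{2}{M}(y_0^2 + \sum_{i,j} y_{i,j}^2) = \frac{2}{M}(y_0^2 + \abs{y}^2) = 2t$. For \eqref{gradExp}, I would square each component derivative and sum; the key point is that $\sum_j y_{i,j}^2$ and the cross terms $2\sum_{j<k} y_{i,j} y_{i,k}$ combine so that $\sum_{i,j}(\del_{x_i}U)^2 = n\abs{\gr_x U}^2$ picks up the factor $n$ (there are $n$ copies of each $\del_{x_i}U$), the $y_0$-term contributes $n (\del_{x_0}U)^2$, the $\del_t U$-squared terms sum to $\frac{4}{M^2}(y_0^2 + \abs{y}^2)\abs{\del_t U}^2 = \frac{4t}{M}\abs{\del_t U}^2$, and the cross terms sum to $\frac{4}{M}(\sqrt n y_0 \del_{x_0}U + \sum_i x_i \del_{x_i} U)\del_t U = \frac{4}{M}(X\cdot\gr U)\del_t U$. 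Here $\abs{\gr U}^2$ denotes $\abs{\gr_X U}^2 = (\del_{x_0}U)^2 + \abs{\gr_x U}^2$.

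The main computation is \eqref{VnKnEllipEqn}, for which I need the second derivatives. Differentiating \eqref{VyijDeriv} again in $y_{i,j}$ gives $\del^2_{y_{i,j}} V_n = \del^2_{x_i} U + \frac{4 y_{i,j}}{M}\del_{x_i}\del_t U + \frac{4 y_{i,j}^2}{M^2}\del_t^2 U + \frac{2}{M}\del_t U$; summing over $i = 1,\ldots,d$ and $j = 1,\ldots,n$ produces $\LP_y V_n = n\LP_x U + \frac{4}{M}\sum_i x_i \del_{x_i}\del_t U + \frac{4}{M^2}\abs{y}^2 \del_t^2 U + \frac{2dn}{M}\del_t U$. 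Similarly differentiating \eqref{Vy0Deriv} in $y_0$ gives $\del^2_{y_0} V_n = n\del^2_{x_0} U + \frac{4 y_0 \sqrt n}{M}\del_{x_0}\del_t U + \frac{4 y_0^2}{M^2}\del_t^2 U + \frac{2}{M}\del_t U$, and $\frac{a}{y_0}\del_{y_0} V_n = \frac{a\sqrt n}{y_0}\del_{x_0} U + \frac{2a}{M}\del_t U = \frac{an}{x_0}\del_{x_0}U + \frac{2a}{M}\del_t U$. Adding these three contributions, the Laplacian-type terms assemble into $n\left(\LP_x U + \frac{a}{x_0}\del_{x_0}U + \del^2_{x_0}U\right)$, the $\del_t^2 U$ terms combine to $\frac{4}{M^2}(y_0^2 + \abs{y}^2)\del_t^2 U = \frac{4t}{M}\del_t^2 U$, the mixed terms combine to $\frac{4}{M}(\sqrt n y_0 \del_{x_0} + \sum_i x_i \del_{x_i})\del_t U = \frac{4}{M}(X\cdot\gr_X U)\del_t U$, and the leftover zeroth-order terms give $\frac{2(dn + 1 + a)}{M}\del_t U = n\,\del_t U$ by the definition \eqref{MDef} of $M$ (this is precisely why $M$ was chosen this way). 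Using the parabolic equation $\LP_x U + \frac{a}{x_0}\del_{x_0}U + \del^2_{x_0}U = -\del_t U$, the $n\left(\cdots\right)$ term cancels against $n\,\del_t U$, leaving exactly $\LP_y V_n + \frac{a}{y_0}\del_{y_0}V_n + \del^2_{y_0}V_n = \frac{4}{M}\left[(X\cdot\gr_X U)\del_t U + t\,\del_t^2 U\right] = \frac{4}{M}(X, t)\cdot\gr_{(X,t)}\del_t U = \frac{4}{M} J(F_n(Y)) = K_n(Y)$, which is the claim. The only delicacy is bookkeeping the many terms and correctly invoking the identity $\frac{2(dn+1+a)}{M} = n$; everything else is mechanical.
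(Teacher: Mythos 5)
Your proposal is correct and follows the same direct chain-rule computation as the paper: record the partial derivatives of $F_n$, differentiate $V_n = U \circ F_n$ once and twice, assemble the second derivatives, and invoke the definition of $M$ to convert $\frac{2(dn+1+a)}{M}$ into $n$ so that the parabolic equation for $U$ absorbs the factor of $n$. One small note: the remark that "$\sum_j y_{i,j}^2$ and the cross terms $2\sum_{j<k} y_{i,j} y_{i,k}$ combine" to give the factor of $n$ in $\sum_{i,j}(\del_{x_i}U)^2$ is a non-sequitur — no such cross terms arise when squaring $\del_{y_{i,j}}V_n$ — but your parenthetical explanation ("there are $n$ copies of each $\del_{x_i}U$") is the actual correct reason, and the computation you carry out is right; likewise, $\frac{4}{M}(X\cdot\gr_X U)\del_t U$ should read $\frac{4}{M}\,X\cdot\gr_X(\del_t U)$ to match $(X,t)\cdot\gr_{(X,t)}\del_t U$, as the surrounding derivation makes clear.
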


\begin{rem}
The relationship between elliptic and parabolic solutions that is described by \eqref{VnKnEllipEqn} holds in a far more general setting, see Lemma \ref{weaktosuperweak}.
\end{rem}

\begin{proof}
Applications of the chain rule lead to the first four expressions.
Differentiating \eqref{Vy0Deriv} and \eqref{VyijDeriv} again, we get
\begin{align*}
&\del^2_{y_0} V_n
= n \, \del^2_{x_0} U
+ \frac{2}{M} \del_t U 
+ \frac{4 x_0}{M}\del_t \del_{x_0} U
+ \frac{4 x_{0}^2}{M^2 n} \del_t^2 U, \\
&\del^2_{y_{i,j}} V_n
= \del^2_{x_i} U
+ \frac{2}{M}\del_t U 
+ \frac{4 y_{i,j}}{M} \del_t \del_{x_i} U
+ \frac{4 y_{i,j}^2}{M^2} \del^2_{t} U ,
\end{align*}
and then
\begin{align*}
\del^2_{y_0} V_n
+ \frac{a}{y_0} \del_{y_0} V_n
+ \sum_{i=1}^d\sum_{j=1}^n \del^2_{y_{i,j}} V_n
&= n \brac{\LP_x U
+ \frac{a}{x_0} \del_{x_0} U
+ \del^2_{x_0} U
+ \frac{2\pr{ dn + 1 + a}}{Mn} \del_t U } \\
&+ \frac 4 M \brac{x_0 \del_t \del_{x_0} U + \sum_{i=1}^d x_{i} \del_t \del_{x_i} U}
+ \frac 4 {M^2}\brac{y_{0}^2 + \sum_{i=1}^d \sum_{j=1}^n y_{i,j}^2} \del_t^2 U \\
&= n \pr{\LP_x U 
 + \frac{a}{x_0} \del_{x_0} U
 + \del_{x_0}^2 U
 + \del_t U} 
 +\frac 4 M \pr{X, t} \cdot \gr_{(X,t)} \del_t U,
\end{align*}
where we used \eqref{yNorm} and \eqref{MDef} to simplify the expression.
The final claim is a consequence of the previous equation.
\end{proof}

In addition to the derivative relationships described above, the mapping $F_n$ also preserves the boundary conditions that are described in Definitions \ref{eBC} and \ref{pBC}.

\begin{lem}[Inheritance of Boundary Conditions]
\label{BCLemma}
    If $U : \R_{+}^{d+1} \times \pr{0, T} \to \R$ satisfies the parabolic boundary conditions on $\R^{d+1}_+ \times \pr{0, T}$, then each $V_n : \BB{D}^{dn+1}_{\sqrt{MT}} \to \R$ defined by $V_n \pr{Y} = U\pr{F_{n}\pr{Y}} = U\pr{X, t}$ satisfies the elliptic boundary conditions on $\BB{D}^{dn+1}_{\sqrt{MT}}$ and the type is preserved.
\end{lem}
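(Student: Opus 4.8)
The plan is to trace each of the two defining conditions from Definition \ref{pBC} through the map $F_n$ and verify that the corresponding condition from Definition \ref{eBC} holds for $V_n$, together with the boundedness requirement on $y_0^a \del_{y_0} V_n$. The key computational input is the Extension Chain Rule Lemma \ref{ChainRuleLem}, in particular the identities \eqref{Vy0Deriv} and \eqref{VyijDeriv}. First I would fix the geometry: if $Y = (y_0, y) \in \BB{D}^{dn+1}_{\sqrt{MT}}$ with $y_0 > 0$ small, then $F_n(Y) = (X, t)$ has $x_0 = \sqrt{n}\, y_0$ also small (with $n$ fixed), and $(x,t)$ ranges over the appropriate region of $\R^d \times (0,T)$ by \eqref{FnImage}; conversely every $(x,t)$ in the relevant parabolic region is hit, so a pointwise condition ``for every $(x,t)$'' transfers to a pointwise condition ``for every $y$.'' This lets me choose the elliptic $\eps_0$ in terms of the parabolic one (divided by $\sqrt n$).

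Next I would handle the boundedness of $y_0^a \del_{y_0} V_n$. From \eqref{Vy0Deriv}, $y_0^a \del_{y_0} V_n = \sqrt{n}\, y_0^a \del_{x_0} U + \frac{2 y_0^{1+a}}{M} \del_t U = n^{(1-a)/2} (x_0^a \del_{x_0} U) + \frac{2}{M} n^{-(1+a)/2} (x_0^{1+a} \del_t U)$. Both $x_0^a \del_{x_0} U$ and $x_0^{1+a} \del_t U$ are bounded on $(0,\eps_0) \times \R^d \times (0,T)$ by the parabolic boundary conditions, so $y_0^a \del_{y_0} V_n$ is bounded on the corresponding elliptic region. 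This simultaneously shows that $\del_\nu^a V_n(0,y) = \lim_{y_0 \to 0^+} y_0^a \del_{y_0} V_n = n^{(1-a)/2} \del_\nu^a U(0,x,t) + \frac{2}{M} n^{-(1+a)/2} \lim_{x_0 \to 0^+} x_0^{1+a} \del_t U$, where the second limit vanishes because $x_0^{1+a} = x_0 \cdot x_0^{a}$ and $x_0^a \del_{x_0} U$ bounded forces, via integration, control that sends $x_0^{1+a}\del_t U \to 0$ — or more simply, $x_0^{1+a}\del_t U$ bounded and the factor $n^{-(1+a)/2}$ finite means the relevant term in $\del_\nu^a V_n$ is $n^{(1-a)/2} \del_\nu^a U$ up to the vanishing boundary term; in case (I) this is $0$. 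For type I this completes that case: $\del_\nu^a V_n(0,y) = 0$.

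For case (II), I would use \eqref{VyijDeriv} and the definition $V_n(Y) = U(F_n(Y))$. Directly, $\lim_{y_0 \to 0^+} V_n(y_0, y) = \lim_{x_0 \to 0^+} U(x_0, x, t) = 0$ since $x_0 = \sqrt n\, y_0 \to 0$. For the gradient in $y$: by \eqref{VyijDeriv}, $\del_{y_{i,j}} V_n = \del_{x_i} U + \frac{2 y_{i,j}}{M} \del_t U$. As $y_0 \to 0^+$, each $\del_{x_i} U \to 0$ by hypothesis (II) for $U$, and $\del_t U = x_0^{-(1+a)} \cdot (x_0^{1+a}\del_t U) = (\sqrt n\, y_0)^{-(1+a)}(x_0^{1+a}\del_t U)$ blows up like $y_0^{-(1+a)}$ at worst, while $y_{i,j}$ stays bounded on the half-ball; since $a \in (-1,1)$ we have $1+a \in (0,2)$, so this term does \emph{not} obviously vanish — this is the main obstacle. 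To resolve it I would argue that hypothesis (II) for $U$ must be interpreted together with the boundedness of $x_0^{1+a}\del_t U$ and $x_0^a \del_{x_0} U$, and that, since $\gr_{(x,t)} U \to 0$ includes $\del_t U \to 0$ as $x_0 \to 0^+$ (the limit in (II) is of $\gr_{(x,t)}U$, which contains the $\del_t$ component), the term $\frac{2y_{i,j}}{M}\del_t U \to 0$ as well. Thus $\gr_y V_n \to 0$, giving condition (II) for $V_n$, and the type is preserved. I would then state that since each defining alternative (I)/(II) for $U$ at a point $(x,t)$ produces the same alternative for $V_n$ at the preimage point $y$, and since $F_n$ surjects onto the relevant region, the global type-I/type-II designation is inherited.

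The step I expect to be the main obstacle is exactly the vanishing of $\frac{2y_{i,j}}{M}\del_t U$ in the gradient computation for case (II): one must be careful about which derivatives of $U$ are assumed to vanish at the thin set versus merely assumed bounded (after weighting by a power of $x_0$). The clean resolution is that Definition \ref{pBC}(II) explicitly requires $\lim_{x_0 \to 0^+}\gr_{(x,t)}U(x_0,x,t) = 0$, and the time derivative $\del_t U$ is one component of $\gr_{(x,t)}U$, so it vanishes in the limit; combined with $y_{i,j}$ and $y_0$ bounded on $\BB{D}^{dn+1}_{\sqrt{MT}}$ and $x_0 = \sqrt n y_0 \to 0$, every extra term produced by the chain rule in \eqref{Vy0Deriv} and \eqref{VyijDeriv} vanishes, and we land precisely in Definition \ref{eBC}(II).
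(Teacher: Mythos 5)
Your proof follows the same route as the paper's: carry Definition~\ref{pBC} through the chain-rule identities \eqref{Vy0Deriv} and \eqref{VyijDeriv} to land on Definition~\ref{eBC}. The boundedness computation for $y_0^a\del_{y_0}V_n$ is correct (your coefficient $\tfrac{2}{M}n^{-(1+a)/2}$ equals $\tfrac{n^{(1-a)/2}}{dn+1+a}$, matching the paper), and your handling of type II is right: $\del_t U$ is a component of $\gr_{(x,t)}U$, so it vanishes under (II) and kills the $\tfrac{2y_{i,j}}{M}\del_t U$ term in $\gr_y V_n$; that is exactly the resolution the paper leaves implicit, and you explain it more clearly than the paper does.

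The one place you need to be more careful is the type I computation. Passing to the limit $y_0\to 0^+$ gives
\begin{equation*}
\del_\nu^a V_n(0,y) = n^{\frac{1-a}{2}}\brac{\del_\nu^a U(0,x,t) + \frac{1}{dn+1+a}\lim_{x_0\to 0^+}x_0^{1+a}\del_t U(x_0,x,t)},
\end{equation*}
so in case (I) you need $\lim_{x_0\to 0^+}x_0^{1+a}\del_t U=0$, and both justifications you sketch are invalid as written. The ``via integration'' claim relating the bound on $x_0^a\del_{x_0}U$ to decay of $x_0^{1+a}\del_t U$ has no basis, since those control unrelated derivatives; and ``$x_0^{1+a}\del_t U$ bounded times the finite factor $n^{-(1+a)/2}$'' is merely bounded, not zero. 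Definition~\ref{pBC} asserts only that $x_0^{1+a}\del_t U$ is bounded near $\set{x_0=0}$, which by itself does not force the limit to vanish. The paper's own proof asserts ``$\del_\nu^a U(0,x,t)=0$ implies $\del_\nu^a V_n(0,y)=0$'' without addressing this term either; the intended reading is evidently that $\del_t U$ itself remains bounded up to $\set{x_0=0}$ (true for the regular solutions in the applications), whence $x_0^{1+a}\del_t U\to 0$ because $1+a>0$, or equivalently that the boundedness clause should be read as decay to zero. You should make that hypothesis explicit rather than burying it in a heuristic, exactly as you successfully did for the parallel issue in case (II).
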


\begin{proof}
From \eqref{Vy0Deriv} with \eqref{MDef} and \eqref{x_0Def}, we see that
\begin{equation*}
    \begin{aligned}
    y_0^{a}\del_{y_0} V_n\pr{y_0, y}
    &= \pr{\frac {x_0} {\sqrt n}}^{a} \brac{\sqrt n \del_{x_0} U\pr{x_0, x,t} + \frac{2 x_{0}}{M \sqrt n} \del_t U\pr{x_0, x,t} } 
    = n^{\frac{1-a}{2}} \brac{x_0^{a} \del_{x_0} U
    + \frac { x_{0}^{1 + a}}{\pr{dn + 1+a}} \del_t U}.    
    \end{aligned}
\end{equation*}
Therefore, if $x_0^a \del_{x_0} U(x_0, x, t)$ and $x_0^{1+a} \del_{t} U(x_0, x, t)$ are bounded whenever $\pr{x_0, x, t} \in \pr{0, \eps_0} \times \R^d \times (0, T)$, 
then $y_0^a \del_{y_0} V_n(y_0, y)$ is bounded whenever $\pr{y_0, y} \in  \mathbb{D}_{\sqrt{MT}}\cap \set{x_0 \in \pr{0, \frac{\eps_0}{\sqrt n }}}$.
Moreover, $\del^a_\nu U(0, x, t) = 0$ implies that $\del^a_\nu V_n(0, y) = 0$, showing that if $U$ has parabolic boundary conditions of type I, then each $V_n$ has elliptic boundary conditions of type I.
An analogous statement for boundary conditions of type II follows from the definition of $V_n$ along with \eqref{Vy0Deriv} and \eqref{VyijDeriv}.
\end{proof}

Combining the previous two results leads to statements about the fractional operators.

\begin{cor}[Fractional Chain Rule Result]
Given $n \in \N$ and a function $u : \R^d \times \pr{0, T} \to \R$, there exists  
$V_n : \BB{D}^{dn+1}_{\sqrt{M T}} \to \R$ 
that solves the nonhomogeneous extension problem \eqref{VnExtProp} below and satisfies the following:
\begin{itemize}
    \item[(I)] If $\pr{-\del_t - \LP}^s u(x,t) = 0$, then $\disp \del_\nu^a V_n(0,y) = 0$.
    \item[(II)] If $u(x,t) = 0$ and $\gr_{(x,t)} u(x,t) = 0$, then $\disp \lim_{y_0 \to 0^+} V_n(y_0, y) = 0$ and $\disp \lim_{y_0 \to 0^+} \gr_y V_n(y_0, y) = 0$.
\end{itemize}
\end{cor}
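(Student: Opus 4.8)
\emph{Proof proposal.}
The plan is to obtain $V_n$ from $u$ in two stages — first pass to a space-time solution of the degenerate backward parabolic equation via the extension problem, then compose with $F_n$ — and then to read off both conclusions from the two lemmas already in hand, namely the Extension Chain Rule Lemma (Lemma~\ref{ChainRuleLem}) and the Inheritance of Boundary Conditions (Lemma~\ref{BCLemma}).

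First I would let $U = U(x_0,x,t)$ be the solution of the backward extension problem \eqref{BPExtProb} with boundary data $u$, given explicitly by the backward Poisson formula preceding \eqref{BPExtProb}. By construction $U$ solves $\LP_x U + \frac{a}{x_0}\del_{x_0}U + \del_{x_0}^2 U + \del_t U = 0$ in $\R^{d+1}_+ \times (0,T)$ with $\lim_{x_0 \to 0^+} U(x_0,x,t) = u(x,t)$. Define $V_n(Y) = U(F_n(Y))$ on $\BB{D}^{dn+1}_{\sqrt{MT}}$, with $F_n$ as in \eqref{FnDef}. Then Lemma~\ref{ChainRuleLem} applies directly: $V_n$ solves the nonhomogeneous degenerate elliptic equation \eqref{VnKnEllipEqn} with error term $K_n$ as in \eqref{KnDef}. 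Since $F_n$ sends $(y_0,y)$ with $y_0 \to 0^+$ to $(x_0,x,t)$ with $x_0 = \sqrt n\, y_0 \to 0^+$, $x = f_n(y)$, and $t = \tfrac{y_0^2 + \abs{y}^2}{M} \to \tfrac{\abs{y}^2}{M}$, the boundary trace of $V_n$ is $\lim_{y_0 \to 0^+} V_n(y_0,y) = u\!\pr{f_n(y), \tfrac{\abs{y}^2}{M}}$; together with \eqref{VnKnEllipEqn} this is precisely the nonhomogeneous extension problem \eqref{VnExtProp}.

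For (I): if $\pr{-\del_t - \LP}^s u(x,t) = 0$, then the extension identity \eqref{BPExt} forces $\del_\nu^a U(0,x,t) = 0$ for every $(x,t)$, with $\del_\nu^a U$ the flat derivative of \eqref{flatDerivU}, so $U$ satisfies the parabolic boundary conditions of type I of Definition~\ref{pBC}. By Lemma~\ref{BCLemma}, $V_n$ inherits the elliptic boundary conditions of type I of Definition~\ref{eBC}, which is exactly $\del_\nu^a V_n(0,y) = 0$. For (II): if $u(x,t) = 0$ and $\gr_{(x,t)} u(x,t) = 0$, then the boundary condition in \eqref{BPExtProb} gives $\lim_{x_0 \to 0^+} U(x_0,x,t) = 0$, and differentiating the Poisson representation tangentially (equivalently, by the boundary regularity of the extension) $\lim_{x_0 \to 0^+} \gr_{(x,t)} U(x_0,x,t) = \gr_{(x,t)} u(x,t) = 0$; hence $U$ satisfies the parabolic boundary conditions of type II. Lemma~\ref{BCLemma} then yields that $V_n$ satisfies the elliptic boundary conditions of type II, i.e.\ $\lim_{y_0 \to 0^+} V_n(y_0,y) = 0$ and $\lim_{y_0 \to 0^+} \gr_y V_n(y_0,y) = 0$.

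The only genuine work — and the step I expect to be the main obstacle — is checking that the Poisson extension $U$ actually qualifies as a function satisfying the parabolic boundary conditions of Definition~\ref{pBC}: one must verify that $x_0^a \del_{x_0} U$ and $x_0^{1+a} \del_t U$ are bounded near the thin set $\set{x_0 = 0}$, and, for part (II), that the full space-time gradient of $U$ extends continuously up to $x_0 = 0$ with the asserted limit. These are standard consequences of the regularity theory for the degenerate operator $L_a$ (the weight $x_0^a$ lies in the Muckenhoupt class $A_2$) together with the decay of the backward Poisson kernel, but they either require mild a priori hypotheses on $u$ or an appeal to that regularity theory. Once granted, the corollary is just the composition of Lemmas~\ref{ChainRuleLem} and~\ref{BCLemma} with the extension relations \eqref{BPExtProb} and \eqref{BPExt}.
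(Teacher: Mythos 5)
Your proposal is correct and follows the paper's proof essentially verbatim: extend $u$ to $U$ via the backward Poisson formula, set $V_n = U \circ F_n$, invoke Lemma~\ref{ChainRuleLem} for the PDE \eqref{VnExtProp}, and then invoke \eqref{BPExt} and Lemma~\ref{BCLemma} for the boundary conditions in cases (I) and (II). Your closing remark that one must also verify the boundedness of $x_0^a\del_{x_0}U$ and $x_0^{1+a}\del_t U$ near the thin set (the quantitative part of Definition~\ref{pBC} needed to apply Lemma~\ref{BCLemma}) is a fair observation — the paper's own proof elides this and tacitly leans on the regularity of the Poisson extension, so flagging it as an implicit hypothesis is the right instinct.
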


\begin{proof}
Given $u: \R^d \times \pr{0, T} \to \R$, let $U : \R^{d+1}_+ \times \pr{0, T} \to \R$ be the solution to \eqref{BPExtProb}. 
Define $V_n : \BB{D}^{dn+1}_{\sqrt{M T}} \to \R$ so that $V_n \pr{Y} = U\pr{F_{n}\pr{Y}} = U\pr{X, t}$, then set $\disp v_n\pr{y} = \lim_{y_0 \to 0^+} V_n\pr{y_0, y}$.
As $U$ is a solution to \eqref{BPExtProb}, then it follows from \eqref{VnKnEllipEqn} in Lemma \ref{ChainRuleLem} that each $V_n$ satisfies the following nonhomogeneous elliptic extension problem:
\begin{equation}
\left\{\begin{array}{ll}  
\LP_y V_n + \frac{a}{y_0} \del_{y_0}{V_n} + \del^2_{y_0}{V_n} = K_n & Y \in \BB{D}^{dn+1}_{\sqrt{M T}} \\ \\
\disp \lim_{y_0 \to 0^+} V_n\pr{y_0, y} = v_n\pr{y} & y \in \BB{B}^{dn}_{\sqrt{M T}},
\end{array}  \right.
\label{VnExtProp}
\end{equation}
where $K_n$ is defined in \eqref{KnDef}.

If $\pr{-\del_t - \LP}^s u(x,t) = 0$, then \eqref{BPExt} shows that $\del_\nu^a U(0,x, t) = 0$, so the conclusion of $(I)$ follows from Lemma \ref{BCLemma}.

If $u(x,t) = 0$ and $\gr_{(x,t)} u(x,t) = 0$, then by definition $\disp \lim_{x_0 \to 0^+} U(x_0, x, t) = 0$ and $\disp \lim_{x_0 \to 0^+} \gr_{(x,t)} U(x_0, x,t) = 0$, so the conclusion of $(II)$ follows from Lemma \ref{BCLemma} as well.
\end{proof}

In summary, we have constructed sequences of maps $f_{n}$ and $F_{n}$, given in \eqref{fnDef} and \eqref{FnDef}, respectively, which serve as the connection between the elliptic and parabolic settings.
The following table describes these relationships and the notation that we use to describe our elliptic and parabolic settings.
Note that $F_{n}$ is the connection between the elliptic column and the parabolic column.

$$
\begin{array}{l|ll}
    & \textrm{Elliptic} & \textrm{Parabolic} \\
    \hline \hline 
    \textrm{Space} & \textrm{high-dimensional space} & \textrm{space-time} \\
    \textrm{Elements} & y \in \R^{dn} & (x,t)\in\R^{d} \times \R_+ \\
     \textrm{Functions} & v_n = v_n(y) & u = u\pr{x,t} \\
     \textrm{Extended elements} & Y=(y_0,y) \in \R^{d n+1}_+ & \pr{X, t} =\pr{x_0,x, t} \in \R^{d+1}_+ \times \R_+ \\
    \textrm{Extended functions} & V_n=V_n(y_0,y) & U=U(x_0,x,t)\\
    \textrm{Nonlocal operators} & (- \LP)^s &  (- \del_t - \LP)^s \\
    \textrm{Degenerate PDEs} & \LP_y V_n + \frac{a}{y_0} \del_{y_0}{V_n} + \del^2_{y_0}{V_n} = K_n & \LP_x U + \frac{a}{x_0} \del_{x_0} U + \del_{x_0}^2 U + \del_t U = 0\\
    \textrm{Type I boundary conditions} & \del_\nu^a V_n(0, y) = 0 & \del_\nu^a U(0, x, t) = 0 \\
    \textrm{Type II boundary conditions} & V_n(0, y) = 0, \gr_y V_n(0, y) = 0 & U(0, x, t) = 0, \gr_{(x,t)} U(0, x, t) = 0 \\
     \textrm{Sets} 
& \textrm{Half-spheres: } \BB{H}^{dn}_{\sqrt{Mt}} & \textrm{Half-balls in a time slice: } \widetilde{\BB{D}}^{d+1}_{\sqrt{Mnt}} \times \set{t} \\
& \textrm{Half-balls: } \widetilde{\BB{D}}^{dn+1}_{\sqrt{M T}}
    & \textrm{Space-time half-cones: }  \disp \bigcup_{0 \le t \le T} \widetilde{\BB{D}}^{d+1}_{\sqrt{Mnt}} \times \set{t}
\end{array}
$$

\section{Pushforward Results}
\label{S:PushfR}

Here we examine how integrals of functions $\phi(X,t)$ defined in space-time $\R^{d+1}_+ \times \R_+$ can be related to integrals of $\phi(F_{n}(Y))$ in the high-dimensional space $\R^{dn+1}_+$.
We start from a probabilistic viewpoint.
That is, if $Y \in \R^{dn+1}_+$ is distributed over a fixed half-sphere, we seek to determine the probability distribution of $X \in \R^{d+1}_+$, where $(X,t) = F_{n}(Y)$.
As we show below, understanding this probability distribution on the parabolic side reduces to a pushforward computation which we carry out explicitly.
We then collect the consequences that will be used in our elliptic-to-parabolic proofs.

Recall that $a \in (-1,1)$.
In contrast to our previous work, \cites{Dav18, DSVG24}, we no longer assume that the elements $Y$ are uniformly distributed over the (half)-sphere.
As in \cite{CS07}, for example, we adapt the viewpoint that we are working in $dn + 1 + a$ dimensions, even though $1+a$ may not be an integer.
More specifically, $y_0$ and $x_0$ are $1$-dimensional objects that ``act like" $\pr{1+a}$-dimensional objects in the measures that we put on our spheres.

Let $\si^{N-1}_r$ denote the canonical surface measure of the sphere $\BB{S}^{N-1}_r$. 
Note that $\abs{\BB{S}^{N-1}_r} = \aSp{N-1}r^{N-1}$, where $\Sp{N-1} = \BB{S}^{N-1}_1$ denotes the unit sphere in $\R^{N}$ and
\begin{equation}
\label{sphereSize}
\aSp{N-1} = \frac{2 \pi^{\frac N 2}}{\Ga\pr{\frac N 2}}.
\end{equation}
For the half-sphere $\BB{H}^{N}_{r}$ of radius $r$ in $\R^{N+1}_+$,
 let $\eta^N_r$ denote its canonical surface measure and note that $\abs{\BB{H}^{N}_{r}} = \frac 1 2 \aSp{N} r^N$.
Define the function $\rho_r(y_0) = \sqrt{r^2 - y_0^2}$ and observe that
\begin{align*}
\eta^N_r(y_0, y)
&= \si^{N-1}_{\rho_r(y_0)}(y) \sqrt{1 + \brac{\rho_r'(y_0)}^2} \chi_{\brac{0, r}}(y_0) 
= \si^{N-1}_{\rho_r(y_0)}(y) \frac{r}{\rho_r(y_0)} \chi_{\brac{0, r}}(y_0).
\end{align*}
By integrating in $y_0$, we see that 
\begin{align*}
\abs{\BB{H}^{N}_{r}}
&= \int_0^r \sigma^{N-1}_{\rho_r(y_0)} \frac{r}{\rho_r(y_0)}  dy_0
= r \aSp{N-1} \int_0^r \rho_r(y_0)^{N-2} dy_0
= \frac 1 2 \frac{2 \pi^{\frac{N+1} 2}}{\Ga\pr{\frac{N+1}{2}}} r^N
= \frac{1}{2} \aSp{N} r^N,
\end{align*}
as expected.
It follows that the normalized measure $\disp 
\frac{2}{\aSp{N} r^N} \eta^N_r$ defines the canonical probability measure on $\BB{H}^{N}_{r}$.

For $a \in (-1,1)$, we define a modified, non-uniform measure on $\BB{H}^{N}_{r}$ in which $y_0$ acts like a $(1+a)$-dimensional object by introducing a weight and setting
\begin{equation}
\label{etaNsDef}
\begin{aligned}
\eta^{N, a}_r(y_0, y)
&= \si^{N-1}_{\rho_r(y_0)}(y) \sqrt{1 + \brac{\rho_r'(y_0)}^2} y_0^{a} \, \chi_{\brac{0, r}}(y_0) 
= \si^{N-1}_{\rho_r(y_0)}(y) \frac{r}{\rho_r(y_0)} y_0^{a} \, \chi_{\brac{0, r}}(y_0).
\end{aligned}    
\end{equation}
This measure is a reweighting of the canonical surface measure on $\BB{H}^{N}_{r}$. 
In particular, when $a = 0$, this measure reduces to the canonical surface measure on $\BB{H}^{N}_{r}$, i.e. $\eta_r^{N, 0} = \eta_r^N$.
Integrating $\eta^{N, a}_r$ in $y_0$ produces a weighted measure for $\BB{H}^{N}_{r}$; that is, 
\begin{align*}
\abs{\BB{H}^{N}_{r}}_a
&:= \int_0^r \sigma_{\rho_r(y_0)}^{N-1} \frac{r}{\rho_r(y_0)} y_0^{a} \, dy_0
= \frac 1 {2\ga_a} \aSp{N+a} r^{N+a},
\end{align*}
where we have introduced the constant
\begin{equation}
\label{gasDefn}
  \ga_a = \frac{\pi^{\frac {1+a} 2}}{\Ga\pr{\frac{1+a}{2}}}.  
\end{equation}

Fix $d \in \N$ and $a \in (-1,1)$.
We use the measures $\eta^{dn, a}_r$ to understand the probability law for the random walks described by the points $Y = (y_0, y_{1,1}, \ldots, y_{1,n}, \ldots, y_{d,1}, \ldots, y_{d,n})$.
Assume that the vectors $Y$ are distributed over the $dn$-dimensional half-sphere of radius $\sqrt{M t}$, where for each fixed $y_0$, the measure is uniform in the $y$-variables, but the measure is weighted in $y_0$ to account for its $(1+a)$-dimensional type behavior.
That is, when the surface measure is normalized to have total measure equal to $1$, then this surface measure, $\mu^{n}_t$, is given by
\begin{equation}
\label{muDefn}
\mu^{n}_t
:= \frac{2\ga_{a} }{\aSp{dn+a} \pr{Mt}^{\frac{dn +a}{2}} } \eta^{dn, a}_{\sqrt{Mt}},
\end{equation}
where $\eta^{N, a}_{r}$, as defined in \eqref{etaNsDef}, denotes the weighted surface measure on the half-sphere in $\R^{N+1}_+$ of radius $r$.
If $a = 0$, then we are reduced to the normalized canonical surface measure for a half-sphere in $\R^{dn + 1}_+$.

Recall the mapping $f_{n}: \R^{dn+1}_+ \to \R^{d+1}_+$ from \eqref{fnDef}.
We compute the push-forward of $\mu^{n}_{t}$ by $f_{n}$ to get a measure on $\R^{d+1}_+ \times \set{t}$ denoted by $\nu^{n}_{t} = f_{n} \# \mu^{n}_{t}$.  
To start, we introduce a related map $\tilde f_n : \R^{dn+1}_+ \to \R^{d+1}_+$ that is given by
\begin{align*}
\tilde f_{n}\pr{y_0, y_{1,1}, y_{1,2}, \ldots, y_{1,n}, \ldots, y_{d,1}, y_{d,2}, \ldots, y_{d,n}}
&= \pr{\sqrt n \, y_0, \sqrt{n}\, y_{1,1}, \sqrt{n} \, y_{2,1}, \ldots, \sqrt{n} \, y_{d,1}}.
\end{align*}
Let $B$ be an $n \times n$ orthogonal matrix with the property that $b_{j, 1} = b_{n, \ell} = \frac 1 {\sqrt n}$ for all $j, \ell \in \set{1, \ldots, n}$.
Orthogonality shows that $\disp \sum_{j=1}^n b_{j, \ell} = \begin{cases} \sqrt n & \ell = 1 \\ 0 & \text{otherwise}\end{cases}$.
Writing $y^i = \pr{y_{i, 1}, y_{i, 2}, \ldots, y_{i,n}} \in \R^n$ for each $i \in \set{1, \ldots, d}$, observe that
\begin{align*}
\sum_{j = 1}^n (B y^i)_j
=\sum_{j = 1}^n \sum_{\ell = 1}^n b_{j, \ell} y_{i,\ell}
&= \sum_{\ell = 1}^n y_{i,\ell} \sum_{j = 1}^n b_{j, \ell} 
= \sum_{\ell = 1}^n y_{i,\ell} \sqrt n \de_{\ell, 1}
= \sqrt n y_{i,1} 
\end{align*}
from which it follows that
\begin{align*}
f_n\pr{y_0, B y^1, B y^2, \ldots, B y^d}
&= \pr{\sqrt n y_0, \sum_{j = 1}^n (B y^1)_j, \sum_{j = 1}^n (B y^2)_j, \ldots, \sum_{j = 1}^n (B y^d)_j}
= \tilde f_n(y_0, y^1, y^2, \ldots, y^d).
\end{align*}
In particular,

since the two maps $f_n$ and $\tilde f_n$ are related by an orthogonal transformation that leaves the measure unchanged, we choose to work with the simpler map, $\tilde f_n$, in our pushforward computation. 
Therefore, we write $x_i = \sqrt{n} \, y_{i,1}$ and $x_0 = \sqrt n \, y_0$ in what follows.  
The push-forward is computed in two steps.  
First, we push-forward the measure $\mu^{n}_{t}$ by the projection defined by 
$$\pi^0_d\pr{y_0, y_{1,1}, y_{1,2}, \ldots, y_{1,n}, \ldots, y_{d,1}, y_{d,2}, \ldots, y_{d,n}} =  \pr{y_0, y_{1,1}, y_{2,1}, \ldots, y_{d,1}},$$ 
then we dilate by a factor of $\sqrt{n}$.

As an intermediate step, consider the pushforward of the measure $\si^{dn-1}_r$ under the projection $\pi_d : \R^{dn} \to \R^d$ defined by 
$$\pi_d\pr{y_{1,1}, y_{1,2}, \ldots, y_{1,n}, \ldots, y_{d,1}, y_{d,2}, \ldots, y_{d,n}} = y^1 := \pr{ y_{1,1}, y_{2,1}, \ldots, y_{d,1}}.$$
Generalizing the notation from above, set $\rho_r(y^1) = \sqrt{r^2 - \abs{y^1}^2}$ so that 
$\disp \sqrt{1 + \abs{\gr \rho_r(y^1)}^2} = \frac{r}{\rho_r(y^1)}$.
The pushforward with this projection produces a measure with support in a ball $\BB{B}_r^d$ which is defined as
$$\pr{\pi_{d} \# \si^{dn-1}_r}(y^1) 
= \si^{dn - d-1}_{\rho_r(y^1)} \frac{r}{\rho_r(y^1)} \chi_{\BB{B}_r}(y^1) 
= r \aSp{dn-d-1} \pr{r^2 - \abs{y^1}^2}^{\frac{dn-d-2} 2} \chi_{\BB{B}_r}(y^1).$$
Recalling \eqref{muDefn} and \eqref{etaNsDef}, it follows that
\begin{align*}
\pi_d^0\# \mu^{n}_{t}(y_0,y)
&= \pi_d^0\# \pr{\frac{2\ga_{a} }{\aSp{dn+a} \pr{Mt}^{\frac{dn +a}{2}} }\eta^{dn, a}_{\sqrt{Mt}}} 
= \pi_d^0\# \pr{\frac{2\ga_{a} \sqrt{Mt} }{\aSp{dn+a} \pr{Mt}^{\frac{dn+a}{2}} }\frac{\si^{dn-1}_{\rho_{\sqrt{Mt}}(y_0)}(y)}{\rho_{\sqrt{Mt}}(y_0)} y_0^{a} \, \chi_{\brac{0, \sqrt{Mt}}}(y_0)} \\
&= \frac{2\ga_{a} \aSp{dn-d-1}}{\aSp{dn+a} \pr{Mt}^{\frac{d +1+ a}{2}} } \pr{1 - \frac{y_0^2 + \abs{y^1}^2}{Mt}}^{\frac{dn-d-2} 2} y_0^{a} \, \chi_{\brac{0, \sqrt{Mt}}}(y_0)  \chi_{\BB{B}^d_{\sqrt{Mt - y_0^2}}}(y^1),
\end{align*}
which is also a probability measure.

Using that $x_0 = \sqrt n y_0$ and $x_i = \sqrt{n} \; y_{i,1}$, the resulting measure $\nu^{n}_{t} = f_{n} \# \mu^{n}_{t}$ is given by
\begin{align}
d \nu^{n}_{t} (x_0, x)
&=\frac{2\ga_{a} \aSp{dn-d-1}}{\aSp{dn+a} \pr{Mnt}^{\frac{d + 1+a}{2}}} \pr{1 - \frac{x_0^2 + \abs{x}^2}{M n t}}^{\frac{dn-d-2} 2} x_0^{a} \, \chi_{ \BB{D}^{d+1}_{\sqrt{Mnt}}}(x_0, x) \, dx_0 \, dx,
\label{nundDefn}
\end{align}
where $d{x} = d{x}_1 d{x}_2 \ldots d{x}_d$.

The following lemma shows that the function appearing in the measure $\nu^{n}_{t}$ described by \eqref{nundDefn} is a weighted approximation to the fundamental solution for the degenerate parabolic equation.

\begin{lem}[Gaussian approximation]
\label{GaussianLimit}
For some $d \in \N$ and $a \in (-1,1)$, define $\mathcal{G}_n : \R^{d+1}_+ \times \R_+ \to \R$ and $\mathcal{G} : \R^{d+1}_+ \times \R_+\to \R$ by
\begin{equation}
\label{GsdnDefn} 
\begin{aligned}
  & \mathcal{G}_n\pr{X, t} 
  := \mathcal{C}_n t^{-\frac{d + 1+a}{2}} \pr{1 - \frac{\abs{X}^2}{M n t}}^{\frac{dn-d-2} 2}  \chi_{\BB{D}^{d+1}_{\sqrt{Mnt}}}(X)  \\
  & \mathcal{G}\pr{X, t} 
  := \mathcal{C} t^{-\frac{d + 1+a}{2}} \exp\pr{ - \frac{\abs{X}^2}{4 t}},
\end{aligned}
\end{equation}
where 
\begin{equation*}
\begin{aligned}
\mathcal{C}_n 
&= \frac{2 \ga_a \aSp{dn-d-1}}{\aSp{dn+a} \pr{Mn}^{\frac{d + 1+a}{2}}}, \quad
\mathcal{C}
= \frac{\pr{4 \pi}^{- \frac d 2}}{2^{a} \Ga\pr{\frac{1+a}{2}}}.
\end{aligned}
\end{equation*}
Given any $t_0 > 0$, the sequence $\set{\mathcal{G}_n}_{n=1}^\iny$ converges uniformly to $\mathcal{G}$ on $\R^{d+1}_+ \times \set{t \ge t_0}$.
Moreover, there exists a constant $C_0 = C_0\pr{d,a} > 0$ so that for every $(X, t) \in \R^{d+1}_+ \times \R_+$, we have
\begin{align}
\label{pointwiseGBound}
    \mathcal{G}_n(X,t)
    &\le C_0 \mathcal{G}(X,t).
\end{align}
\end{lem}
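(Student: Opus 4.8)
The plan is to prove the two claims of Lemma \ref{GaussianLimit} by reducing everything to the elementary limit $\pr{1 - \frac{z}{n}}^n \to e^{-z}$, keeping careful track of the dimension-dependent constants $\mathcal{C}_n$ via Stirling's formula. First I would analyze the constant $\mathcal{C}_n = \frac{2\ga_a \aSp{dn-d-1}}{\aSp{dn+a}\pr{Mn}^{\frac{d+1+a}{2}}}$. Using \eqref{sphereSize}, $\aSp{N-1} = \frac{2\pi^{N/2}}{\Ga(N/2)}$, so the ratio $\frac{\aSp{dn-d-1}}{\aSp{dn+a}} = \pi^{-\frac{d+1+a}{2}} \frac{\Ga\pr{\frac{dn+1+a}{2}}}{\Ga\pr{\frac{dn-d}{2}}}$. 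Since $M = Mn \cdot \frac{1}{n} \cdot n$ — more precisely $Mn = 2dn + 2(1+a)$ — I would write $Mn = 2\pr{\frac{dn+1+a}{2} \cdot 2} = \ldots$; the point is that $Mn \approx 2dn$ for large $n$, and $\frac{dn+1+a}{2} - \frac{dn-d}{2} = \frac{d+1+a}{2}$. By Stirling, $\frac{\Ga(w + c)}{\Ga(w)} \sim w^c$ as $w \to \iny$, so $\frac{\Ga\pr{\frac{dn-d}{2} + \frac{d+1+a}{2}}}{\Ga\pr{\frac{dn-d}{2}}} \sim \pr{\frac{dn-d}{2}}^{\frac{d+1+a}{2}} \sim \pr{\frac{dn}{2}}^{\frac{d+1+a}{2}}$. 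Combining with the $\pr{Mn}^{-\frac{d+1+a}{2}} \sim \pr{2dn}^{-\frac{d+1+a}{2}}$ factor shows $\mathcal{C}_n \to \frac{2\ga_a}{\pi^{\frac{d+1+a}{2}}} \cdot \pr{\frac{dn}{2}}^{\frac{d+1+a}{2}} \cdot \pr{2dn}^{-\frac{d+1+a}{2}} = \frac{2\ga_a}{\pi^{\frac{d+1+a}{2}}} \cdot 4^{-\frac{d+1+a}{2}} = \frac{2\ga_a}{(4\pi)^{\frac{d+1+a}{2}} \cdot 1}$. Plugging in $\ga_a = \frac{\pi^{\frac{1+a}{2}}}{\Ga\pr{\frac{1+a}{2}}}$ from \eqref{gasDefn}, one checks this equals $\mathcal{C} = \frac{(4\pi)^{-d/2}}{2^a \Ga\pr{\frac{1+a}{2}}}$. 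So $\mathcal{C}_n \to \mathcal{C}$.

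Next, for the uniform convergence on $\R^{d+1}_+ \times \set{t \ge t_0}$, I would fix $t \ge t_0$ and set $z = \frac{\abs{X}^2}{t} \ge 0$; then $\mathcal{G}_n\pr{X,t} = \mathcal{C}_n t^{-\frac{d+1+a}{2}} \pr{1 - \frac{z}{Mn}}_+^{\frac{dn-d-2}{2}}$ and $\mathcal{G}\pr{X,t} = \mathcal{C} t^{-\frac{d+1+a}{2}} e^{-z/4}$, where $(\cdot)_+$ denotes the positive part (the characteristic function enforces $z \le Mn$). Since $\frac{dn-d-2}{2} \cdot \frac{1}{Mn} = \frac{dn-d-2}{2(2dn+2+2a)} \to \frac{1}{4}$, the standard estimate $\pr{1 - \frac{z}{Mn}}_+^{\frac{dn-d-2}{2}} \to e^{-z/4}$ holds, and it holds uniformly on $z \in [0, \iny)$: on a compact set $z \in [0, R]$ this is Taylor expansion of $\log$, and for large $z$ both sides are small (the left side vanishes for $z > Mn$, and one has the bound $\pr{1-\frac{z}{Mn}}_+^{\frac{dn-d-2}{2}} \le e^{-\frac{(dn-d-2)}{2Mn}z} \le e^{-cz}$ for an absolute $c > 0$ once $n$ is large, so both sides are $\le e^{-c z}$ and hence uniformly small for $z \ge R$ with $R$ large). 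Multiplying by $t^{-\frac{d+1+a}{2}} \le t_0^{-\frac{d+1+a}{2}}$ and combining with $\mathcal{C}_n \to \mathcal{C}$ gives the uniform convergence. The precise bookkeeping to get uniformity in $z$ across the full half-line — splitting into a compact piece and a tail, and handling the degenerate case where $z$ is comparable to $Mn$ — is the part requiring the most care, though it is routine.

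Finally, for the pointwise bound \eqref{pointwiseGBound} valid for \emph{all} $(X,t) \in \R^{d+1}_+ \times \R_+$ (not just $t \ge t_0$), I would note that the $t^{-\frac{d+1+a}{2}}$ prefactors are identical on both sides, so they cancel and the inequality reduces to a $t$-independent statement: it suffices to find $C_0 = C_0(d,a)$ with $\mathcal{C}_n \pr{1 - \frac{z}{Mn}}_+^{\frac{dn-d-2}{2}} \le C_0 \, \mathcal{C}\, e^{-z/4}$ for all $z \ge 0$ and all $n \ge 1$. For this I would use the convexity inequality $\log(1-u) \le -u$ for $u \in [0,1)$ to get $\pr{1 - \frac{z}{Mn}}_+^{\frac{dn-d-2}{2}} \le \exp\pr{-\frac{dn-d-2}{2Mn} z}$, and then check that $\frac{dn-d-2}{2Mn} \ge \frac{1}{4}$ fails in general for small $n$ — so instead I would argue that $\frac{dn-d-2}{2Mn} \ge \frac14 - \frac{C(d,a)}{n}$, hence $\exp\pr{-\frac{dn-d-2}{2Mn}z} \le \exp\pr{-\frac{z}{4}}\exp\pr{\frac{C(d,a)}{n} z}$; this is not yet bounded by $e^{-z/4}$ for large $z$, so I would instead split: for $n$ large (say $n \ge n_0(d,a)$ chosen so that $\frac{dn-d-2}{2Mn} \ge \frac15$, say, \emph{and} so that the sequence $\mathcal{C}_n$ is within a factor $2$ of $\mathcal{C}$), use $\pr{1-\frac{z}{Mn}}_+^{\frac{dn-d-2}{2}} \le e^{-z/5}$ combined with $z\mapsto e^{-z/5+z/4}$ being bounded; wait — rather, the clean route is: for each fixed $n$ the function $z \mapsto \frac{\mathcal{C}_n \pr{1-z/Mn}_+^{(dn-d-2)/2}}{\mathcal{C} e^{-z/4}}$ is continuous on $[0,\iny)$, vanishes at $z = Mn$ if $dn - d - 2 > 0$, and as $z \to \iny$ tends to $0$, hence attains a finite maximum $C_0^{(n)}$; then since $\mathcal{C}_n \to \mathcal{C}$ and the convergence of the profiles is uniform in $z$ (from the previous paragraph applied with $z$-uniformity), $\sup_n C_0^{(n)} < \iny$, and we take $C_0 = \sup_n C_0^{(n)}$. (One must also handle the finitely many small $n$ for which $dn-d-2 \le 0$ separately, where the profile is $1$ for $z < Mn$ and the ratio is still bounded since $e^{z/4}$ beats the constant $1$ on the bounded interval $[0, Mn]$ — again a finite sup.) This last step — establishing the bound uniformly in $n$, including the small-$n$ anomalies where $dn-d-2$ may be nonpositive — is the main obstacle, and it is resolved by combining the uniform-in-$z$ convergence from the second paragraph with a compactness/continuity argument over the finitely many exceptional values of $n$.
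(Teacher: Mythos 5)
Your approach is sound and in fact more self-contained than the paper's proof, which computes $\lim_n \mathcal{C}_n = \mathcal{C}$ directly and then defers both the uniform convergence and the bound \eqref{pointwiseGBound} to the cited \cite{DSVG24}*{Lemma 3.1}. Your Stirling analysis of $\mathcal{C}_n$ is correct, and your reduction of the uniform convergence on $\R^{d+1}_+\times\set{t\ge t_0}$ to a uniform-in-$z$ convergence of the profile $(1-z/(Mn))_+^{(dn-d-2)/2}$ to $e^{-z/4}$, where $z=|X|^2/t$, is also correct.

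The gap is in the final paragraph, where you handle the ``finitely many small $n$ for which $dn-d-2\le 0$.'' You assert that in this range ``the profile is $1$ for $z<Mn$,'' but this is true only when the exponent $(dn-d-2)/2$ is exactly zero. When $dn-d-2<0$ the exponent is negative, and $(1-z/(Mn))^{(dn-d-2)/2}\to+\iny$ as $z\to(Mn)^-$, so the ratio $\mathcal{G}_n/\mathcal{G}$ is unbounded. For $n=1$ (any $d$) this is saved because $\mathcal{C}_1=0$: the factor $\aSp{dn-d-1}=\aSp{-1}$ involves $1/\Ga(0)=0$. But for $d=1$, $n=2$ one has $dn-d-2=-1$ and $\mathcal{C}_2>0$, so \eqref{pointwiseGBound} genuinely fails for that pair, and no compactness argument can rescue it; the lemma as literally stated is slightly too strong. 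The correct fix, which is all the paper's applications require, is to assert the bound for all $n$ large enough that $dn-d-2\ge 0$. On that range a cleaner argument than your sup-over-ratios compactness is: by $\log(1-u)\le -u$, the profile is at most $\exp\pr{-\frac{dn-d-2}{2Mn}z}$; since $\frac{dn-d-2}{2Mn}=\frac14-\frac{d+3+a}{4(dn+1+a)}$ and $z\le Mn=2(dn+1+a)$ on the support of $\mathcal{G}_n$, the deficit in the exponent contributes at most a factor of $e^{(d+3+a)/2}$, independent of $n$; combining with $\sup_n\mathcal{C}_n<\iny$ (which follows from $\mathcal{C}_n\to\mathcal{C}$) yields $C_0=C_0(d,a)$ directly, without needing to control the sup of a ratio over a non-compact range.
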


\begin{rem}
The function $\mathcal{G}\pr{X, t}$ is the fundamental solution to the equation in \eqref{PExtProb} subject to the boundary conditions of type I. 
In particular, a computation shows that $\del_\nu^a \mathcal{G}(0, x, t) = 0$ for any $(x,t) \in \R^d \times \R_+$ and $\mathcal{G}(X,t)$ satisfies $\disp \del_t \mathcal{G}(X, t) = x_0^{-a} \di \pr{x_0^a \gr \mathcal{G}(X,t)}$ away from the singularity.
Moreover, for any $t > 0$,
\begin{align}
\label{GnInt}
    \int_{\R^{d+1}_+} \mathcal{G}(X,t) x_0^a dX
    &= \int_{\R^{d+1}_+} \frac{\pr{4 \pi}^{- \frac d 2}}{2^a \Ga\pr{\frac{1+a}{2}}} t^{-\frac{d + 1+a}{2}} \exp\pr{ - \frac{\abs{X}^2}{4 t}} x_0^a dX 
    = 1.
\end{align}
\end{rem}

\begin{proof}
We first shows that $\disp \lim_{n \to \iny} \mathcal{C}_n = \mathcal{C}$.
Using \eqref{MDef}, \eqref{sphereSize}, and \eqref{gasDefn}, we simplify to get that
$$\mathcal{C}_n 
= \frac{\pr{4\pi}^{-\frac {d} 2}}{2^a \Ga\pr{\frac{1+a}{2}}} \frac{\Ga\pr{\frac {dn+1+a} 2}}{\Ga\pr{\frac {dn-d} 2}\pr{\frac{d n + 1 + a}2}^{\frac{d + 1+a}{2}}}
= \mathcal{C} \frac{\Ga\pr{\frac{dn-d} 2 + \frac{d+1+a} 2}}{\Ga\pr{\frac {dn-d} 2} \pr{\frac{d n -d}2}^{\frac{d + 1+a}{2}}} \pr{\frac{d n -d}{d n + 1 + a}}^{\frac{d + 1+a}{2}}.$$
Since $\disp \lim_{m \to \iny} \frac{\Ga\pr{m+\al}}{\Ga\pr{m} m^\al} = 1$ and $\disp \lim_{n \to \iny} \pr{\frac{1 - \frac 1 n}{1 + \frac {1+a} {d n}}}^{\frac{d+1+a} 2} = 1$, then $\disp \lim_{n \to \iny} \frac{\mathcal{C}_n}{\mathcal{C}} = 1$.
Following the arguments in \cite{DSVG24}*{Lemma 3.1}, we deduce that 
\begin{align*}
\lim_{n \to \iny} t^{-\frac{d + 1 + a}{2}} \pr{1 - \frac{\abs{X}^2}{4 t\pr{\frac{d n + 1 + a}2} }}^{\frac{dn-d-2} 2}  \chi_{\BB{D}^{d+1}_{\sqrt{Mnt}}}(X) 
= t^{-\frac{d + 1 + a}{2}} \exp\pr{- \frac{\abs{X}^2}{4t}}
\end{align*}
uniformly on $\R^{d+1}_+ \times \set{t \ge t_0}$.
The bound in \eqref{pointwiseGBound} also follows from the arguments used to reach \cite{DSVG24}*{Lemma 3.1}, so the conclusion follows. 
\end{proof}

We used the maps $f_{n}$ from \eqref{fnDef} to compute push-forwards and generate the measures $\nu^{n}_{t}$. 
The following statement shows each $f_{n}$ may be used to connect functions in a high-dimensional (elliptic) setting to those in fixed time-slices of the space-time (parabolic) setting.

\begin{lem}[Fixed $t$ bridge]
\label{PFTS}
For $d\in \N$ and $a \in (-1,1)$, let $\mathcal{G}_n(X,t)$ and $\mathcal{G}(X,t)$ be as defined in \eqref{GsdnDefn}.
If for some $t > 0$, $\vp: \R^{d+1}_+ \to \R$ is integrable with respect to $x_0^{a} \, \mathcal{G}(X,t) d{X}$, then for every $n \in \N$, $\vp$ is integrable with respect to $x_0^{a} \, \mathcal{G}_n(X,t) d{X}$ and it holds that
\begin{align*}
\int_{\BB{H}^{dn}_{\sqrt{Mt}}} \vp\pr{f_{n}(Y)} y_0^{a} \, d\eta(Y)
&= \bar{C} t^{\frac{dn +a}{2}} \int_{\R^{d+1}_+} \vp\pr{X}  \dGnt,
\end{align*}
where the constant is given by
\begin{equation}
\label{CbarDefn}
   \bar{C} =  \frac{\aSp{dn+a} M^{\frac{dn+a}{2}} }{2\ga_a},
\end{equation}
$\ga_a$ is defined in \eqref{gasDefn},  
and we write $d\eta$ to denote the canonical surface measure on the half-sphere $\BB{H}^{dn}_{\sqrt{Mt}}$. 
\end{lem}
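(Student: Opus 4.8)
The plan is to obtain the stated identity as essentially a change-of-variables statement, using the pushforward measure $\nu^n_t = f_n \# \mu^n_t$ that was already computed explicitly in \eqref{nundDefn}. Recall that $\mu^n_t$ is the \emph{normalized} weighted surface measure on $\BB{H}^{dn}_{\sqrt{Mt}}$; comparing with the unnormalized weighted surface measure $\eta^{dn,a}_{\sqrt{Mt}}(Y) = \eta(Y)\, y_0^a$ (where $\eta$ is the canonical surface measure), \eqref{muDefn} gives
\begin{equation*}
y_0^a\, d\eta(Y) = d\eta^{dn,a}_{\sqrt{Mt}}(Y) = \frac{\aSp{dn+a}(Mt)^{\frac{dn+a}{2}}}{2\ga_a}\, d\mu^n_t(Y) = \bar C\, t^{\frac{dn+a}{2}}\, d\mu^n_t(Y),
\end{equation*}
which already explains the constant $\bar C$ in \eqref{CbarDefn} and the power $t^{(dn+a)/2}$.

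With that normalization identity in hand, the first step is to write the left-hand integral against $\mu^n_t$: by the definition of pushforward,
\begin{equation*}
\int_{\BB{H}^{dn}_{\sqrt{Mt}}} \vp(f_n(Y))\, y_0^a\, d\eta(Y) = \bar C\, t^{\frac{dn+a}{2}} \int_{\BB{H}^{dn}_{\sqrt{Mt}}} \vp(f_n(Y))\, d\mu^n_t(Y) = \bar C\, t^{\frac{dn+a}{2}} \int_{\R^{d+1}_+} \vp(X)\, d\nu^n_t(X).
\end{equation*}
The second step is to substitute the explicit formula \eqref{nundDefn} for $d\nu^n_t$ and recognize the resulting density as $x_0^a \mathcal{G}_n(X,t)$: indeed, comparing \eqref{nundDefn} with the definition \eqref{GsdnDefn} of $\mathcal{G}_n$ and the constant $\mathcal{C}_n$, one sees $d\nu^n_t(x_0,x) = x_0^a\, \mathcal{G}_n(X,t)\, dX$ exactly (the factor $t^{-(d+1+a)/2}$ in $\mathcal{G}_n$ matches $(Mnt)^{-(d+1+a)/2}$ up to the $n$-dependent constant absorbed into $\mathcal{C}_n$). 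This yields the claimed identity. The final step is the integrability assertion: since $\mathcal{G}_n(X,t) \le C_0\, \mathcal{G}(X,t)$ pointwise by \eqref{pointwiseGBound} in Lemma \ref{GaussianLimit}, integrability of $\vp$ against $x_0^a \mathcal{G}(X,t)\,dX$ immediately gives integrability against $x_0^a \mathcal{G}_n(X,t)\,dX$, so all the integrals above are well-defined and the pushforward manipulation is justified (for $\vp \ge 0$ first, then in general by splitting into positive and negative parts).

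The only genuinely nontrivial point — and the one I would be most careful about — is verifying that the pushforward of $\mu^n_t$ under the \emph{full} map $f_n$ (summing the $n$ increments in each spatial coordinate) really does equal the measure in \eqref{nundDefn}, which was derived via the projection-then-dilation argument using the simplified map $\tilde f_n$. This relies on the observation, already made in the text preceding \eqref{nundDefn}, that $f_n$ and $\tilde f_n$ differ by an orthogonal transformation of the $y$-variables that preserves both the sphere $\BB{H}^{dn}_{\sqrt{Mt}}$ and the weighted measure $\mu^n_t$ (the weight depends only on $y_0$, which is untouched). Granting that, the computation of $\pi^0_d \# \mu^n_t$ followed by dilation by $\sqrt n$ is exactly the content of \eqref{nundDefn}, and nothing further is needed. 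Everything else is bookkeeping with the constants \eqref{MDef}, \eqref{sphereSize}, \eqref{gasDefn}.
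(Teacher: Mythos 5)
Your proof is correct and takes essentially the same route as the paper: both unwind the identity through $d\nu^n_t = f_n \# \mu^n_t$, recognize $d\nu^n_t(X) = x_0^a\,\mathcal{G}_n(X,t)\,dX$ by matching \eqref{nundDefn} with \eqref{GsdnDefn}, use \eqref{muDefn} together with \eqref{etaNsDef} to produce the constant $\bar C\, t^{(dn+a)/2}$, and obtain integrability from the pointwise bound \eqref{pointwiseGBound}. The only cosmetic difference is the direction of travel (you start from the $y$-side and the paper starts from the $X$-side), and your remark that $f_n$ and $\tilde f_n$ differ by an orthogonal transformation fixing $y_0$ and preserving $\mu^n_t$ is a welcome, though not strictly necessary, reiteration of the point made just before \eqref{nundDefn}.
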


\begin{proof}
That $\vp$ is integrable with respect to each $x_0^{a} \, \mathcal{G}_n(X,t) d{X}$ whenever $\vp$ is integrable with respect to $x_0^{a} \, \mathcal{G}(X,t) d{X}$ follows from the bound \eqref{pointwiseGBound}.

Since $\vp$ is integrable with respect to each $x_0^{a} \, \mathcal{G}_n(X,t) d{X}$, by comparing \eqref{GsdnDefn} with \eqref{nundDefn}, we see that
\begin{align*}
    \int_{\R^{d+1}_+} \vp\pr{X} \dGnt
    &= \int_{\BB{D}^{d+1}_{\sqrt{Mnt}}} \vp\pr{X} d \nu_{t}^n(X)
    = \int_{f_{n}\pr{\BB{H}^{dn}_{\sqrt{Mt}}}} \vp\pr{X} d \pr{f_{n} \# \mu^n_t}
    = \int_{\BB{H}^{dn}_{\sqrt{Mt}}} \vp\pr{f_{n}(Y)} d \mu^n_t(Y),
\end{align*}
where we have used that $\nu_{t}^n = f_{n} \# \mu^n_t$, the change-of-variables formula for pushforward measures, and \eqref{fnRem} in Remark \ref{mappingObs}.
Plugging in the definition from \eqref{muDefn} shows that
\begin{align*}
    \frac{2\ga_a}{\aSp{dn+a} \pr{Mt}^{\frac{dn +a}{2}} }  \int_{\BB{H}^{dn}_{\sqrt{Mt}}} \vp\pr{f_{n}(Y)} d\eta^{dn, a}_{\sqrt{Mt}}(Y)
    &= \int_{\R^{d+1}_+} \vp\pr{X}  \dGnt.
\end{align*}
From \eqref{etaNsDef}, we see that
\begin{align*}
    d\eta^{dn, a}_{\sqrt{Mt}}(Y)
    &= d\si^{dn-1}_{\rho_{\sqrt{Mt}}(y_0)}(y) \frac{{\sqrt{Mt}}}{\rho_{\sqrt{Mt}}(y_0)} y_0^{a} \, \chi_{\brac{0, {\sqrt{Mt}}}}(y_0) dy_0
    = y_0^{a} \, d\eta^{dn}_{\sqrt{Mt}}(Y),
\end{align*}
where $\eta^{N}_{r}$ is the canonical surface measure on the $N$-dimensional half-sphere of radius $r$.
The conclusion follows.
\end{proof}

This lemma relates spheres in the high-dimensional (elliptic) setting to $t$-slices in the (parabolic) setting.
To broaden this viewpoint, as in \cite{Dav18}, \cite{DSVG24}, and \cite{Sve11}, we consider $t$ to be a parameter instead of a fixed constant.  
We think of the measures $\nu_t^{n}$ as time slices of a space-time object that comes from projections of some global measure $\mu^n$ in the full space $\R^{dn+1}_+$ (not just the half-spheres) onto the space-time $\R^{d+1}_+ \times \R_+$ (not just the time slices).
To do this, recall the function $F_{n} : \R^{dn+1}_+ \to \R^{d+1}_+ \times \R_+$ defined in \eqref{FnDef} and that $F_{n}\pr{\BB{H}^{dn}_{\sqrt{Mt}}} = \widetilde{\BB{D}}^{d+1}_{\sqrt{Mnt}} \times \set{t}$, see \eqref{fnRem}.

The global measure $\mu^n$ on $\R^{dn+1}_+$ has the property that 
\begin{align}
\label{FdnIntRel}
F_{n} \# \mu^n
&= \int_0^\iny f_{n} \# \mu^n_t d{t} 
= \int_0^\iny \nu^n_{t} d{t}.
\end{align}
As $\disp \mu^n_t = \frac{2\ga_{a} }{\aSp{dn+a} \pr{Mt}^{\frac{dn +a}{2}} } \eta^{dn, a}_{\sqrt{Mt}}$,
and with $r = \abs{Y}$, $t = \frac{r^2}{M}$ so that $d t = \frac{2r}{M} dr$, then we claim that
\begin{equation}
\label{mundDefn}
d\mu^n 
= \frac{4 \ga_{a} \, y_0^{a}}{M \aSp{dn+a} \pr{y_0^2 + \abs{y}^2}^{\frac{dn -1+a}{2}}} d y_0 \, d{y}
= \frac{4 \ga_{a} }{M \aSp{dn+a}} \abs{Y}^{-(dn-1+a)} y_0^{a} \, d Y.
\end{equation}
To check this, set $\chi_\tau = \chi_{\BB{D}^{dn+1}_{\sqrt{M \tau}}}$. 
By the change-of-variables formula,
\eqref{FdnIntRel}, \eqref{FnImage}, and that $\nu^n_{t}(\BB{D}^{d+1}_{\sqrt{Mnt}}) = 1$,
we get
\begin{align*}
    \int_{\R^{dn+1}_+} \chi_\tau d \mu^n
    &= \int_{\BB{D}^{dn+1}_{\sqrt{M \tau}}} d \mu^n
    =\int_{F_{n}\pr{\BB{D}^{dn+1}_{\sqrt{M \tau}}}} d\pr{F_{n} \# \mu^n}
    = \pr{F_{n} \# \mu^n}\pr{F_{n}\pr{\BB{D}^{dn+1}_{\sqrt{M \tau}}}} 
    = \int_0^{\tau} \nu^n_{t}\pr{\BB{D}^{d+1}_{\sqrt{Mnt}}}  dt
    = \tau.
\end{align*}
On the other hand, a direct computation with \eqref{mundDefn} shows that
\begin{align*}
    \int_{\R^{dn+1}_+} \chi_\tau d \mu^n
    &= \int_0^{\sqrt{M\tau}} \int_{\set{\abs{y}^2 \le M \tau - y_0^2}} \frac{4 \ga_{a} \, y_0^{a}}{M \aSp{dn+a} \pr{y_0^2 + \abs{y}^2}^{\frac{dn-1+a}{2}}} d{y} \, d y_0 
    = \tau,
\end{align*}
where we use \eqref{sphereSize} and \eqref{gasDefn} to simplify.
As both computations give the same result, and $\mu^n$ is rotation-invariant in $y$, we may conclude that \eqref{mundDefn} is valid.

By analogy with Lemma \ref{PFTS}, we arrive at the following result.

\begin{lem}[Variable $t$ bridge]
\label{PFT}
For $d\in \N$ and $a \in (-1,1)$, let $\mathcal{G}_n(X,t)$ and $\mathcal{G}(X,t)$ be as defined in \eqref{GsdnDefn}.
If $\phi: \R^{d+1}_+ \times \pr{0, T_0} \to \R$ is integrable with respect to $x_0^{a} \, \mathcal{G}(X,t) d{X} dt$, then for every $n \in \N$, $\phi$ is integrable with respect to $x_0^{a} \, \mathcal{G}_n(X,t) d{X} dt$ and for every $T \le T_0$, it holds that
\begin{align*}
\int_{\BB{D}^{dn+1}_{\sqrt{MT}}} \phi\pr{F_{n}(Y)} y_0^{a} \, dY
&= \frac{\bar{C} \sqrt M}{2} \int_0^T \int_{\R^{d+1}_+} \phi\pr{X, t} t^{\frac{dn-1+a}{2}} \dGnt dt,
\end{align*}
where we define $\bar{C}$ in 
\eqref{CbarDefn}. 
If $T = \iny$, then
\begin{align*}
\int_{\R^{dn + 1}_+}\phi\pr{F_{n}(Y)} y_0^{a} \, dY
&= \frac{\bar{C} \sqrt M}{2}\int_0^\iny \int_{\R^{d+1}_+} \phi(X,t) t^{\frac{dn-1+a}{2}} \dGnt d{t}.
\end{align*}
\end{lem}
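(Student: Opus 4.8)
The plan is to prove Lemma \ref{PFT} by a change of variables that mirrors the proof of Lemma \ref{PFTS}, using the explicit formula \eqref{mundDefn} for the global measure $\mu^n$ together with the fundamental relation \eqref{FdnIntRel}. First I would observe that the integrability claim follows immediately from \eqref{pointwiseGBound}: if $\phi$ is integrable against $x_0^a \mathcal{G}(X,t)\, dX\, dt$, then since $\mathcal{G}_n \le C_0 \mathcal{G}$ pointwise, $\phi$ is integrable against $x_0^a \mathcal{G}_n(X,t)\, dX\, dt$ as well. This also handles both the finite $T$ and $T = \iny$ cases uniformly.

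Next, for the main identity, I would apply the change-of-variables formula for pushforward measures to the map $F_n : \R^{dn+1}_+ \to \R^{d+1}_+ \times \R_+$ on the region $\BB{D}^{dn+1}_{\sqrt{MT}}$, using $F_n(\BB{D}^{dn+1}_{\sqrt{MT}}) = \bigcup_{0 \le t \le T} \overline{\BB{D}}^{d+1}_{\sqrt{Mnt}} \times \{t\}$ from \eqref{FnImage}. Plugging in the explicit form $d\mu^n = \frac{4\ga_a}{M \aSp{dn+a}} \abs{Y}^{-(dn-1+a)} y_0^a\, dY$ from \eqref{mundDefn}, this yields
\begin{align*}
\int_{\BB{D}^{dn+1}_{\sqrt{MT}}} \phi(F_n(Y)) y_0^a\, dY
&= \frac{M \aSp{dn+a}}{4\ga_a} \int_{\BB{D}^{dn+1}_{\sqrt{MT}}} \phi(F_n(Y)) \abs{Y}^{dn-1+a}\, d\mu^n(Y),
\end{align*}
and then I would use \eqref{FdnIntRel}, namely $F_n \# \mu^n = \int_0^\iny \nu^n_t\, dt$, recognizing that on $\BB{H}^{dn}_{\sqrt{Mt}}$ we have $\abs{Y} = \sqrt{Mt}$, to convert this into $\frac{M \aSp{dn+a}}{4\ga_a} \int_0^T (Mt)^{\frac{dn-1+a}{2}} \int_{\R^{d+1}_+} \phi(X,t)\, d\nu^n_t(X)\, dt$. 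Finally, comparing \eqref{nundDefn} with \eqref{GsdnDefn} shows $d\nu^n_t(X) = \mathcal{G}_n(X,t) x_0^a\, dX$ up to the constant $\mathcal{C}_n / \mathcal{C}_n = 1$ — more precisely, $d\nu^n_t = x_0^a \mathcal{G}_n(X,t)\, dX$ by direct inspection of the two formulas — so I would collect the constants and verify that $\frac{M \aSp{dn+a}}{4\ga_a} M^{\frac{dn-1+a}{2}} = \frac{\bar{C}\sqrt{M}}{2}$ using the definition \eqref{CbarDefn} of $\bar{C}$, namely $\bar{C} = \frac{\aSp{dn+a} M^{\frac{dn+a}{2}}}{2\ga_a}$.

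The $T = \iny$ case then follows either by the same argument with $T$ replaced by $\iny$ throughout, or by monotone convergence (for nonnegative $\phi$) and linearity. The main obstacle I anticipate is purely bookkeeping: carefully tracking the powers of $M$, $t$, and $n$, and confirming that the factor $t^{\frac{dn-1+a}{2}}$ appearing inside the integral on the parabolic side is exactly what emerges from rewriting $\abs{Y}^{dn-1+a} = (Mt)^{\frac{dn-1+a}{2}}$ and splitting off one factor of $M$ from $dt = \frac{2r}{M}\, dr$; I would also double-check that no Jacobian factor is lost when passing between the "$dY$" and "$d\mu^n$" descriptions, since \eqref{mundDefn} already encodes the relationship $d\mu^n = \frac{4\ga_a}{M\aSp{dn+a}} \abs{Y}^{-(dn-1+a)} y_0^a\, dY$ that was verified just above the statement. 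No genuinely new idea is required beyond the measure-theoretic setup already established.
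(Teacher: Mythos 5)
Your proposal is correct and uses exactly the ingredients the paper's proof does: the integrability argument via \eqref{pointwiseGBound}, the explicit form of $\mu^n$ from \eqref{mundDefn}, the pushforward identity $F_n\#\mu^n = \int_0^\iny \nu^n_t\,dt$, the identification $d\nu^n_t = x_0^a\mathcal{G}_n(X,t)\,dX$, and the substitution $|Y|=\sqrt{Mt}$. The only difference is cosmetic — you start from the left-hand side and push forward, while the paper starts from the right-hand side and pulls back — and the constant bookkeeping you flag checks out as you claim.
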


\begin{proof}
By the argument from the previous proof, if $\phi$ is integrable with respect to $x_0^{a} \,  \mathcal{G}(X,t) d{X} \, d{t}$, then $\phi$ is also integrable with respect to each $x_0^{a} \, \mathcal{G}_n(X,t) d{X} \, d{t}$.

Since $\phi$ is integrable with respect to each $x_0^{a} \, \mathcal{G}_n(X,t) d{X} dt$, then by comparing \eqref{GsdnDefn} with \eqref{nundDefn}, recalling \eqref{FnImage} and \eqref{FdnIntRel}, we see that

\begin{align*}
    \int_0^T \int_{\R^{d+1}_+} \phi(X,t) \dGnt dt
    &= \int_0^T \int_{\R^{d+1}_+} \phi(X,t) d \nu^n_t(X) dt
    = \int_0^T \int_{\BB{D}^{d+1}_{\sqrt{M nt}}} \phi(X,t) d \nu^n_t(X) dt \\
    &    = \int_{F_{n}\pr{\BB{D}^{dn+1}_{\sqrt{ MT}}}} \phi(X,t) d\pr{F_{n} \# \mu^n}(X, t)
    = \int_{\BB{D}^{dn+1}_{\sqrt{ MT}}} \phi\pr{F_{n}(Y)} d\mu^n(Y),
\end{align*}
where the last step follows from the change-of-variables formula.
An application of \eqref{mundDefn} shows that
\begin{align}
\label{bridgeResultwithY}
    \int_{\BB{D}^{dn+1}_{\sqrt{ MT}}} \phi\pr{F_{n}(Y)} \abs{Y}^{-(dn-1+a)} y_0^{a} \, d Y
    &= \frac{M \aSp{dn+a}}{4 \ga_{a}} \int_0^T \int_{\R^{d+1}_+} \phi(X,t) \, \dGnt dt.
\end{align}
We use that $\abs{Y} = \sqrt{Mt}$ to reach the conclusion.
\end{proof}

\section{Integral Relationships}
\label{S:IntRel}

In this section, we apply the results of the previous section, namely the ``Bridge Lemmas," to establish a number of tools that will be used repeatedly.
To simplify our notation, we introduce the functionals that appear often.

\begin{defn}[Common functionals]
\label{functionalDefs}
For $d\in \N$ and $a \in (-1,1)$, let $\mathcal{G}(X,t)$ be as in \eqref{GsdnDefn}.
We define the functionals
\begin{equation}
\label{12functionals}
\begin{aligned}
\mathcal{H}(t)
&= \mathcal{H}(t; U) 
= \int_{\R^{d+1}_+} \abs{U(X,t)}^2 \dGt \\
\mathcal{D}(t)
&= \mathcal{D}(t; U) 
= \int_{\R^{d+1}_+} \abs{\gr U(X,t)}^2 \dGt \\
\mathcal{T}(t)
&=\mathcal{T}(t;U)
=\int_{\R^{d+1}_+} \abs{\del_t U(X,t)}^2 \dGt \\
\mathcal{I}(t)
&= \mathcal{I}(t; U) 
= \int_{\R^{d+1}_+} \abs{(X,2t) \cdot \gr_{(X,t)} U(X,t)}^2 \dGt \\
\mathcal{J}(t)
&=\mathcal{J}(t;U)
=\int_{\R^{d+1}_+} \abs{J(X,t)}^2 \dGt
\\
\mathcal{M}(t)
&=\mathcal{M}(t;U)
=\int_{\R^{d+1}_+} U(X,t) J(X,t) \dGt,
\end{aligned}
\end{equation} 
where $J(X,t)$ is given in \eqref{JDefn}.
We write $\mathcal{J}^-$ and $\mathcal{M}^-$ when $J$ is replaced by $J^-$.
Let $\mathcal{H}_n(t)$, $\mathcal{D}_n(t)$, $\mathcal{T}_n(t)$, $\mathcal{I}_n(t)$, $\mathcal{J}_n(t)$, and $\mathcal{M}_n(t)$ be defined analogously with $\mathcal{G}_n(X,t)$ in place of $\mathcal{G}(X,t)$.
That is, for example,
\begin{equation*}
  \mathcal{H}_n(t)
= \mathcal{H}_n(t; U) 
= \int_{\R^{d+1}_+} \abs{U(X,t)}^2 \, \dGnt.
\end{equation*}
\end{defn}

\begin{rem}
    An application of Lemma \ref{GaussianLimit} shows that $\disp \lim_{n \to \iny} \mathcal{H}_n(t; U) = \mathcal{H}(t; U)$ for each $t\in (0, T)$.
    Analogous statements hold for $\mathcal{D}_n(t)$, $\mathcal{T}_n(t)$, $\mathcal{I}_n(t)$, $\mathcal{J}_n(t)$, and $\mathcal{M}_n(t)$.
\end{rem}

We collect applications of Lemmas \ref{PFTS} and \ref{PFT} that we use frequently.

\begin{lem}[Applications of the bridge lemmas]
\label{bridgeLemmaResults}
For $U(X,t)$ defined on $\R^{d+1}_+ \times (0,T)$, let $V_n : \BB{D}^{dn+1}_{\sqrt{MT}} \to \R$ be given by $V_n \pr{Y} = U\pr{F_{n}\pr{Y}} = U(X,t)$.

With $J$ as defined in \eqref{JDefn}, let $K_n$ be as defined in \eqref{KnDef}.
Assume that $\mathcal{H}(t)$, $\mathcal{D}(t)$, $\mathcal{T}(t)$, $\mathcal{I}(t)$, and $\mathcal{M}(t)$ are all well-defined.
For any $t \in (0, T)$, it holds that
\begin{align}
&\frac{1}{\bar{C} t^{\frac{dn+a}{2}}}\int_{\BB{H}^{dn}_{\sqrt{M t}}} \abs{V_n\pr{Y}}^2 y_0^a  
= \int_{\R^{d+1}_+}  \abs{U(X,t)}^2 \dGnt
=  \mathcal{H}_n(t)
\label{HnExpr} \\
&\frac{1}{\bar{C} t^{\frac{dn+a}{2}}} \int_{\BB{H}^{dn}_{\sqrt{Mt}}} \, V_n \pr{Y \cdot \gr V_n} y_0^a 
= \int_{\R^{d+1}_+} U(X,t) \brac{(X,2t) \cdot \gr_{(X,t)} U(X,t)} \dGnt
\le  \brac{\mathcal{H}_n(t) \mathcal{I}_n(t)}^{\frac 1 2}
\label{sphereMixedTerm} \\
&\frac{1}{\bar{C} t^{\frac{dn +a}{2}} } \int_{\BB{H}^{dn}_{\sqrt{Mt}}} \, \abs{\gr V_n(Y)}^2 y_0^a
\le 2 n \mathcal{D}_n(t)
+ \frac{8t}{M} \mathcal{T}_n(t)
\label{sphereGradientSquared} 
\end{align}

\begin{align}
&\frac{\sqrt M}{2\bar{C}} \int_{\BB{D}^{dn+1}_{\sqrt{Mt}}} V_n(Y) K_n(Y) y_0^a 
= \int_0^t \tau^{\frac{dn-1+a}2} \mathcal{M}_n(\tau) d\tau 
\le \pr{\int_0^t \tau^{\frac{dn-1+a}2} \mathcal{I}_n(\tau)  d\tau }^{\frac 1 2} \pr{ \int_0^t \tau^{\frac{dn-1+a}2} \mathcal{J}_n(\tau) d\tau }^{\frac 1 2}
\label{ballKnVn} \\
&\frac{\sqrt M}{2\bar{C}}\int_{\BB{D}^{dn+1}_{\sqrt{Mt}}}  (\, Y \cdot \gr V_n) K_n y_0^a
\le \pr{\int_0^t \tau^{\frac{dn-1+a}2} \mathcal{I}_n(\tau)  d\tau }^{\frac 1 2} \pr{ \int_0^t \tau^{\frac{dn-1+a}2} \mathcal{J}_n(\tau) d\tau }^{\frac 1 2}
\label{ballKngradBound} \\
&\frac{\sqrt M}{2\bar{C}} \int_{\BB{D}^{dn+1}_{\sqrt{Mt}}} \abs{\gr V_n\pr{Y}}^2 y_0^a
\le \frac{Mn}{2} \int_0^t \tau^{\frac{dn-1+a}{2}} \mathcal{D}_n(\tau) d\tau
+ 2 \int_0^t \tau^{\frac{dn+1+a}{2}} \mathcal{T}_n(\tau) d\tau
\label{grVnBound1}
\end{align}

\begin{align}
\frac{\ga_{a}}{\aSp{dn+a}} \int_{\BB{D}^{dn+1}_{\sqrt{Mt}}} V_n(Y) K_n^-(Y) \Ga(Y) & y_0^a 
= \frac M 4 \int_0^t \mathcal{M}_n^-(\tau) d\tau
\label{ballKnVnWeighted} \\
\frac{\ga_{a}}{\aSp{dn+a}} \int_{\BB{D}^{dn+1}_{\sqrt{Mt}}} \abs{\gr V_n(Y)}^2 \Ga(Y) y_0^a 
&= \int_0^t \int_{\R^{d+1}_+} \brac{\frac{M n}4 \abs{\gr U}^2 + \tau \abs{\del_\tau U}^2 + \pr{X \cdot \gr U} \del_\tau U} \dGnta d\tau \nonumber \\
&\le \frac{Mn}{2} \int_0^t \mathcal{D}_n(\tau) d\tau
+ 2 \int_0^t \tau \mathcal{T}_n(\tau) d\tau,
\label{ballgradVnWeighted}
\end{align}
where $\Ga(Y) = \abs{Y}^{-(dn-1+a)}$.
\end{lem}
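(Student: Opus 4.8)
The plan is to prove each of the six displayed identities/inequalities by a direct application of the Bridge Lemmas (Lemma \ref{PFTS} and Lemma \ref{PFT}) combined with the Extension Chain Rule Lemma \ref{ChainRuleLem}. The overall strategy is identical in flavor for all six statements: choose the correct integrand $\vp$ or $\phi$, invoke the relevant bridge identity, then simplify using the chain-rule expressions \eqref{Vy0Deriv}--\eqref{gradExp} together with the definitions \eqref{KnDef}, \eqref{JDefn}, and the constants $M$, $\bar C$. Where an inequality rather than an equality is claimed, the extra input is Cauchy--Schwarz (either pointwise in $(X,t)$ or in the weighted $L^2$ inner product) together with the elementary bound $(p+q)\le 2(|p|+|q|)$ or $|p+q|^2\le 2|p|^2+2|q|^2$.

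First, for \eqref{HnExpr}: apply Lemma \ref{PFTS} with $\vp = |U|^2$, i.e. $\vp\circ f_n = |V_n|^2$ on $\BB{H}^{dn}_{\sqrt{Mt}}$ by definition of $V_n$; dividing through by $\bar C t^{(dn+a)/2}$ and recognizing the right-hand side as $\mathcal{H}_n(t)$ gives the claim. For \eqref{sphereMixedTerm}: apply Lemma \ref{PFTS} with $\vp = U\cdot[(X,2t)\cdot\gr_{(X,t)}U]$, and use \eqref{YdotGrExp} to identify $\vp\circ f_n = V_n\,(Y\cdot\gr V_n)$; the final inequality is the Cauchy--Schwarz inequality in $L^2(x_0^a\mathcal{G}_n(X,t)\,dX)$, using the definitions of $\mathcal{H}_n$ and $\mathcal{I}_n$. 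For \eqref{sphereGradientSquared}: apply Lemma \ref{PFTS} with $\vp\circ f_n = |\gr V_n|^2$, which by \eqref{gradExp} equals $n|\gr U|^2 + \tfrac{4t}{M}|\del_tU|^2 + \tfrac4M(X\cdot\gr U)\del_tU$; bound the cross term via $|ab|\le \tfrac12 a^2 + \tfrac12 b^2$ (so that $\tfrac4M(X\cdot\gr U)\del_tU \le n|\gr U|^2 + \tfrac{4t}{M}|\del_tU|^2$ after absorbing the $|X|^2\le Mnt$ bound from the support of $\mathcal{G}_n$, cf. Remark \ref{mappingObs}), yielding the stated coefficients $2n$ and $8t/M$.

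Next, for the three statements \eqref{ballKnVn}, \eqref{ballKngradBound}, \eqref{grVnBound1}, we instead invoke Lemma \ref{PFT}. For \eqref{ballKnVn}: take $\phi(X,t) = \tfrac4M U(X,t)J(X,t)$ so that $\phi\circ F_n = V_n K_n$ by \eqref{KnDef}; Lemma \ref{PFT} then produces $\int_{\BB{D}^{dn+1}_{\sqrt{Mt}}} V_n K_n y_0^a = \tfrac{\bar C\sqrt M}{2}\cdot\tfrac4M\int_0^t\int_{\R^{d+1}_+} U J\, \tau^{(dn-1+a)/2}\mathcal{G}_n\,dX\,d\tau$; the factor $4/M$ cancels against the $\sqrt M/2$ prefactor up to the constant shown, and the inner integral is $\mathcal{M}_n(\tau)$. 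The final inequality is Cauchy--Schwarz in the measure $\tau^{(dn-1+a)/2}x_0^a\mathcal{G}_n(X,t)\,dX\,d\tau$, recognizing $\mathcal{I}_n$ and $\mathcal{J}_n$ after noting $|UJ| = |U\cdot(X,t)\cdot\gr\del_tU|$ and $|(X,t)\cdot\gr_{(X,t)}U|\le|(X,2t)\cdot\gr_{(X,t)}U|$ on the relevant region (which is where a little care is needed). For \eqref{ballKngradBound}: take $\phi\circ F_n = (Y\cdot\gr V_n)K_n$; by \eqref{YdotGrExp} this is $[(X,2t)\cdot\gr_{(X,t)}U]\cdot\tfrac4M J$, and the same bridge-plus-Cauchy--Schwarz argument gives the bound. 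For \eqref{grVnBound1}: take $\phi\circ F_n = |\gr V_n|^2$, expand via \eqref{gradExp}, and repeat the cross-term estimate from \eqref{sphereGradientSquared}.

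Finally, \eqref{ballKnVnWeighted} and \eqref{ballgradVnWeighted} are proved the same way but using the \emph{weighted} form \eqref{bridgeResultwithY} of Lemma \ref{PFT} (before substituting $|Y|=\sqrt{Mt}$), so that the weight $\Ga(Y)=|Y|^{-(dn-1+a)}$ exactly cancels the factor $\tau^{(dn-1+a)/2} = (Mt)^{(dn-1+a)/2}M^{-(dn-1+a)/2}$, leaving a clean $\int_0^t\mathcal{M}_n^-(\tau)\,d\tau$ or $\int_0^t(\cdots)\,d\tau$ with no power of $\tau$; one takes $\phi\circ F_n = V_n K_n^-$ (resp. $|\gr V_n|^2$), tracks the constant $M\aSp{dn+a}/(4\ga_a)$ from \eqref{bridgeResultwithY} against the prefactor $\ga_a/\aSp{dn+a}$, and for the last display expands $|\gr V_n|^2$ by \eqref{gradExp} and bounds the cross term. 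I expect the main obstacle to be purely bookkeeping: matching all the multiplicative constants ($M$, $\bar C$, $\ga_a$, $\aSp{dn+a}$, powers of $t$) between the two forms of the bridge lemma and the definitions of the functionals, and being careful that the Cauchy--Schwarz steps use exactly the measures that reproduce $\mathcal{I}_n$, $\mathcal{J}_n$, $\mathcal{H}_n$ as written — in particular the harmless replacement of $(X,t)\cdot\gr_{(X,t)}U$ by $(X,2t)\cdot\gr_{(X,t)}U$ inside the $\mathcal{I}_n$ factor and the absorption of the $|X|^2\le Mnt$ support bound into the cross-term estimates.
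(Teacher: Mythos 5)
Your overall plan---apply the Bridge Lemmas (Lemma \ref{PFTS}, Lemma \ref{PFT}, and its weighted form \eqref{bridgeResultwithY}) with the chain-rule identities \eqref{YdotGrExp}, \eqref{gradExp}, \eqref{KnDef}, and then invoke Cauchy--Schwarz/Young together with $\abs{X}^2 \le Mnt$ on $\supp\mathcal{G}_n$---is exactly the paper's argument, and your treatment of \eqref{HnExpr}, \eqref{sphereMixedTerm}, \eqref{sphereGradientSquared}, \eqref{ballKngradBound}, \eqref{grVnBound1}, \eqref{ballKnVnWeighted}, and \eqref{ballgradVnWeighted} is sound. The one place where you sensed trouble (``which is where a little care is needed'') is indeed where there is a gap, but your attempted fix does not work.

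For the inequality in \eqref{ballKnVn} you try to produce the factor $\mathcal{I}_n$ by invoking a pointwise comparison $\abs{(X,t)\cdot\gr_{(X,t)}U} \le \abs{(X,2t)\cdot\gr_{(X,t)}U}$. That comparison is false in general: writing $p = X\cdot\gr_X U$ and $q = t\,\del_t U$, the claim is $\abs{p+q} \le \abs{p+2q}$, which fails e.g.\ when $p=-3$, $q=1$. More to the point, it is not even the comparison you need: Cauchy--Schwarz on $\mathcal{M}_n(\tau) = \int U\, J \, \dGnta$ pairs $U$ against $J=(X,t)\cdot\gr_{(X,t)}\del_t U$, so the two factors it produces are $\mathcal{H}_n(\tau)^{1/2}$ and $\mathcal{J}_n(\tau)^{1/2}$, not $\mathcal{I}_n(\tau)^{1/2}$ and $\mathcal{J}_n(\tau)^{1/2}$; there is no expression of the form $(X,\cdot)\cdot\gr U$ anywhere in the integrand to convert into $\mathcal{I}_n$. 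The natural bound is therefore
\[
\int_0^t \tau^{\frac{dn-1+a}{2}} \mathcal{M}_n(\tau)\,d\tau
\le \pr{\int_0^t \tau^{\frac{dn-1+a}{2}}\mathcal{H}_n(\tau)\,d\tau}^{1/2}\pr{\int_0^t \tau^{\frac{dn-1+a}{2}}\mathcal{J}_n(\tau)\,d\tau}^{1/2},
\]
with $\mathcal{H}_n$, not $\mathcal{I}_n$. This is in fact the bound that the paper applies when it uses \eqref{ballKnVn} in the proof of Theorem \ref{pMono} (the term $\brac{\tfrac{\mathcal{I}_n(t)}{\mathcal{H}_n(t)} \int_0^t (\tau/t)^{\frac{dn-1+a}{2}}\mathcal{H}_n(\tau)\,d\tau}^{1/2}$ there arises from the $\mathcal{H}_n$-version, combined with the separate factor $[\mathcal{H}_n\mathcal{I}_n]^{1/2}$ coming from \eqref{sphereMixedTerm}); the $\mathcal{I}_n$ printed in \eqref{ballKnVn} is almost certainly a typo for $\mathcal{H}_n$. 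You should state the $\mathcal{H}_n$ bound directly rather than manufacture an inequality that matches the misprint.
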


\begin{proof}
Equations \eqref{HnExpr} and \eqref{sphereMixedTerm} follow from applications of Lemma \ref{PFTS} combined with the expression \eqref{YdotGrExp}; H\"older's inequality leads to the bound in \eqref{sphereMixedTerm}. 

Lemma \ref{PFTS} and \eqref{gradExp} show that
\begin{align*}
\int_{\BB{H}^{dn}_{\sqrt{Mt}}} \, \abs{\gr V_n(Y)}^2 y_0^a d\eta(Y)
&= \bar{C} t^{\frac{dn +a}{2}} \int_{\R^{d+1}_+} \pr{n \abs{\gr U}^2 + \frac {4t} {M} \abs{\del_t U}^2 + \frac 4 M \pr{X \cdot \gr U} \del_t U} \dGnt 
\end{align*}
while Young's inequality gives
\begin{align*}
\pr{n \abs{\gr U}^2 + \frac {4t} {M} \abs{\del_t U}^2 + \frac 4 M \pr{X \cdot \gr U} \del_t U}
&\le \pr{n + \frac {\abs{X}^2}{Mt}} \abs{\gr U}^2
+ \frac {8t} {M} \abs{\del_t U}^2.
\end{align*}

Since $\abs{X}^2 \le M n t$ on the support of $\mathcal{G}_n(X, t)$, then we reach \eqref{sphereGradientSquared}.
Analogous arguments with Lemma \ref{PFT} and \eqref{bridgeResultwithY} in place of Lemma \ref{PFTS} lead to \eqref{grVnBound1} and \eqref{ballgradVnWeighted}, respectively.

Equations \eqref{ballKnVn} (and \eqref{ballKnVnWeighted}) are a consequence of Lemma \ref{PFT} (see \eqref{bridgeResultwithY} in the proof),  \eqref{JDefn}, and \eqref{KnDef}; H\"older's inequality leads to the upper bound in \eqref{ballKnVn}.

Lemma \ref{PFT}, \eqref{YdotGrExp}, and \eqref{KnDef} show that
\begin{align*}
&\int_{\BB{D}^{dn+1}_{\sqrt{Mt}}}  (\, Y \cdot \gr V_n) K_n y_0^a dY
= \frac{2\bar{C}}{\sqrt M } \int_0^t \tau^{\frac{dn-1+a}2}\int_{\R^{d+1}_+}  \brac{\pr{X, 2 \tau}  \cdot \gr_{(X, \tau)} U(X, \tau)} J(X, \tau) \, \dGnta d\tau,
\end{align*}
which leads to \eqref{ballKngradBound} after an application of H\"older's inequality.
\end{proof}

We now introduce relevant elliptic and parabolic function spaces and show that there is a natural relationship between them.
We begin with the weighted Sobolev spaces.

\begin{defn}[Weighted Sobolev spaces]
    For $\Om \su \R^{N+1}$ open and bounded, the \textbf{weighted Sobolev space} $H^1\pr{\Om, \abs{y_0}^a}$ is defined as the closure of $C^\iny\pr{\Om}$ under the norm
    $$\norm{\vp}^2_{H^1\pr{\Om, \abs{y_0}^a}}
    := \int_{\Om} \abs{\vp(Y)}^2 \abs{y_0}^a dY + \int_{\Om} \abs{\gr \vp(Y)}^2 \abs{y_0}^a dY.$$
    The zero trace weighted Sobolev space $H^1_0\pr{\Om, \abs{y_0}^a}$ is defined as the closure of $C^\iny_0\pr{\Om}$ under the $H^1(\Om, \abs{y_0}^a)$-norm.
    If $\Om \su \R^{N+1}_+$, then we write $y_0^a$ in place of $\abs{y_0}^a$.
\end{defn}

Going forward, we focus on $\Om = \BB{B}_r$ and $\BB{D}_r$ since this is how we apply these results.
We begin with a couple of observations about weighted traces.

\begin{lem}[Trace lemma]
\label{traceLemma}
If $V \in H^1\pr{\BB{D}_R, y_0^a}$, then for any $r \in (0, R)$,
\begin{equation}
\label{trace1}
\lim_{\eps \to 0^+}\int_{\BB{B}_r \cap \set{y_ 0 = \eps}} \abs{V(Y)}^2 y_0^{1+a} dy = 0.
\end{equation}
If we additionally assume that $y_0^{-a}\di\pr{y_0^{a} \gr V} \in L^2\pr{\BB{D}_R, y_0^{2+a}}$, then for any $r \in (0, R)$,
\begin{equation}
\label{trace2}
\lim_{\eps \to 0^+}\int_{\BB{B}_r \cap \set{y_ 0 = \eps}} \brac{\abs{\gr V}^2 - 2 \pr{\del_{y_0} V}^2} y_0^{1+a} dy = 0.
\end{equation}
\end{lem}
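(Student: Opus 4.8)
The plan is to prove both trace identities by testing the weighted equation (or the definition of the weighted Sobolev space) against suitable cutoff functions and integrating by parts, controlling the boundary terms at $\set{y_0 = \eps}$ via an averaging/Fubini argument. For \eqref{trace1}, first I would note that $V \in H^1(\BB{D}_R, y_0^a)$ means $\int_{\BB{D}_R} (|V|^2 + |\gr V|^2) y_0^a \, dY < \iny$. Fix $r < \rho < R$ and a cutoff $\zeta = \zeta(|y|)$ with $\zeta \equiv 1$ on $\BB{B}_r$, supported in $\BB{B}_\rho$. Consider the function $g(\eps) := \int_{\BB{B}_r \cap \set{y_0 = \eps}} |V|^2 y_0^{1+a} \, dy$. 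By Fubini, $\int_0^{R} g(\eps) \, d\eps \le \int_{\BB{D}_R} |V|^2 y_0^{1+a} \, dY \le R \int_{\BB{D}_R} |V|^2 y_0^a \, dY < \iny$, so $g \in L^1(0,R)$ and in particular $\liminf_{\eps \to 0^+} g(\eps) = 0$ along a sequence. To upgrade this to a genuine limit, I would differentiate: for a.e.\ $\eps$,
\begin{align*}
g'(\eps) = \int_{\BB{B}_r \cap \set{y_0 = \eps}} \brac{2 V \, \del_{y_0} V \, y_0^{1+a} + (1+a) |V|^2 y_0^a} \, dy,
\end{align*}
and by Cauchy--Schwarz the first term is bounded by $\int |V|^2 y_0^a + \int |\del_{y_0} V|^2 y_0^{2+a} \le \int |V|^2 y_0^a + R^2 \int |\gr V|^2 y_0^a$, integrable on slices, so $g'$ is integrable near $0$; combined with $g \in L^1$ this forces $\lim_{\eps\to 0^+} g(\eps)$ to exist, and since $g \ge 0$ is $L^1$ near $0$ the limit must be $0$.

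For \eqref{trace2}, the idea is similar but now I test the weighted equation against $\del_{y_0} V$ times a cutoff. Write $L_a V := y_0^{-a}\di(y_0^a \gr V) = \LP V + \frac{a}{y_0}\del_{y_0} V$, which by hypothesis lies in $L^2(\BB{D}_R, y_0^{2+a})$. The key computation is a Rellich--Nečas / Pohozaev-type identity: multiply $\di(y_0^a \gr V)$ by $\del_{y_0} V$ and integrate over $\BB{D}_R \cap \set{y_0 > \eps}$ against a spatial cutoff $\zeta(|y|)$. Integration by parts produces, on the slice $\set{y_0 = \eps}$, precisely the combination $y_0^a\brac{|\gr V|^2 - 2(\del_{y_0} V)^2}$ (the factor $2$ and the sign come from the standard identity $\del_{y_0}(|\gr V|^2) = 2\gr V \cdot \gr \del_{y_0} V$ and $\di(\del_{y_0} V \gr V) = \del_{y_0}V \LP V + \gr V \cdot \gr \del_{y_0}V$, so the normal derivative term $-e_0$ direction picks up $(\del_{y_0}V)^2 - \tfrac12|\gr V|^2$ up to the overall factor). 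The remaining volume terms involve $\int y_0^a L_a V \, \del_{y_0} V \, \zeta$, $\int y_0^{a-1}(\del_{y_0}V)^2 \zeta$ times a constant from the $\frac{a}{y_0}$ piece, and $\int y_0^a |\gr V|^2 |\gr\zeta|$, all of which are finite by the hypotheses (here one uses $|\del_{y_0}V \, L_a V| y_0^a \le |\del_{y_0}V|^2 y_0^{a} \cdot \tfrac{1}{2} \cdots$ — actually one balances the weights as $\tfrac12 |\del_{y_0}V|^2 y_0^{a-2}\cdot y_0^{2} $, hmm, more carefully $|L_a V \, \del_{y_0}V| y_0^a \le \tfrac12 |L_a V|^2 y_0^{2+a} + \tfrac12 |\gr V|^2 y_0^{a-2}$; the last term is problematic near $y_0 = 0$, so instead I would integrate the $L_a V$ term by a different pairing or keep it as $\int_{\{y_0 > \eps\}}$ and send $\eps \to 0$ after isolating the slice term). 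Letting $h(\eps) := \int_{\BB{B}_r \cap \set{y_0 = \eps}} (|\gr V|^2 - 2(\del_{y_0}V)^2) y_0^{1+a} \, dy$, the identity shows $h(\eps)$ equals a quantity converging as $\eps \to 0$ plus terms that are $o(1)$, and a parallel $L^1$-plus-$L^1$-derivative argument (as in the first part, now applied to $h$) forces the limit to exist and, by the identity's structure, to equal $0$.

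The main obstacle I anticipate is the boundary term bookkeeping in the second identity: making the Rellich-type integration by parts rigorous requires care because $\del_{y_0} V$ and second derivatives of $V$ need not be classically defined, so one really works with $L_a V \in L^2(y_0^{2+a})$ as the hypothesis, justifies the identity first for smooth approximations $V_k \to V$ in $H^1(y_0^a)$ (possible since $C^\iny(\BB{D}_R)$ is dense), and then passes to the limit — but the slice terms $\int_{\set{y_0=\eps}}$ are not continuous under $H^1(y_0^a)$ convergence for fixed $\eps$, so one must instead integrate the whole identity in $\eps$ over a small interval and use the $L^1$-averaging to extract the pointwise statement, exactly mirroring the mechanism of part one. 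Handling the singular coefficient $\frac{a}{y_0}$ correctly (it contributes a term $a \int_{\{y_0>\eps\}} y_0^{a-1}(\del_{y_0}V)^2\zeta$ whose finiteness near $y_0 = 0$ is \emph{not} obvious from the hypotheses, and which may in fact also only be controlled in the averaged sense) is the subtle point, and I would expect the cleanest route is to combine $\LP V$ and $\frac{a}{y_0}\del_{y_0}V$ into $L_a V$ throughout and never separate them, writing all volume integrals in terms of $y_0^a \gr V$, $L_a V$, and $\zeta$ only.
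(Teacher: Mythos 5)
Your route for \eqref{trace1} is genuinely different from the paper's. The paper introduces a thin cutoff $\zeta_\eps$ supported in the $2\eps$-neighborhood of $\BB{B}_r \cap \set{y_0 = \eps}$ with $\abs{\gr\zeta_\eps}\lesssim\eps^{-1}$, applies the divergence theorem in $y_0$, and confines the resulting volume integral to $\set{\eps < y_0 < 3\eps}$, where it vanishes by dominated convergence. Your averaging/absolute-continuity argument can be made to work, but as written it contains a gap: the inference ``$g\in L^1(0,R)$, so $\liminf_{\eps\to 0^+} g(\eps)=0$'' is false (take $g\equiv 1$). The saving observation is that your own Fubini step actually gives the strictly stronger fact that $\eps^{-1}g(\eps) = \int_{\BB{B}_r\cap\set{y_0=\eps}}\abs{V}^2\eps^a\,dy$ belongs to $L^1(0,R)$, since integrating it in $\eps$ produces $\norm{V}_{L^2(\BB{D}_R,y_0^a)}^2$; nonnegativity together with $\eps^{-1}g\in L^1$ \emph{does} force $\liminf_{\eps\to 0^+}g(\eps)=0$, and combined with your $g'\in L^1$ argument (which gives existence of the limit), this closes \eqref{trace1}. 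You should also account for the boundary contribution to $g'(\eps)$ from the shrinking cross-section $\BB{B}_r\cap\set{y_0=\eps}$; working with a fixed $d$-dimensional disk (a cylinder base), which dominates the ball slices, removes that term.

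For \eqref{trace2}, the global Rellich/Pohozaev identity you propose --- testing $\di\pr{y_0^a\gr V}$ against $\del_{y_0}V\,\zeta(\abs{y})$ over the full half-ball with a spatial cutoff --- genuinely fails, for exactly the reason you flag. The volume integrand there carries the weight $y_0^a$, while the slice carries $y_0^{1+a}$: splitting $\int y_0^a L_aV\,\del_{y_0}V$ by Young against the controlled $y_0^{2+a}\abs{L_aV}^2$ forces $y_0^{a-2}\abs{\gr V}^2$, and a second $y_0$-integration by parts to eliminate $\gr V\cdot\gr\del_{y_0}V$ produces $a\,y_0^{a-1}\abs{\gr V}^2$; neither is controlled by the hypotheses, and keeping the operator bundled as $L_aV$ does not remove these powers of $y_0$. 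The paper's thin-slab proof works precisely because the $\del_{y_0}$-telescoping starts from the slice-weighted quantity $\pr{\abs{\gr V}^2 - 2\pr{\del_{y_0}V}^2}y_0^{1+a}$, so the factor $\tfrac{a}{y_0}$ only ever appears against $y_0^{1+a}$, giving the controllable weight $y_0^a$ (indeed in the paper's architecture the Rellich/Pohozaev identity is Lemma~\ref{Lem6.2}, which \emph{uses} this trace lemma as input rather than proving it). If you want to retain the averaging scheme for $h(\eps)$, the fix is to integrate by parts slice-by-slice in $y$ only, which gives $-\int\del_{y_0}V\,L_aV\,\eps^{1+a}+a\int\pr{\del_{y_0}V}^2\eps^a$ per slice --- both integrable in $\eps$ --- in place of the global Rellich identity, and then run the same $L^1$-plus-$L^1$-derivative argument.
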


\begin{pf}
For $r \in (0, R)$ and $\eps \in \pr{0, \frac{R- r}2}$, let $\zeta_\eps \in C^\iny_0(\BB{B}_R)$ be a nonnegative cutoff function that is supported on the $2\eps$-neighborhood of $\BB{B}_r \cap \set{y_ 0 = \eps}$ with $\zeta_\eps \equiv 1$ on the $\eps$-neighborhood of $\BB{B}_r \cap \set{y_ 0 = \eps}$.
Since $\abs{\gr \zeta_\eps} \lesssim \eps^{-1}$, then $y_0 \abs{\gr \zeta_\eps} \lesssim 1$.
An integration by parts gives
\begin{align*}
\int_{\BB{B}_r \cap \set{y_ 0 = \eps}} V^2 y_0^{1+a} dy
&\le 
\int_{\BB{B}_R \cap \set{y_ 0 = \eps}} \zeta_\eps V^2 y_0^{1+a} dy
= - \int_{\BB{B}_R \cap \set{y_ 0 > \eps}} \del_{y_0} \pr{\zeta_\eps V^2 y_0^{1+a}} dY \\
&= - \int_{\BB{B}_R \cap \set{y_ 0 > \eps}} \brac{y_0 \del_{y_0} \zeta_\eps + \zeta_\eps \pr{1 + a}}  V^2 y_0^{a} dY
- \int_{\BB{B}_R \cap \set{y_ 0 > \eps}}  2 \zeta_\eps V \del_{y_0} V  y_0^{1+a} dY \\
&\lesssim \int_{\BB{B}_R \cap \set{\eps < y_ 0 < 3 \eps}} \pr{V^2 + \abs{\gr V}^2} y_0^{a} dY,
\end{align*}
where we have used Young's inequality and the properties of $\zeta_\eps$.
Since $V \in H^1\pr{\BB{D}_R, y_0^a}$, then 
$$\lim_{\eps \to 0^+}\int_{\BB{B}_R \cap \set{\eps < y_ 0 < 3 \eps}} \pr{V^2 + \abs{\gr V}^2 }y_0^{a} dY = 0$$
and the first conclusion follows.

We analyze the gradient term in a similar way:
\begin{align*}
\int_{\BB{B}_R \cap \set{y_ 0 = \eps}} \zeta_\eps \brac{\abs{\gr_y V}^2 - \pr{\del_{y_0}V}^2} & y_0^{1+a} dy
= - \int_{\BB{B}_R \cap \set{y_ 0 > \eps}} \del_{y_0} \brac{\zeta_\eps \pr{\abs{\gr_y V}^2 - \pr{\del_{y_0}V}^2} y_0^{1+a}} dY \\
&= \int_{\BB{B}_R \cap \set{y_ 0 > \eps}} \brac{y_0 \del_{y_0} \zeta_\eps + \zeta_\eps \pr{1 + a}} \pr{-\abs{\gr_y V}^2 + \pr{\del_{y_0}V}^2} y_0^{a} dY \\ 
&+ 2 \int_{\BB{B}_R \cap \set{y_ 0 > \eps}} \zeta_\eps  \pr{-\gr_y V \cdot \gr_y \del_{y_0} V + \del_{y_0}V \del_{y_0}^2 V} y_0^{1+a} dY.
\end{align*}
Integrating the last term by parts shows that
\begin{align*}
&\int_{\BB{B}_R \cap \set{y_ 0 > \eps}} \zeta_\eps  \pr{-\gr_y V \cdot \gr_y \del_{y_0} V + \del_{y_0}V \del_{y_0}^2 V} y_0^{1+a} dY \\
&= \int_{\BB{B}_R \cap \set{y_ 0 > \eps}} \zeta_\eps  \del_{y_0}V \pr{\LP_y V +  \del_{y_0}^2 V} y_0^{1+a} dY
+ \int_{\BB{B}_R \cap \set{y_ 0 > \eps}} y_0 \gr_y \zeta_\eps \cdot \gr_y V \del_{y_0} V y_0^{a} dY \\
&= \int_{\BB{B}_R \cap \set{y_ 0 > \eps}} y_0 \zeta_\eps  \del_{y_0}V \di\pr{y_0^{a} \gr V} dY
- a \int_{\BB{B}_R \cap \set{y_ 0 > \eps}} \zeta_\eps \pr{\del_{y_0} V}^2 y_0^{a} dY
+ \int_{\BB{B}_R \cap \set{y_ 0 > \eps}} y_0 \gr_y \zeta_\eps \cdot \gr_y V \del_{y_0} V y_0^{a} dY.
\end{align*}
Simplifying and using the bounds on $\zeta_\eps$ as above gives
\begin{align*}
\int_{\BB{B}_r \cap \set{y_ 0 = \eps}} \brac{\abs{\gr V}^2 - 2 \pr{\del_{y_0} V}^2} y_0^{1+a} dy
&\lesssim \int_{\BB{B}_R \cap \set{\eps < y_ 0 < 3 \eps}} \pr{\abs{\gr V}^2 +y_0^2 \brac{y_0^{-a}\di\pr{y_0^{a} \gr V}}^2}y_0^{a} dY.
\end{align*}
Arguing as in the first case leads to the second conclusion.
\end{pf}

We define a preliminary space for our parabolic functions.

\begin{defn}[Moderate $a$-growth at infinity]
\label{aGrowth}
Let $U: \R^{d+1}_+ \times \pr{0, T} \to \R$ be a  function with weak first order derivatives. We say that $U$ has \textbf{moderate $a$-growth at infinity} if for every $t \in (0, T)$, the functionals $\mathcal{H}\pr{\cdot \ ; U}$, $\mathcal{D}\pr{\cdot \ ; U}$, and $\mathcal{T}\pr{\cdot \ ; U}$ from Definition \ref{functionalDefs} all belong to $L^1((0,t))$.
\end{defn}

The transformations send functions from the parabolic space to the appropriate elliptic space, defined above.

\begin{lem}[Inheritance of function classes]
\label{functionClassLemma}
    If $U$ defined on $\R^{d+1}_+ \times (0, T)$ has moderate $a$-growth at infinity, then each $V_n : \BB{D}^{dn+1}_{\sqrt{MT}} \to \R$ defined by 
    $V_n \pr{Y} = U\pr{F_n\pr{Y}}$
    belongs to $H^1\pr{\BB{D}^{dn+1}_{\sqrt{M t}}, y_0^a}$ for every $t \in (0, T)$.
\end{lem}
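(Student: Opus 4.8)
The plan is to express the weighted Sobolev norm of $V_n$ on $\BB{D}^{dn+1}_{\sqrt{Mt}}$ in terms of the parabolic functionals of $U$, and then invoke the hypothesis that $\mathcal{H}(\cdot\,;U)$, $\mathcal{D}(\cdot\,;U)$, $\mathcal{T}(\cdot\,;U) \in L^1(0,t)$. First I would recall that by definition
$$\norm{V_n}^2_{H^1\pr{\BB{D}^{dn+1}_{\sqrt{Mt}}, y_0^a}}
= \int_{\BB{D}^{dn+1}_{\sqrt{Mt}}} \abs{V_n(Y)}^2 y_0^a \, dY
+ \int_{\BB{D}^{dn+1}_{\sqrt{Mt}}} \abs{\gr V_n(Y)}^2 y_0^a \, dY,$$
so it suffices to bound each of these two integrals. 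For the first term, I would apply Lemma \ref{PFT} (the Variable $t$ bridge) with $\phi(X,t) = \abs{U(X,t)}^2$, which is integrable against $x_0^a \mathcal{G}(X,t)\,dX\,dt$ on $\R^{d+1}_+ \times (0,t)$ precisely because $\mathcal{H}(\cdot\,;U) \in L^1(0,t)$; this yields
$$\int_{\BB{D}^{dn+1}_{\sqrt{Mt}}} \abs{V_n(Y)}^2 y_0^a \, dY
= \frac{\bar C \sqrt M}{2} \int_0^t \tau^{\frac{dn-1+a}{2}} \mathcal{H}_n(\tau)\, d\tau < \iny.$$
For the gradient term, I would use the bound \eqref{grVnBound1} from Lemma \ref{bridgeLemmaResults}, namely
$$\frac{\sqrt M}{2\bar C} \int_{\BB{D}^{dn+1}_{\sqrt{Mt}}} \abs{\gr V_n(Y)}^2 y_0^a \, dY
\le \frac{Mn}{2} \int_0^t \tau^{\frac{dn-1+a}{2}} \mathcal{D}_n(\tau)\, d\tau
+ 2 \int_0^t \tau^{\frac{dn+1+a}{2}} \mathcal{T}_n(\tau)\, d\tau,$$
whose right-hand side is finite since $\mathcal{D}(\cdot\,;U), \mathcal{T}(\cdot\,;U) \in L^1(0,t)$ (and $\mathcal{D}_n \le C_0 \mathcal{D}$, $\mathcal{T}_n \le C_0 \mathcal{T}$ by the pointwise bound \eqref{pointwiseGBound}, which also justifies the integrability needed to apply the bridge lemma in the first place). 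Adding the two estimates shows $V_n \in H^1\pr{\BB{D}^{dn+1}_{\sqrt{Mt}}, y_0^a}$ for every $t \in (0,T)$.

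The only subtlety — and the step I expect to require the most care — is the membership in the \emph{closure} sense: $H^1(\Om, y_0^a)$ is defined as the completion of $C^\iny(\Om)$ under the weighted norm, so one must check that $V_n$, a priori only a measurable function with weak derivatives and finite weighted $H^1$-norm, actually lies in that completion rather than merely in the ``weak" weighted Sobolev space. This is handled either by noting that for degenerate $A_2$-weights such as $y_0^a$ with $a \in (-1,1)$ the two definitions coincide on nice domains like half-balls (a standard density fact, e.g. from Fabes–Kenig–Serapioni type theory), or by observing that $U$ (being given by the Poisson formula \eqref{PPoisson}) and hence $V_n = U \circ F_n$ is smooth in the interior of $\BB{D}^{dn+1}_{\sqrt{Mt}}$, so one can approximate by truncating and mollifying while controlling the weighted norm using the finiteness just established. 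I would include a brief remark to this effect; the rest of the argument is a direct citation of the bridge lemmas and is routine.
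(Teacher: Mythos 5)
Your proposal is correct and essentially identical to the paper's proof: both apply Lemma \ref{PFT} to $\abs{U}^2$ for the $L^2$ part, use \eqref{grVnBound1} together with the pointwise bound \eqref{pointwiseGBound} for the gradient part, and then invoke the identification of the ``closure'' Sobolev space $H^1(\cdot, y_0^a)$ with the ``weak'' space $W^{1,2}(\cdot, y_0^a)$ for the $A_2$-weight $y_0^a$ (the paper cites Kilpel\"{a}inen \cite{Kil94} for this last point, exactly the standard density fact you allude to). One minor observation: your exponent $\tau^{\frac{dn-1+a}{2}}$ in the $L^2$ computation is the correct one coming from Lemma \ref{PFT}; the paper's display has $\tau^{\frac{dn-1-a}{2}}$, which appears to be a small sign typo.
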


\begin{proof}
Fix $t \in (0, T)$.
With the notation from \cite{Kil94}, we show that $V_n \in W^{1,2}\pr{\BB{D}^{dn+1}_{\sqrt{M t}}, y_0^a}$ by checking that $\disp \int_{\BB{D}_{\sqrt{M t}} } \pr{\abs{V_n(Y)}^2 + \abs{\gr V_n(Y)}^2} y_0^a dY < \iny$.
 
 By Lemma \ref{PFT}, \eqref{pointwiseGBound} in Lemma \ref{GaussianLimit}, and Definition \ref{functionalDefs},
\begin{align*}
\int_{\BB{D}_{\sqrt{M t}} } \abs{V_n(Y)}^2 y_0^a dY
&= \frac{\bar{C} \sqrt M}{2} \int_0^t \int_{\R^{d+1}_+} \abs{U\pr{X, \tau}}^2 \tau^{\frac{dn-1-a}{2}} \dGnta d\tau
\le \frac{C_0 \bar{C} \sqrt M}{2} \int_0^t \tau^{\frac{dn-1-a}{2}} \mathcal{H}(\tau) d\tau \\
&\le \frac{C_0 \bar{C} \sqrt M}{2} t^{\frac{dn-1-a}{2}} \norm{\mathcal{H}}_{L^1([0,t])}
< \iny.
\end{align*}
From \eqref{grVnBound1} in Lemma \ref{bridgeLemmaResults} and \eqref{pointwiseGBound} again, we see that
\begin{align*}
\int_{\BB{D}_{\sqrt{Mt}}} \abs{\gr V_n\pr{Y}}^2 y_0^a \, dY
&\le n C_0 \bar{C}\sqrt M t^{\frac{dn-1+a}{2}} \norm{\mathcal{D}}_{L^1([0,t])}
+ \frac{4 C_0 \bar{C}}{\sqrt M} t^{\frac{dn+1+a}{2}} \norm{\mathcal{T}}_{L^1([0,t])}
< \iny.
\end{align*}
As shown in \cite{Kil94}, $W^{1,2}\pr{\BB{D}^{dn+1}_{\sqrt{M t}}, y_0^a} = H^1\pr{\BB{D}^{dn+1}_{\sqrt{M t}}, y_0^a}$, so the conclusion follows.
\end{proof}

\section{Almgren Frequency Functions}
\label{S:almgren}

In this section, we prove an Almgren-type monotonicity formula for solutions to nonhomogeneous degenerate elliptic equations. 
We then use our elliptic monotonicity formula to prove a parabolic monotonicity formula for solutions to degenerate parabolic equations. 
More precisely, we use the elliptic result to show that the Almgren frequency
function associated with the degenerate parabolic operator, $x_0^{a}\partial_tU+\text{div}(x_0^{a}\nabla U)$, is monotonically non-decreasing. 

\subsection{Almgren frequency functions in the elliptic setting}
\label{SS:EAlmgren}

We start by adapting the monotonicity formula given in \cite{CS07} to the nonhomogeneous setting.

Recall that $Y = \pr{y_0, y} \in \R^{N+1}_+$.
We work in upper half-balls and upper half-spheres, denoted by $\BB{D}_R^{N+1}$ and $\BB{H}_R^N$, respectively.
At times, when the dimension is understood from the context, we write $\BB{D}_R$ and $\BB{H}_R$ in place of $\BB{D}_R^{N+1}$ and $\BB{H}^N_R$. 
Note that the boundary of $\BB{D}_R$, denoted by $\del \BB{D}_R$, is the disjoint union of $\BB{H}_R$ and the set $\BB{B}_R \cap \set{y_0 = 0}$, where $\BB{B}_R$ denotes the ball of radius $R$.
Our elliptic Almgren monotonicity result is the following:

\begin{thm}[Nonhomogeneous Elliptic Almgren, cf. Theorem 6.1 in \cite{CS07}]
\label{monoThm}
For $V \in H^1\pr{\BB{D}_R, y_0^a} \cap C^2\pr{\BB{D}_R}$, define
\begin{equation}
\label{HDLDefn}
\begin{aligned}
& H\pr{r; V} = \int_{\BB{H}_r} \abs{V\pr{Y}}^2 y_0^{a} \, d{\eta} \\ 
& D\pr{r; V} = \int_{\BB{D}_r} \abs{\gr V\pr{Y}}^2 y_0^{a} \, d{Y} \\
& L\pr{r; V} = \frac{r \, D\pr{r; V}}{ H\pr{r; V}}.
\end{aligned}    
\end{equation}
If $V$ satisfies the elliptic boundary conditions as in Definition \ref{eBC} and is a solution to 
\begin{equation}
\LP_y V + \frac{a}{y_0} \del_{y_0} V + \del^2_{y_0} V = K \quad \text{ in } \; \BB{D}_{R},
\label{VEqn}
\end{equation}
where $K \in L^2\pr{\BB{D}_R, y_0^a}$, then for all $r \in (0, R)$ at which $H(r; V) > 0$, it holds that
\begin{align*}
L'\pr{r} \pr{\int_{\BB{H}_r} \abs{V(Y)}^2 y_0^{a} \, d\eta }^2
&\ge 2 \pr{\int_{\BB{H}_r} V(Y) \, \pr{Y \cdot \gr V(Y)} \, y_0^{a} \, d\eta} \pr{ \int_{\BB{D}_r} V(Y) \, K(Y) \, y_0^{a} \, d{Y}} \\
&- 2 \pr{\int_{\BB{H}_r} \abs{V(Y)}^2 y_0^{a} \, d \eta} \pr{\int_{\BB{D}_r} \pr{ Y \cdot \gr V(Y)} \,K(Y) \,  y_0^{a} \, d{Y} } .
\end{align*}
\end{thm}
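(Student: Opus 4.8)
The plan is to compute $L'(r)$ directly via logarithmic differentiation, so I first record formulas for $H'(r)$ and $D'(r)$. For $H(r;V) = \int_{\BB{H}_r} |V|^2 y_0^a \, d\eta$, after normalizing to the unit half-sphere (writing $Y = r\omega$) and differentiating, the standard computation gives $H'(r) = \frac{N+a}{r} H(r) + 2 \int_{\BB{H}_r} V (\partial_\nu V) y_0^a \, d\eta$, where $\partial_\nu V = \frac{Y}{|Y|}\cdot \gr V$ is the radial derivative on the sphere; equivalently $\int_{\BB{H}_r} V (\partial_\nu V) y_0^a \, d\eta = \frac{1}{r}\int_{\BB{H}_r} V (Y\cdot\gr V) y_0^a\, d\eta$. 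For $D(r)$, I would use the divergence structure of the equation \eqref{VEqn}: since $y_0^{-a}\di(y_0^a \gr V) = K$, multiplying by $V$ and integrating over $\BB{D}_r$ and using the divergence theorem gives $D(r) = \int_{\BB{H}_r} V(\partial_\nu V) y_0^a \, d\eta - \int_{\BB{D}_r} V K y_0^a \, dY$, using here that the boundary contribution on the flat part $\BB{B}_r \cap \{y_0=0\}$ vanishes — this is exactly where the elliptic boundary conditions of Definition \ref{eBC} (type I makes $y_0^a\partial_{y_0}V \to 0$, type II makes $V\to 0$) are used, together with the assumed boundedness of $y_0^a \partial_{y_0} V$ near $y_0=0$. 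Then $D'(r) = \int_{\BB{H}_r}(\cdots)$, and the Rellich–Nečas (Pohozaev) identity for $L_a$ — contracting the equation against $Y\cdot\gr V$ — yields $D'(r) = \frac{N-1+a}{r}D(r) + 2\int_{\BB{H}_r}(\partial_\nu V)^2 y_0^a \, d\eta - \frac{2}{r}\int_{\BB{D}_r}(Y\cdot\gr V) K y_0^a \, dY$; again the flat-boundary terms drop by the boundary conditions, and the weighted trace identity \eqref{trace2} of Lemma \ref{traceLemma} is what justifies that the $|\gr V|^2$ versus $(\partial_{y_0}V)^2$ boundary terms behave correctly in the limit.

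With these two derivative formulas in hand, the next step is purely algebraic. Writing $A = \int_{\BB{H}_r} V(Y\cdot\gr V) y_0^a \, d\eta$, $B = \int_{\BB{H}_r}(Y\cdot\gr V)^2 y_0^a \, d\eta$ (so that by Cauchy–Schwarz $A^2 \le H(r) B$), $P = \int_{\BB{D}_r} V K y_0^a\, dY$, and $Q = \int_{\BB{D}_r}(Y\cdot\gr V) K y_0^a \, dY$, we have $H'(r) = \frac{N+a}{r}H + \frac{2}{r}A$ and $D = \frac{1}{r}A - P$ and $D'(r) = \frac{N-1+a}{r}D + \frac{2}{r^2}B - \frac{2}{r}Q$. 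Then from $L = rD/H$,
\begin{equation*}
L'(r) H(r)^2 = \pr{D + r D' } H - r D H' = \pr{D + rD'}H - rD\pr{\tfrac{N+a}{r}H + \tfrac{2}{r}A}.
\end{equation*}
Substituting the expressions for $D$, $D'$, and $H'$, the terms involving $N$ cancel and one is left — after using $rD = A - rP$ — with an expression of the form $2B H - 2A^2 + (\text{terms linear in } P, Q)$. The Cauchy–Schwarz bound $A^2 \le HB$ kills the $2BH - 2A^2 \ge 0$ part, and collecting the remaining terms gives precisely $L'(r)H(r)^2 \ge 2 A P \cdot(\text{sign})- 2 H Q \cdot(\text{sign})$; matching signs and constants against the claimed inequality $L'(r)H^2 \ge 2AP - 2HQ$ (in the notation of the statement) is then a bookkeeping check.

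The main obstacle I anticipate is not the algebra but the rigorous justification of the boundary integrations by parts at $y_0 = 0$ under the stated hypotheses: $V$ is only in $H^1(\BB{D}_R, y_0^a)\cap C^2(\BB{D}_R)$ and $K \in L^2(\BB{D}_R, y_0^a)$, with boundedness of $y_0^a\partial_{y_0}V$ only near the thin set. So the differentiation of $H$ and $D$ must really be carried out at level $y_0 = \eps$, the identities established there, and then the limit $\eps \to 0^+$ taken, invoking Lemma \ref{traceLemma} (both \eqref{trace1} and \eqref{trace2}) and the two cases of Definition \ref{eBC} to show the flat-boundary contributions vanish. A secondary subtlety is that $H(r)$, $D(r)$ need to be shown absolutely continuous / differentiable in $r$ for a.e.\ $r$ so that $L'(r)$ makes sense and the pointwise inequality suffices; this follows from the coarea formula and the $H^1$ regularity, but should be noted. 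Once the boundary terms are controlled, the computation of $L'$ is the routine Almgren argument adapted to the weight $y_0^a$ and the inhomogeneity $K$, exactly paralleling Theorem 6.1 of \cite{CS07} with the extra $K$-terms tracked through.
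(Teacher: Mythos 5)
Your proposal is correct and follows essentially the same route as the paper: you compute $H'$ via rescaling, express $D$ through the weak form of the equation (your $D = A/r - P$ is exactly the paper's \eqref{divExp}), obtain $D'$ from the $y_0^a$-weighted Rellich–Pohozaev identity with the $K$-correction (this is precisely Lemma \ref{Lem6.2} in the paper), combine algebraically so that the $(N+a)$ terms cancel, and finish with Cauchy–Schwarz applied to $A^2 \le HB$, with the flat-boundary terms at $y_0=0$ controlled by Definition \ref{eBC} and Lemma \ref{traceLemma} exactly as you indicate. The paper does carry out the $\eps$-level integration by parts you flag as the main technical point, so no gap remains.
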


To prove this theorem, we need the following nonhomogeneous version of the lemma that is used in the proof of \cite{CS07}*{Theorem 6.1}.

\begin{lem}[Nonhomogeneous version of Lemma 6.2 in \cite{CS07}]
\label{Lem6.2}
For $V \in H^1\pr{\BB{D}_R, y_0^a} \cap C^2\pr{\BB{D}_R}$, assume that there exists $\eps_0 > 0$ so that $\disp y_0^{a} \del_{y_0} V(y_0, y)$ is bounded whenever $(y_0, y) \in \mathbb{D}_R \cap \set{y_0 \in(0, \eps_0)}$ and either $\disp \del^a_{\nu} V(0,y) = 0$ or $\disp \lim_{y_0 \to 0^+} y \cdot \gr_y V\pr{y_0, y} = 0$.
If $V$ is a solution to \eqref{VEqn} in $\BB{D}_{R}^{N+1}$, where $K \in L^2\pr{\BB{D}_R, y_0^a}$, then for any $r \in (0, R)$, it holds that
\begin{align*}
r \int_{\BB{H}_r} \brac{ \abs{\gr_{\tan} V(Y)}^2 - \pr{\frac{\del V(Y)}{\del \nu}}^2} y_0^{a} \, d\eta
&= \int_{\BB{D}_r} \brac{\pr{N - 1+a} \abs{\gr V(Y)}^2 - 2 \pr{Y \cdot \gr V(Y)} K(Y)} y_0^{a}  \, d{Y},
\end{align*}
where $\gr_{\tan}$ denotes for the tangential gradient of $V$, and $\frac{\del}{\del \nu}$ denotes the normal derivative.
\end{lem}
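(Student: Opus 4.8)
The strategy is the standard Rellich--Nečas (Pohozaev-type) identity argument, adapted to the degenerate weight $y_0^a$ and the presence of the inhomogeneity $K$. The plan is to compute $\int_{\BB{D}_r} \di\pr{y_0^a \gr V} \, \pr{Y \cdot \gr V} \, dY$ in two ways. On one hand, since $V$ solves \eqref{VEqn}, we have $y_0^{-a}\di\pr{y_0^a \gr V} = \LP_y V + \frac{a}{y_0}\del_{y_0}V + \del_{y_0}^2 V = K$, so this integral equals $\int_{\BB{D}_r} K \, \pr{Y \cdot \gr V} \, y_0^a \, dY$. On the other hand, I would integrate by parts on $\BB{D}_r$, whose boundary is $\BB{H}_r \sqcup \pr{\BB{B}_r \cap \set{y_0 = 0}}$, treating the thin boundary piece via a limiting argument over $\set{y_0 = \eps}$ and letting $\eps \to 0^+$.

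The key computation is the pointwise identity
\[
\di\pr{y_0^a \gr V} \pr{Y \cdot \gr V}
= \di\brac{y_0^a \pr{Y \cdot \gr V} \gr V - \tfrac{1}{2} y_0^a \abs{\gr V}^2 Y}
+ \tfrac{N-1+a}{2} y_0^a \abs{\gr V}^2,
\]
which one checks by expanding both sides (the $\frac{a}{2}\abs{\gr V}^2 y_0^{a-1}$ term coming from $\di(y_0^a Y)$ combines with the $N+1$ from the Euclidean divergence to give $N+1+a$, and the $-1$ appears after subtracting the piece $y_0^a\abs{\gr V}^2$ absorbed into rewriting the quadratic term; I would carry out this algebra carefully but it is routine). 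Integrating over $\BB{D}_r$ and applying the divergence theorem, the outward normal on $\BB{H}_r$ is $\nu = Y/r$, so $Y \cdot \nu = r$ and $Y \cdot \gr V = r\,\del_\nu V$ there; the boundary term on $\BB{H}_r$ becomes
\[
\int_{\BB{H}_r} \brac{r\pr{\del_\nu V}^2 - \tfrac{r}{2}\abs{\gr V}^2} y_0^a \, d\eta
= \tfrac{r}{2}\int_{\BB{H}_r}\brac{\pr{\del_\nu V}^2 - \abs{\gr_{\tan} V}^2} y_0^a\, d\eta,
\]
using the orthogonal decomposition $\abs{\gr V}^2 = \pr{\del_\nu V}^2 + \abs{\gr_{\tan}V}^2$. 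Rearranging and combining with the first computation of the integral gives exactly the claimed identity, up to showing the thin-boundary contribution vanishes.

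The main obstacle is precisely the contribution from the thin boundary $\BB{B}_r \cap \set{y_0 = 0}$: one must show that $\disp \lim_{\eps\to 0^+}\int_{\BB{B}_r \cap \set{y_0 = \eps}} \brac{y_0^a \pr{Y\cdot\gr V}\del_{y_0}V - \tfrac12 y_0^a\abs{\gr V}^2 \pr{Y\cdot e_0}} dy = 0$, where $e_0$ is the $y_0$-direction. The factor $Y \cdot e_0 = y_0$ provides an extra power that, combined with $y_0^a$ and the trace control from Lemma \ref{traceLemma} (specifically \eqref{trace1} applied to $\gr V$ componentwise, which is available since $K \in L^2(\BB{D}_R, y_0^a)$ controls the relevant second-order quantity), forces the $\abs{\gr V}^2$ term to vanish. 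For the first term I would split $Y\cdot\gr V = \pr{y\cdot\gr_y V} + y_0 \del_{y_0}V$: the piece $y_0 (\del_{y_0}V)^2 y_0^a$ vanishes by the same trace estimate, while the piece $y_0^a \pr{y\cdot\gr_y V}\del_{y_0}V$ is handled by the dichotomy in the hypothesis --- if $\del_\nu^a V(0,y) = y_0^a\del_{y_0}V \to 0$ we use boundedness of $y_0^a\del_{y_0}V$ together with this decay and integrability of $y\cdot\gr_y V$ in $y_0^a\,dy$ near the thin set; if instead $y\cdot\gr_y V \to 0$ we pair it with the bounded factor $y_0^a\del_{y_0}V$. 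Each case requires a dominated-convergence argument using the weighted $H^1$ membership of $V$ and the uniform bound on $y_0^a\del_{y_0}V$ on $\set{y_0 < \eps_0}$; this bookkeeping, rather than any deep difficulty, is where the real work lies. Once the thin boundary term is dispatched, the identity follows immediately.
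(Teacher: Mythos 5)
Your overall architecture --- a weighted Rellich--Pohozaev identity for the vector field $y_0^a\pr{Y\cdot\gr V}\gr V - \tfrac12 y_0^a\abs{\gr V}^2 Y$, the divergence theorem on $\BB{B}_r\cap\set{y_0>\eps}$, and a limit $\eps\to 0^+$ killing the thin-boundary contribution --- matches the paper's proof exactly (your identity is the paper's with both sides negated), and your reduction of the outer-sphere term to $\tfrac r2\int_{\BB{H}_r}\brac{\pr{\del_\nu V}^2-\abs{\gr_{\tan}V}^2}y_0^a\,d\eta$ is correct. The gap is in how you dispose of the thin-boundary piece $\int_{\BB{B}_r\cap\set{y_0=\eps}}\brac{\tfrac12\abs{\gr V}^2 - \pr{\del_{y_0}V}^2}y_0^{1+a}\,dy$. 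You propose to kill the two quadratic terms separately by applying \eqref{trace1} to $\gr V$ componentwise, and you justify this by saying $K\in L^2(\BB{D}_R,y_0^a)$ ``controls the relevant second-order quantity.'' It does not: \eqref{trace1} requires $\partial_j V\in H^1(\BB{D}_R,y_0^a)$, i.e.\ a full weighted $L^2$ bound on the Hessian, whereas the PDE controls only the single combination $y_0^{-a}\di(y_0^a\gr V)=K$. No elliptic-regularity step producing $D^2V\in L^2(\BB{D}_R,y_0^a)$ is available from the stated hypotheses (and even interior weighted $W^{2,2}$ regularity for $L_a$ would have to be cited and would not extend uniformly to the thin set). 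The paper's Lemma \ref{traceLemma}\eqref{trace2} is designed precisely to sidestep this: it controls the exact combination $\abs{\gr V}^2 - 2\pr{\del_{y_0}V}^2$ under only the hypothesis $y_0^{-a}\di(y_0^a\gr V)\in L^2(\BB{D}_R,y_0^{2+a})$, which follows from $K\in L^2(\BB{D}_R,y_0^a)$ since $y_0^{2+a}\lesssim y_0^a$ on a bounded domain. Replacing your componentwise \eqref{trace1} step with a single appeal to \eqref{trace2} (noting that $\tfrac12\abs{\gr V}^2-\pr{\del_{y_0}V}^2 = \tfrac12\brac{\abs{\gr V}^2-2\pr{\del_{y_0}V}^2}$) closes the gap.

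Two minor remarks. First, for the piece $y_0^{1+a}\pr{\del_{y_0}V}^2$ that you isolate when expanding $Y\cdot\gr V = y\cdot\gr_y V + y_0\del_{y_0}V$, no trace lemma is needed at all: the hypothesis that $y_0^a\del_{y_0}V$ is bounded near $\set{y_0=0}$ gives $\pr{\del_{y_0}V}^2 y_0^{1+a} = \pr{y_0^a\del_{y_0}V}^2 y_0^{1-a}\lesssim y_0^{1-a}\to 0$ uniformly since $1-a>0$; this is the argument the paper uses in the proof of Theorem~\ref{monoThm} (there the split is needed; in Lemma~\ref{Lem6.2} it is not, since \eqref{trace2} handles the combination in one step). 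Second, your dichotomy treatment of the cross term $\int\pr{y\cdot\gr_y V}\,y_0^a\del_{y_0}V\,dy$ is in the right spirit (boundedness of $y_0^a\del_{y_0}V$ together with either $\del_\nu^a V\to 0$ or $y\cdot\gr_y V\to 0$, then dominated convergence), which is also what the paper does, though both you and the paper are terse about the choice of dominating function on the slices.
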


\begin{rem}
    We do not need the full power of the elliptic boundary condition for this result, so we have made a slightly weaker assumption.
\end{rem}

\begin{proof}
By \eqref{VEqn}, $\di \pr{y_0^{a} \, \gr V} = y_0^{a} \, K$, so a computation shows that
\begin{align*}
\di \pr{\frac{\abs{\gr V}^2}{2} Y \, y_0^{a}  - \pr{Y \cdot \gr V} \gr V \, y_0^{a} } 
=& \pr{\frac{N - 1+a}{2} \abs{\gr V}^2 -  Y \cdot \gr V K }\, y_0^{a}.
\end{align*}
For any $\eps \in (0, \eps_0)$, applying the divergence theorem on the set $\BB{B}_r \cap \set{y_0 > \eps}$ shows that
\begin{align*}
&r \int_{\del \BB{B}_r \cap \set{y_0 > \eps}} \brac{ \frac{\abs{\gr V}^2}{2} - \pr{\frac{\del V}{\del \nu}}^2} y_0^{a} \, d \eta
- \int_{\BB{B}_r \cap \set{y_0 > \eps}} \pr{\frac{N - 1+a}{2} \abs{\gr V}^2 - \pr{Y \cdot \gr V} K } y_0^{a} \, d{Y}\\
=& \int_{\mathbb{B}_r \cap \set{y_0 = \eps}} \pr{\frac{\abs{\gr V\pr{y_0, y}}^2}{2} 
- \abs{\del_{y_0} V\pr{y_0, y}}^2} y_0^{1+a} dy
- \int_{\mathbb{B}_r \cap \set{y_0 = \eps}} y \cdot \gr_y V\pr{y_0, y} y_0^{a}  \del_{y_0} V\pr{y_0, y} dy.
\end{align*}
Since $K \in L^2\pr{\BB{D}_R, y_0^a}$, then by \eqref{trace2} in Lemma \ref{traceLemma}, we deduce that the first term on the right vanishes in the limit as $\eps \to 0^+$. 
The boundary assumptions with the dominated convergence theorem ensure that the second term on the right also vanishes in the limit.
It follows that
\begin{align*}
r \int_{\BB{H}_r} \brac{ \frac{\abs{\gr V}^2}{2} - \pr{\frac{\del V}{\del \nu}}^2} y_0^{a} \, d\eta 
&= \int_{\BB{D}_r} \pr{\frac{N - 1+a}{2} \abs{\gr V}^2 - \pr{Y \cdot \gr V} K} y_0^{a}  \,  d{Y},
\end{align*}
which leads to the conclusion of the lemma since $\disp \abs{\gr V}^2 = \abs{\gr_{\tan} V}^2 + \pr{\frac{\del V}{\del \nu}}^2$.
\end{proof}

Now we use Lemma \ref{Lem6.2} to prove Theorem \ref{monoThm}.

\begin{proof}[Proof of Theorem \ref{monoThm}]
Observe that
\begin{align*}
D'\pr{r; V}
&= \frac{d}{dr}\brac{ r^{N+1} \int_{\BB{D}_1} \abs{\gr V\pr{rY}}^2 \pr{r y_0}^{a} } \\
&= r^{N+1} \brac{\frac{N+1}{r} \int_{\BB{D}_1} \abs{\gr V\pr{r Y}}^2 \pr{r y_0}^{a}
+ \int_{\BB{D}_1} \gr\pr{\pr{r y_0}^{a} \abs{\gr V\pr{r Y}}^2} \cdot Y} \\
&= \frac 1 r \int_{\BB{D}_r} \di \pr{y_0^{a} \abs{\gr V\pr{Y}}^2 Y}
= \lim_{\eps \to 0} \frac 1 r \int_{\BB{B}_r \cap \set{y_0 > \eps}} \di \pr{y_0^{a} \abs{\gr V\pr{Y}}^2 Y}  \\
&=  \lim_{\eps \to 0}  \frac 1 r \int_{\del \BB{B}_r \cap \set{\eps > 0}} \abs{\gr V\pr{Y}}^2 \pr{Y \cdot \nu} \, y_0^{a} 
+  \lim_{\eps \to 0}  \frac 1 r \int_{\BB{B}_r \cap \set{y_0 = \eps}} \abs{\gr V\pr{Y}}^2 \pr{Y \cdot \nu} \, y_0^{a}   \\
&=  \int_{\BB{H}_r} \abs{\gr V\pr{Y}}^2 \, y_0^{a} 
+  \lim_{\eps \to 0}  \frac 1 r \int_{\BB{B}_r \cap \set{y_0 = \eps}} \abs{\gr V\pr{Y}}^2 \, y_0^{1+a} . 
\end{align*}
Since $V$ satisfies the elliptic boundary conditions, then $y_0^a \del_{y_0} V$ is bounded near $\set{y_0 = 0}$ and we see that 
\begin{align*}
\lim_{\eps \to 0^+} \int_{\BB{B}_r \cap \set{y_0 = \eps}} \pr{\del_{y_0} V}^2 \, y_0^{1+a}
&= \lim_{\eps \to 0^+} \int_{\BB{B}_r \cap \set{y_0 = \eps}} \pr{y_0^a \del_{y_0} V}^2 \, y_0^{1-a}
= 0,
\end{align*}
since $1 - a > 0$.
In combination with Lemma \ref{traceLemma}, we may conclude that $\disp \lim_{\eps \to 0} \int_{\BB{B}_r \cap \set{y_0 = \eps}} \abs{\gr V\pr{Y}}^2 \, y_0^{1+a} = 0$ and therefore,
\begin{equation}
\label{D'0}
\begin{aligned}
D'\pr{r; V} &= \int_{\BB{H}_r} \abs{\gr V\pr{Y}}^2 y_0^{a}.
\end{aligned}
\end{equation}
An application of Lemma \ref{Lem6.2} shows that
\begin{equation}
\label{gradsphere}
\begin{aligned}
D'\pr{r; V}
&= \int_{\BB{H}_r} \abs{\gr V}^2 y_0^{a}
= \int_{\BB{H}_r} \brac{ \abs{\gr_{\tan} V}^2 - \pr{\frac{\del V}{\del \nu}}^2} y_0^{a}
+ 2 \int_{\BB{H}_r} \pr{\frac{\del V}{\del \nu}}^2 y_0^{a} \\
&= \frac {N - 1-a} r D\pr{r; V}
- \frac 2 r \int_{\BB{D}_r} \pr{Y \cdot \gr V} K y_0^{a}
+ \frac{2}{r^2} \int_{\BB{H}_r} \pr{Y \cdot \gr V}^2 y_0^{a}.
\end{aligned}
\end{equation}
Since $\abs{\gr V}^2 y_0^{a} = \di\pr{V y_0^{a} \gr V} - V \di\pr{y_0^{a} \gr V}$, then the divergence theorem shows that
\begin{equation}
\label{divExp}
\begin{aligned}
D\pr{r; V} 
&=\int_{\BB{D}_r} \abs{\gr V}^2 y_0^{a}
= \lim_{\eps \to 0^+} \int_{\BB{B}_r \cap \set{y_0 > \eps}}  \di\pr{V y_0^{a} \gr V}
- \int_{\BB{D}_r} V \di\pr{y_0^{a} \gr V} \\
&= \int_{\BB{H}_r} V \frac{\del V}{\del \nu} y_0^a
- \lim_{\eps \to 0^+} \int_{\BB{B}_r \cap \set{y_0 = \eps}} V y_0^{a} \del_{y_0} V
- \int_{\BB{D}_r} V K y_0^{a} \\
&= \frac 1 r \int_{\BB{H}_r} V \pr{Y \cdot \gr V} y_0^{a}
- \int_{\BB{D}_r} V K y_0^{a},
\end{aligned}
\end{equation}
where the elliptic boundary condition has been used to eliminate the limit term. Another computation shows that
\begin{equation}
\label{H'}
\begin{aligned}
H'\pr{r; V}
&= \frac{d}{dr}\brac{ r^{N+a} \int_{\BB{H}_1} \abs{V\pr{rY}}^2 y_0^{a}} 
= r^{N+a} \brac{\frac{N+a}{r}  \int_{\BB{H}_1}  \abs{V\pr{rY}}^2 y_0^{a}
+ 2 \int_{\BB{H}_1} V\pr{r Y} \gr V\pr{r Y} \cdot Y \, y_0^{a}} \\
&= \frac{N+a}{r} H\pr{r; V}
+ \frac 2 r \int_{\BB{H}_r} V \pr{Y \cdot \gr V} \, y_0^{a}.
\end{aligned}
\end{equation}
Assuming that $H(r; V) > 0$, taking the derivative and plugging in \eqref{gradsphere} and \eqref{H'} shows that
\begin{align*}
L'\pr{r; V} \pr{H\pr{r; V}}^2
&=  D\pr{r; V}  H\pr{r; V}
- r D\pr{r; V} H'\pr{r; V} 
+  r D'\pr{r; V}  H\pr{r; V} \\
&= D\pr{r; V}  H\pr{r; V} 
- r D(r; V) \brac{\frac{N+a}{r} H\pr{r; V}
+ \frac 2 r \int_{\BB{H}_r} V \pr{Y \cdot \gr V} y_0^{a}} \\
&+  r \brac{\frac {N - 1- a} r D\pr{r; V}
- \frac 2 r \int_{\BB{D}_r} \pr{Y \cdot \gr V} K y_0^{a} 
+ \frac{2}{r^2} \int_{\BB{H}_r} \pr{Y \cdot \gr V}^2 y_0^{a}} H\pr{r; V} \\
&= \brac{1 - (N + a) + N - 1- a } D\pr{r; V}  H\pr{r; V} \\
&- 2 \brac{\frac 1 r \int_{\BB{H}_r} V \pr{Y \cdot \gr V} y_0^{a}
- \int_{\BB{D}_r} V K y_0^{a}} \brac{ \int_{\BB{H}_r} V \pr{Y \cdot \gr V} y_0^{a}} \\
&+  2 \brac{- \int_{\BB{D}_r} \pr{Y \cdot \gr V} K y_0^{a} 
+ \frac{1}{r} \int_{\BB{H}_r} \pr{Y \cdot \gr V}^2 y_0^{a}} \int_{\BB{H}_r} \abs{V}^2 y_0^{a}  \\
&= \frac 2 r \brac{ \pr{\int_{\BB{H}_r} \pr{Y \cdot \gr V}^2 y_0^{a}} \pr{\int_{\BB{H}_r} \abs{V}^2 y_0^{a}}
- \pr{\int_{\BB{H}_r} V \pr{Y \cdot \gr V} y_0^{a} \,  }^2 } \\
&+ 2 \brac{ \pr{\int_{\BB{H}_r} V \pr{Y \cdot \gr V} y_0^{a} } \pr{\int_{\BB{D}_r} V K y_0^{a}}
- \pr{\int_{\BB{H}_r} \abs{V}^2 y_0^{a}} \pr{\int_{\BB{D}_r} \pr{Y \cdot \gr V} K y_0^{a}}},
\end{align*}
where we have applied \eqref{divExp} then simplified to reach the last line.
The Cauchy-Schwartz inequality shows that the first line is non-negative and  the conclusion of the theorem follows.
\end{proof}

\begin{rem}
\label{noBCRemark}
    We point out that the boundary assumptions are not used to establish the equality described by \eqref{H'}.
\end{rem}

\subsection{Almgren frequency functions in the parabolic setting}
\label{SS:PAlmgren}

We use Theorem \ref{monoThm} to establish its parabolic counterpart. 
To begin, we define the class of functions that we work with.

\begin{defn}[Almgren function class]
We say that a function $U = U(X,t)$ defined on $\R^{d+1}_+ \times (0, T)$ belongs to the function class $\mathfrak{A}\pr{\R^{d+1}_+ \times \pr{0, T}}$ if 
\begin{itemize}
    \item $U$ has moderate $a$-growth at infinity (see Definition \ref{aGrowth}),
    \item for every $t_0 \in \pr{0, T}$, there exists $p > 1$, $\al \in \R$ so that
\begin{equation}
\label{AlmgrenClass2}   
t^\al \mathcal{H}, t^\al \mathcal{I}, t^\al \mathcal{J} \in L^{p}\pr{\pr{0, t_0}},
\end{equation}
    \item for every $t_0 \in \pr{0, T}$, there exists $\eps \in \pr{0, t_0}$ so that
\begin{equation}
\label{AlmgrenClass1}
\mathcal{I} \in L^\iny\pr{\brac{t_0 - \eps, t_0}}.
\end{equation}
\end{itemize}
Refer to Definition \ref{functionalDefs} for these functionals.    
\end{defn}

Monotonicity of the Almgren frequency function for solutions to the degenerate parabolic equation is established in \cite{ST17}.
To suit our setting, we reverse the time direction here.

\begin{thm}[Parabolic Almgren Monotonicity, Theorem 1.15 in \cite{ST17}]
\label{pMono}
For $U = U\pr{X,t} \in \mathfrak{A}\pr{\R^{d+1}_+ \times \pr{0, T}}$, 
define
\begin{align}
\label{pAlmFreq}
\mathcal{L}\pr{t; U} 
&= t \frac{\mathcal{D}\pr{t; U}}{\mathcal{H}\pr{t; U}}
= t \frac{\int_{\R^{d+1}_+} \abs{\gr U(X,t)}^2 \dGt}{ \int_{\R^{d+1}_+} \abs{U(X,t)}^2 \dGt}.
\end{align}
If $U$ satisfies the parabolic boundary conditions as in Definition \ref{pBC} and is a solution to 
\begin{equation}
\label{pAlmPDE}
\LP_x U + \frac{a}{x_0} \del_{x_0} U + \del^2_{x_0} U + \del_t U = 0 \text{ in } \R^{d+1}_+ \times \pr{0, T},
\end{equation}
then for all $t \in (0, T)$ for which $\mathcal{H}(t; U) > 0$, it holds that $\mathcal{L}\pr{t; U}$ is monotonically non-decreasing in $t$.
\end{thm}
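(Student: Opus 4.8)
The plan is to derive the parabolic Almgren monotonicity formula as a high-dimensional limit of the elliptic Almgren monotonicity formula from Theorem~\ref{monoThm}. Fix $U \in \mathfrak{A}(\R^{d+1}_+ \times (0,T))$ solving \eqref{pAlmPDE} and satisfying the parabolic boundary conditions. For each $n \in \N$, set $V_n(Y) = U(F_n(Y))$ on $\BB{D}^{dn+1}_{\sqrt{MT}}$. By Lemma~\ref{functionClassLemma}, $V_n \in H^1(\BB{D}^{dn+1}_{\sqrt{Mt}}, y_0^a)$; by Lemma~\ref{ChainRuleLem}, $V_n$ solves \eqref{VnKnEllipEqn} with right-hand side $K_n = \frac{4}{M} J(F_n(\cdot))$; and by Lemma~\ref{BCLemma}, $V_n$ satisfies the elliptic boundary conditions. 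So Theorem~\ref{monoThm} applies with $N = dn$, giving, at each $r = \sqrt{Mt}$ where $H(r; V_n) > 0$,
\begin{align*}
L'(r; V_n)\, H(r; V_n)^2
&\ge 2 \pr{\int_{\BB{H}_r} V_n (Y \cdot \gr V_n) y_0^a}\pr{\int_{\BB{D}_r} V_n K_n y_0^a} \\
&\quad - 2 \pr{\int_{\BB{H}_r} |V_n|^2 y_0^a}\pr{\int_{\BB{D}_r} (Y \cdot \gr V_n) K_n y_0^a}.
\end{align*}

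Next I would translate every quantity in this inequality into the parabolic language using the Bridge Lemmas (Lemmas~\ref{PFTS} and~\ref{PFT}) and their packaged consequences in Lemma~\ref{bridgeLemmaResults}. The key identities are: $H(r; V_n) = \bar C t^{(dn+a)/2} \mathcal{H}_n(t)$ from \eqref{HnExpr}; the mixed sphere term becomes $\bar C t^{(dn+a)/2} \int U \brac{(X,2t)\cdot\gr_{(X,t)}U}\, \dGnt$ via \eqref{sphereMixedTerm} and \eqref{YdotGrExp}; and the two ball integrals involving $K_n$ become, respectively, $\frac{2\bar C}{\sqrt M}\int_0^t \tau^{(dn-1+a)/2}\mathcal{M}_n(\tau)\,d\tau$ and an analogous expression from \eqref{ballKnVn} and \eqref{ballKngradBound}, using \eqref{JDefn}. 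Since $r = \sqrt{Mt}$, we have $\frac{dr}{dt} = \frac{\sqrt M}{2\sqrt t}$, so $L'(r; V_n) = \frac{2\sqrt t}{\sqrt M}\frac{d}{dt}L(\sqrt{Mt}; V_n)$. Using $L(r; V_n) = r D/H = \sqrt{Mt}\cdot D/H$ together with \eqref{HnExpr} and a parabolic analogue of $D$ (obtained by differentiating \eqref{grVnBound1}, or more precisely by computing $D(r;V_n)$ directly from \eqref{grVnBound1} and the fundamental theorem of calculus so that $D'(\sqrt{Mt}; V_n)\cdot\frac{dr}{dt}$ matches), one identifies, after cancellation of the $t^{(dn+a)/2}$ and $\bar C$ factors, a quantity $\mathcal{L}_n(t)$ that is a perturbation of $\mathcal{L}(t; U)$ in \eqref{pAlmFreq}. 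The cleanest route is to define $\mathcal{L}_n(t) := r L(r; V_n)$-type object explicitly as $t \mathcal{D}_n^\sharp(t)/\mathcal{H}_n(t)$ for an appropriate $\mathcal{D}_n^\sharp$, show $\mathcal{L}_n'(t) \ge -E_n(t)$ where $E_n$ collects the $K_n$ error terms, and then pass to the limit.

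The limiting step is where I expect the real work. One must show (i) $\mathcal{L}_n(t) \to \mathcal{L}(t; U)$ pointwise in $t$ — this follows from the Gaussian convergence Lemma~\ref{GaussianLimit} applied to $\mathcal{H}_n, \mathcal{D}_n, \mathcal{T}_n$ (so the $\mathcal{T}_n$ contribution to $\mathcal{D}_n^\sharp$, which carries a factor like $t/M$ or $4t/(Mn)$, vanishes as $n \to \iny$ because $M \sim 2d$ while the coefficient is $O(1/n)$ or the $\mathcal{T}_n$ term is multiplied by $t/(Mn) \to 0$); and (ii) the error terms $E_n(t) \to 0$ as $n \to \iny$, locally uniformly or at least in a way compatible with preserving monotonicity. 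For (ii), the error terms are controlled by Cauchy--Schwarz as in Theorem~\ref{monoThm} by products of $\int_0^t \tau^{(dn-1+a)/2}\mathcal{I}_n(\tau)\,d\tau$ and $\int_0^t \tau^{(dn-1+a)/2}\mathcal{J}_n(\tau)\,d\tau$; after normalizing by the $t^{(dn+a)/2}$ factors appearing in $H^2$, the point is that the weight $\tau^{(dn-1+a)/2}/t^{(dn-1+a)/2} = (\tau/t)^{(dn-1+a)/2}$ concentrates at $\tau = t$ as $n \to \iny$, so these integrals behave like Dirac masses and, crucially, $J = (X,t)\cdot\gr_{(X,t)}\del_t U$ involves a $t$-derivative that produces the $O(1/n)$ or $O(1/\sqrt n)$ smallness — this is exactly the mechanism by which the elliptic error $K_n = O(1/M) = O(1/n)$ washes out. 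The hypotheses \eqref{AlmgrenClass2} and \eqref{AlmgrenClass1} defining $\mathfrak{A}$ are precisely what is needed to make the dominated-convergence and concentration arguments rigorous; the technical limit lemmas in Appendix~\ref{AppA} should handle the bookkeeping. Having shown $\mathcal{L}_n'(t) \ge -E_n(t)$ with $\int E_n \to 0$ (or $E_n \to 0$ pointwise with uniform domination), one concludes that for $t_1 < t_2$, $\mathcal{L}(t_2; U) - \mathcal{L}(t_1; U) = \lim_n \brac{\mathcal{L}_n(t_2) - \mathcal{L}_n(t_1)} \ge -\lim_n \int_{t_1}^{t_2} E_n = 0$, which is the desired monotonicity.

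The main obstacle, as in the companion papers \cite{Dav18, DSVG24}, is making the passage to the limit fully rigorous: justifying the interchange of limit and integral/derivative, and in particular showing the error integrals $\int_0^t (\tau/t)^{(dn-1+a)/2}\mathcal{J}_n(\tau)\,d\tau$ (and the $\mathcal{I}_n$ counterpart) tend to $0$ after the correct normalization. This requires the integrability hypothesis \eqref{AlmgrenClass2} with exponent $p > 1$ to beat the measure $(\tau/t)^{(dn-1+a)/2}d\tau$ via Hölder, the boundedness \eqref{AlmgrenClass1} near the endpoint to control the concentrating mass, and careful tracking that the prefactor $4/M \to 2/d$ stays bounded while the $t$-derivative inside $J$ supplies decay — this is the degenerate analogue of the argument in \cite{DSVG24} and is where the technical lemmas of Appendix~\ref{AppA} do the heavy lifting.
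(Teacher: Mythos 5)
Your overall plan matches the paper's proof: compose $U$ with $F_n$, apply Theorem~\ref{monoThm} to $V_n$ with $K=K_n$, rescale the elliptic frequency by the chain rule, pass to the limit via the Bridge Lemmas, and use local uniform convergence of the error to preserve monotonicity. The mechanism you identify for why the $K_n$-errors vanish (the $O(1/n)$ prefactor, the $(\tau/t)^{(dn-1+a)/2}$ weight concentrating at $\tau=t$, and the $\mathfrak{A}$ hypotheses feeding H\"older) is exactly what Lemmas~\ref{HnLowerBoundLemma} and~\ref{weightedLpBound} supply.

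The one genuine gap is your treatment of $D(\sqrt{Mt}; V_n)$ and hence of $L(\sqrt{Mt};V_n)$. You propose to recover it ``from \eqref{grVnBound1} and the fundamental theorem of calculus,'' but \eqref{grVnBound1} is an inequality (it comes from an application of Young), so it cannot give the exact expression for $D$ that the convergence argument requires. Even if you replace \eqref{grVnBound1} by the identity \eqref{gradExp} and integrate via Lemma~\ref{PFT}, you are then forced into a concentration argument: you must show that $\frac{Mn}{2}\int_0^t(\tau/t)^{(dn-1+a)/2}\mathcal{D}_n(\tau)\,d\tau \to 2t\,\mathcal{D}(t)$ and that the $\mathcal{T}_n$ and cross terms vanish, which needs quantitative control on $\mathcal{D}_n$ near $\tau = t$ that your function class does not obviously provide. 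The paper avoids this entirely: it applies the divergence identity \eqref{divExp} \emph{(which holds because $V_n$ solves \eqref{VnKnEllipEqn} with elliptic boundary conditions)} to write
\begin{equation*}
\sqrt{Mt}\,D(\sqrt{Mt};V_n)
= \int_{\BB{H}_{\sqrt{Mt}}} V_n(Y\cdot\gr V_n)\,y_0^a - \sqrt{Mt}\int_{\BB{D}_{\sqrt{Mt}}} V_n K_n\,y_0^a,
\end{equation*}
which by \eqref{sphereMixedTerm} and \eqref{ballKnVn} becomes a single time-slice integral of $U[(X,2t)\cdot\gr_{(X,t)}U]$ against $\mathcal{G}_n$ plus a small $\mathcal{M}_n$ error. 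The slice integral is then matched with $2t\,\mathcal{D}(t)$ via the parabolic integration by parts \eqref{pIntbyPs}, which uses $\gr\mathcal{G} = -\frac{X}{2t}\mathcal{G}$, the PDE, and the parabolic boundary conditions. This is the missing computation; once you insert \eqref{divExp} and \eqref{pIntbyPs} in place of your \eqref{grVnBound1} argument, the rest of your plan goes through as in the paper, with Lemma~\ref{HnLowerBoundLemma} supplying the uniform lower bound on $\mathcal{H}_n$ needed to control the error $E_n$ near $t_0$.
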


\begin{rem}
If $t_0 \in Z := \set{t \in (0, T) : \mathcal{H}(t; U) = 0}$, 
then $U(\cdot, t_0) \equiv 0$ so that $\mathcal{D}(t_0; U) = 0$ and then $\mathcal{L}(t_0; U)$ is ill-defined.
Therefore, it is not meaningful to talk about the monotonicity of $\mathcal{L}(t; U)$ at such times. 
\end{rem}

The proof of this theorem is given in \cite{ST17}, for example.
Here we use the high-dimensional limiting technique to prove this theorem.
That is, we use the nonhomogeneous elliptic theorem stated in Theorem \ref{monoThm} in combination with the tools developed in earlier sections to take limits and give a new proof of Theorem \ref{pMono}.

\begin{proof}
With $U$ as given, for each $n \in \N$, define $V_n : \BB{D}^{dn+1}_{\sqrt{MT}} \to \R$ so that
$$V_n \pr{Y} = U\pr{F_{n}\pr{Y}} = U\pr{X, t}.$$
Since $U$ has moderate $a$-growth at infinity, then Lemma \ref{functionClassLemma} shows that each $V_n \in H^1(\mathbb{D}^{dn+1}_{\sqrt{Mt}}, y_0^a)$ for every $t \in (0, T)$.
Since $U$ is a solution to a parabolic equation, then regularity theory ensures that each $V_n$ is $C^2$.
As shown in Lemma \ref{ChainRuleLem}, since $U$ satisfies \eqref{pAlmPDE}, then each $V_n$ satisfies \eqref{VnKnEllipEqn} with $K_n$ as defined in \eqref{KnDef}.

Since $U$ satisfies the parabolic boundary conditions, then Lemma \ref{BCLemma} shows that each $V_n$ satisfies the elliptic boundary conditions.

Choose $t \in P := \set{t \in (0, T) : \mathcal{H}\pr{t; U} > 0}$.
Since $\mathcal{H}_n\pr{t; U} \to \mathcal{H}\pr{t; U} > 0$, then there exists $N \in \N$ so that $\mathcal{H}_n\pr{t; U} > 0$ whenever $n \ge N$.
An application of Lemma \ref{bridgeLemmaResults} shows that $H\pr{\sqrt{M t}; V_n}$ is given by \eqref{HnExpr}.
In particular, $H\pr{\sqrt{M t}; V_n} > 0$ whenever $n \ge N$.
Therefore, for any $n \ge N$, we may apply Theorem \ref{monoThm} to each $V_n$ with $K = K_n$ and $r = \sqrt{Mt}$.

Since $U$ satisfies the parabolic boundary conditions, then so too does $V_n$, so by the expression in \eqref{divExp}, 
\begin{equation}
\label{DnExp}
\begin{aligned}
&\sqrt{Mt} D\pr{\sqrt{Mt}; V_n}
= \int_{\BB{H}^{dn}_{\sqrt{Mt}}} V_n \pr{Y \cdot \gr V_n} y_0^{a}
- \sqrt{Mt} \int_{\BB{D}^{dn+1}_{\sqrt{Mt}}} V_n K_n y_0^{a} \\
&= \bar{C} t^{\frac{dn+a}{2}} \brac{\int_{\R^{d+1}_+} U(X,t) \brac{(X,2t) \cdot \gr_{(X,t)} U(X,t)} \dGnt
- 2 \int_0^t \pr{\frac \tau t}^{\frac{dn-1+a}2}\mathcal{M}_n(\tau) d\tau},
\end{aligned}    
\end{equation}
where we have used \eqref{sphereMixedTerm} and \eqref{ballKnVn} from Lemma \ref{bridgeLemmaResults}.
H\"older's inequality and the bound \eqref{pointwiseGBound} show that
\begin{equation}
\label{secondTermBound}
\abs{\int_0^t \pr{\frac \tau t}^{\frac{dn-1+a}2} \mathcal{M}_n(\tau) d\tau}
\le C_0 \pr{\int_0^t \mathcal{H}(\tau) d\tau}^{\frac 1 2} \pr{\int_0^t \pr{\frac \tau t}^{dn-1+a} \mathcal{J}(\tau) d\tau}^{\frac 1 2}.
\end{equation}
Since $\mathcal{H} \in L^1((0, t))$, then the first term is bounded.
As $\tau^\al \mathcal{J} \in L^p((0, t))$, then $\tau^\al \mathcal{J} \in L^1((0, t))$ as well and the dominated convergence theorem shows that the second term in the product goes to zero as $n \to \iny$. Since $\gr \mathcal{G}(X, t) = - \frac{X}{2t} \mathcal{G}(X, t)$ and $U$ is a solution to \eqref{pAlmPDE}, then
\begin{equation}
\label{pIntbyPs}
\begin{aligned}
& \int_{\R^{d+1}_+} U(X,t) \brac{\frac{X}{2t} \cdot \gr U(X,t) + \del_t U(X,t)} \dGt \\
&= - \int_{\R^{d+1}_+} U(X,t) \brac{\gr \mathcal{G}(X,t) \cdot \gr U(X,t) x_0^a  + \di\pr{x_0^a \gr U} \mathcal{G}(X,t)} dX \\
&= \int_{\R^{d+1}_+} \abs{\gr U(X,t)}^2 \dGt,   
\end{aligned}
\end{equation}
where the last line follows from an integration by parts and uses that $U$ satisfies the parabolic boundary conditions.
From Lemma \ref{GaussianLimit}, we see that
\begin{equation}
\label{othergradExp}
\begin{aligned}
&\lim_{n \to \iny} \brac{\int_{\R^{d+1}_+} U(X,t) \brac{(X,2t) \cdot \gr_{(X,t)} U(X,t)} \dGnt
- 2 \int_0^t \pr{\frac \tau t}^{\frac{dn-1+a}2}\mathcal{M}_n(\tau) d\tau} \\
&= 2t \int_{\R^{d+1}_+} \abs{\gr U(X,t)}^2 \dGt.  
\end{aligned}
\end{equation}
Therefore, with $\disp L_n(t):= \frac 1 2 L\pr{\sqrt{Mt}; V_n}$, we have 
\begin{align*}
L_n(t)
&= \frac{ \int_{\R^{d+1}_+} U(X,t) \brac{(X,2t) \cdot \gr_{(X,t)} U(X,t)} \dGnt}{2  \int_{\R^{d+1}_+}  \abs{U(X,t)}^2 \dGnt} 
- \frac{\int_0^t \pr{\frac \tau t}^{\frac{dn-1+a}2} \mathcal{M}_n(\tau) d\tau }{ \int_{\R^{d+1}_+}  \abs{U(X,t)}^2 \dGnt}
\end{align*}
and comparing with \eqref{pAlmFreq}, we see that
\begin{align*}
\lim_{n \to \iny} L_n(t)
&= \frac{t \int_{\R^{d+1}_+} \abs{\gr U(X,t)}^2 \dGt}{\int_{\R^{d+1}_+}  \abs{U(X,t)}^2 \dGt} 
= \mathcal{L}(t; U).
\end{align*}

An application of Theorem \ref{monoThm} shows that 
\begin{align*}
L'&\pr{\sqrt{M t}; V_n}
\ge 2  \frac{\pr{\int_{\BB{D}_{\sqrt{Mt}}} V_n \, K_n \,  y_0^a} \pr{\int_{\BB{H}_{\sqrt{Mt}}} \, V_n \pr{Y \cdot \gr V_n} y_0^a }}{\pr{\int_{\BB{H}_{\sqrt{Mt}}} \, \abs{V_n}^2 y_0^a }^2}
- 2 \frac{\int_{\BB{D}_{\sqrt{Mt}}} \pr{Y \cdot \gr V_n} K_n y_0^a}{\int_{\BB{H}_{\sqrt{Mt}}} \, \abs{V_n}^2 y_0^a} \\
&\ge -4 \frac{\brac{\int_0^t \tau^{\frac{dn-1+a}2} \mathcal{J}_n(\tau) d\tau}^{\frac 1 2 }}{\sqrt {Mt} \mathcal{H}_n(t)} \set{ \brac{ \frac{\mathcal{I}_n(t)}{\mathcal{H}_n(t)} \int_0^t \pr{\frac \tau t}^{\frac{dn-1+a}2} \mathcal{H}_n(\tau) d\tau}^{\frac 1 2 } + \brac{\int_0^t \pr{\frac \tau t}^{\frac{dn-1+a}2} \mathcal{I}_n(\tau) d\tau}^{\frac 1 2 }  },
\end{align*}
where we have used 
\eqref{HnExpr}, \eqref{sphereMixedTerm}, \eqref{ballKnVn}, and \eqref{ballKngradBound} from Lemma \ref{bridgeLemmaResults} to reach the second line. 
By the chain rule, $\disp \frac{d}{dt}{L}_n(t) = \frac{\sqrt M}{4 \sqrt t} L'\pr{\sqrt{Mt}; V_n}$, so using the bound  \eqref{pointwiseGBound} from Lemma \ref{GaussianLimit}, we deduce that for any $t \in P$,
\begin{equation}
\label{FnBound}
\begin{aligned}
\frac{d}{dt}{L}_n(t)
&\ge - \frac{C_0 \brac{\int_0^t \pr{\frac \tau t}^{\frac{dn -1+a}2} \mathcal{J}(\tau) d\tau }^{\frac 1 2}}{t \mathcal{H}_n\pr{t}} \set{
\brac{\int_0^t \pr{\frac \tau t}^{\frac{dn -1+a}2} \mathcal{I}(\tau) d\tau }^{\frac 1 2}
+ \brac{\frac{C_0 \mathcal{I}(t)}{ \mathcal{H}_n\pr{t}} \int_0^t \pr{\frac \tau t}^{\frac{dn -1+a}2} \mathcal{H}(\tau)d\tau }^{\frac 1 2}  } 
  \\
&\ge - \frac{C_0 }{t \mathcal{H}_n\pr{t}} \brac{\frac{C_0 \mathcal{I}(t)}{ 2\mathcal{H}_n\pr{t}} \int_0^t \pr{\frac \tau t}^{\frac{dn -1+a}2} \mathcal{H}(\tau)d\tau  
+ \frac 1 2 \int_0^t \pr{\frac \tau t}^{\frac{dn -1+a}2} \mathcal{I}(\tau) d\tau 
+ \int_0^t \pr{\frac \tau t}^{\frac{dn -1+a}2} \mathcal{J}(\tau) d\tau } \\
&=: - E_n(t).
\end{aligned}
\end{equation}

To show that $\mathcal{L}$ is monotone non-decreasing, it suffices to show that given any $t_0 \in P$, there exists $\de \in \pr{0, \frac {t_0} 2}$ so that $E_n$ converges uniformly to $0$ on $\brac{t_0 - \de, t_0}$.
Indeed, since $\frac{d}{dt}L_n(t) \ge - E_n(t)$, then for any $t \in \brac{t_0 - \de, t_0}$, it holds that
\[
L_n(t_0)-L_n(t)
\ge -\int_{t}^{t_0}E_n(\tau) d\tau.
\]
Since $L_n(t)$ converges pointwise to $\mathcal{L}(t)$, then
\[
\mathcal{L}(t_0) - \mathcal{L}(t)
= \lim_{n \to \iny} \brac{L_n(t_0)-L_n(t)} 
\ge - \lim_{n \to \iny} \int_{t}^{t_0}E_n(\tau)d\tau.
\]
Assuming the local uniform convergence of $E_n$ to 0 on $\brac{t_0 - \de, t_0} \supset \brac{t, t_0}$, we see that
$$\lim_{n \to \iny} \int_{t}^{t_0}E_n(\tau) d\tau 
=  \int_{t}^{t_0} \lim_{n \to \iny} E_n(\tau)d\tau = 0$$
and we may conclude that $\mathcal{L}(t_0)-\mathcal{L}(t) \ge 0$, as desired.

It remains to justify the local uniform convergence of $E_n(t)$ to $0$, as described above. For $t_0 \in P$, with $\disp H = \frac 1 8 \int_{\R^{d+1}_+} \abs{U\pr{X, t_0}}^2 x_0^a \, dX> 0$, an application of Lemma \ref{HnLowerBoundLemma} shows that there exists $N \in \N$ and $\de \in (0, \frac{t_0} 2]$ so that whenever $n \ge N$, \eqref{HnLowerBound} holds. 

Since $U \in \mathfrak{A}\pr{\R^{d+1}_+ \times \pr{0, T}}$, then there exists $p > 1$ so that \eqref{AlmgrenClass2} holds.
In particular, an application of Lemma \ref{weightedLpBound} shows that there exists $C_1 = C_1(a, d, \al, p, N)$ so that
\begin{equation*}
\sup_{t \in \brac{t_0 - \de, t_0}} \int_0^t \pr{\frac \tau t}^{\frac{dn -1+a}2} \mathcal{H}(\tau) d\tau 
\le 2^\al C_1 t_0^{-\al} \pr{ \frac{t_0}{n}}^{1 - \frac 1 p} \norm{t^\al \mathcal{H}}_{L^p\pr{[0, t_0]}}.
\end{equation*}
Analogous statements hold with $\mathcal{I}$ and $\mathcal{J}$ in place of $\mathcal{H}$.
Returning to the expression \eqref{FnBound}, assuming that $\de \le \eps$ from \eqref{AlmgrenClass1}, we see that whenever $n \ge N$, it holds that
\begin{align*}
\sup_{t \in \brac{t_0 - \de, t_0}} E_n(t)
&\le \frac{2^{1+\al} C_0 C_1 e^{\frac{d N \ln 2}{3} }}{ \mathcal{C} H  } t_0^{\frac{d-1+a-2\al} 2} \pr{ \frac{t_0}{n}}^{1 - \frac 1 p} \times \\
&\times \brac{\frac{C_0 e^{\frac{d N \ln 2}{3} }}{ 2\mathcal{C} H  } t_0^{\frac{d+1+a} 2} \norm{\mathcal{I}}_{L^\iny\pr{[t_0 - \de, t_0]}} \norm{t^\al \mathcal{H}}_{L^p\pr{[0, t_0]}}  
+ \frac 1 2 \norm{t^\al \mathcal{I}}_{L^p\pr{[0, t_0]}} 
+ \norm{t^\al \mathcal{J}}_{L^p\pr{[0, t_0]}} }.
\end{align*}
The required version of uniform convergence follows from this bound and completes the proof.
\end{proof}

\section{Weiss Frequency Functions}
\label{S:Weiss}

The central results of this section are a parabolic Weiss monotonicity formula (Theorem \ref{T:parabolicweiss}) and a new epiperimetric inequality for weakly $a$-harmonic functions (Theorem \ref{T:epi}). 
Both of these results have been inspired by those originally obtained by Weiss in \cite{Wei98} and \cite{Wei99} in the context of the classical obstacle problem. 

\subsection{Elliptic results}
\label{SS:ellipticWeiss}

The goal of this section is to establish the elliptic tools that will be used to prove the corresponding parabolic monotonicity result.
In most other applications of our elliptic-to-parabolic technique, we prove nonhomogeneous versions of the elliptic result, then carefully take limits to arrive at the corresponding parabolic theorem.
In this setting, the approach is more subtle and we need a number of new ideas and results, each outlined below.

\subsubsection{Weiss frequency functions in the elliptic setting}
\label{SSS:weisselliptic}

Here we state and prove a monotonicity formula for the elliptic Weiss frequency function.
Notice that this result holds for any function in the appropriate function class, not just those functions that satisfy a certain elliptic PDE.

\begin{thm}[Elliptic Weiss]
\label{monoWeissThm}
For $V \in H^1_{loc}\pr{\BB{D}^{N+1}_{R}, y_0^a}$ and $h \in (0, \iny)$, define the Weiss frequency functional as
\begin{equation}
\label{ellipWeiss}
\begin{aligned}
W_h(r;V)
&=\frac{1}{r^A} \int_{\BB{D}_r}  |\nabla V(Y)|^2 y_0^{a} \, dY
-\frac{h}{r^{A+1} } \int_{\BB{H}_r}  \abs{V(Y)}^2 y_0^{a} \, d\eta
= \frac{1}{r^A} D(r; V)
-\frac{h}{r^{A+1} } H(r; V),
\end{aligned}
\end{equation}
where we set $A = N + a + 2h - 1$.
Let $\bar V$ denote the $h$-homogeneous extension of $V$ on $\BB{H}^N_r$; that is, 
\begin{equation}
\label{hhomogExt}
\bar V(Y) 
= \pr{\frac{\abs{Y}}r}^h V\pr{\frac{r Y}{\abs{Y}}}.  
\end{equation}
For a.e. $r \in (0, R)$, it holds that
\begin{equation}
\label{Weisselliptic}
\begin{aligned}
\frac{d}{dr} W_h(r; V)
&= \frac{A}{r} \brac{W_h(r;\bar V) - W_h(r;V)}
+ \frac{1}{r^{A+2}} \int_{\BB{H}_r}  \brac{Y \cdot \gr V(Y) - h V(Y)}^2 y_0^{a} \, d\eta.
\end{aligned}
\end{equation}
Moreover, if $V$ satisfies elliptic boundary conditions, then \eqref{Weisselliptic} holds for every $r \in (0, R)$.
\end{thm}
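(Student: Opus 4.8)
The plan is to compute $\frac{d}{dr}W_h(r;V)$ directly from \eqref{ellipWeiss} and then reorganize the resulting terms. Two differentiation identities are needed first. Since $V\in H^1_{loc}(\BB{D}^{N+1}_R,y_0^a)$, the function $|\gr V|^2 y_0^a$ lies in $L^1_{loc}$, so the coarea formula applied to the radius function gives $D(r;V)=\int_0^r\big(\int_{\BB{H}_\rho}|\gr V|^2 y_0^a\,d\eta\big)\,d\rho$; hence $D(\cdot;V)$ is absolutely continuous and $D'(r;V)=\int_{\BB{H}_r}|\gr V|^2 y_0^a\,d\eta$ for a.e.\ $r$. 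Likewise, writing $H(r;V)=r^{N+a}\int_{\BB{H}^N_1}|V(r\omega)|^2\omega_0^a\,d\eta_1(\omega)$ (with $d\eta_1$ the surface measure on the unit half-sphere and $\omega_0$ the first coordinate of $\omega$) and using $V^2\in W^{1,1}_{loc}$, a Fubini argument yields, for a.e.\ $r$, the identity \eqref{H'}, namely $H'(r;V)=\frac{N+a}{r}H(r;V)+\frac{2}{r}\int_{\BB{H}_r}V\,(Y\cdot\gr V)\,y_0^a\,d\eta$. When $V$ additionally satisfies the elliptic boundary conditions, the first formula holds for \emph{every} $r\in(0,R)$ by \eqref{D'0} --- whose proof uses Lemma \ref{traceLemma} to kill the thin-set term $\lim_{\eps\to0}\frac1r\int_{\BB{B}_r\cap\{y_0=\eps\}}|\gr V|^2 y_0^{1+a}$ --- and the second for every $r$ by \eqref{H'} together with Remark \ref{noBCRemark}.

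Given these, I differentiate $W_h(r;V)=r^{-A}D(r;V)-h\,r^{-A-1}H(r;V)$, substitute the formula for $H'(r;V)$, and use the elementary identity $A+1-(N+a)=2h$ (immediate from $A=N+a+2h-1$) to cancel the $\frac{N+a}{r}H$ contribution. This leaves
\[
\frac{d}{dr}W_h(r;V)=\frac{1}{r^A}\int_{\BB{H}_r}|\gr V|^2 y_0^a-\frac{A}{r^{A+1}}D(r;V)+\frac{2h^2}{r^{A+2}}\int_{\BB{H}_r}|V|^2 y_0^a-\frac{2h}{r^{A+2}}\int_{\BB{H}_r}V\,(Y\cdot\gr V)\,y_0^a.
\]
On $\BB{H}_r$ one has $\del_\nu V=\frac1r\,Y\cdot\gr V$, so $|\gr V|^2=|\gr_{\tan}V|^2+\frac{1}{r^2}(Y\cdot\gr V)^2$; separating the tangential part and completing the square in the remaining terms identifies
\[
\frac{1}{r^{A+2}}\int_{\BB{H}_r}(Y\cdot\gr V)^2 y_0^a-\frac{2h}{r^{A+2}}\int_{\BB{H}_r}V\,(Y\cdot\gr V)\,y_0^a+\frac{h^2}{r^{A+2}}\int_{\BB{H}_r}|V|^2 y_0^a=\frac{1}{r^{A+2}}\int_{\BB{H}_r}\big(Y\cdot\gr V-hV\big)^2 y_0^a,
\]
which is exactly the last term of \eqref{Weisselliptic}. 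What remains is $\frac{1}{r^A}\int_{\BB{H}_r}|\gr_{\tan}V|^2 y_0^a-\frac{A}{r^{A+1}}D(r;V)+\frac{h^2}{r^{A+2}}\int_{\BB{H}_r}|V|^2 y_0^a$, and it remains to match this to $\frac{A}{r}\big(W_h(r;\bar V)-W_h(r;V)\big)$.

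To do so I compute $W_h(r;\bar V)$ using that $\bar V$ is $h$-homogeneous. Since $\bar V=V$ on $\BB{H}_r$, we have $H(r;\bar V)=H(r;V)$. For $D(r;\bar V)$, pass to polar coordinates $Y=\rho\omega$: since $\bar V(\rho\omega)=(\rho/r)^h V(r\omega)$, the radial derivative equals $\frac h\rho\bar V$ and the tangential gradient on the sphere of radius $\rho$ rescales from that on $\BB{H}_r$, so
\[
D(r;\bar V)=\Big(\int_0^r (\rho/r)^{2h}\rho^{N+a-2}\,d\rho\Big)\int_{\BB{H}^N_1}\big(h^2 V(r\omega)^2+r^2|\gr_{\tan}V(r\omega)|^2\big)\omega_0^a\,d\eta_1=\frac1A\Big(\frac{h^2}{r}H(r;V)+r\!\int_{\BB{H}_r}\!|\gr_{\tan}V|^2 y_0^a\Big),
\]
where the $\rho$-integral equals $r^{N+a-1}/A$ precisely because $2h+N+a-1=A$. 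Hence $W_h(r;\bar V)=\frac{1}{Ar^{A-1}}\int_{\BB{H}_r}|\gr_{\tan}V|^2 y_0^a+\frac{h(h-A)}{Ar^{A+1}}H(r;V)$, and forming $\frac{A}{r}\big(W_h(r;\bar V)-W_h(r;V)\big)$ reproduces the leftover terms above, finishing the proof; the ``every $r$'' statement follows by using the boundary-condition versions of the two differentiation identities in place of their a.e.\ counterparts.

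I expect the main obstacle to be the careful bookkeeping in the two differentiation identities and in the polar-coordinate evaluation of $D(r;\bar V)$ --- in particular, tracking the weight $y_0^a=r^a\omega_0^a$ through the change of variables and correctly rescaling the tangential gradient between spheres of different radii --- together with the clean separation of the $H^1_{loc}$ (a.e.) case, which needs no boundary hypothesis, from the everywhere case, which invokes Lemma \ref{traceLemma} exactly as in the proof of Theorem \ref{monoThm}.
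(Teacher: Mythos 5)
Your proposal is correct and follows essentially the same route as the paper's own proof: differentiate $W_h$, substitute the a.e.\ formulas for $D'$ and $H'$, split $|\nabla V|^2$ into tangential and normal parts, complete the square in $(Y\cdot\nabla V - hV)$, and identify the remainder with $\frac{A}{r}\big(W_h(r;\bar V)-W_h(r;V)\big)$ via the $h$-homogeneity of $\bar V$. The only superficial difference is that you evaluate $D(r;\bar V)$ and $H(r;\bar V)$ directly in polar coordinates, whereas the paper does the equivalent computation by rescaling to the unit half-sphere with $V^{(r)}(Y)=V(rY)/r^h$ and using $Z\cdot\nabla\bar V = h\bar V$; the algebra and conclusions match.
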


\begin{proof}
Observe that
$$W_h(r; V) =\frac{1}{r^{A}}D(r; V)
-\frac{h}{r^{A+1}}H(r; V),$$
where $D\pr{r; V}$ and $H\pr{r; V}$ are as defined in \eqref{HDLDefn}. 
An application of the coarea formula and the Lebesgue differentiation theorem shows that for a.e. $r \in (0, R)$, $\disp D'(r) = \int_{\BB{H}_r} \abs{\gr V}^2 y_0^a$.

Using \eqref{H'}, see Remark \ref{noBCRemark}, it follows that for a.e. $r \in (0, R)$,
\begin{equation}
\label{WeissDeriv}
\begin{aligned}
W_h'(r;V)
&=-\frac{A}{r^{A+1}} D(r;V)
+\frac{D'(r;V)}{r^A}
+(A+1) \frac{h}{r^{A+2}} H(r;V)
-\frac{h}{r^{A+1}} H'(r;V)\\
&=-\frac{A}{r} W_h(r;V)
+\frac{h}{r^{A+2}} H(r;V)
+\frac{D'(r;V)}{r^A}
-\frac{h}{r^{A+1}} H'(r;V)\\
&=-\frac{A}{r} W_h(r;V)
+\frac{h}{r^{A+2}} H(r;V)
+\frac{1}{r^A} \int_{\BB{H}_r}  |\nabla V|^2 y_0^{a}  
-\frac{h(N+a)}{r^{A+2}} H(r;V)
-\frac{2h}{r^{A+2}} \int_{\BB{H}_r}  V \pr{Y \cdot \gr V} y_0^{a}  
\\
&= - \frac{A}{r} W_h(r;V)
+ \frac{1}{r^{A+2}} \int_{\BB{H}_r} \brac{- h(N-1+a)V^2  -2h V \pr{Y \cdot \gr V}  + r^2 \abs{\gr_{\tan} V}^2 + \pr{Y \cdot \gr V}^2} y_0^{a} 
\\
&= - \frac{A}{r} W_h(r;V)
+ \frac{1}{r^{A+2}} \int_{\BB{H}_r}  \pr{Y \cdot \gr V - h V}^2 y_0^{a}
 \\ 
&+ \frac{1}{r^{A+2}} \int_{\BB{H}_r}  \brac{r^2 \abs{\gr_{\tan} V}^2 - h(N - 1+a + h) \abs{V}^2} y_0^{a}, 
\end{aligned}
\end{equation}
where we have used that $\abs{\gr }^2 = \abs{\gr_{\tan} }^2 + \pr{\frac{\del }{\del \nu}}^2$, then completed the square.
With $V^{(r)}(Y) := \frac{V(rY)}{r^h}$, it holds that $\nabla V^{(r)}(Y) = \frac{\nabla V(rY)}{r^{h-1}}$, and then with $r Z = Y$ we get
\begin{align*}
& \frac{1}{r^{A+2}} \int_{\BB{H}_r}  \pr{r^2 \abs{\gr_{\tan} V(Y)}^2 - h(N -1+a + h) \abs{V(Y)}^2} \, y_0^{a} \, d\eta(Y) \\
&= \frac{1}{r^{A+2}} \int_{\BB{H}_1}  \pr{r^2 \abs{\gr_{\tan}V(rZ )}^2 - h(N -1+a + h) \abs{V(rZ)}^2} \pr{r z_0}^a d\eta(r Z) \\
&= \frac{1}{r} \int_{\BB{H}_1}  \pr{ \abs{\gr_{\tan} V^{(r)}(Z)}^2 - h(N -1+a + h) \abs{V^{(r)}(Z)}^2} \, z_0^a \, d\eta(Z).
\end{align*}
If $\bar V$ denotes the $h$-homogeneous extension of $V^{(r)}$ on $\BB{H}_1$, then $\disp \bar V(Y) = \abs{Y}^h V^{(r)}\pr{\frac{Y}{\abs{Y}}} = \pr{\frac{\abs{Y}}r}^h V\pr{\frac{r Y}{\abs{Y}}}$, showing that $\bar V$ is also the $h$-homogeneous extension of $V$ on $\BB{H}_r$.
It follows that
\begin{align*}
\int_{\BB{H}_1}  \pr{\abs{\gr_{\tan} V^{(r)}}^2 - h\pr{N-1+a+h} \abs{V^{(r)}}^2} z_0^a 
&= \int_{\BB{H}_1}  \pr{\abs{\gr_{\tan}\bar V}^2 - h\pr{N-1+a+h} \abs{\bar V}^2}z_0^a 
\\
&= \int_{\BB{H}_1} \pr{|\gr \bar V|^2 - |Z \cdot \gr \bar V|^2 - h\pr{N-1+a+h} \abs{\bar V}^2} z_0^a 
\\
&= \int_{\BB{H}_1}  |\gr \bar V|^2z_0^a 
- A h \int_{\BB{H}_1} \abs{\bar V}^2z_0^a,  
\end{align*}
since the $h$-homogeneity of $\bar V$ implies that $Z \cdot \nabla \bar V(Z) = h \bar V(Z)$ for any $Z \in \BB{H}_1$. 
Another application of $h$-homogeneity shows that
\begin{align*}
\int_{\BB{D}_r}  |\nabla \bar V(Y)|^2 \, y_0^a \, dY
&= \int_0^r \int_{\BB{H}_1} |\nabla \bar V(t Z)|^2 \pr{t z_0}^a  d\eta(tZ) dt
= \int_0^r \int_{\BB{H}_1}  |t^{h-1} \nabla \bar V(Z)|^2 t^N t^a z_0^a \, d\eta(Z) dt
\\
&= \int_0^r t^{N+a + 2h - 2} dt \int_{\BB{H}_1}  |\nabla \bar V(Z)|^2 z_0^a d\eta(Z) 
= \frac{r^A}{A} \int_{\BB{H}_1}  |\nabla \bar V|^2 z_0^a. 
\end{align*}
From the definition of $\bar V$, we see that 
\begin{align*}
\int_{\BB{H}_1}   \abs{\bar V(Z)}^2 z_0^a \, d\eta(Z) 
&= \int_{\BB{H}_1}  \abs{\pr{\frac{\abs{Z}}r}^h V\pr{\frac{r Z}{\abs{Z}}}}^2  z_0^a \, d\eta(Z) 
= r^{-\pr{a + 2h}} \int_{\BB{H}_1}   \abs{ V\pr{r Z}}^2 \pr{r z_0}^a \, d\eta(Z) \\
&= r^{-\pr{N + a + 2h}} \int_{\BB{H}_r}   \abs{V\pr{Y}}^2 y_0^a \, d\eta(Y) 
= r^{-\pr{A+1}} \int_{\BB{H}_r}  \abs{\bar V}^2 y_0^a.
\end{align*}
Combining the previous four equations shows that
\begin{align*}
\frac{1}{r^{A+2}} \int_{\BB{H}_r}   \pr{r^2 \abs{\gr_{\tan} V}^2 - h(N -1+a + h) \abs{V}^2} y_0^a 
&= \frac 1 r \int_{\BB{H}_1}  |\nabla \bar V|^2 z_0^a 
- \frac{A h}{r} \int_{\BB{H}_1}  \abs{\bar V}^2 z_0^a 
= \frac A r W_h\pr{r; \bar V}.
\end{align*}
Substituting this expression into \eqref{WeissDeriv} leads to the conclusion.
If $V$ satisfies the elliptic boundary conditions, then \eqref{D'0} holds for every $r \in (0, R)$ and we may improve the conclusion accordingly.
\end{proof}

\subsubsection{Tools from elliptic theory}
\label{SSS:EllipTools}

In this subsection, we introduce some general theory that we use below.
We need precise information about the constant in the weighted Poincar\'{e} inequality on balls, so we prove the estimate here.

\begin{lem}[Weighted Poincar\'{e} inequality]
\label{L:poincare}
If $V \in C^1_0(\BB{B}_R^{N+1})$, then
\begin{align*}
\int_{\BB{B}_R}  \abs{V(Y)}^2 \abs{y_0}^a dY \le \frac{R^2}{N + a + 1} \int_{\BB{B}_R}  \abs{\gr V(Y)}^2 \abs{y_0}^a dY.
\end{align*}
\end{lem}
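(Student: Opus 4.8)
The plan is to use polar coordinates around the origin and reduce the estimate to a one-dimensional Hardy-type inequality in the radial variable, where the weight $|y_0|^a$ simply contributes a factor $\omega^a$ (with $\omega$ the angular coordinate giving the direction of $y_0$) that is harmless because it is radially constant. Concretely, writing $Y = r\theta$ with $r = |Y| \in (0,R)$ and $\theta \in \mathbb{S}^N$, one has $y_0 = r\theta_0$ where $\theta_0$ is the $y_0$-component of $\theta$, so $|y_0|^a = r^a |\theta_0|^a$, and $dY = r^N \, dr \, d\eta_{\mathbb{S}^N}(\theta)$. Thus both integrals split as an angular integral of $|\theta_0|^a$ against a radial integral. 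Since $V \in C^1_0(\mathbb{B}_R^{N+1})$, for each fixed direction $\theta$ the function $r \mapsto V(r\theta)$ is $C^1$, vanishes at $r = R$, and $\frac{d}{dr} V(r\theta) = \theta \cdot \nabla V(r\theta)$, so $\left|\frac{d}{dr} V(r\theta)\right| \le |\nabla V(r\theta)|$.

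The key one-dimensional fact is: if $g \in C^1([0,R])$ with $g(R) = 0$ and $\beta := N + a > -1$, then
\begin{equation*}
\int_0^R |g(r)|^2 r^{\beta} \, dr \le \frac{R^2}{\beta + 1} \int_0^R |g'(r)|^2 r^{\beta} \, dr.
\end{equation*}
I would prove this by writing $g(r) = -\int_r^R g'(s)\, ds$, integrating by parts in $\int_0^R |g|^2 r^\beta dr$ using the primitive $\frac{r^{\beta+1}}{\beta+1}$ of $r^\beta$ (the boundary terms vanish: at $r = R$ because $g(R)=0$, and at $r = 0$ because $\beta + 1 > 0$), which gives
\begin{equation*}
\int_0^R |g(r)|^2 r^\beta \, dr = -\frac{2}{\beta+1}\int_0^R g(r) g'(r) r^{\beta+1}\, dr \le \frac{2R}{\beta+1} \int_0^R |g(r)|\,|g'(r)| r^{\beta}\, dr,
\end{equation*}
and then applying Cauchy--Schwarz to the right-hand side followed by absorbing one factor of $\left(\int_0^R |g|^2 r^\beta\right)^{1/2}$. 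This yields the claim with the sharp constant $R^2/(\beta+1) = R^2/(N+a+1)$.

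Applying this with $g(r) = V(r\theta)$ and $\beta = N + a$ for each fixed $\theta$, multiplying by $|\theta_0|^a$, and integrating over $\theta \in \mathbb{S}^N$ gives
\begin{equation*}
\int_{\mathbb{B}_R} |V(Y)|^2 |y_0|^a \, dY = \int_{\mathbb{S}^N} |\theta_0|^a \int_0^R |V(r\theta)|^2 r^{N+a}\, dr\, d\eta(\theta) \le \frac{R^2}{N+a+1} \int_{\mathbb{S}^N} |\theta_0|^a \int_0^R |\nabla V(r\theta)|^2 r^{N+a}\, dr\, d\eta(\theta),
\end{equation*}
and the last expression equals $\frac{R^2}{N+a+1}\int_{\mathbb{B}_R} |\nabla V(Y)|^2 |y_0|^a\, dY$ by undoing the polar change of variables. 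I do not anticipate a genuine obstacle here; the one mildly delicate point is justifying the vanishing of the boundary term at $r=0$ in the radial integration by parts, which requires $a > -1$ (so $\beta + 1 = N+a+1 > 0$) and the local boundedness of $V$ near the origin — both guaranteed by $a \in (-1,1)$ and $V \in C^1_0$. One should also note the angular integral $\int_{\mathbb{S}^N} |\theta_0|^a\, d\eta(\theta)$ is finite precisely because $a > -1$, so dividing through is legitimate.
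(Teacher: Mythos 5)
Your reduction to a one-dimensional radial inequality via polar coordinates is exactly the strategy the paper uses, and the one-dimensional statement you isolate is correct. However, the proof you give of that one-dimensional Hardy-type inequality does not yield the constant you claim. From
\begin{equation*}
\int_0^R |g|^2 r^{\beta}\,dr \;\le\; \frac{2R}{\beta+1}\int_0^R |g|\,|g'|\, r^{\beta}\,dr,
\end{equation*}
applying Cauchy--Schwarz and absorbing a factor of $\bigl(\int_0^R |g|^2 r^\beta\,dr\bigr)^{1/2}$ gives $\bigl(\int_0^R |g|^2 r^\beta\bigr)^{1/2} \le \tfrac{2R}{\beta+1}\bigl(\int_0^R |g'|^2 r^\beta\bigr)^{1/2}$, hence the constant $\tfrac{4R^2}{(\beta+1)^2}$ after squaring (and an optimized Young's inequality gives the same). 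This equals $\tfrac{R^2}{\beta+1}$ only when $\beta+1 = 4$; in the regime $N + a + 1 < 4$ (e.g.\ $N=1$ and $a$ near $-1$, which lies within the hypotheses of the lemma), your bound is strictly weaker than the stated one, so the lemma is not proved as written.

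To recover the advertised constant $\tfrac{R^2}{N+a+1}$ you should follow the paper's route for the radial step: rather than integrating $|g|^2 r^\beta$ by parts, write $g(r) = -\int_r^R g'(s)\,ds$ and apply Cauchy--Schwarz pointwise to get $|g(r)|^2 \le R \int_r^R |g'(s)|^2\,ds$; then multiply by $r^\beta$, integrate in $r$, and use Fubini, so the inner integral $\int_0^s r^\beta\,dr = s^{\beta+1}/(\beta+1)$ produces the factor $1/(\beta+1)$ directly. This avoids the squaring that doubles the exponent on $(\beta+1)^{-1}$ and gives precisely the constant in the lemma. The rest of your argument --- the angular decomposition $|y_0|^a = r^a|\theta_0|^a$, the reduction to a radial integral against $|\theta_0|^a\,d\eta$, and the observation that $a > -1$ makes both the angular integral finite and the boundary term at $r=0$ vanish --- is correct and matches the paper.
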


\begin{proof}
First, we change to polar coordinates, $Y = r \om$, to get
\begin{align*}
\int_{\BB{B}_R}  \abs{V(Y)}^2  \abs{y_0}^a dY 
&= \int_{\BB{S}_1} \int_0^R  \abs{V(r \om)}^2 \pr{r\abs{\om_0}}^a r^{N} dr d\si
= \int_{\BB{S}_1} \int_0^R \abs{V(r \om)}^2 r^{N+a} dr \abs{\om_0}^a d\si,
\end{align*}
where $d \si$ denotes the canonical surface measure on $\BB{S}_1$.
Since $V \in C^1_0(\mathbb{B}_R)$, then for any $Y = r \om \in \mathbb{B}_R$, it holds that 
\begin{align*}
V(r \om) 
&= V(r \om) - V\pr{R\om}
= - \int_r^R \gr V\pr{t \om} \cdot \om \, dt
\end{align*}
and then, via H\"older's inequality, we get
\begin{align*}
\abs{V(r \om) }^2
&= \abs{\int_r^R \gr V\pr{t \om} \cdot \om \, dt}^2
\le \pr{\int_r^R \abs{\gr V\pr{t \om}} dt}^2
\le R \pr{\int_r^R \abs{\gr V\pr{t \om}}^2 dt}.
\end{align*}
Returning to the expression above, this bound combined with an application of Fubini's theorem shows that
\begin{align*}
\int_0^R \abs{V(r \om)}^2 r^{N+a} dr
&\le \int_0^R R \pr{\int_r^R \abs{\gr V\pr{t \om}}^2 dt} r^{N+a} dr
= R \int_0^R \int_r^R \abs{\gr V\pr{t \om}}^2 t^{N+a} \pr{\frac r t}^{N+a} dt dr \\
&= R \int_0^R \abs{\gr V\pr{t \om}}^2 t^{N+a} \pr{\int_0^t \pr{\frac r t}^{N+a} dr} dt
= \frac{R}{N+a+1} \int_0^R \abs{\gr V\pr{t \om}}^2 t^{N+1+a} dt \\
&\le \frac{R^2}{N+a+1} \int_0^R \abs{\gr V\pr{t \om}}^2 t^{N+a} dt.
\end{align*}
Integrating over $\BB{S}_1$ and changing back from polar coordinates leads to the conclusion.
\end{proof}

\begin{defn}[Weak solutions]
    Let $\Om \su \R^{N+1}$ be open and bounded.
    Given $K \in L^2\pr{\Om, \abs{y_0}^a}$, we say that $V \in H^1\pr{\Om, \abs{y_0}^a}$ is a \textbf{weak solution} to $- \di \pr{\abs{y_0}^a \gr V} = K \abs{y_0}^a$ if for every $\vp \in C^\iny_0\pr{\Om}$, it holds that
    \begin{equation}
    \label{weakSolEqn}
        \int_{\Om} \gr V(Y) \cdot \gr \vp(Y) \abs{y_0}^a dY = \int_{\Om} K(Y) \vp\pr{Y} \abs{y_0}^a dY.
    \end{equation}
    If $K = 0$, then we say that $V$ is \textbf{weakly $a$-harmonic}.
    Given $W \in H^1\pr{\Om, \abs{y_0}^a}$, we say that $V \in H^1\pr{\Om, \abs{y_0}^a}$ is a \textbf{weak solution to the boundary value problem}
    \begin{equation*}
    \left\{
    \begin{array}{rl} 
    - \di\pr{\abs{y_0}^a \gr V} = K \abs{y_0}^a & \text{ in } \Om \\
    V = W \qquad & \text{ on } \del\Om
    \end{array}
    \right.
    \end{equation*}
    if \eqref{weakSolEqn} holds and $V - W \in H^1_0\pr{\Om, \abs{y_0}^a}$.
\end{defn}

\begin{rem}
    We point out that if $\Om \su \R^{N+1}_+$, then weak solutions are in fact classical solutions.
\end{rem}

\begin{lem}[Existence of weak solutions]
\label{weakSolExLemm}
    For every $W \in H^1\pr{\Om, \abs{y_0}^a}$, there exists a unique $V \in H^1\pr{\Om, \abs{y_0}^a}$ that weakly solves
    \begin{equation}
    \label{EBVP}
    \left\{
    \begin{array}{rl} 
    - \di\pr{\abs{y_0}^a \gr V} = 0 & \text{ in } \Om \\
    V = W & \text{ on } \del\Om.
    \end{array}
    \right.
    \end{equation}
\end{lem}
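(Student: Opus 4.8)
The plan is to obtain $V$ from the Lax--Milgram theorem on the weighted Hilbert space $H := H^1_0\pr{\Om, \abs{y_0}^a}$, after reducing to zero boundary data. First I would observe that $V$ is a weak solution of \eqref{EBVP} exactly when $U := V - W$ lies in $H$ and satisfies
\[
\int_\Om \gr U(Y) \cdot \gr \vp(Y) \, \abs{y_0}^a \, dY = - \int_\Om \gr W(Y) \cdot \gr \vp(Y) \, \abs{y_0}^a \, dY
\]
for all $\vp \in C^\iny_0\pr{\Om}$; since $H$ is by definition the closure of $C^\iny_0\pr{\Om}$ in the weighted norm, this identity then automatically extends to all $\vp \in H$. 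So it suffices to produce a unique such $U \in H$.

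Next I would introduce the bilinear form $B(U,\vp) = \int_\Om \gr U \cdot \gr \vp \, \abs{y_0}^a \, dY$ and the functional $\ell(\vp) = -\int_\Om \gr W \cdot \gr \vp \, \abs{y_0}^a \, dY$ on $H$ and verify the three hypotheses of Lax--Milgram. Boundedness of $B$ and of $\ell$ is immediate from the Cauchy--Schwarz inequality, giving $\abs{B(U,\vp)} \le \norm{U}_{H} \norm{\vp}_{H}$ and $\abs{\ell(\vp)} \le \norm{W}_{H^1\pr{\Om, \abs{y_0}^a}} \norm{\vp}_{H}$. For coercivity I would fix $R>0$ with $\Om \su \BB{B}_R^{N+1}$ and extend $U \in H$ by zero; the extension belongs to $H^1_0\pr{\BB{B}_R^{N+1}, \abs{y_0}^a}$, so Lemma \ref{L:poincare} (which passes from $C^1_0$ to $H^1_0$ by density) yields $\int_\Om \abs{U}^2 \abs{y_0}^a \le \frac{R^2}{N+a+1} \int_\Om \abs{\gr U}^2 \abs{y_0}^a$, and hence
\[
\norm{U}_H^2 \le \pr{1 + \frac{R^2}{N+a+1}} \int_\Om \abs{\gr U}^2 \abs{y_0}^a \, dY = \pr{1 + \frac{R^2}{N+a+1}} B(U,U).
\]
Lax--Milgram then produces a unique $U \in H$ with $B(U,\vp) = \ell(\vp)$ for all $\vp \in H$, and $V := U + W$ is the desired weak solution.

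For uniqueness of $V$ itself: if $V_1, V_2$ both solve \eqref{EBVP}, then $V_1 - V_2 \in H$ and testing the identity $B(V_1 - V_2, \cdot) = 0$ against $\vp = V_1 - V_2$ forces $\gr\pr{V_1 - V_2} = 0$ a.e., whence $V_1 = V_2$ by the same weighted Poincar\'e inequality. (Alternatively, one could run the direct method of the calculus of variations, minimizing $V \mapsto \int_\Om \abs{\gr V}^2 \abs{y_0}^a$ over the affine class $W + H$: coercivity on this class again comes from Lemma \ref{L:poincare} applied to $V - W$, weak lower semicontinuity from convexity of the integrand, and strict convexity of the gradient term modulo $H$ gives uniqueness; but the Lax--Milgram route is shorter.) I expect the only genuinely delicate point to be the coercivity step, i.e.\ making sure that $\abs{y_0}^a$ with $a \in (-1,1)$ is harmless enough near $\set{y_0 = 0}$ for the zero-extension to remain in the weighted space and for Lemma \ref{L:poincare} to extend from $C^1_0$ to $H^1_0$; this is exactly where the hypothesis $a \in (-1,1)$ enters (the weight is then locally integrable and Muckenhoupt $A_2$), and it has effectively been arranged already by the statement and proof of Lemma \ref{L:poincare}.
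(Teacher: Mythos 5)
Your proposal is correct, and it fills a proof that the paper itself does not actually supply: after stating Lemma~\ref{weakSolExLemm}, the paper simply points to \cite{KS80}*{Theorem 5.4} and \cite{FKS82}*{Theorem 2.2} rather than arguing directly. The Lax--Milgram route you take is the standard mechanism behind those citations, and every step you flag checks out. The reduction to $U = V - W \in H^1_0(\Om,\abs{y_0}^a)$ correctly unwinds the paper's definition of weak solution with $K=0$, and the density of $C^\infty_0(\Om)$ in $H^1_0(\Om,\abs{y_0}^a)$ (by definition of the space) legitimately upgrades the test function class to all of $H$. Boundedness of $B$ and $\ell$ is indeed Cauchy--Schwarz; and the coercivity step is exactly where care is needed, since the weight is degenerate. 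Your handling is sound: zero extension of elements of $H^1_0(\Om,\abs{y_0}^a)$ lands in $H^1_0(\BB{B}_R,\abs{y_0}^a)$ (the zero extension preserves the weighted norm term by term on the approximating sequence in $C^\infty_0$), and Lemma~\ref{L:poincare} passes from $C^1_0$ to $H^1_0$ by density, giving $\norm{U}_H^2 \le (1 + R^2/(N+a+1))\,B(U,U)$. One small caveat: Lemma~\ref{L:poincare} is proved for balls centered at the origin (the weight $\abs{y_0}^a$ is not translation-invariant in $y_0$), so the enclosing ball $\BB{B}_R \supset \Om$ must be taken centered at the origin; this is always possible since $\Om$ is bounded by the paper's definition of the weighted Sobolev spaces. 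Your closing remark about $a \in (-1,1)$ making $\abs{y_0}^a$ locally integrable and $A_2$ is true but slightly more than needed here -- for this lemma all that is genuinely used is $N + a + 1 > 0$ (so Lemma~\ref{L:poincare} holds) and local integrability of the weight -- though the $A_2$ property is certainly the reason the cited general theory of \cite{FKS82} applies.
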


A more general version of this result appears in \cite{KS80}*{Theorem 5.4}, see also \cite{FKS82}*{Theorem 2.2}.

\begin{lem}[Even solutions lemma]
\label{evenSolLemma}
For $W \in H^1\pr{\BB{B}_r, \abs{y_0}^a}$, let $V \in H^1\pr{\BB{B}_r, \abs{y_0}^a}$ be the weak solution to \eqref{EBVP}.
If $W$ is even with respect to $y_0$, then $V$ is even with respect to $y_0$ on $\BB{B}_r$.
\end{lem}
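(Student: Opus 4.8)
The plan is to exploit the uniqueness clause from Lemma \ref{weakSolExLemm} together with the reflection symmetry of the equation. Let $R: \BB{B}_r \to \BB{B}_r$ denote the reflection $R(y_0, y) = (-y_0, y)$, and for any function $G$ on $\BB{B}_r$ write $G^R := G \circ R$, i.e. $G^R(y_0, y) = G(-y_0, y)$. The two key elementary facts are: first, the weight is reflection-invariant, $\abs{y_0}^a = \abs{-y_0}^a$, and the Jacobian of $R$ is $1$; second, the chain rule gives $\gr(G^R)(Y) = (R^{-1})^T \gr G(R Y)$, which for this particular reflection just flips the sign of the $y_0$-component of the gradient, so that $\abs{\gr(G^R)(Y)}^2 = \abs{\gr G(RY)}^2$ and more generally $\gr(G^R) \cdot \gr(\psi^R)$ at $Y$ equals $(\gr G \cdot \gr \psi)$ at $RY$.

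First I would check that $V^R$ is also a weak solution to the same boundary value problem \eqref{EBVP}. Given any test function $\vp \in C^\iny_0(\BB{B}_r)$, note $\vp^R \in C^\iny_0(\BB{B}_r)$ as well, and change variables $Z = RY$ in the weak formulation:
\begin{align*}
\int_{\BB{B}_r} \gr V^R(Y) \cdot \gr \vp(Y) \, \abs{y_0}^a \, dY
&= \int_{\BB{B}_r} \brac{\gr V(RY) \cdot \gr \vp^R(RY)} \abs{y_0}^a \, dY \\
&= \int_{\BB{B}_r} \gr V(Z) \cdot \gr \vp^R(Z) \, \abs{z_0}^a \, dZ = 0,
\end{align*}
where the last equality uses that $V$ is weakly $a$-harmonic and $\vp^R$ is an admissible test function. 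Thus $V^R$ satisfies \eqref{weakSolEqn} with $K = 0$. Next, one must verify the boundary condition: since $W$ is even, $W^R = W$, and since $V - W \in H^1_0(\BB{B}_r, \abs{y_0}^a)$, applying the (bounded, norm-preserving) reflection operator to an approximating sequence from $C^\iny_0(\BB{B}_r)$ shows $V^R - W^R = V^R - W \in H^1_0(\BB{B}_r, \abs{y_0}^a)$ as well; here one also uses that $V^R \in H^1(\BB{B}_r, \abs{y_0}^a)$ because $R$ preserves the weighted Sobolev norm. Hence $V^R$ is a weak solution to \eqref{EBVP} with the same boundary datum $W$.

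Finally, by the uniqueness assertion in Lemma \ref{weakSolExLemm}, we conclude $V^R = V$, which is exactly the statement that $V$ is even with respect to $y_0$. I do not expect a genuine obstacle here; the one point requiring a little care is the bookkeeping that the reflection operator maps $H^1_0(\BB{B}_r, \abs{y_0}^a)$ to itself and preserves the norm, which follows from its action on $C^\iny_0(\BB{B}_r)$ together with the change of variables above, and the analogous (routine) check that $V^R$ lies in $H^1(\BB{B}_r, \abs{y_0}^a)$.
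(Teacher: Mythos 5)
Your proposal is correct and follows essentially the same route as the paper: show that the reflected function $V^R$ is a weak solution to the same boundary value problem (via the change of variables $Z = RY$ and the observation that reflection preserves the weight and maps test functions to test functions), and then invoke the uniqueness clause of Lemma \ref{weakSolExLemm} to conclude $V^R = V$. Your treatment of the boundary condition, via applying the norm-preserving reflection to an approximating sequence in $C^\iny_0(\BB{B}_r)$, is a slightly more explicit justification than the paper's one-sentence trace argument, but the two proofs are the same in substance.
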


\begin{proof}
For any function $F: \BB{B}_r \to \R$, define its reflection $\overline{F}: \BB{B}_r \to \R$ as $\overline{F}\pr{y_0, y} = F\pr{- y_0, y}$ and note that $\gr \overline{F}\pr{y_0, y} = \pr{-\del_{0} F\pr{-y_0, y}, \gr_y F \pr{-y_0, y}}$.

We first show that $\overline{V}$ is a weak solution to $-\di\pr{\abs{y_0}^a \gr V} = 0$ in $\BB{B}_r$.
It suffices to show that for arbitrary $\zeta \in C^\iny_0\pr{\BB{B}_r}$, we have
$$\int_{\mathbb{B}_R} \gr \overline{V}(Y) \cdot \gr \zeta(Y) \abs{y_0}^a dY = 0.$$
Observe that
\begin{align*}
\int_{\BB{B}_r} \gr \overline{V}(Y) \cdot \gr \zeta(Y) \abs{y_0}^a dY
&= \int_{\BB{B}_r} \brac{-\del_{0} V\pr{-y_0, y}, \gr_y V \pr{-y_0, y}} \cdot \pr{\del_{0} \zeta\pr{y_0, y}, \gr_y \zeta \pr{y_0, y}} \abs{y_0}^a dy_0 dy \\
&= - \int_{\BB{B}_r} \brac{\del_{0} V\pr{z_0, y}, \gr_y V \pr{z_0, y}} \cdot \pr{-\del_{0} \zeta\pr{-z_0, y}, \gr_y \zeta \pr{-z_0, y}} \abs{z_0}^a dz_0 dy \\
&= - \int_{\BB{B}_r} \gr V(Y) \cdot \gr \overline{\zeta}(Y) \abs{y_0}^a dY.
\end{align*}
Since $\zeta \in C^\iny_0\pr{\BB{B}_r}$ if and only if $\overline{\zeta} \in C^\iny_0\pr{\BB{B}_r}$, then we deduce that $\disp \int_{\BB{B}_r} \gr \overline{V}(Y) \cdot \gr \zeta(Y) \abs{y_0}^a dY= 0$ as well, showing that $\overline{V}$ is a weak solution.

As $W$ is even with respect to $y_0$, then because $W = V$ on $\del\BB{B}_r$ in the sense of trace, then we deduce that $W = \overline{V}$ on $\del \BB{B}_r$ as well.
However, we know by Lemma \ref{weakSolExLemm} that weak solutions are unique, so we may conclude that $V = \overline{V}$.
That is, $V$ is itself an even function.
\end{proof}

Given a function $V$ defined on $\BB{D}_r$, we let $V^e$ denote its even extension to $\BB{B}_r$.
Unless we have some kind of continuity assumption, $V^e$ may not be defined on $\BB{B}_r \cap \set{y_0 = 0}$. 
It turns out that the even extension of a function in the Sobolev space belongs to the extended Sobolev space.

\begin{lem}[Even extension spaces lemma]
\label{evenExtLemma}
    If $V \in H^1\pr{\mathbb{D}_r, y_0^a}$, then $V^e \in H^1\pr{\mathbb{B}_r, \abs{y_0}^a}$.
\end{lem}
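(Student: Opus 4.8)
The plan is to prove the even extension lemma by the standard density-plus-approximation argument, carefully tracking the behavior of the weight $\abs{y_0}^a$ near the thin set $\set{y_0 = 0}$. First I would reduce to the case $V \in C^\infty(\overline{\BB{D}_r})$ (or $C^\infty$ up to a neighborhood of the relevant part of $\overline{\BB{D}_r}$), since by definition $H^1(\BB{D}_r, y_0^a)$ is the closure of smooth functions under the weighted $H^1$-norm, and the even-extension operation is linear; if I can show it is bounded from $H^1(\BB{D}_r, y_0^a)$ to $H^1(\BB{B}_r, \abs{y_0}^a)$ on smooth functions, it extends continuously to all of $H^1(\BB{D}_r, y_0^a)$. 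For a smooth $V$, the even extension $V^e(y_0,y) = V(\abs{y_0}, y)$ is Lipschitz in $y_0$ away from $\set{y_0=0}$ but generically only continuous (not $C^1$) across $\set{y_0=0}$, so $V^e$ is not smooth and I must compute its weak gradient.

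The key computation is to show that the classical (a.e.-defined) gradient of $V^e$ is the weak gradient, i.e. that no singular part concentrates on $\set{y_0=0}$. Concretely, one sets $\gr V^e(y_0,y) = \pr{\sgn(y_0)\, \del_{y_0} V(\abs{y_0}, y),\ \gr_y V(\abs{y_0}, y)}$ for $y_0 \ne 0$ and checks that for every $\zeta \in C^\infty_0(\BB{B}_r)$,
\begin{equation*}
\int_{\BB{B}_r} V^e\, \del_{y_i} \zeta\, \abs{y_0}^a\, dY = - \int_{\BB{B}_r} (\gr V^e)_i\, \zeta\, \abs{y_0}^a\, dY.
\end{equation*}
The way to do this is to split $\BB{B}_r = (\BB{B}_r \cap \set{y_0 > \eps}) \cup (\BB{B}_r \cap \set{\abs{y_0} < \eps}) \cup (\BB{B}_r \cap \set{y_0 < -\eps})$, integrate by parts on the two outer pieces (where everything is smooth), and observe that the boundary terms on $\set{y_0 = \pm\eps}$ are of the form $\int_{\set{y_0 = \eps}} V(\eps, y)\, \zeta(\pm\eps, y)\, \eps^a\, dy$, which have the crucial property that the contributions from $y_0 = +\eps$ and $y_0 = -\eps$ appear with \emph{opposite signs} for the $\del_{y_0}$-component (because $V^e$ is even but $\del_{y_0}\zeta$ pairs with the outward normals $\mp e_0$). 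For the $\del_{y_i}$-components with $i \ge 1$ the boundary terms simply cancel by evenness. For the $\del_{y_0}$-component, the two boundary terms both converge (since $V$ is smooth up to $\set{y_0=0}$ and $\eps^a \to 0$ when $a > 0$, while for $a \le 0$ one uses that $\eps^a V(\eps,y)\zeta(\pm\eps,y) \to 0$ fails in general — here is where the trace control matters) so instead one invokes the trace estimate \eqref{trace1} from Lemma \ref{traceLemma}: since $V \in H^1(\BB{D}_r, y_0^a)$, $\lim_{\eps \to 0^+} \int_{\BB{B}_\rho \cap \set{y_0 = \eps}} \abs{V}^2 y_0^{1+a}\, dy = 0$, which combined with a similar bound on $\gr V$ (or directly with Cauchy–Schwarz against the bounded $\zeta$) forces the limit of the boundary integral to vanish. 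Letting $\eps \to 0^+$ then yields the desired integration-by-parts identity, so $V^e$ has weak gradient $\gr V^e \in L^2(\BB{B}_r, \abs{y_0}^a)$, and evenness gives $\norm{V^e}_{H^1(\BB{B}_r, \abs{y_0}^a)}^2 = 2 \norm{V}_{H^1(\BB{D}_r, y_0^a)}^2 < \infty$.

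The main obstacle will be the passage to the limit $\eps \to 0^+$ in the boundary terms: one must confirm that the boundary integrals $\int_{\BB{B}_r \cap \set{y_0 = \eps}} V\, \zeta\, y_0^a\, dy$ and $\int_{\BB{B}_r \cap \set{y_0 = \eps}} \del_{y_0} V\, \zeta\, y_0^{1+a}\, dy$ tend to zero along a sequence $\eps_k \to 0$, which is precisely the kind of statement that Lemma \ref{traceLemma} is designed to supply (its proof already handles exactly the cutoff-and-integrate-by-parts estimate needed). Once the smooth case is settled, I would finish by taking $V_k \in C^\infty(\overline{\BB{D}_r})$ with $V_k \to V$ in $H^1(\BB{D}_r, y_0^a)$, noting $V_k^e \to $ some limit in $H^1(\BB{B}_r, \abs{y_0}^a)$ by the just-established bound (so $\set{V_k^e}$ is Cauchy), and identifying that limit with $V^e$ by $L^2_{loc}$ convergence away from $\set{y_0=0}$; since $\set{y_0=0}$ has measure zero and $\abs{y_0}^a \in L^1_{loc}$ (as $a > -1$), this pins down $V^e \in H^1(\BB{B}_r, \abs{y_0}^a)$, completing the proof.
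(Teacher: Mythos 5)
Your route differs from the paper's and has a gap. The paper defines $H^1(\mathbb{B}_r, |y_0|^a)$ as the \emph{closure of $C^\infty(\mathbb{B}_r)$} in the weighted norm, so establishing $V^e \in H^1(\mathbb{B}_r, |y_0|^a)$ means exhibiting a sequence of genuinely smooth functions converging to $V^e$. Your plan shows instead that $V^e$ has a distributional weak gradient lying in $L^2(\mathbb{B}_r, |y_0|^a)$ --- that is, $V^e \in W^{1,2}(\mathbb{B}_r, |y_0|^a)$. These two spaces do coincide for the $A_2$-weight $|y_0|^a$ (Kilpel\"ainen's theorem, which the paper cites from \cite{Kil94} in the proof of Lemma \ref{functionClassLemma}), but you never invoke that identification; without it the argument does not close, and your concluding Cauchy-sequence step only yields a $W^{1,2}$-limit, not $H^1$-membership. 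The paper's own proof sidesteps this by producing the smooth approximants to $V^e$ directly: take $\varphi_m \in C^\infty$ with $\varphi_m \to V$ in $H^1(\mathbb{D}_r, y_0^a)$, reflect evenly to a Lipschitz function $\widetilde\psi_m$ on $\mathbb{B}_r$, mollify at a carefully chosen scale $\eps_m \to 0$ to get $\psi_m \in C^\infty(\mathbb{B}_r)$, and estimate $\|V^e - \psi_m\|_{H^1(\mathbb{B}_r, |y_0|^a)}$ via H\"older on the thin strip $\{|y_0| < \eps_m\}$ where the mollified reflection differs from the unmollified one.

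A secondary issue is the integration-by-parts identity you propose, with the weight $|y_0|^a$ inside the pairing; this is not the characterization of the weak derivative and is in fact false as written. Differentiating $\zeta\,|y_0|^a$ in $y_0$ produces the additional term $a\,V^e\,\zeta\,\sgn(y_0)\,|y_0|^{a-1}$, which does not vanish unless $\zeta$ is even in $y_0$, so your displayed identity would have to be corrected to include it. The weak derivative is characterized by the \emph{unweighted} pairing $\int_{\mathbb{B}_r} V^e\,\del_{y_0}\zeta\,dY = -\int_{\mathbb{B}_r} (\del_{y_0}V^e)\,\zeta\,dY$, and there the smooth-$V$ computation is simpler than you suggest: the two boundary integrals from $y_0 = \pm\eps$ combine to $\int V(\eps,y)\brac{\zeta(-\eps,y) - \zeta(\eps,y)}\,dy = O(\eps)$, which vanishes outright. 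Lemma \ref{traceLemma} is therefore not needed here, and \eqref{trace1}, which controls $\int V^2\,y_0^{1+a}$, would not bound the individual weighted boundary terms $\int_{\{y_0 = \eps\}} V\,\zeta\,\eps^a\,dy$ when $a \le 0$ anyway; only the evenness cancellation does.
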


\begin{proof}
Since $V^e \in H^1\pr{\mathbb{B}_r \setminus \set{y_0 = 0}, \abs{y_0}^a}$ and $\set{y_0 = 0}$ is relatively closed and of dimension $N$, then an application of \cite[Theorem 2.6]{Kil94} shows that $V^e \in H^1\pr{\mathbb{B}_r, \abs{y_0}^a}$.
\end{proof}

If we impose boundary conditions of type I, then the even extension of a solution is a weak solution.

\begin{lem}[Even extension solutions, cf. Lemma 4.1 in \cite{CS07}]
\label{evenLem}
    Let $K \in L^2\pr{\BB{D}_r, y_0^a}$.
    If $V \in H^1\pr{\BB{D}_r, y_0^a}$ satisfies the elliptic boundary conditions of type I and is a solution to 
    \begin{equation*}
        - \di\pr{y_0^a \gr V} = K y_0^a   \quad \text{ in } \; \BB{D}_r,
    \end{equation*}
    then $V^e \in H^1\pr{\BB{B}_r, \abs{y_0}^a}$ is a weak solution to 
    \begin{equation*}
        - \di\pr{\abs{y_0}^a \gr V^e} = K^e \abs{y_0}^a  \quad \text{ in } \; \BB{B}_r,
    \end{equation*}
    where $K^e$ denotes the even extension of $K$ to $\Om$.
\end{lem}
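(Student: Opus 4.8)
The plan is to realize $V^e$ as a weak solution by reducing the integral identity on $\BB{B}_r$ to one on $\BB{D}_r$ and then exploiting the type~I condition through an $\eps$-truncation near the thin set. That $V^e \in H^1(\BB{B}_r, \abs{y_0}^a)$ is exactly Lemma~\ref{evenExtLemma}, so only the weak equation remains. Fix $\varphi \in C^\infty_0(\BB{B}_r)$. Since $\BB{B}_r$ is symmetric in $y_0$, write $\varphi = \varphi_e + \varphi_o$ for its even and odd parts in $y_0$, both of which lie in $C^\infty_0(\BB{B}_r)$. Using that $V^e$ and $K^e$ are even in $y_0$, while $\del_{y_0}V^e$ is odd and $\gr_y V^e$ is even, one checks that $\gr V^e \cdot \gr\varphi_o \, \abs{y_0}^a$ and $K^e\varphi_o\,\abs{y_0}^a$ are both odd in $y_0$, so both sides of the weak equation vanish when tested against $\varphi_o$. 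Hence it suffices to test against $\varphi_e$, and, since the integrands are now even in $y_0$, to establish
\[
\int_{\BB{D}_r} \gr V(Y)\cdot\gr\varphi_e(Y)\,y_0^a\,dY = \int_{\BB{D}_r} K(Y)\varphi_e(Y)\,y_0^a\,dY .
\]

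First I would prove this last identity by integrating by parts on the truncated half-ball $\BB{D}_r \cap \set{y_0 > \eps}$, where the equation $-\di(y_0^a\gr V) = K y_0^a$ is non-degenerate so that $V$ is a (locally) classical solution and the divergence theorem applies to the field $y_0^a\varphi_e\,\gr V$, exactly as in the proof of Lemma~\ref{Lem6.2}. Because $\varphi_e$ is compactly supported inside $\BB{B}_r$, its trace on $\BB{H}_r$ vanishes, and the only boundary contribution comes from the slice $\set{y_0 = \eps}$, whose outward unit normal is $-e_0$; thus
\[
\int_{\BB{D}_r \cap \set{y_0 > \eps}} \gr V\cdot\gr\varphi_e\,y_0^a\,dY
= -\int_{\BB{B}_r \cap \set{y_0 = \eps}} \eps^a\,\varphi_e(\eps,y)\,\del_{y_0}V(\eps,y)\,dy
+ \int_{\BB{D}_r \cap \set{y_0 > \eps}} K\varphi_e\,y_0^a\,dY .
\]
Letting $\eps \to 0^+$, the left-hand side and the last term converge to the corresponding integrals over $\BB{D}_r$ by dominated convergence, using $\gr V, K \in L^2(\BB{D}_r, y_0^a)$, the local integrability of $y_0^a$ (as $a > -1$), and the boundedness of $\varphi_e$ and $\gr\varphi_e$.

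The hard part will be showing that the slice term tends to $0$; this is the only place the hypothesis enters. Here I would invoke the elliptic boundary conditions of type~I: $y_0^a\del_{y_0}V$ is bounded on $\BB{D}_r \cap \set{0 < y_0 < \eps_0}$ and $\del_\nu^a V(0,y) = \lim_{y_0 \to 0^+} y_0^a\del_{y_0}V(y_0,y) = 0$ for every $y$. Since $\supp\varphi_e$ projects into a fixed ball $\BB{B}^N_{r'}$ with $r' < r$, the slice integral is bounded in absolute value by $\norm{\varphi_e}_{L^\infty(\BB{B}_r)} \int_{\BB{B}^N_{r'}} \abs{\eps^a\del_{y_0}V(\eps,y)}\,dy$, which tends to $0$ by dominated convergence (uniform dominating constant, pointwise limit $0$ by type~I). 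This yields the displayed identity on $\BB{D}_r$ for every even $\varphi_e$; combining it with the trivial odd case gives $\int_{\BB{B}_r}\gr V^e\cdot\gr\varphi\,\abs{y_0}^a\,dY = \int_{\BB{B}_r} K^e\varphi\,\abs{y_0}^a\,dY$ for all $\varphi \in C^\infty_0(\BB{B}_r)$, i.e.\ $V^e$ is a weak solution of $-\di(\abs{y_0}^a\gr V^e) = K^e\abs{y_0}^a$ in $\BB{B}_r$.
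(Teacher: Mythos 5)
Your proof is correct. The core technical content matches the paper's: you integrate by parts on an $\eps$-truncated region, invoke the type~I boundary condition together with dominated convergence to kill the boundary term on $\set{y_0 = \eps}$, and use the $L^2(y_0^a)$-integrability of $\gr V$ and $K$ to pass to the limit in the remaining terms. Where you depart is the organization: the paper tests directly against an arbitrary $\zeta \in C^\infty_0(\BB{B}_r)$, integrates by parts on $\BB{B}_r \setminus \set{\abs{y_0} < \eps}$, and must then separately estimate the Dirichlet-form contribution from the thin strip $\set{\abs{y_0} < \eps}$ via H\"older's inequality and $V^e \in H^1(\BB{B}_r, \abs{y_0}^a)$. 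You instead decompose $\varphi$ into its even and odd parts, observe by parity that the odd part pairs trivially with both $\gr V^e \cdot \gr(\cdot)\,\abs{y_0}^a$ and $K^e(\cdot)\,\abs{y_0}^a$, and reduce the whole identity to the half-ball $\BB{D}_r$. This sidesteps the paper's strip estimate entirely: the missing piece $\int_{\BB{D}_r \cap \set{y_0 < \eps}} \gr V \cdot \gr\varphi_e\, y_0^a$ is absorbed into your dominated-convergence step for the main integral rather than appearing as a standalone error term. Both routes are sound; yours is marginally tighter, exploiting the even/odd symmetry that the paper leaves implicit.
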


\begin{proof}
Since $- \di\pr{y_0^a \gr V} = K y_0^a$ in $\BB{D}_r$, then $- \di\pr{\abs{y_0}^a \gr V^e} = K^e \abs{y_0}^a$ in $\BB{B}_r \cap \set{y_0 < 0}$ as well, in the classical sense.
Given $\zeta \in C^\iny_0\pr{\BB{B}_r}$, for any $\eps > 0$, we get
\begin{align*}
    \int_{\BB{B}_r} \gr V^e(Y) \cdot \gr \zeta(Y) \abs{y_0}^a dY
    &= \int_{\BB{B}_r \setminus \set{\abs{y_0} < \eps}} \gr V^e(Y) \cdot \gr \zeta(Y) \abs{y_0}^a dY
     + \int_{\BB{B}_r \cap \set{\abs{y_0} < \eps}} \gr V^e(Y) \cdot \gr \zeta(Y) \abs{y_0}^a dY \\
    &= \int_{\BB{B}_r \setminus \set{\abs{y_0} < \eps}} K^e(Y) \zeta(Y) \abs{y_0}^a dY
    + \int_{\BB{B}_r \cap \set{\abs{y_0} = \eps}} \abs{y_0}^a \gr V^e(Y) \cdot \nu \zeta(Y) dy  \\
    &+ \int_{\BB{B}_r \cap \set{\abs{y_0} < \eps}} \gr V^e(Y) \cdot \gr \zeta(Y) \abs{y_0}^a dY.
\end{align*}
Since $K \in L^{2}\pr{\BB{D}_r, y_0^a dY}$ and $\zeta$ is bounded, then
\begin{align*}
    \lim_{\eps \to 0^+} \int_{\BB{B}_r \setminus \set{\abs{y_0} < \eps}} V^e(Y) K^e(Y) \zeta(Y) \abs{y_0}^a dY
    &= \int_{\BB{B}_r} V^e(Y) K^e(Y) \zeta(Y) \abs{y_0}^a dY.
\end{align*}
Because $V$ satisfies the elliptic boundary conditions of type I, then by the dominated convergence theorem,
\begin{align*}
    \lim_{\eps \to 0^+} \int_{\BB{B}_r \cap \set{\abs{y_0} = \eps}} \abs{y_0}^a \gr V^e(Y) \cdot \nu \zeta(Y)  dy
    = -\lim_{\eps \to 0^+} \int_{\BB{B}_r \cap \set{y_0 = \eps}} \eps^a \del_{y_0} V^e(\eps, y) \brac{\zeta(\eps, y) + \zeta(-\eps, y)}  dy
    = 0.
\end{align*}
Since  
\begin{align*}
    & \int_{\BB{B}_r \cap \set{\abs{y_0} < \eps}} \gr V^e(Y) \cdot \gr \zeta(Y) \abs{y_0}^a dY 
    \le \norm{\gr \zeta}_{L^\iny\pr{\BB{B}_R}} \pr{\int_{\BB{B}_r} \abs{y_0}^a dY}^{\frac 1 2} \pr{\int_{\BB{B}_r \cap \set{\abs{y_0} < \eps}} \abs{\gr V^e(Y)}^2 \abs{y_0}^a dY}^{\frac 1 2},
\end{align*}
and $V^e \in H^1\pr{\BB{B}_r, \abs{y_0}^a}$, then the last term also vanishes in the limit and we reach the conclusion.
\end{proof}

\subsubsection{Epiperimetric inequalities for weakly $a$-harmonic functions}
\label{SSS:WeakaEpi}

If $V$ is a function defined on a ball $\BB{B}_R$, then for any $r \in (0, R)$, we write $W_h^e\pr{r; V}$ to denote the associated Weiss functional, as defined in \eqref{ellipWeiss}, except we now replace $y_0^a$, $d\eta$, $\BB{D}_r^{N+1}$, and $\BB{H}^N_r$ with $\abs{y_0}^a$, $d \si$, $\BB{B}^{N+1}_r$, and $\BB{S}^N_r$, respectively.
That is,
\begin{equation}
\label{ellipWeissFull}
\begin{aligned}
W^e_h(r;V)
&=\frac{1}{r^{N+a+2h-1}} \int_{\BB{B}_r}  |\nabla V(Y)|^2 \abs{y_0}^{a} \, dY
-\frac{h}{r^{N+a+2h} } \int_{\BB{S}_r}  \abs{V(Y)}^2 \abs{y_0}^{a} \, d\si(Y).
\end{aligned}
\end{equation}

To establish a ``full-ball" version of the epiperimetric inequality, we use polar coordinates and an orthonormal system for $L^2(\BB{S}_1, \abs{\om_0}^a)$.

We introduce polar coordinates on $\R^{N+1}$ by setting $Y = r \om$ where $\om \in \BB{S}^N$ and $r > 0$.
That is, for each $i = 0, \ldots, N$, $y_i = r \om_i$.
The vector fields $\Om_i$ act on $\BB{S}^N$ and satisfy $\disp \sum_{i=0}^N \om_i \Om_i = 0$ and $\disp \sum_{i=0}^N \Om_i \om_i = N.$
For each $i = 0, \ldots, N$, $\disp \frac{\del }{\del y_i} = \om_i \del_r + \frac 1 r \Om_i$, so it follows that
\begin{align}
\label{polarGradient}
    \gr V \cdot \gr W
    &= \pr{\del_r V} \pr{\del_r W }
+ \frac 1 {r^2} \sum_{i=0}^N \pr{\Om_i V} \pr{\Om_i W}.
\end{align}

Let $\mathcal{P}$ denote the set of all weakly $a$-harmonic homogeneous polynomials on $\BB{B}_1$ that are even with respect to $y_0$.
If $P \in \mathcal{P}$, then $P = P_j$ is a weakly $a$-harmonic polynomial of order $h_j \in \N$ that is even with respect to $y_0$ and we may write $P_j(Y) = r^{h_j} \phi_j\pr{\om}$.
Moreover, as shown in \cite{JP20}, $\set{\phi_j}_{j=1}^\iny$ forms an orthonormal system in $L^2\pr{\BB{S}_1, \abs{\om_0}^a}$.
The following result allows us to prove our epiperimetric inequality below.

\begin{lem}[Properties of the orthonormal system]
For $\set{\phi_j}_{j=1}^\iny$ as introduced above, it holds that
\begin{equation}
\label{sphericalIntegral}
\sum_{i=0}^N \int_{\BB{S}_1} \Om_i \phi_j  \, \Om_i \phi_k \abs{\om_0}^a d\si
= \begin{cases}
    h_j \pr{N + a + h_j - 1} & \text{ if } j = k \\
    0 & \text{ otherwise }.
\end{cases}
\end{equation}
\end{lem}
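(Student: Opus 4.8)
The plan is to evaluate the weighted Dirichlet product $\disp\int_{\BB{B}_1}\gr P_j\cdot\gr P_k\,\abs{y_0}^a\,dY$ in two ways, where $P_j(Y)=r^{h_j}\phi_j(\om)$ and $P_k(Y)=r^{h_k}\phi_k(\om)$ are the even, weakly $a$-harmonic, homogeneous polynomials from $\mathcal P$ associated with $\phi_j$ and $\phi_k$. If one of $P_j,P_k$ is constant (degree $0$), then the corresponding $\Om_i\phi$ is identically $0$ and both sides of \eqref{sphericalIntegral} vanish, so I may assume $h_j,h_k\ge 1$.

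First I would compute in polar coordinates. Using \eqref{polarGradient} together with $\del_r P_j=h_jr^{h_j-1}\phi_j$ and $\Om_iP_j=r^{h_j}\Om_i\phi_j$, the integrand becomes $r^{h_j+h_k+N+a-2}$ times $h_jh_k\phi_j\phi_k+\sum_{i=0}^N(\Om_i\phi_j)(\Om_i\phi_k)$, integrated over $\BB{S}_1$ against $\abs{\om_0}^a\,d\si$. Since $h_j+h_k+N+a-1>0$, the radial integral equals $(h_j+h_k+N+a-1)^{-1}$, giving
\begin{equation*}
\int_{\BB{B}_1}\gr P_j\cdot\gr P_k\,\abs{y_0}^a\,dY
=\frac{1}{h_j+h_k+N+a-1}\int_{\BB{S}_1}\pr{h_jh_k\,\phi_j\phi_k+\sum_{i=0}^N\pr{\Om_i\phi_j}\pr{\Om_i\phi_k}}\abs{\om_0}^a\,d\si .
\end{equation*}

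Second I would integrate by parts using that $P_j$ is weakly $a$-harmonic. Applying the divergence theorem to the smooth field $\abs{y_0}^aP_k\gr P_j$ on $\BB{B}_1\cap\set{\abs{y_0}>\eps}$ and letting $\eps\to 0^+$, the interior term $\int P_k\,\di(\abs{y_0}^a\gr P_j)$ drops out (the equation holds classically off $\set{y_0=0}$), while the thin-set boundary integrals over $\set{\abs{y_0}=\eps}$ vanish because $P_j$ is even in $y_0$, so $\del_{y_0}P_j$ is an odd polynomial and $\abs{y_0}^a\del_{y_0}P_j=O(\eps^{1+a})\to 0$ as $\eps\to 0^+$. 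Only the outer boundary survives, and since $\del_\nu P_j|_{\BB{S}_1}=\del_rP_j|_{r=1}=h_j\phi_j$ and $\abs{y_0}^a=\abs{\om_0}^a$ on $\BB{S}_1$, it equals $h_j\int_{\BB{S}_1}\phi_j\phi_k\,\abs{\om_0}^a\,d\si$. This thin-set argument — the same mechanism behind the type~I elliptic boundary conditions used in Lemmas~\ref{Lem6.2} and \ref{evenLem} — is the only real obstacle; everything else is bookkeeping.

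Equating the two expressions and inserting the orthonormality relation $\int_{\BB{S}_1}\phi_j\phi_k\,\abs{\om_0}^a\,d\si=\delta_{jk}$, I solve for the angular sum to obtain
\begin{equation*}
\sum_{i=0}^N\int_{\BB{S}_1}\pr{\Om_i\phi_j}\pr{\Om_i\phi_k}\abs{\om_0}^a\,d\si
=\brac{h_j\pr{h_j+h_k+N+a-1}-h_jh_k}\delta_{jk}.
\end{equation*}
For $j\ne k$ this is $0$, and for $j=k$ (so $h_k=h_j$) the bracket simplifies to $h_j(N+a+h_j-1)$, which is exactly \eqref{sphericalIntegral}.
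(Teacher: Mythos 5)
Your proof is correct, but it takes a genuinely different route from the paper's in the second half. Both compute the weighted Dirichlet product $\int_{\BB{B}_1}\gr P_j\cdot\gr P_k\,\abs{y_0}^a$ in polar coordinates and obtain the same first identity. The paper then introduces the $h$-homogeneous extension $\overline{P}_k=r^h\phi_k$ for an auxiliary parameter $h\notin\{h_j,h_k\}$, observes $P_k-\overline{P}_k\in H^1_0(\BB{B}_1,|y_0|^a)$, and invokes the weak definition of $a$-harmonicity to get $\int_{\BB{B}_1}\gr P_j\cdot(\gr P_k-\gr\overline{P}_k)|y_0|^a=0$; comparing the two polar-coordinate expansions and using $h\ne h_k$ then isolates the desired identity without any explicit boundary integration. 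You instead integrate by parts directly, identifying the Dirichlet form with the outer boundary term $h_j\int_{\BB{S}_1}\phi_j\phi_k|\om_0|^a\,d\si=h_j\delta_{jk}$, and the burden shifts to showing the thin-set contribution on $\{|y_0|=\eps\}$ vanishes. Your justification of this — $\del_{y_0}P_j$ is odd in $y_0$ since $P_j$ is an even polynomial, so the two $\eps$-slabs contribute $O(\eps^{1+a})\to 0$ — is correct and is exactly the mechanism behind the type I boundary conditions elsewhere in the paper. Your route is more concrete and avoids the auxiliary $h$; the paper's route is slicker in that the $H^1_0$ membership of the test function absorbs the boundary analysis, so no $\eps$-limit is needed. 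Your explicit handling of the degree-zero case is a small added care the paper omits.
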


\begin{proof}
Choose $j, k \in \N$ and $h \in \pr{0, \iny} \setminus \set{h_j, h_k}$. 
If $P_j, P_k \in \mathcal{P}$, then $P_j(r, \om) = r^{h_j}\phi_j\pr{\om}$, $P_k(r, \om) = r^{h_k}\phi_k\pr{\om}$, and $\overline{P}_k = r^h \phi_k\pr{\om}$ is the $h$-homogeneous extension of $P_k$ from $\BB{S}_1$ to $\BB{B}_1$.
From \eqref{polarGradient}, we see that
\begin{align*}
\gr P_j \cdot \gr P_k
&= r^{h_j + h_k - 2}\pr{ h_j h_k \phi_j \phi_k + \sum_{i=0}^N \Om_i \phi_j \Om_i \phi_k} \\
\gr P_j \cdot \gr \overline{P}_k
&= r^{h_j + h - 2}\pr{ h_j h \phi_j \phi_k + \sum_{i=0}^N \Om_i \phi_j \Om_i \phi_k} .
\end{align*}
Therefore,
\begin{align*}
\int_{\BB{B}_1} \gr P_j \cdot \gr P_k \abs{y_0}^a dY
&= \int_{\BB{S}_1} \int_0^1 r^{h_j + h_k - 2}\pr{ h_j h_k \phi_j \phi_k + \sum_{i=0}^N \Om_i \phi_j \Om_i \phi_k} \abs{r \om_0}^a r^N dr d\si \\
&= \pr{\int_0^1 r^{N + a + h_j + h_k -2} dr} \brac{\int_{\BB{S}_1} \pr{ h_j h_k \phi_j \phi_k + \sum_{i=0}^N \Om_i \phi_j  \, \Om_i \phi_k} \abs{\om_0}^a  d\si} \\
&= \frac 1 {N + a + h_j + h_k - 1} \pr{ h_j h_k \de_{jk} 
+ \sum_{i=0}^N \int_{\BB{S}_1}  \Om_i \phi_j  \, \Om_i \phi_k \abs{\om_0}^a d\si} \\
\int_{\BB{B}_1} \gr P_j(Y) \cdot \gr \overline{P}_k(Y) \abs{y_0}^a dY
&= \frac 1 {N + a + h_j + h -1} \pr{ h_j h \de_{jk}
+ \sum_{i=0}^N \int_{\BB{S}_1}  \Om_i \phi_j  \, \Om_i \phi_k \abs{\om_0}^a d\si},
\end{align*}
where we have used that $\set{\phi_j}_{j = 1}^\iny$ is orthonormal in $L^2\pr{\BB{S}_1, \abs{\om_0}^a}$.
Since $P_j$ is weakly $a$-harmonic and $P_k - \overline{P}_k \in H^1_0\pr{\BB{B}_1, \abs{y_0}^a}$, then
\begin{align*}
0
&= \int_{\BB{B}_1}\gr P_j \cdot \pr{\gr P_k - \gr \bar{P}_k} \abs{y_0}^a dY \\
&=  \frac {h_k - h} {\pr{N + a + h_j + h_k - 1}\pr{N + a + h_j + h -1}} \brac{h_j \de_{jk}\pr{N + a + h_j  -1} - \sum_{i=0}^N \int_{\BB{S}_1}  \Om_i \phi_j  \, \Om_i \phi_k \abs{\om_0}^a d\si},
\end{align*}
where we have substituted the previous pair of equations and simplified.
Since $h \ne h_k$, then the bracketed term must vanish and the conclusion follows.
\end{proof}

We now prove an epiperimetric inequality for weakly $a$-harmonic functions 
that are even with respect to $y_0$. 
Our proof is inspired by that of the epiperimetric inequality for harmonic functions in \cite{BET20}.
We rely on some of the properties of weakly $a$-harmonic functions that are shown in \cite{JP20}.

\begin{prop}[Epiperimetric inequalities for weakly $a$-harmonic functions]
\label{T:epi}
Let $V \in H^1\pr{\BB{B}_R, \abs{y_0}^a}$ be a weakly $a$-harmonic function that is even with respect to $y_0$. 
For any $r \in (0, R)$ and any $h \in \pr{0, \iny}$, let $\bar V$ denote the $h$-homogeneous extension of $V$ from $\BB{S}_r^N$ to $\BB{B}_r^{N+1}$, as defined in \eqref{hhomogExt}.
Define 
\begin{equation}
\label{kappa0Def}
\kappa_0 =\frac{1 + \lfloor h \rfloor - h}{N + a +h + \lfloor h \rfloor}.
\end{equation}
For any $\kappa \in \pb{0, \kappa_0}$, it holds that
\begin{align}
&W_h^e\pr{r; V} \le \pr{1 - \kappa} W_h^e\pr{r; \bar V} 
\label{epiIneqFull} \\
&W_h\pr{r; V} \le \pr{1 - \kappa} W_h\pr{r; \bar V},
\label{epiIneqweakHalf}
\end{align}
where $W_h^e$ and $W_h$ are defined in \eqref{ellipWeissFull} and \eqref{ellipWeiss}, respectively.
\end{prop}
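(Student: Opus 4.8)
The plan is to diagonalize the Weiss energy in the orthonormal system $\set{\phi_j}$ and thereby reduce both inequalities to an elementary term-by-term comparison. I first reduce \eqref{epiIneqweakHalf} to \eqref{epiIneqFull}: since $V$ is even with respect to $y_0$, so is its $h$-homogeneous extension $\bar V$, and hence each integrand $\abs{\gr V}^2\abs{y_0}^a$, $\abs{V}^2\abs{y_0}^a$, $\abs{\gr\bar V}^2\abs{y_0}^a$, $\abs{\bar V}^2\abs{y_0}^a$ is even in $y_0$; integrating over $\BB{B}_r$ (resp.\ $\BB{S}_r$) thus gives twice the integral over $\BB{D}_r$ (resp.\ $\BB{H}_r$), so $W_h^e(r;\,\cdot\,)=2\,W_h(r;\,\cdot\,)$ on such functions and \eqref{epiIneqweakHalf} follows from \eqref{epiIneqFull}. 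It therefore remains to prove \eqref{epiIneqFull}.

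For this, I would use that an even weakly $a$-harmonic function on $\BB{B}_R$ expands into even weakly $a$-harmonic homogeneous polynomials (the weighted analogue of the solid-harmonic expansion; see \cite{JP20}): write $V(Y)=\sum_j c_j P_j(Y)$ with $P_j(Y)=\abs{Y}^{h_j}\phi_j\pr{Y/\abs{Y}}$, the series converging in $H^1\pr{\BB{B}_r,\abs{y_0}^a}$ for each $r<R$ and in $L^2$ on the sphere $\BB{S}_r$; then $\bar V(Y)=\sum_j c_j\, r^{h_j-h}\abs{Y}^h\phi_j\pr{Y/\abs{Y}}$ is a sum of $h$-homogeneous terms. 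Using the polar identity \eqref{polarGradient}, the orthonormality of $\set{\phi_j}$ in $L^2\pr{\BB{S}_1,\abs{\om_0}^a}$, the relations \eqref{sphericalIntegral}, and the elementary radial integrals — termwise integration being legitimate by the stated convergence — one computes
\begin{align*}
W_h^e(r;V) &= \sum_j c_j^2\,(h_j-h)\,r^{2(h_j-h)}, \\
W_h^e(r;\bar V) &= \sum_j c_j^2\,(h_j-h)\,\mu_j\,r^{2(h_j-h)}, \qquad \mu_j := \frac{h_j+h+N+a-1}{2h+N+a-1}.
\end{align*}

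Thus \eqref{epiIneqFull} reduces to showing $(h_j-h)\le(1-\kappa)(h_j-h)\mu_j$ for every $j$. Note $\mu_j$ is increasing in $h_j$ and equals $1$ at $h_j=h$. If $h_j=h$, both sides vanish. If $h_j<h$, then $h_j-h<0$, and since $0\le\mu_j<1\le(1-\kappa)^{-1}$ the inequality holds after dividing by $h_j-h$. If $h_j>h$, then $h_j\in\N$ forces $h_j\ge\lfloor h\rfloor+1$, so $\mu_j\ge\mu_{\lfloor h\rfloor+1}$; a short computation from \eqref{kappa0Def} gives $(1-\kappa_0)\mu_{\lfloor h\rfloor+1}=1$, hence $(1-\kappa)\mu_j\ge(1-\kappa_0)\mu_{\lfloor h\rfloor+1}=1$ whenever $\kappa\le\kappa_0$, which is the desired inequality. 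Summing over $j$ yields \eqref{epiIneqFull}.

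The main obstacle is the first input above: establishing that an even weakly $a$-harmonic function really does admit the homogeneous-polynomial expansion, with convergence in $H^1_{\mathrm{loc}}$ and in $L^2$ on spheres (equivalently, completeness of $\set{\phi_j}$ on the even subspace of $L^2\pr{\BB{S}_1,\abs{\om_0}^a}$ together with enough decay of the coefficients $c_j$ to pass to the limit), since this is exactly what legitimizes computing the two Weiss energies term by term. Once the two displayed formulas are in hand, the remainder is bookkeeping with the explicit constant in \eqref{kappa0Def}.
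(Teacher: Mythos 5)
Your proposal is correct and follows essentially the same route as the paper: expand $V$ into even weakly $a$-harmonic homogeneous polynomials via \cite{JP20}, diagonalize both Weiss energies using the orthonormality of $\set{\phi_j}$ together with \eqref{sphericalIntegral}, and reduce to a term-by-term inequality in $h_j$ with the sharp constant $\kappa_0$. The only cosmetic differences are that you carry a general $r$ where the paper rescales to $r=1$, and you flag explicitly the $H^1_{\mathrm{loc}}$/spherical-$L^2$ convergence needed to integrate termwise, which the paper subsumes in its citation of \cite{JP20}*{Theorem B.1}.
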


\begin{proof}
Without loss of generality, assume that $r = 1$.
Since $V \in H^1\pr{\BB{B}_R, \abs{y_0}^2}$ is weakly $a$-harmonic and even with respect to $y_0$, then an application of \cite{JP20}*{Theorem B.1} shows that we may write 
$$V(Y) = \sum_{j=1}^\iny c_j P_j\pr{Y},$$
where each $P_j \in \mathcal{P}$ is an $a$-harmonic homogeneous polynomial of degree $h_j$ that is even in $y_0$.
Moreover, the convergence of the series is locally uniform so $V$ is real analytic in $\BB{B}_1$.
In polar coordinates, $P_j(Y) = r^{h_j} \phi_j(\om)$ where $\set{\phi_j}_{j=1}^\iny$ forms an orthonormal system in $L^2\pr{\BB{S}_1, \abs{\om_0}^a}$.
Therefore, in $\overline{\BB{B}}_1$, it holds that 
$$V(r, \om) = \sum_{j=1}^\iny c_j r^{h_j} \phi_j\pr{\om}.$$

Using \eqref{polarGradient}, we see that 
\begin{align*}
\abs{\gr V}^2
&= \pr{\del_r V}^2 
+ \frac 1 {r^2} \sum_{i=0}^N \pr{\Om_i V}^2
= \pr{\sum_{j=1}^\iny h_j c_j r^{h_j-1} \phi_j}^2 
+ \frac 1 {r^2} \sum_{i=0}^N \pr{ \sum_{j=1}^\iny c_j r^{h_j} \Om_i \phi_j}^2 \\
&= \sum_{j, k=1}^\iny c_j c_k  r^{h_j + h_k -2} \pr{ h_j h_k \phi_j \phi_k  
+ \sum_{i=0}^N \Om_i \phi_j  \, \Om_i \phi_k}.
\end{align*}
Therefore,
\begin{align*}
\int_{\BB{B}_1} \abs{\gr V(Y)}^2 &\abs{y_0}^a dY
= \int_{\BB{S}_1} \int_0^1  \sum_{j, k=1}^\iny c_j c_k  r^{h_j + h_k -2} \pr{ h_j h_k \phi_j \phi_k  
+ \sum_{i=0}^N \Om_i \phi_j  \, \Om_i \phi_k} \abs{r \om_0}^a r^N dr d\si \\
&= \sum_{j, k=1}^\iny c_j c_k \pr{\int_0^1   r^{N+a+h_j + h_k -2} dr } \brac{ h_j h_k \int_{\BB{S}_1}  \phi_j \phi_k \abs{\om_0}^a d\si
+ \sum_{i=0}^N \int_{\BB{S}_1} \Om_i \phi_j  \, \Om_i \phi_k \abs{\om_0}^a d\si} \\
&= \sum_{j=1}^\iny \frac{c_j^2 }{N + a + 2h_j - 1} \brac{h_j^2 + h_j \pr{N + a + h_j - 1}}
= \sum_{j=1}^\iny c_j^2 h_j,
\end{align*}
where we have used that $\set{\phi_j}_{j=1}^\iny$ is orthonormal system in $L^2\pr{\BB{S}_1, \abs{\om_0}^a}$ along with \eqref{sphericalIntegral}.
As
\begin{align}
\label{H1V}
\int_{\BB{S}_1} \abs{V}^2 \abs{y_0}^a  
&= \sum_{j=1}^\iny c_j^2  \int_{\BB{S}_1}  \phi_j^2 \, \abs{y_0}^a 
= \sum_{j=1}^\iny c_j^2
\end{align}
then
\begin{align}
\label{aharmonWeiss}
W_h^e(V, 1)
&= \int_{\BB{B}_1}  \abs{\gr V}^2 \abs{y_0}^a 
- h \int_{\BB{S}_1}  \abs{V}^2 \abs{y_0}^a
= \sum_{j=1}^\iny c_j^2 \pr{h_j - h}  .
\end{align}

Analogous arguments show that
\begin{align*}
\int_{\BB{B}_1} \abs{\gr \bar{V}(Y)}^2 \abs{y_0}^a dY
&= \int_{\BB{S}_1} \int_0^1  \sum_{j, k=1}^\iny c_j c_k  r^{2h -2} \pr{ h^2 \phi_j \phi_k   + \sum_{i=0}^N \Om_i \phi_j  \, \Om_i \phi_k} \abs{r \om_0}^a r^N dr d\si \\
&= \sum_{j=1}^\iny \frac{c_j^2 }{N + a + 2h - 1} \brac{h^2 + h_j \pr{N + a + h_j - 1}}.
\end{align*}
Since $V = \bar V$ on $\BB{S}_1$, we may combine these computations with the expression \eqref{H1V} to get
\begin{align}
W_h^e(\bar V, 1)
&= \int_{\BB{B}_1}  \abs{\gr \bar V}^2 \abs{y_0}^a dY
- h \int_{\BB{S}_1}  \abs{\bar V}^2 \abs{y_0}^a d\si
= \sum_{j=1}^\iny c_j^2 \pr{h_j - h} \brac{\frac{N + a +h_j + h -1}{N + a + 2h - 1}}.
\label{homogWeiss}
\end{align}
From \eqref{aharmonWeiss} and \eqref{homogWeiss}, we have
\begin{align*}
W_h^e(V, 1) \le \pr{1 - \kappa}W_h^e(\bar V, 1)
& \iff
\sum_{j=1}^\iny c_j^2 \pr{h_j - h}
\le \pr{1 - \kappa} \sum_{j=1}^\iny c_j^2 \pr{h_j - h} \brac{\frac{N + a +h_j + h -1}{N + a + 2h - 1}} \\
& \iff
\kappa \sum_{j=1}^\iny c_j^2 \pr{h_j - h} \pr{N + a +h_j + h -1}
\le \sum_{j=1}^\iny c_j^2 \pr{h_j - h}^2
\end{align*}
which holds if for all $j \in \N$,
\begin{equation}
\label{kappaCond}
\kappa \pr{j - h} \pr{N + a +j + h -1}
\le \pr{j - h}^2.
\end{equation}
If $j \le h$, then \eqref{kappaCond} holds for any $\kappa \ge 0$.
On the other hand, if $j > h$, then $j \ge \lfloor h \rfloor +1$, and \eqref{kappaCond} holds if and only if $\kappa \le \kappa_0$, as given in \eqref{kappa0Def} and the inequality \eqref{epiIneqFull} follows.

Since $V$ is even with respect to $y_0$, then $W_h^e\pr{r; V} = 2 W_h\pr{r; V}$ and $W_h^e\pr{r; \bar{V}} = 2 W_h\pr{r; \bar{V}}$, so \eqref{epiIneqweakHalf} follows from \eqref{epiIneqFull}.
\end{proof}

\subsection{Weiss frequency functions in the parabolic setting}
\label{SS:paraWeiss}

We use the results from Subsection \ref{SS:ellipticWeiss} to prove a Weiss monotonicity formula for solutions to degenerate parabolic equations, described below in Theorem \ref{T:parabolicweiss}.
In our other proofs of parabolic monotonicity formulas, we follow a common scheme: 
\begin{enumerate}
\item Use the parabolic solution function $U$ to generate a sequence of elliptic solution functions $\disp (V_n)_{n \in \N}$.
\item Apply the nonhomogeneous elliptic theorems to get statements about each $V_n$.
\item Use the bridge lemmas to transform these statements about $V_n$ to $n$-dependent statements about $U$.
\item Take the limit as $n \to \iny$.
\end{enumerate}
In this setting, we have an elliptic Weiss monotonicity result that applies to any function (not just solutions) and we have an epiperimetric inequality that applies to $a$-harmonic functions, i.e. solutions to \textit{homogeneous} degenerate elliptic equations.
Accordingly, we need to introduce auxiliary $a$-harmonic functions into our argument.
To prove Theorem \ref{T:parabolicweiss}, our new scheme is as follows:
\begin{enumerate}
\item Use the parabolic solution function $U$ to generate a sequence of elliptic solution functions $\disp (V_n)_{n \in \N}$.
\item Apply the elliptic Weiss monotonicity formula, Theorem \ref{monoWeissThm}, to get statements about each $V_n$.
\item For each $n \in \N$, use results from \S \ref{SSS:EllipTools} to define an $a$-harmonic functions $Q_n$ that is ``close'' to $V_n$.
\item Rewrite the elliptic Weiss monotonicity formula in terms of $V_n$, $Q_n$, and terms from the epiperimetric inequality, then apply the epiperimetric inequality, Proposition \ref{T:epi}.
\item Use more results from \S \ref{SSS:EllipTools} to carefully estimate the error term involving $V_n - Q_n$.
\item Use the bridge lemmas to transform these statements to $n$-dependent statements about $U$.
\item Take the limit as $n \to \iny$.
\end{enumerate}

We need a few a priori conditions on our solutions, described by the following function class.

\begin{defn}[Weiss function class]
We say that a function $U = U(X, t)$ defined on $\R^{d+1}_+ \times \pr{0, T}$ belongs to the function class $\mathfrak{W}\pr{\R^{d+1}_+ \times \pr{0, T}}$ if 
\begin{itemize}
    \item $U$ has moderate $a$-growth at infinity (see Definition \ref{aGrowth}),
\item for every $t_0 \in \pr{0, T}$, there exists $p > 1$, $\al \in \R$ so that
\begin{equation}
\label{WeissClass2}   
t^\al \mathcal{J} \in L^{p}\pr{\pr{0, t_0}},
\end{equation}
    \item for every $t_0 \in \pr{0, T}$, there exists $\eps \in \pr{0, t_0}$ so that
\begin{equation}
\label{WeissClass1}
\mathcal{H}, 
\mathcal{I} \in L^\iny\pr{\brac{t_0 - \eps, t_0}}.
\end{equation}
\end{itemize}
See \eqref{12functionals} in Definition \ref{functionalDefs} for the definitions of these functionals.    
\end{defn}

\begin{thm}[Parabolic Weiss Monotonicity]
\label{T:parabolicweiss}
For $U = U\pr{X,t} \in \mathfrak{W}\pr{\R^{d+1}_+ \times \pr{0, T}}$ and $h \in (0, \iny)$, define
\[
\mathcal{W}_{h}(t; U):=
\frac{2}{t^{h-1}}\int_{\R^{d+1}_+} |\nabla U(X, t)|^2\dGt
-\frac{h}{t^h}\int_{\R^{d+1}_+} U(X, t)^2 \dGt
= \frac{2}{t^{h-1}} \mathcal{D}(t)
-\frac{h}{t^h}\mathcal{H}(t).
\]
If $U$ satisfies the parabolic boundary conditions of type I as in Definition \ref{pBC} and is a solution to 
\begin{equation}
\label{pWeissPDE}
\LP_x U + \frac{a}{x_0} \del_{x_0} U + \del^2_{x_0} U + \del_t U = 0 \text{ in } \R^{d+1}_+ \times \pr{0, T},
\end{equation}
then for every $t \in \pr{0, T}$, it holds that
\begin{equation*}
\begin{aligned}
\frac{d}{dt}\mathcal{W}_{h}\pr{t; U}
&\ge \frac{1 + \lfloor h \rfloor - h}{2t}\mathcal{W}_{h}(t;U)
+\frac{1}{2t^{h+1}} \int_{\R^{d+1}_+} \abs{ X\cdot\nabla U(X, t) + 2t\del_t U(X,t) - h \, U(X,t)}^2 \dGt.
\end{aligned}
\end{equation*}
\end{thm}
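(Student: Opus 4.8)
The plan is to realize $\mathcal{W}_h(\cdot\,;U)$ as a high-dimensional limit of (normalized) elliptic Weiss functionals, following the seven-step scheme outlined above. Fix $U \in \mathfrak{W}$ solving \eqref{pWeissPDE} with parabolic boundary conditions of type I, and for each $n$ set $V_n(Y) = U(F_n(Y))$ on $\BB{D}^{dn+1}_{\sqrt{MT}}$. Then Lemma \ref{functionClassLemma} gives $V_n \in H^1(\BB{D}^{dn+1}_{\sqrt{Mt}}, y_0^a)$ for $t < T$, Lemma \ref{ChainRuleLem} gives that $V_n$ solves $\LP_y V_n + \tfrac{a}{y_0}\del_{y_0}V_n + \del_{y_0}^2 V_n = K_n$ with $K_n$ as in \eqref{KnDef}, Lemma \ref{BCLemma} gives elliptic boundary conditions of type I, and interior parabolic regularity makes $V_n$ smooth. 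Applying the elliptic Weiss identity (Theorem \ref{monoWeissThm}) at $r = \sqrt{Mt}$ with $A = A_n := dn + a + 2h - 1$ produces, for every $t$,
\begin{equation*}
\frac{d}{dr}W_h(r;V_n)\Big|_{r=\sqrt{Mt}} = \frac{A_n}{r}\brac{W_h(r;\bar V_n) - W_h(r;V_n)} + \frac{1}{r^{A_n+2}}\int_{\BB{H}_r}\abs{Y\cdot\gr V_n - h V_n}^2 y_0^a \, d\eta.
\end{equation*}
I would then set $\mathcal{W}_n(t) := \tfrac{2\ga_a M^h}{\aSp{dn+a}}W_h(\sqrt{Mt};V_n)$; expanding $D(r;V_n)$ via \eqref{divExp}, applying the bridge identities \eqref{HnExpr}, \eqref{sphereMixedTerm}, \eqref{ballKnVn}, converting the resulting mixed term through the PDE-based integration by parts \eqref{pIntbyPs}, and invoking Lemma \ref{GaussianLimit} with dominated convergence (the $\mathcal{M}_n$-correction vanishes because $(\tau/t)^{(dn-1+a)/2}\to 0$ for $\tau < t$, using \eqref{WeissClass2}), one checks that $\mathcal{W}_n(t) \to \mathcal{W}_h(t;U)$ pointwise in $t$.

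Next I would introduce the auxiliary $a$-harmonic comparison function required by the epiperimetric inequality, which applies only to (even) weakly $a$-harmonic functions and not to $V_n$ itself. For fixed $n$ and $t$, write $r = \sqrt{Mt}$, even-extend $V_n$ to $V_n^e \in H^1(\BB{B}_r, \abs{y_0}^a)$ (Lemma \ref{evenExtLemma}), and let $Q_n \in H^1(\BB{B}_r, \abs{y_0}^a)$ be the unique weakly $a$-harmonic function agreeing with $V_n^e$ on $\BB{S}_r$ (Lemma \ref{weakSolExLemm}), which is even in $y_0$ by Lemma \ref{evenSolLemma}. Since $Q_n$ and $V_n$ share the same trace on $\BB{H}_r$, the $h$-homogeneous extension \eqref{hhomogExt} of $Q_n$ agrees on $\BB{D}_r$ with $\bar V_n$, so the epiperimetric inequality \eqref{epiIneqweakHalf} of Proposition \ref{T:epi}, applied to $Q_n$ with $\kappa = \kappa_0 = \tfrac{1+\lfloor h \rfloor - h}{dn+a+h+\lfloor h\rfloor}$, yields $W_h(r;Q_n) \le (1-\kappa_0)W_h(r;\bar V_n)$, hence
\begin{equation*}
W_h(r;\bar V_n) - W_h(r;V_n) \ge \kappa_0\,W_h(r;\bar V_n) - \mathcal{E}_n, \qquad \mathcal{E}_n := \tfrac{1}{r^{A_n}}\brac{D(r;V_n) - D(r;Q_n)} \ge 0,
\end{equation*}
with $\mathcal{E}_n \ge 0$ because $Q_n$ minimizes the weighted Dirichlet energy among competitors with its boundary trace. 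To control $\mathcal{E}_n$ I would test the equation $-\di(\abs{y_0}^a\gr(V_n^e - Q_n)) = K_n^e\abs{y_0}^a$ (valid by Lemma \ref{evenLem}, which uses type I) against $V_n^e - Q_n \in H^1_0$, invoke the weighted Poincaré inequality (Lemma \ref{L:poincare}) to get $\mathcal{E}_n \lesssim \tfrac{r^{2-A_n}}{dn}\int_{\BB{D}_r}\abs{K_n}^2 y_0^a$, and then feed in the bridge expression $\int_{\BB{D}_r}\abs{K_n}^2 y_0^a \lesssim \int_0^t \tau^{(dn-1+a)/2}\mathcal{J}_n(\tau)\,d\tau$ coming from \eqref{JDefn} and Lemma \ref{PFT}; the resulting quantity vanishes as $n\to\iny$ thanks to \eqref{WeissClass2} via the appendix limit lemmas.

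Finally I would substitute the displayed lower bound into the Weiss identity, multiply by $\tfrac{2\ga_a M^h}{\aSp{dn+a}}\cdot\tfrac{\sqrt M}{2\sqrt t}$ to form $\tfrac{d}{dt}\mathcal{W}_n(t)$, and pass to the limit term by term. Using \eqref{YdotGrExp} and Lemma \ref{PFTS}, the squared term becomes $\tfrac{1}{2t^{h+1}}\int_{\R^{d+1}_+}\abs{X\cdot\gr U + 2t\del_t U - hU}^2\dGnt \to \tfrac{1}{2t^{h+1}}\int_{\R^{d+1}_+}\abs{X\cdot\gr U + 2t\del_t U - hU}^2\dGt$ by Lemma \ref{GaussianLimit}. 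For the $\kappa_0$-term one uses the identity $\tfrac{A_n}{r}W_h(r;\bar V_n) = \tfrac{1}{r^{A_n+2}}\int_{\BB{H}_r}(r^2\abs{\gr_{\tan}V_n}^2 - h(dn-1+a+h)\abs{V_n}^2)y_0^a$ from the proof of Theorem \ref{monoWeissThm}, together with the cancellation $r^2\abs{\gr_{\tan}V_n}^2 = Mnt\,\abs{\gr U}^2 - \abs{X\cdot\gr U}^2$ (immediate from \eqref{gradExp}, \eqref{YdotGrExp} and $r^2 = Mt$); since $Mnt = (2dn + 2(1+a))t$, the $O(n)$ part of the resulting integral is $A_n$ times $2t\mathcal{D}_n(t) - h\mathcal{H}_n(t)$, so dividing by $A_n$ and using Lemma \ref{GaussianLimit} gives $\tfrac{2\ga_a M^h}{\aSp{dn+a}}W_h(r;\bar V_n) \to \mathcal{W}_h(t;U)$, while $A_n\kappa_0 \to 1+\lfloor h\rfloor - h$, so the $\kappa_0$-term tends to $\tfrac{1+\lfloor h\rfloor - h}{2t}\mathcal{W}_h(t;U)$; the error term tends to $0$. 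Because the elliptic Weiss identity holds for every $r$ under type I conditions, $\tfrac{d}{dt}\mathcal{W}_n$ exists everywhere; integrating $\tfrac{d}{dt}\mathcal{W}_n(t) \ge (\cdots)$ over any $[t_1,t_2] \subset (0,T)$, sending $n\to\iny$ (dominated convergence on $[t_1,t_2]$, using \eqref{WeissClass1}, \eqref{WeissClass2}, and \eqref{pointwiseGBound}), and then differentiating using the $C^1$-regularity of $t \mapsto \mathcal{W}_h(t;U)$ (which follows from interior smoothness of $U$ and the growth control in $\mathfrak{W}$) yields the claimed inequality for every $t \in (0,T)$.

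The main obstacle is the delicate conspiracy underlying the $\kappa_0$-term: the epiperimetric loss $\kappa_0$ must shrink at precisely the rate $1/n$ while the factor $A_n/r$ in the Weiss derivative grows like $n$ and $W_h(r;\bar V_n)$ itself carries $O(n)$ contributions — only after recognizing that $A_n\kappa_0 \to 1+\lfloor h\rfloor - h$ and that the leading $O(n)$ part of $W_h(r;\bar V_n)$ is $A_n(2t\mathcal{D}_n - h\mathcal{H}_n)$ do these combine into the correct constant. A close second is making the error bound $\mathcal{E}_n \to 0$ uniform enough in $t$ on compact subintervals to justify interchanging limit and integral, which is exactly where the weighted Poincaré inequality and the $L^p$-integrability of $t^\al\mathcal{J}$ (via the appendix limit lemmas) are indispensable.
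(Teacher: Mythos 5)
Your proof is correct and follows the same seven-step strategy as the paper's: transform $U$ to $V_n$, apply the elliptic Weiss identity, introduce the $a$-harmonic comparison $Q_n$ via even extension, invoke the epiperimetric inequality, control the Dirichlet-energy defect $\mathcal{E}_n$ by Poincar\'e and the bridge estimate for $K_n$, and pass to the limit. The one place where you genuinely deviate is in how the epiperimetric gain is absorbed. The paper converts the epiperimetric inequality into $W_h(r;\overline{Q}_n)-W_h(r;Q_n)\ge \tfrac{\kappa_0}{1-\kappa_0}W_h(r;Q_n)$, observes that $\tfrac{\kappa_0}{1-\kappa_0}=\tfrac{\delta_h}{A_n}$ with $\delta_h=1+\lfloor h\rfloor-h$, writes $W_h(Q_n)=W_h(V_n)+[W_h(Q_n)-W_h(V_n)]$, and then re-uses the already-established limit of the rescaled $W_h(V_n)$; this produces the weighted form $\frac{d}{dt}\brac{t^{-\delta_h/2}W_{h,n}}\ge t^{-\delta_h/2}(\cdots)$ that integrates cleanly. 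You instead keep $\kappa_0 W_h(r;\bar V_n)$, note the equivalent fact $A_n\kappa_0\to\delta_h$, and compute the limit of the rescaled $W_h(r;\bar V_n)$ directly from the identity $\tfrac{A_n}{r}W_h(r;\bar V_n)=\tfrac{1}{r^{A_n+2}}\int_{\BB{H}_r}(r^2|\gr_{\tan}V_n|^2-h(dn-1+a+h)|V_n|^2)y_0^a$ together with $r^2|\gr_{\tan}V_n|^2=Mnt|\gr U|^2-(X\cdot\gr U)^2$. This is an equally valid bookkeeping of the same cancellations; in the limit the $(X\cdot\gr U)^2$ piece is killed by $\kappa_0\to 0$ and boundedness of $\mathcal{I}$ from \eqref{WeissClass1}, so there is no gap. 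Do note one small imprecision: you attribute the vanishing of the $\mathcal{M}_n$-correction to \eqref{WeissClass2} alone, but as in \eqref{secondTermBound} one also needs the $L^1$-bound on $\mathcal{H}$ from moderate $a$-growth. Also, since $\mathcal{D}$ is only required to be locally $L^1$ (not $L^\infty$), the interchange of limit and integral for your $\kappa_0$-term must go through dominated convergence with the majorant $C_0\mathcal{D}$ from \eqref{pointwiseGBound}, as you correctly indicate; this is precisely the issue the paper sidesteps by packaging the $\kappa_0$-term back into $W_h(V_n)$ itself.
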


\begin{proof}
With $U$ as given, for each $n \in \N$, define $V_n : \BB{D}^{dn+1}_{\sqrt{MT}} \to \R$ so that
$$V_n \pr{Y} = U\pr{F_n\pr{Y}} = U\pr{X, t}.$$
Since $U$ has moderate $a$-growth at infinity, then Lemma \ref{functionClassLemma} shows that each $V_n \in H^1(\mathbb{D}^{dn+1}_{\sqrt{Mt}}, y_0^a)$ for every $t \in (0, T)$.
In particular, we may apply Theorem \ref{monoWeissThm} to each $V_n$ on $\mathbb{D}^{dn+1}_{\sqrt{Mt}}$. 

We compute the elliptic Weiss functional from \eqref{ellipWeiss} for $V = V_n$ at $r = \sqrt{Mt}$ with $N=dn$ and $A = dn + a + 2h - 1$.
An application of Lemma \ref{bridgeLemmaResults} shows that $H\pr{\sqrt{M t}; V_n}$ is given by \eqref{HnExpr}.
Since $U$ satisfies the parabolic boundary conditions, then the computation in \eqref{DnExp} is applicable and we see that
\begin{align*}
W_h(\sqrt{Mt};V_n)
&= \frac{1}{\pr{Mt}^{\frac A 2}} D(\sqrt{Mt}; V_n)
-\frac{h}{\pr{Mt}^{\frac{A+1}2} } H(\sqrt{Mt}; V_n) \\
&= \frac{\bar{C}t^{-h}}{M^{\frac{A+1}{2}}} \brac{\int_{\R^{d+1}_+} U(X,t) \brac{(X,2t) \cdot \gr_{(X,t)} U(X,t)} \dGnt - 2 \int_0^t \pr{\frac \tau t}^{\frac{dn-1+a}2} \mathcal{M}_n(\tau) d\tau} \\
&- \frac{h \bar{C}t^{-h}}{M^{\frac{A+1}{2}}} \int_{\R^{d+1}_+}  \abs{U(X,t)}^2 \dGnt
 .
\end{align*} 
Since $U$ is a solution to \eqref{pWeissPDE}, we can repeat the arguments in the proof of Theorem \ref{pMono} to get that \eqref{othergradExp} holds.
Thus, with the rescaling
\begin{equation}
\label{rescaledW}
W_{h,n}(t;U)= \frac{M^{\frac{A+1}{2}}}{\bar{C}}  W_h(\sqrt{Mt};V_n),
\end{equation}
we see that
\begin{align*}
\lim_{n \to \iny} W_{h,n}(t;U)
&= \mathcal{W}_{h}(t; U).
\end{align*}

Now we establish bounds for the derivatives of $W_{h,n}(t;U)$.
Since $U$ satisfies parabolic boundary conditions of type I, then Lemma \ref{BCLemma} shows that each $V_n$ satisfies elliptic boundary conditions of type I.
Let $\overline{V}_n$ denote the $h$-homogeneous extension of $V_n|_{\BB{H}^{dn}_{\sqrt{Mt}}}$ to $\BB{D}^{dn+1}_{\sqrt{Mt}}$.
An application of \eqref{Weisselliptic} from Theorem \ref{monoWeissThm} to $W_h(\sqrt{Mt};V_n)$, followed by the chain rule, shows that for every $t \in (0, T)$,
\begin{align*}
\frac{d}{dt} W_h(\sqrt{Mt};V_n)
&= \frac{1}{2} \sqrt{\frac{M}{t}}\set{
\frac{A}{\sqrt{Mt}} \brac{W_h\pr{\sqrt{Mt};\overline{V}_n} -W_h\pr{\sqrt{Mt};V_n} }
+ \frac{1}{(Mt)^{\frac{A+2}{2}}} \int_{\BB{H}^{dn}_{\sqrt{Mt}}}\pr{Y \cdot \nabla V_n -  h V_n}^2 \, y_0^a } \\
&= \frac{A}{2t} \brac{W_h\pr{\sqrt{Mt};\overline{V}_n} -W_h\pr{\sqrt{Mt};V_n} }
+ \frac{1}{2t (Mt)^{\frac{A+1}{2}}} \int_{\BB{H}^{dn}_{\sqrt{Mt}}}\pr{Y \cdot \nabla V_n -  h V_n}^2 \, y_0^a .
\end{align*}

Since $U$ satisfies \eqref{pWeissPDE}, then Lemma \ref{ChainRuleLem} shows that each $V_n$ is a solution to \eqref{VnKnEllipEqn} in $\mathbb{D}^{dn+1}_{\sqrt{MT}}$ with $K_n$ as in \eqref{KnDef}.
Extend $V_n$ evenly from the half-ball $\BB{D}^{dn+1}_{\sqrt{Mt}}$ to the full ball $\BB{B}^{dn+1}_{\sqrt{Mt}}$ and denote the extended function by $V_n^e$. 
According to Lemma \ref{evenExtLemma}, $V_n^e \in H^1\pr{\BB{B}^{dn+1}_{\sqrt{Mt}}, \abs{y_0}^a}$, while Lemma \ref{evenLem} shows that $\di\pr{\abs{y_0}^a \gr V_n^e} = K_n^e$ in $\BB{B}^{dn+1}_{\sqrt{Mt}}$ in the weak sense, where $K_n^e$ denotes the even extension of $K_n$.

An application of Lemma \ref{weakSolExLemm} shows that there exists a unique function $Q_n \in H^1\pr{\BB{B}^{dn+1}_{\sqrt{Mt}}, \abs{y_0}^a}$ that weakly solves
\begin{equation*}
\left\{
\begin{array}{rl} 
\di\pr{\abs{y_0}^a \gr Q_n} = 0 & \text{ in } \BB{B}^{dn+1}_{\sqrt{Mt}} \\
Q_n = V_n^e & \text{ on } \BB{S}^{dn+1}_{\sqrt{Mt}} .
\end{array}
\right.
\end{equation*}
Since $Q_n = V_n$ on $\BB{H}_{\sqrt{Mt}}$, then $\overline{Q}_n = \overline{V}_n$ on $\BB{D}_{\sqrt{Mt}}$ so that $W_h\pr{\sqrt{Mt};\overline{Q}_n} = W_h\pr{\sqrt{Mt};\overline{V}_n}$ and then 
\begin{equation*}
\begin{aligned}
\frac{d}{dt} W_h\pr{\sqrt{Mt};V_n}
&= \frac{A}{2t} \brac{ W_h\pr{\sqrt{Mt};\overline{Q}_n} -W_h\pr{\sqrt{Mt};Q_n} } 
+ \frac{1}{2t (Mt)^{\frac{A+1}{2}}} \int_{\BB{H}^{dn}_{\sqrt{Mt}}}\pr{Y \cdot \nabla V_n -  h V_n}^2 \, y_0^a \\
&+ \frac{A}{2t} \brac{ W_h\pr{\sqrt{Mt}; Q_n} -W_h\pr{\sqrt{Mt};V_n} }.
\end{aligned}
\end{equation*}
Because $V_n^e$ is even with respect to $y_0$, then Lemma \ref{evenSolLemma} shows that $Q_n$ is also even with respect to $y_0$.
Therefore, $Q_n$ is weakly $a$-harmonic and even with respect to $y_0$, so we may apply \eqref{epiIneqweakHalf} from Proposition \ref{T:epi} to get
\begin{align*}
W_h\pr{\sqrt{Mt}; \overline{Q}_n} - W_h\pr{\sqrt{Mt}; Q_n}
&\ge \kappa_0 W_h\pr{\sqrt{Mt}; \overline{Q}_n} 
\ge \frac{\kappa_0}{1 - \kappa_0} W_h\pr{\sqrt{Mt}; Q_n}.
\end{align*}
From \eqref{kappa0Def}, we see that $\disp \frac{\kappa_0}{1 - \kappa_0} 
= \frac{1 + \lfloor h \rfloor - h}{dn + a + 2h - 1} = \frac{\de_h}{A}$, where $\de_h := 1 + \lfloor h \rfloor - h \in (0, 1]$.
Combining the previous two observations shows that
\begin{equation}
\label{aharmest}
\begin{aligned}
\frac{d}{dt} W_h\pr{\sqrt{Mt};V_n}
&\ge \frac{\de_h}{2t} W_h\pr{\sqrt{Mt}; Q_n}
+ \frac{1}{2t (Mt)^{\frac{A+1}{2}}} \int_{\BB{H}^{dn}_{\sqrt{Mt}}}\pr{Y \cdot \nabla V_n -  h V_n}^2 \, y_0^a \\
&+ \frac{A}{2t} \brac{ W_h\pr{\sqrt{Mt}; Q_n} -W_h\pr{\sqrt{Mt};V_n} } \\
&= \frac{\de_h}{2t} W_h\pr{\sqrt{Mt}; V_n} 
+ \frac{1}{2t (Mt)^{\frac{A+1}{2}}} \int_{\BB{H}^{dn}_{\sqrt{Mt}}}\pr{Y \cdot \nabla V_n -  h V_n}^2 \, y_0^a \\
&+ \frac{dn + a + h + \lfloor h \rfloor}{2t} \brac{ W_h\pr{\sqrt{Mt}; Q_n} -W_h\pr{\sqrt{Mt};V_n} }.
\end{aligned}
\end{equation}

Observe that
\begin{align*}
\int_{\BB{B}_{\sqrt{Mt}}} \left(|\nabla Q_n|^2-|\nabla V_n^e|^2\right) \abs{y_0}^a
&= \int_{\BB{B}_{\sqrt{Mt}}} \nabla\pr{Q_n + V_n^e} \cdot \nabla(Q_n - V_n^e) \abs{y_0}^a \\
&= 2 \int_{\BB{B}_{\sqrt{Mt}}} \nabla Q_n \cdot \nabla(Q_n - V_n^e) \abs{y_0}^a
+ \int_{\BB{B}_{\sqrt{Mt}}} \nabla\pr{V_n^e - Q_n} \cdot \nabla(Q_n - V_n^e) \abs{y_0}^a \\
&= - \int_{\BB{B}_{\sqrt{Mt}}}\abs{\nabla\pr{Q_n-V_n^e}}^2 \abs{y_0}^a,
\end{align*}
since $Q_n$ is weakly $a$-harmonic and $Q_n - V_n^e \in H^1_0\pr{\BB{B}_{\sqrt{Mt}}, \abs{y_0}^a}$.
On the other hand, since $V_n^e$ weakly solves $\di\pr{\abs{y_0}^a \gr V_n^e} = K_n^e$, then
\begin{align*}
\int_{\BB{B}_{\sqrt{Mt}}} \left(|\nabla Q_n|^2-|\nabla V_n^e|^2\right) \abs{y_0}^a
&= \int_{\BB{B}_{\sqrt{Mt}}} \nabla V_n^e \cdot \nabla\pr{Q_n-V_n^e} \abs{y_0}^a
= - \int_{\BB{B}_{\sqrt{Mt}}} K_n^e \pr{Q_n-V_n^e} \abs{y_0}^a.
\end{align*}
Comparing these equations shows that
\begin{align*}
\int_{\BB{B}_{\sqrt{Mt}}}\abs{\nabla\pr{Q_n-V_n^e}}^2 \abs{y_0}^a
&= \int_{\BB{B}_{\sqrt{Mt}}} K_n^e \pr{Q_n-V_n^e} \abs{y_0}^a
\le \pr{\int_{\BB{B}_{\sqrt{Mt}}} \abs{K_n^e}^2 \abs{y_0}^a}^{\frac 1 2} \pr{\int_{\BB{B}_{\sqrt{Mt}}} \abs{Q_n-V_n^e}^2 \abs{y_0}^a}^{\frac 1 2} \\
&\le \sqrt{\frac{Mt}{dn + a + 1}}\pr{\int_{\BB{B}_{\sqrt{Mt}}} \abs{K_n^e}^2 \abs{y_0}^a}^{\frac 1 2} \pr{\int_{\BB{B}_{\sqrt{Mt}}} \abs{\gr\pr{Q_n-V_n^e}}^2 \abs{y_0}^a}^{\frac 1 2},
\end{align*}
where we have applied H\"older's inequality followed by the Poincar\'e inequality in Lemma \ref{L:poincare}.
That is,
\begin{align*}
\int_{\BB{B}_{\sqrt{Mt}}} \left(|\nabla Q_n|^2-|\nabla V_n^e|^2\right) \abs{y_0}^a
&= - \int_{\BB{B}_{\sqrt{Mt}}}\abs{\nabla\pr{Q_n-V_n^e}}^2 \abs{y_0}^a
\ge - \frac{Mt}{dn + a + 1} \int_{\BB{B}_{\sqrt{Mt}}} \abs{K_n^e}^2 \abs{y_0}^a,
\end{align*}
or, equivalently, 
\begin{align*}
\int_{\BB{D}_{\sqrt{Mt}}} \left(|\nabla Q_n|^2-|\nabla V_n|^2\right) y_0^a
&\ge - \frac{Mt}{dn + a + 1} \int_{\BB{D}_{\sqrt{Mt}}} \abs{K_n}^2 y_0^a.
\end{align*}
Substituting this bound into \eqref{aharmest} and recalling the definition of the Weiss functional in \eqref{ellipWeiss}, we have
\begin{equation*}
\begin{aligned}
\frac{d}{dt} W_h\pr{\sqrt{Mt};V_n}
&\ge \frac{\de_h}{2t} W_h\pr{\sqrt{Mt}; V_n} 
+ \frac{1}{2t (Mt)^{\frac{A+1}{2}}} \int_{\BB{H}^{dn}_{\sqrt{Mt}}}\pr{Y \cdot \nabla V_n  -  h V_n}^2 \, y_0^a
-  \frac {\frac{dn + a + h + \lfloor h \rfloor}{dn + a + 1}}{2t\pr{Mt}^{\frac{A-2} 2}} \int_{\BB{D}^{dn+1}_{\sqrt{Mt}}} \abs{K_n}^2 y_0^a \\
&= \frac{\de_h}{2t} W_h\pr{\sqrt{Mt}; V_n} 
+ \frac{\bar{C} }{M^{\frac{A+1}{2}}} \set{\frac 1 {2t^{h+1}} \int_{\R^{d+1}_+} \brac{(X, 2t) \cdot \gr_{(X,t)} U - h U}^2 \dGnt - E_n(t)},
\end{aligned}
\end{equation*}
where we have applied Lemmas \ref{PFTS} and \ref{PFT} with \eqref{YdotGrExp} and \eqref{KnDef}, then introduced
$$E_n(t) = \frac{4 \frac{dn + a + h + \lfloor h \rfloor}{dn + a + 1}}{t^h} \int_0^t \pr{\frac \tau t}^{\frac{dn-1+a}{2}} \mathcal{J}_n\pr{\tau} d\tau.$$
Let 
\[
\mathcal{P}(t; U) := \int_{\R^{d+1}_+} \brac{(X, 2t) \cdot \gr_{(X,t)} U - h U}^2 \dGt
\]
and define $\mathcal{P}_n(t; U)$ analogously with $\mathcal{G}_n$ in place of $\mathcal{G}$.

Recalling \eqref{rescaledW}, we have shown that for every $t \in (0, T)$,
\begin{equation}
\label{WeissMonoCombined}
\frac{d}{dt} \brac{t^{- \frac{\de_h}{2}} W_{h,n}(t;U)}
\ge t^{- \frac{\de_h}{2}} \brac{\frac{1}{2t^{h+1}} \mathcal{P}_n(t) 
-E_n(t)}.
\end{equation}
To show the desired estimate for $\disp \frac{d}{dt}\mathcal{W}_h(t;U)$, it suffices to show two uniform convergence statements: 
given any $t_0 \in (0, T]$, there exists $\eps_0 \in \pr{0, t_0}$ so that $E_n(t)$ converges uniformly to $0$ and $\mathcal{P}_n$ converges uniformly to $\mathcal{P}$; both on $\brac{t_0 - \eps_0, t_0}$.

Indeed, from \eqref{WeissMonoCombined}, for any $t \in \brac{t_0 - \eps_0, t_0}$, it holds that
\[
t_0^{- \frac{\de_h}{2}}  W_{h,n}(t_0;U) - t^{- \frac{\de_h}{2}}  W_{h,n}(t;U)
\ge \int_{t}^{t_0} \tau^{- \frac{\de_h}{2}} \brac{\frac{1}{2\tau^{h+1}} \mathcal{P}_n(\tau) - E_n(\tau)} d\tau.
\]
Since $W_{h,n}(t;U)$ converges pointwise to $\mathcal{W}_h(t;U)$, then
\[
t_0^{- \frac{\de_h}{2}} \mathcal{W}_h(t_0;U) - t^{- \frac{\de_h}{2}} \mathcal{W}_h(t;U)
\ge \lim_{n \to \iny} \int_{t}^{t_0} \tau^{- \frac{\de_h}{2}} \brac{\frac{1}{2\tau^{h+1}} \mathcal{P}_n(\tau) - E_n(\tau)} d\tau.
\]
Assuming the two local uniform convergences described above on $\brac{t_0 - \eps_0, t_0} \supseteq \brac{t, t_0}$, we see that
\begin{align*}
\lim_{n \to \iny} \int_{t}^{t_0} \tau^{- \frac{\de_h}{2}} \brac{\frac{1}{2\tau^{h+1}} \mathcal{P}_n(\tau)- E_n(\tau)} d\tau 
&= \int_{t}^{t_0} \lim_{n \to \iny} \tau^{- \frac{\de_h}{2}} \brac{ \frac{1}{2\tau^{h+1}} \mathcal{P}_n(\tau) - E_n(\tau)} d\tau \\
&= \int_{t}^{t_0}  \brac{\frac{\tau^{- \frac{\de_h}{2}}}{2\tau^{h+1}}\int_{\R^{d+1}_+}\left(X\cdot\nabla U+2\tau \partial_\tau U -h \, U\right)^2x_0^a\mathcal{G}(X,\tau)dX}d\tau.
\end{align*}
By the Lebesgue differentiation theorem, since the bracketed term is integrable and $\mathcal{W}_h(t,U)$ is differentiable, we may conclude that for almost every $t_0\in(0,T],$
\[
\frac{d}{dt}\mathcal{W}_h(t_0,U)
\ge \frac{\de_h}{2t_0} \mathcal{W}_h(t_0;U) + \frac{1}{2t_0^{h+1}}\int_{\R^{d+1}_+}\left(X\cdot \nabla U+2t_0 \partial_t U -h\, U \right)^2x_0^a\mathcal{G}(X,t_0)dX.
\] 
By continuity, the statement holds for all $t_0 \in (0, T]$. 

It remains to justify the two local uniform convergence statements from above. 
For $t_0 \in (0, T]$, since $U\in \mathfrak{W}\pr{\R^{d+1}_+ \times \pr{0, T}}$, then let $\eps \in(0,t_0)$ be from \eqref{WeissClass1} and let $p > 1, \al \in \R$ be from \eqref{WeissClass2}. We first show that $E_n$ converges uniformly to $0$ in $[t_0-\eps,t_0]$. 
Since $\frac{dn+a+h+ \lfloor h \rfloor}{dn+a+1}\rightarrow 1$, then for $n \gg 1$, Lemma \ref{GaussianLimit} shows that
$$E_n(t) \le \frac{5 C_0}{t^h} \int_0^t \pr{\frac \tau t}^{\frac{dn-1+a}{2}} \mathcal{J}\pr{\tau} d\tau.$$
By \eqref{WeissClass2}, Lemma \ref{weightedLpBound} is applicable and shows that whenever $n \gg 1$, we have
\[
\sup_{t \in [t_0-\eps,t_0]} \int_0^{t} \left(\frac{\tau}{t}\right)^{\frac{dn-1+a}{2}}\mathcal{J}(\tau)d\tau
\le C_1 (t_0-\eps)^{-\al}\left(\frac{t_0}{n}\right)^{1-\frac{1}{p}}||t^\al\mathcal{J}||_{L^p([0,t_0])}.
\]
Combining these bounds shows that $E_n$ converges to zero uniformly in $[t_0-\eps,t_0]$. 

Next we show that $\mathcal{P}_n$ converges uniformly to $\mathcal{P}$ in $[t_0-\eps,t_0]$. 
Since $\mathcal{P}(t) \le 2 \pr{\mathcal{I}(t) + h^2 \mathcal{H}(t)}$, then \eqref{WeissClass1} shows that $\mathcal{P} \in L^\iny\pr{\brac{t_0 - \eps, t_0}}$ as well.
An application of Lemma \ref{uniformConvLemma} implies that for some $\eps_0 \in (0, \eps]$, $\mathcal{P}_n$ converges uniformly to $\mathcal{P}$ on $\brac{t_0 - \eps_0, t_0}$.

As we have shown the two desired uniform convergence properties, the proof is complete.
\end{proof}

\section{Alt-Caffarelli-Friedman Frequency Functions}
\label{S:ACF}

The main result of this section is a parabolic counterpart to the degenerate Alt-Caffarelli-Friedman (ACF) monotonicity formulas of \cite{TVZ14}, \cite{TVZ2}.
This result is given below in Theorem \ref{T:parabolicACF}. 
To prove this degenerate parabolic
result via the high-dimensional limiting technique, we first need to prove a result that applies to elliptic subsolutions, as described by \eqref{strangecondition} below.

\subsection{Alt-Caffarelli-Friedman frequency functions in the elliptic setting}
\label{SS:EllipACF}

Here we prove nonhomogeneous versions of the degenerate ACF monotonicity formulas that appear in \cite{TVZ14} and \cite{Zil14} for general $a$, and in \cite{TVZ2} for the square root of the Laplacian.

\begin{thm}[Nonhomogeneous Elliptic ACF]
\label{C:ACFellipticnonzero}
For $R > 0$, assume that $V_1, V_2 \in C^0\pr{\overline{\BB{D}}_R} \cap H^1\pr{\BB{D}_R, y_0^a}$ are non-negative functions with $V_1\pr{0} = 0 = V_2\pr{0}$ and $V_1 V_2 \equiv 0$ on $\BB{B}_R^{N+1}\cap\{y_0=0\}$. 
For each $i$, assume that $K_i$ is chosen so that $K_i V_i \in L^1_{loc}(\BB{D}_R,y_0^a)$, 
and for every $r \in (0, R)$, $\disp \int_{\BB{D}_r} K_i^- V_i |Y|^{-(N-1+a)} y_0^a \, dY < \infty$.
Assume that each $V_i$ is a subsolution in the sense that for all $\Psi\in C_0^{2}(\BB{B}_R)$ with $\Psi\ge 0$, we have
\begin{equation}
\label{strangecondition}
\int_{\R^{N+1}_+} \brac{\nabla V_i \cdot \nabla \pr{V_i\Psi} + K_i(V_i\Psi) } y_0^a \, dY
\le 0.
\end{equation}
For a.e. $r \in (0, R)$, the function $\phi\pr{r} = \phi\pr{r; V_1, V_2}$ given by 
\begin{equation}
\label{ACF}
\phi(r)
= \frac{1}{r^{1-a}} \pr{\int_{\BB{D}_r} |\nabla V_1(Y)|^2 |Y|^{-(N-1+a)} y_0^a \, dY } \pr{\int_{\BB{D}_r} |\nabla V_2(Y)|^2 |Y|^{-(N-1+a)} y_0^a \, dY }
\end{equation}
is well-defined and bounded.
Moreover, for each such $r$ for which $\disp \pr{\int_{\BB{H}_r} V_{1}^2 y_0^a \, d\eta} \pr{\int_{\BB{H}_r} V_{2}^2 y_0^a \, d\eta} \neq 0$, we have that
\begin{equation}
\label{phiTDer}
\begin{aligned}
\tfrac{N - 1+ a}2 \phi'\pr{r}
&\ge - r^a \frac{ \pr{
\int_{\BB{D}_r} K_1^- V_1 |Y|^{-(N-1+a)} y_0^a \, dY}
\pr{\int_{\BB{D}_r} |\nabla V_2|^2 |Y|^{-(N-1+a)} y_0^a \, dY} \pr{\int_{\BB{H}_r} |\nabla V_1|^2 y_0^a \, d\eta} }{\pr{\int_{\BB{H}_r} V_1^2 y_0^a \, d\eta}} \\
&- r^a \frac{\pr{\int_{\BB{D}_r} |\nabla V_{1}|^2 |Y|^{-(N-1+a)} y_0^a \, dY} \pr{\int_{\BB{D}_r} K_{2}^- V_{2} |Y|^{-(N-1+a)} y_0^a \, dY} \pr{\int_{\BB{H}_r} |\nabla V_{2}|^2 y_0^a \, d\eta}} {\pr{\int_{\BB{H}_r} V_2^2 y_0^a \, d\eta}} .
\end{aligned}
\end{equation}
\end{thm}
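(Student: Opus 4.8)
The plan is to adapt the classical Alt--Caffarelli--Friedman scheme to the degenerate weight $y_0^a$ and to the nonhomogeneous terms $K_i$, keeping careful track of every error contribution. Set $I_i(r) = \int_{\BB{D}_r} \abs{\gr V_i}^2 \abs{Y}^{-(N-1+a)} y_0^a \, dY$, so that $\phi(r) = r^{a-1} I_1(r) I_2(r)$. First I would record, using the coarea formula and the Lebesgue differentiation theorem, that $I_i'(r) = r^{-(N-1+a)} \int_{\BB{H}_r} \abs{\gr V_i}^2 y_0^a \, d\eta$ for a.e.\ $r$, so that wherever $I_1 I_2 > 0$ one has $\phi'(r)/\phi(r) = (a-1)/r + I_1'(r)/I_1(r) + I_2'(r)/I_2(r)$. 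In this way everything reduces to a per-phase lower bound on $I_i'(r)/I_i(r)$.

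The engine for that bound is a weighted Rellich--Pohozaev inequality. Since $Y \mapsto \abs{Y}^{-(N-1+a)}$ is, up to a positive dimensional constant, the fundamental solution of $L_a = \di(y_0^a \gr\,\cdot)$ (in the ``$N+1+a$-dimensional'' sense), I would rewrite the subsolution hypothesis \eqref{strangecondition} as $\tfrac12 \di(y_0^a \gr V_i^2) \ge (\abs{\gr V_i}^2 + K_i V_i) y_0^a$ in the sense of measures, multiply by $\abs{Y}^{-(N-1+a)}$, and integrate over $\BB{D}_r$. A double integration by parts (with a regularization near $Y = 0$, where the Dirac mass of $\di(y_0^a \gr \abs{Y}^{-(N-1+a)})$ is annihilated by $V_i(0)=0$, and where the flat-boundary contributions on $\{y_0=0\}$ vanish or have a favorable sign, using $1+a>0$ together with the one-sided condition on the thin set encoded in \eqref{strangecondition}) yields
\[
I_i(r) \;\le\; \frac{N-1+a}{2\, r^{N+a}} \int_{\BB{H}_r} V_i^2 y_0^a \, d\eta
\;+\; \frac{1}{r^{N-1+a}} \int_{\BB{H}_r} V_i \, \del_\nu V_i \, y_0^a \, d\eta
\;+\; \int_{\BB{D}_r} K_i^- V_i \abs{Y}^{-(N-1+a)} y_0^a \, dY .
\]
Writing $A_i(r), B_i(r)$ for the two spherical integrals $\int_{\BB{H}_r}\abs{\gr V_i}^2 y_0^a$ and $\int_{\BB{H}_r} V_i^2 y_0^a$ and $C_i(r)$ for the last integral (exactly the quantities in \eqref{phiTDer}), and using Cauchy--Schwarz on $\int_{\BB{H}_r}V_i\del_\nu V_i y_0^a$, the orthogonal splitting $\abs{\gr V_i}^2 = \abs{\gr_{\tan}V_i}^2 + (\del_\nu V_i)^2$ on the spheres, and a Friedland--Hayman--type spectral inequality on $\BB{H}_r$ (here the hypotheses $V_i \ge 0$, $V_i(0)=0$, and $V_1 V_2 \equiv 0$ on $\BB{B}_R \cap \{y_0=0\}$ are used crucially, forcing the relevant characteristic exponent of each phase to be at least $\tfrac{1-a}{2}$; without the Dirichlet condition on a portion of the equator the resulting constant would be uncontrolled), one obtains a bound $I_i(r) \le c_i(r)\, r^{-(N-2+a)} A_i(r) + \mathcal{R}_i(r)$ with $c_i(r)$ bounded in terms of $N,a$ alone and $\mathcal{R}_i(r)$ controlled by $C_i(r)$ and spherical data. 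With a judicious choice of the intermediate Young/Cauchy--Schwarz parameters and the elementary estimate $(X+Y)^{-1}\ge X^{-1} - Y X^{-2}$ to absorb the $C_i$-error, this yields a per-phase differential inequality of the form $I_i'(r)/I_i(r) \ge \tfrac{1-a}{2r} - \mathcal{E}_i(r)$, in which $\mathcal{E}_i(r)$ is a constant multiple of $r\, C_i(r) A_i(r)/(B_i(r) I_i(r))$ and with a nonnegative remainder that is simply discarded.

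Summing the two per-phase inequalities and substituting into the logarithmic-derivative identity, the principal terms combine to give $\phi'/\phi \ge -\mathcal{E}_1 - \mathcal{E}_2$; multiplying through by $\tfrac{N-1+a}{2}\,\phi(r) = \tfrac{N-1+a}{2}\, r^{a-1} I_1 I_2$ (the factor $r^{a-1}I_1 I_2 / I_i$ supplies the $r^a I_j$ in the numerator) turns the two leftover errors into precisely the two terms on the right-hand side of \eqref{phiTDer}. The well-definedness of $\phi$ (finiteness of $I_i(r)$ for a.e.\ $r$) and its boundedness are handled separately: the only obstruction is the singular weight near $Y=0$, and it is controlled by a Caccioppoli estimate (test \eqref{strangecondition} with $\Psi=\zeta^2$ for standard cutoffs), the interior De Giorgi--Nash--Moser H\"older continuity of $L_a$-subsolutions (the weight $\abs{y_0}^a$ lies in the Muckenhoupt class $A_2$), and $V_i(0)=0$, which together force enough decay of $\int_{\BB{D}_\rho}\abs{\gr V_i}^2 y_0^a$ as $\rho\to0$; boundedness then follows by integrating the differential inequality inward from a fixed radius and using the assumed local finiteness of $\int_{\BB{D}_r}K_i^- V_i\abs{Y}^{-(N-1+a)}y_0^a$.

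I expect the main obstacle to be the Rellich--Pohozaev step together with the spectral input. Running the integration by parts against the singular weight $\abs{Y}^{-(N-1+a)}$ with only $V_i\in C^0(\overline{\BB{D}}_R)\cap H^1(\BB{D}_R,y_0^a)$ and the weak inequality \eqref{strangecondition} demands a careful limiting argument: one must show rigorously that the origin and the thin boundary contribute nothing problematic (this is exactly where $V_i(0)=0$ and the thin-set condition enter) and that the error collapses to the stated $C_i$-integral without any extra integrability assumption on $K_i$. One must then couple this with the Friedland--Hayman--type bound so that the net coefficient $(a-1) + 2(N-1+a) + 2\gamma_1(r) + 2\gamma_2(r)$ is nonnegative for \emph{every} admissible $N\ge1$ and $a\in(-1,1)$; for the high-dimensional-limit application $N=dn\to\infty$ this is automatic, but for general $N$ it genuinely needs the characteristic-exponent lower bound $\gamma_i\ge\tfrac{1-a}{2}$ coming from the disjointness on the thin set. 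Arranging the error into exactly the form of \eqref{phiTDer}, rather than a cruder majorant, is the remaining bookkeeping point.
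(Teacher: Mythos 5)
Your overall scheme is essentially the paper's: derive a weighted Rellich--Pohozaev inequality by testing the subsolution condition \eqref{strangecondition} against (a regularization of) the fundamental-solution weight $\abs{Y}^{-(N-1+a)}$, control the origin via $V_i(0)=0$ and continuity and the thin boundary via the trace behavior, and couple the result with a Friedland--Hayman type spectral inequality through the logarithmic derivative $\phi'/\phi = (a-1)/r + I_1'/I_1 + I_2'/I_2$. The paper realizes your ``multiply by $|Y|^{-(N-1+a)}$ and integrate'' step by mollifying the weight to a bounded $\Ga_\eps$ and using radial cutoffs $\xi_\de$, which is exactly the careful limiting argument you anticipated; your proposed route for finiteness via Caccioppoli plus $A_2$-weighted De Giorgi--Nash--Moser is heavier than what the paper does (the paper gets finiteness directly from the limiting inequality and $V_i(0)=0$), but either works.

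The genuine gap is in the spectral input. You claim a per-phase lower bound --- ``forcing the relevant characteristic exponent of each phase to be at least $\tfrac{1-a}{2}$,'' restated later as ``$\gamma_i\ge\tfrac{1-a}{2}$'' --- and from it you derive the per-phase differential inequality $I_i'(r)/I_i(r) \ge \tfrac{1-a}{2r} - \mathcal{E}_i(r)$. This step fails. The Friedland--Hayman type inequality that the disjointness $V_1 V_2 \equiv 0$ on $\set{y_0=0}$ actually yields controls only the \emph{sum}: in the paper's notation, $\tfrac{\beta_1}{\sqrt{\alpha_1}} + \tfrac{\beta_2}{\sqrt{\alpha_2}} \ge \tfrac{1-a}{2}$ for $N\ge 3$ (and $\ge\max\set{\tfrac{1-a}{2},\tfrac{1-2a}{2}}$ for $N=2$). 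A single $\beta_i/\sqrt{\alpha_i}$ can be arbitrarily close to zero, e.g.\ when the Dirichlet portion $\mathbb{S}^{N-1}\setminus S_i$ of the equator has tiny measure, so that $1/\alpha_i$ and hence $\beta_i/\sqrt{\alpha_i}$ nearly vanish. The disjointness only prevents \emph{both} Dirichlet portions from being small simultaneously, which is precisely why the bound is on the sum and not on each term. Consequently your per-phase inequality cannot be established, and the argument must run through the combined estimate $I_1'/I_1 + I_2'/I_2 \ge \tfrac{1-a}{r} - \mathcal{E}_1 - \mathcal{E}_2$ (which the paper obtains by choosing $\beta_i = \tfrac{\sqrt{\alpha_i}}{2}\set{[(N-1+a)^2+\tfrac{4}{\alpha_i}]^{1/2}-(N-1+a)}$ to absorb the Rellich--Pohozaev coefficient $\tfrac{N-1+a}{2}$, and then invoking the sum bound). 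Relatedly, the ``net coefficient'' you write, $(a-1)+2(N-1+a)+2\gamma_1+2\gamma_2$, is not the correct quantity; after the absorption just described, the nonnegativity needed is of $\tfrac{2\beta_1}{\sqrt{\alpha_1}} + \tfrac{2\beta_2}{\sqrt{\alpha_2}} - (1-a)$, with no surviving $N-1+a$ term. Fixing these two points collapses your proposal onto the paper's proof.
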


\begin{rem} If, instead of \eqref{strangecondition}, $V_i$ satisfies $\div(y_0^a\nabla V_i)\ge K_iy_0^a$ pointwise on $\D_R$, and $V_i\partial_{\nu}^a V_i(0,y) = 0$ on $\BB{B}_R \cap \{y_0=0\}$, then \eqref{strangecondition} holds.

Indeed, given $\Psi\in C_0^{2}(\BB{B}_R)$ with $\Psi\ge 0$, one has
\begin{align*}
\int_{\R^{N+1}_+} \nabla V_i \cdot \nabla \pr{V_i\Psi} y_0^a
&= \int_{\BB{B}_R \cap \{y_0=0\}} (V_i\Psi) \partial_{\nu}^a V_i(0,y)
- \int_{\R^{N+1}_+} \div(y_0^a\nabla V_i) V_i\Psi 
\le-\int_{\R^{N+1}_+}K_i (V_i\Psi) y_0^a.
\end{align*}

\end{rem}

\begin{rem}
\label{radialTestFunc}
An inspection of the proof of Theorem \ref{C:ACFellipticnonzero} shows that our arguments also hold in the setting where the test functions $\Psi \in C^2_0\pr{\BB{B}_R^{N+1}}$ are assumed to be radial.
\end{rem}

Our proof differs from the original of Alt-Caffarelli-Friedman \cite{ACF84} in a few ways. 
First, as in \cite{TVZ14}, \cite{TVZ2} and \cite{Zil14}, we do not work with distributional solutions, but instead consider the type of weak subsolution described by \eqref{strangecondition}.
Next, we need to deal with a technical obstruction: mollifying our subsolutions in the usual way changes the operator.
To overcome this, we carefully employ smooth cutoff functions and replace our singular weight, $\abs{Y}^{-(N-1_+a)}$, with a bounded, $C^2$ approximation of itself. 

\begin{proof}
To simplify notation, let $\Ga(Y) = |Y|^{-(N-1+a)}$.
We first need to show that 
the mapping
\begin{equation}
\label{acf1}
r\mapsto \int_{\BB{D}_r} |\nabla V(Y)|^2 \Ga(Y) \, y_0^a \, dY
\end{equation}
is well-defined and bounded a.e. in $(0,R)$ for each $V = V_i$. 

If $G(Y) = f(\abs{Y})$ is a radial function, then with $\al = N+a$ and $L_a = \di\pr{y_0^a \gr}$, it holds that 
$$L_a G = y_0^a\pr{f'' + \frac{\al}{\abs{Y}} f'}.$$
In particular, $L_a \Ga = 0$ away from $Y = 0$.

We now construct $C^2$ approximations to $\Ga$ that are bounded near the origin.
Fix $\eps > 0$ and define 

$\disp g_0 : \brac{\frac \eps 2, \frac{3 \eps} 2} \to \R$ by
$$g_0(\rho) = \eps^{1-\al}\brac{1 
+ c_3 \pr{\frac \rho \eps - \frac 1 2}^3 
+ c_4 \pr{\frac \rho \eps - \frac 1 2}^4 
+ c_5 \pr{\frac \rho \eps - \frac 1 2}^5},$$
where
\begin{align*}
& c_3 = 
10\brac{\pr{\frac {3}2}^{1 - \al} -1} 
- 4\pr{1 - \al} \pr{\frac{3} 2}^{-\al} 
+ \frac 1 2\al \pr{\al - 1} \pr{\frac {3}2}^{-\pr{\al + 1}} \\
&
c_4 = 
- 15 \brac{\pr{\frac {3}2}^{1 - \al} -1} 
+ 7\pr{1 - \al} \pr{\frac{3} 2}^{-\al} 
- \al \pr{\al - 1} \pr{\frac {3}2}^{-\pr{\al + 1}} \\
&
c_5 = 
6 \brac{\pr{\frac {3}2}^{1 - \al} -1} 
- 3 \pr{1 - \al} \pr{\frac{3} 2}^{-\al} 
+ \frac 1 2 \al \pr{\al - 1} \pr{\frac {3}2}^{-\pr{\al + 1}} .
\end{align*}
Since
\begin{align*}
&g_0'(\rho) = \eps^{-\al}\brac{
3 c_3 \pr{\frac \rho \eps - \frac 1 2}^2
+ 4c_4 \pr{\frac \rho \eps - \frac 1 2}^3 
+ 5c_5 \pr{\frac \rho \eps - \frac 1 2}^4} \\
&g_0''(\rho) = \eps^{-\pr{\al+1}}\brac{
6 c_3 \pr{\frac \rho \eps - \frac 1 2}
+ 12 c_4 \pr{\frac \rho \eps - \frac 1 2}^2
+ 20 c_5 \pr{\frac \rho \eps - \frac 1 2}^3 },
\end{align*}
then there exist constants $c_1, c_2 > 0$ so that for all $\disp \rho \in \brac{\frac \eps 2, \frac{3 \eps} 2}$, $\disp \abs{g_0'(\rho)} \le \frac{c_1}{\eps^\al}$ and $\disp \abs{g_0''(\rho)} \le \frac{c_2}{\eps^{\al+1}}$.
Moreover, it can be seen that $\disp g_0\pr{\frac \eps 2} = \eps^{1 - \al}$, $\disp g_0'\pr{\frac \eps 2} = 0$, and $\disp g_0''\pr{\frac \eps 2} = 0$, while further computations show that
\begin{align*}
&g_0\pr{\frac {3\eps} 2} 
= \eps^{1-\al}\pr{1 + c_3 + c_4 + c_5 }
= \pr{\frac {3 \eps}2}^{1 - \al} \\
&g_0'\pr{\frac {3\eps} 2}
= \eps^{-\al}\pr{ 3 c_3 + 4 c_4 + 5 c_5 }
= \pr{1 - \al} \pr{\frac {3\eps} 2}^{-\al} \\
&g_0''\pr{\frac {3\eps} 2} 
= \eps^{-\al-1 }\pr{6 c_3 + 12 c_4 + 20 c_5}
= \al \pr{\al - 1} \pr{\frac {3\eps}2}^{-\pr{\al + 1}}.
\end{align*}
In particular, with $g : [0, \iny) \to \R$ given by
$$g(\rho) := \begin{cases}
    \eps^{1 - \al} & \rho \le \frac{\eps} 2 \\
    g_0(\rho) & \frac{\eps} 2 < \rho < \frac{3 \eps} 2 \\
    \rho^{1 - \al} & \rho \ge \frac{3 \eps} 2
\end{cases},$$
we deduce that $g \in C^2((0, \iny))$ and is bounded. 

We define $\Ga_\eps \in C^2(\R^{N+1}_+)$ by $\Ga_\eps(Y) = g(\abs{Y})$ so that $\Ga_\eps$ is bounded and agrees with $\Ga$ on $\R^{N+1}_+ \setminus \mathbb{D}_{\frac {3\eps} 2 }$. 
In particular, $\gr \Ga_\eps = -\pr{N-1+a} \abs{Y}^{-\pr{N+1+a}} Y$ and $L_a \Ga_\eps(Y) = 0$ on $\R^{N+1}_+ \setminus \mathbb{D}_{\frac {3\eps} 2 }$.
Moreover, all derivatives of $\Ga_\eps$ vanish on $\mathbb{D}_{\frac \eps 2}$, while on $\mathbb{D}_{\frac {3\eps} 2 } \setminus \mathbb{D}_{\frac \eps 2}$, $\abs{\gr \Ga_\eps} \le C \eps^{-(N+a)}$ and $\abs{L_a \Ga_\eps(Y)} \le C \eps^{-(N + 1 + a)} y_0^a$.

For $r \in (0, R)$, we next construct smooth approximations to $\chi_{\mathbb{D}_r}$.
Let $\de \in (0, r)$ be small enough so that $r + \de < R$.
Choose $\mu = \mu(\de) > 0$ so that $\de = \mu + 2 \mu^2$. With $I_1 = [r + \mu^2, r+ \mu + \mu^2]$, define the ramp function $\si : [0, \iny) \to \R$ by 
\[
\si(\rho) = \begin{cases}
    1 & \text{ if } \rho \le r + \mu^2 \\
    - \frac 1 {\mu}\pr{\rho - r - \mu - \mu^2} & \text{ if } \rho \in I_1 \\
    0 & \text{ if } \rho \ge r + \mu + \mu^2.
\end{cases}
\]
Set $\be_\mu = \eta_{\mu^2} * \si$, where $\eta$ denotes the standard mollifier.
It holds that $\be_\mu \in C^{\infty}([0, r+\de])$ is a cut-off function with $0 \le \be_\mu \le 1$,  $\be_\mu = 1$ in $[0,r]$, and  $\be_\mu(r + \de) = 0$.
Therefore, with $\xi_\de(Y) = \beta_\mu(\abs{Y})$, we have that $\xi_\de\in C^{\infty}_0\pr{\BB{B}_{r+\delta}}$ is a radial cut-off function with $0 \le \xi_\de \le 1$ and $\xi_\de = 1$ in $\BB{B}_r$.

For now, to simplify notation, we write $V=V_i$, $K=K_i$. 
Let $\Psi = \Psi_{\eps, \de} := \xi_\de \Ga_\eps$ and note that $\Psi \in C^{2}_0\pr{\BB{B}_{r+\delta}}$ is bounded, non-negative, and radial.
An application of \eqref{strangecondition} with $\Psi$ gives
\begin{equation*}
\begin{aligned}
\int_{\R^{N+1}_+} \nabla V \cdot \nabla (V \xi_\de \Ga_\eps) \, y_0^a 
\le -\int_{\R^{N+1}_+} K(V \xi_\de \Ga_\eps) y_0^a
\end{aligned}
\end{equation*}
from which it follows that
\begin{equation}
\label{another}
\begin{aligned}
\int_{\R^{N+1}_+} \brac{|\nabla V|^2\Ga_\eps + \tfrac{1}{2} \nabla (V^2) \cdot \nabla\Ga_\eps} \xi_\de \, y_0^a 
&\le - \tfrac{1}{2} \int_{\R^{N+1}_+} \nabla (V^2) \cdot \nabla\xi_\de \Ga_\eps y_0^a
+ \int_{\R^{N+1}_+} K^- V \xi_\de\Ga_\eps y_0^a. 
\end{aligned}
\end{equation}

We first take $\de \to 0^+$ in the above inequality, then we take $\eps \to 0^+$.

Since $\xi_\de(Y) = \be_\mu(\abs{Y})$, then with $\disp G(\rho) = \int_{\BB{H}_{\rho}} V \pr{\nabla V \cdot \nu} \Ga_\eps \, y_0^a$, it holds that 
$$\tfrac{1}{2} \int_{\R^{N+1}_+} \nabla (V^2) \cdot \nabla \xi_\de \Ga_\eps \, y_0^a = \int_r^{r+\delta} \be_\mu'(\rho) G(\rho) d\rho.$$

We now examine the behavior of $\disp \int_{r}^{r+\de} \be_\mu'(\rho) G(\rho) d\rho$ as $\delta\rightarrow 0^+$. 

Notice that $- \be_\mu' = \frac 1 {\mu} \eta_{\mu^2} * \chi_{I_1}$ is supported on $I_2 = \brac{r, r + \mu + 2\mu^2} = \brac{r, r + \de}$ and $- \be_\mu' - \frac 1 \mu\chi_{I_1}$ is supported on $I_3 \cup I_4$, where $I_3 = \brac{r, r + 2\mu^2}$ and $I_4 = \brac{r +\mu , r + \mu + 2\mu^2}$.
It follows that
\begin{align*}
    - \int_{I_2} \be_\mu'(\rho) G(\rho) d\rho - G(r)
    &= \frac 1 {\mu} \int_{I_2} \brac{\eta_{\mu^2} * \chi_{I_1}(\rho) - \chi_{I_1}(\rho)} G(\rho) d\rho 
    + \fint_{I_2} \pr{G(\rho)- G(r)} d\rho \\
    &+ \frac 1 {\mu} \int_{I_2} \chi_{I_1}(\rho) G(\rho) d\rho 
    - \fint_{I_2} G(\rho) d\rho \\
    &= \frac 1 {\mu} \int_{I_3 \cup I_4} \pr{\eta_{\mu^2} * \chi_{I_1} - \chi_{I_1}} G(\rho) d\rho 
    + \fint_{I_2} \pr{G(\rho)- G(r)} d\rho \\
    &+ \frac {2 \mu}{1 + 2 \mu} \fint_{I_1}  G(\rho) d\rho
    - \frac {\mu}{1 + 2 \mu} \fint_{r}^{r + \mu^2} G(\rho) d\rho
    - \frac {\mu}{1 + 2 \mu} \fint_{r +\mu + \mu^2}^{r + \mu + 2 \mu^2} G(\rho) d\rho.
\end{align*}
For $i = 3, 4$, 
\begin{align*}
   \abs{\frac 1 {\mu} \int_{I_i} \pr{\eta_{\mu^2} * \chi_{I_1} - \chi_{I_1}} G(\rho) d\rho  }
   &\le 2 \mu \norm{\eta_{\mu^2} * \chi_{I_1} - \chi_{I_1}}_{L^\iny(I_i)} \fint_{I_i} \abs{G(\rho)} d\rho
   \le 2 \mu \fint_{I_i} \abs{G(\rho)} d\rho,
\end{align*}
so then
\begin{align*}
    \abs{- \int_{I_2} \be_\mu'(\rho) G(\rho) d\rho - G(r)}
    &\le \abs{\fint_{I_2} G(\rho)- G(r) d\rho} 
    + 2 \mu \brac{\fint_{I_3} \abs{G(\rho)} d\rho
    + \fint_{I_4} \abs{G(\rho)} d\rho} \\
    &+ \frac {\mu}{1 + 2 \mu}  \brac{2 \fint_{I_1}  \abs{G(\rho)} d\rho
    + \fint_{r}^{r + \mu^2} \abs{G(\rho)} d\rho
    + \fint_{r + \mu + \mu^2}^{r + \mu + 2 \mu^2} \abs{G(\rho)} d\rho}.
\end{align*}
By the Lebesgue differentiation theorem, for a.e. $r \in (0, R)$, all of these terms converge to zero as $\mu \to 0^+$.
Since $\mu \to 0^+$ if and only if $\de \to 0^+$, then for a.e. $r \in (0, R)$,
\begin{equation}
\label{limit1}
\lim_{\de \to 0^+} \frac{1}{2} \int_{\R^{N+1}_+}  \nabla (V^2) \cdot \nabla\xi_\de \Ga_\eps y_0^a 
= \lim_{\de \to 0^+} \int_r^{r+\delta} \be_\mu'(\rho) G(\rho) d\rho
= - G(r)
= - \int_{\BB{H}_{r}} V \pr{\nabla V \cdot \nu} \Ga_\eps \, y_0^a.
\end{equation}

Since $\abs{K^- V \xi_\de \Ga_\eps y_0^a} \le \abs{K^- V \Ga_\eps y_0^a} \le \abs{K^- V \Ga \, y_0^a}$, which is assumed to be integrable, the dominated convergence theorem shows that 
\begin{equation}
\label{limit2}
\lim_{\de \to 0^+}\int_{\R^{N+1}_+} K^- V \xi_\de \Ga_\eps y_0^a = \int_{\BB{D}_{r}} K^- V \Ga_\eps y_0^a.  
\end{equation}
Since $V \in H^{1}\pr{\mathbb{D}_R, y_0^a}$ and $\Ga_\eps$ is $C^2$ and bounded, another application of dominated convergence theorem gives
\begin{equation}
\label{limit3}
\lim_{\de \to 0^+}\int_{\R^{N+1}_+} \brac{|\nabla V|^2\Ga_\eps + \tfrac{1}{2} \nabla (V^2) \cdot \nabla\Ga_\eps} \xi_\de \, y_0^a 
= \int_{\BB{D}_{r}} \brac{|\nabla V|^2\Ga_\eps + \tfrac{1}{2} \nabla (V^2) \cdot \nabla\Ga_\eps} y_0^a .
\end{equation}
In particular, taking the limit as $\de \to 0^+$ in \eqref{another} and using \eqref{limit1} -- \eqref{limit3} shows that for a.e. $r \in (0, R)$,
\begin{equation}
\label{delta0}
\begin{aligned}
\int_{\BB{D}_{r}} \brac{|\nabla V|^2\Ga_\eps + \tfrac{1}{2} \nabla (V^2) \cdot \nabla\Ga_\eps} y_0^a
&\le \int_{\BB{H}_{r}} V \pr{\nabla V \cdot \nu} \Ga_\eps y_0^a
+ \int_{\BB{D}_{r}} K^- V \Ga_\eps y_0^a. 
\end{aligned}
\end{equation}
We now carefully take $\eps \to 0^+$.

Because $V \in H^{1}\pr{\mathbb{D}_R, y_0^a}$ and $\Ga_\eps$ is $C^2$ and bounded, then
\begin{equation*}
\begin{aligned}
       \int_{\BB{D}_r}\nabla (V^2) \cdot \nabla \Ga_\eps \,  y_0^a
    &= \lim_{\de \to 0^+} \int_{\BB{B}_r \cap \set{y_0 > \de}} \nabla (V^2) \cdot \nabla \Ga_\eps \, y_0^a.
   \end{aligned}
\end{equation*}
From here, an integration by parts gives
\begin{align*}
\int_{\BB{B}_r \cap \set{y_0 > \de}} \brac{\nabla (V^2) \cdot \nabla \Ga_\eps \, y_0^a
 + V^2 L_a(\Ga_\eps) }
&= \int_{\BB{H}_r \cap \set{y_0 > \de}} V^2 \pr{\nabla \Ga_\eps \cdot \nu} y_0^a
+ \int_{\BB{B}_r \cap \set{y_0 = \de}} V^2 \pr{\nabla \Ga_\eps \cdot \nu} y_0^a.
\end{align*}
Since $\abs{L_a \Ga_\eps(Y)} \le C \eps^{-(N + 1 + a)} y_0^a$ and $V \in H^{1}\pr{\mathbb{D}_R, y_0^a}$, then 
\begin{align*}
\lim_{\de \to 0^+} \int_{\BB{B}_r \cap \set{y_0 > \de}} V^2 L_a(\Ga_\eps) 
&= \int_{\BB{D}_r} V^2 L_a(\Ga_\eps).
\end{align*}
Assuming that $r > \frac{3\eps}{2}$, $\Ga_\eps = \Ga$ on $\BB{H}_r$ and then
\begin{align*}
\lim_{\de \to 0^+} \int_{\BB{H}_r \cap \set{y_0 > \de}}  V^2 \pr{\nabla \Ga_\eps \cdot \nu} y_0^a    
&= - \pr{N-1+a} r^{-(N+a)}\int_{\BB{H}_r} V^2 y_0^a.
\end{align*}
Since $\abs{\nabla \Ga_\eps \cdot \nu} \le C \eps^{-(N+1+a)} y_0$ on $\BB{B}_r^{N+1} \cap \set{y_0 = \de}$, an application of \eqref{trace1} in Lemma \ref{traceLemma} shows that
\begin{align*}
\lim_{\de \to 0^+} \abs{\int_{\BB{B}_r \cap \set{y_0 = \de}} V^2 \pr{\nabla \Ga_\eps \cdot \nu} y_0^a}
&\lesssim \eps^{-(N+1+a)} \lim_{\de \to 0^+} \int_{\BB{B}_r \cap \set{y_0 = \de}}V^2 y_0^{1+a}
= 0.
\end{align*}
Therefore,
\begin{equation}
\label{noThinBoundary}
\begin{aligned}
    \int_{\BB{D}_r} \nabla (V^2) \cdot \nabla \Ga_\eps \, y_0^a
    &= - \int_{\BB{D}_r} V^2 L_a(\Ga_\eps)
    - \pr{N-1+a} r^{-(N+a)}\int_{\BB{H}_r} V^2 y_0^a.
\end{aligned}
\end{equation}
Substituting \eqref{noThinBoundary} into \eqref{delta0} shows that
\begin{equation}
\label{almostThere}
\begin{aligned}
\int_{\BB{D}_{r}} |\nabla V|^2\Ga_\eps y_0^a
&\le r^{-(N-1+a)}\int_{\BB{H}_{r}} V \pr{ \nabla V \cdot \nu} y_0^a
+ \frac{N-1+a}{2} r^{-(N+a)}\int_{\BB{H}_r} V^2 y_0^a
+ \int_{\BB{D}_{r}} K^- V \Ga_\eps y_0^a \\
&+ \frac{1}{2} \int_{\BB{D}_r} V^2 L_a(\Ga_\eps). 
\end{aligned}
\end{equation}
Since $\abs{K^- V \Ga_\eps y_0^a} \le \abs{K^- V \Ga \, y_0^a}$, which is assumed to be integrable, the dominated convergence theorem shows that 
\begin{equation*}
\lim_{\eps \to 0^+} \int_{\BB{D}_{r}} K^- V \Ga_\eps y_0^a 
= \int_{\BB{D}_{r}} K^- V \Ga \, y_0^a .  
\end{equation*}
Because $\abs{L_a \Ga_\eps(Y)} \le C \eps^{-(N + 1 + a)} y_0^a$ on $\mathbb{D}_{\frac {3\eps} 2 } \setminus \mathbb{D}_{\frac \eps 2}$ and ${L_a \Ga_\eps(Y)} = 0$  otherwise, then
\begin{align*}
\abs{\int_{\BB{D}_r} V^2 L_a(\Ga_\eps) }
&\lesssim \eps^{-(N + 1 + a)} \int_{\mathbb{D}_{2\eps}} V^2 y_0^a
\lesssim \norm{V}^2_{L^{\iny}\pr{\mathbb{D}_{2\eps}}}.
\end{align*}

As $V$ is continuous with $V(0) = 0$, then $\disp \lim_{\eps \to 0^+} \norm{V}_{L^\iny\pr{\mathbb{D}_{2\eps}}} = 0$, 
so we may conclude that the right-hand side of \eqref{almostThere} is bounded independent of $\eps> 0$.
Taking the limit $\eps \to 0^+$ in \eqref{almostThere} shows that for a.e. $r \in (0, R)$,
\begin{equation}
\label{usenegativepart}
\begin{aligned}
\int_{\BB{D}_{r}} |\nabla V|^2\Ga \, y_0^a
&\le r^{-(N-1+a)}\int_{\BB{H}_{r}} V\pr{ \nabla V \cdot \nu} y_0^a
+ \frac{N-1+a}{2} r^{-(N+a)}\int_{\BB{H}_r} V^2 y_0^a
+ \int_{\BB{D}_{r}} K^- V \Ga \, y_0^a.
\end{aligned}
\end{equation}
\noindent Therefore, the function in \eqref{acf1} is indeed well-defined and bounded for a.e. $r \in (0,R)$. 

With $\phi(r)$ as defined in \eqref{ACF}, the coarea formula and the Lebesgue differentiation theorem imply that for a.e. $r \in (0, R)$,

\begin{equation}
\label{ACF'}
\begin{aligned}
\phi'(r)
&= - \frac{1-a}{r^{2-a}} \pr{\int_{\BB{D}_r} |\nabla V_1|^2 \Ga \,y_0^a }
\pr{\int_{\BB{D}_r}  |\nabla V_2|^2 \Ga \,y_0^a} \\
&+ \frac{1}{r^{N}} \brac{\pr{\int_{\BB{H}_r} |\nabla V_1|^2 y_0^a} 
\pr{\int_{\BB{D}_r} |\nabla V_2|^2 \Ga \,y_0^a}
+ \pr{\int_{\BB{D}_r} |\nabla V_1|^2 \Ga \,y_0^a} \pr{\int_{\BB{H}_r} |\nabla V_2|^2 y_0^a}}.
\end{aligned}
\end{equation}
Assume for now that $\phi(r)$ is well-defined, bounded, and satisfies \eqref{ACF'} when $r=1$.
Assume also that $\disp \pr{\int_{\BB{H}_1} V_{1}^2 y_0^a} \pr{\int_{\BB{H}_1} V_{2}^2 y_0^a} \neq 0$. Let $\gr_\te \omega$ denote the gradient of the function $\omega$ on $\BB{H}_1$, the upper half unit sphere in $\R^{N+1}$.

Let $S_i$ denote the support of $V_i$ on $\mathbb{S}^{N-1} = \del \BB{H}_1$ for $i = 1, 2$. 
Define
\begin{align*}
\frac{1}{\al_i} = \inf \set{ \frac{\int_{\BB{H}_1} \abs{\gr_\te \om}^2 y_0^a}{\int_{\BB{H}_1} \om^2 y_0^a} : \om \in W^{1,2}\pr{\BB{H}_1, y_0^a}, \om \equiv 0 \text{ on } \mathbb{S}^{N-1}\setminus S_i}.
\end{align*}
Note that since $V_1 V_2 \equiv 0$, then $\pr{\mathbb{S}^{N-1}\setminus S_1} \cap \pr{\mathbb{S}^{N-1}\setminus S_2} = \emptyset$.
Observe that for any $\be_i \in \pr{0,1}$,
\begin{align*}
\frac{1 - \be_i^2}{\al_i} \int_{\BB{H}_1}  V_i^2 y_0^a 
&\le \pr{1 - \be_i^2} \int_{\BB{H}_1} \abs{\gr _\te V_i}^2 y_0^a  
\end{align*}
and 
\begin{align*}
\frac{2 \be_i}{\sqrt{\al_i}} \int_{\BB{H}_1} V_i \pr{ \gr V_i \cdot y} y_0^a 
&\le 2 \pr{ \frac{\be_i^2}{ \al_i} \int_{\BB{H}_1} V_i^2 y_0^a }^{\frac 1 2} \pr{ \int_{\BB{H}_1} \abs{ \gr V_i \cdot y}^2 y_0^a }^{\frac 1 2}
\le \frac{\be_i^2}{\al_i} \int_{\BB{H}_1} V_i^2 y_0^a
+ \int_{\BB{H}_1} \pr{ \gr V_i \cdot \nu}^2 y_0^a \\
&\le \int_{\BB{H}_1} \brac{\be_i^2\abs{\gr _\te V_i}^2 + \pr{ \gr V_i \cdot \nu}^2} y_0^a.
\end{align*}
If for $i = 1,2$ we choose $\disp \beta_i = \frac{\sqrt{\alpha_i}}{2}\left\{\brac{(N - 1+ a)^2+\frac{4}{\alpha_i}}^{\frac{1}{2}}-(N - 1+ a)\right\}$, then
\begin{equation*}
\frac{1 - \be_i^2}{\al_i} =  \frac{\be_i \pr{N - 1+ a}}{\sqrt{\al_i}},
\end{equation*}
and it follows from \eqref{usenegativepart} that
\begin{align*}
\int_{\BB{D}_1} |\nabla V_i|^2 \Ga \,y_0^a
- \int_{\BB{D}_1}K_i^- V_i \Ga \, y_0^a
&\le \frac{N - 1+a} {2} \int_{\BB{H}_1} V_i^2 y_0^a 
+ \int_{\BB{H}_1} V_i \pr{\gr V_i \cdot \nu} y_0^a
\le \frac{\sqrt{\alpha_i}}{2\beta_i}\int_{\BB{H}_1}|\nabla V_i|^2 y_0^a.
\end{align*}
We substitute this inequality into \eqref{ACF'} with $r=1$ to obtain 
\begin{align*}
\phi'(1)
&\ge  \pr{\frac{2 \be_1}{\sqrt{\al_1}} + \frac{2 \be_2}{\sqrt{\al_2}} - 1+a }\pr{\int_{\BB{D}_1} |\nabla V_1|^2 \Ga \,y_0^a} 
\pr{\int_{\BB{D}_1}  |\nabla V_2|^2 \Ga \,y_0^a} \\
&- \frac{2 \be_1}{\sqrt{\al_1}} \pr{
\int_{\BB{D}_1} K_1^- V_1 \Ga \, y_0^a}
\pr{\int_{\BB{D}_1} |\nabla V_2|^2 \Ga \,y_0^a } 
- \frac{2 \be_2}{\sqrt{\al_2}} \pr{ \int_{\BB{D}_1} |\nabla V_1|^2 \Ga \,y_0^a } \pr{\int_{\BB{D}_1} K_2^- V_2 \Ga \, y_0^a}.
\end{align*}

It is known that for every choice of $S_1$ and $S_2$, $\disp \frac{\beta_1}{\sqrt{\alpha_1}}+\frac{\beta_2}{\sqrt{\alpha_2}} \ge \max\set{\frac{1-a}{2}, \frac{1-2a}{2}}$ when $N=2$, and for $N\ge 3$, $\disp \frac{\beta_1}{\sqrt{\alpha_1}} + \frac{\beta_2}{\sqrt{\alpha_2}} \ge \frac{1-a}{2}$; see \cite{TTS18}, \cite{TVZ14}, \cite{TVZ2}, \cite{TV18}. 
Therefore, $\disp \frac{2 \be_1}{\sqrt{\al_1}} + \frac{2 \be_2}{\sqrt{\al_2}} \ge 1 - a$ and we get that
\begin{align*}
\phi'(1)
&\ge - \frac{2 \be_1}{\sqrt{\al_1}} \pr{
\int_{\BB{D}_1} K_1^- V_1 \Ga \, y_0^a}
\pr{\int_{\BB{D}_1} |\nabla V_2|^2 \Ga \, y_0^a }
- \frac{2 \be_2}{\sqrt{\al_2}} \pr{ \int_{\BB{D}_1} |\nabla V_1|^2 \Ga \, y_0^a } \pr{\int_{\BB{D}_1} K_2^- V_2 \Ga \, y_0^a}.
\end{align*}
Since
\begin{align*}
\frac{\beta_i}{\sqrt{\alpha_i}}
&= \frac{N - 1+a}{2}\left\{\brac{1+\frac{4}{\alpha_i(N - 1+ a)^2}}^{\frac{1}{2}}-1\right\}
\le \frac{N - 1+ a}{2}\frac{2}{\alpha_i(N - 1+ a)^2}
\le \frac{1}{N - 1+a} \frac{\int_{\BB{H}_1} |\nabla V_i|^2 y_0^a}{\int_{\BB{H}_1} V_i^2 y_0^a},
\end{align*}
then
\begin{align*}
\phi'(1)
&\ge - \frac{2}{N - 1+ a} \frac{\pr{
\int_{\BB{D}_1} K_1^- V_1 \Ga \, y_0^a}
\pr{\int_{\BB{D}_1} |\nabla V_2|^2 \Ga \, y_0^a} \pr{\int_{\BB{H}_1} |\nabla V_1|^2 y_0^a}}{ \pr{\int_{\BB{H}_1} V_1^2 y_0^a}} \\
&- \frac{2}{N - 1+a} \frac{\pr{\int_{\BB{D}_1} |\nabla V_1|^2 \Ga \, y_0^a} \pr{\int_{\BB{D}_1} K_2^- V_2 \Ga \, y_0^a} \pr{\int_{\BB{H}_1} |\nabla V_2|^2 y_0^a}}{ \pr{\int_{\BB{H}_1} V_2^2 y_0^a}}.
\end{align*}

Recall that $\phi(r; V_1, V_2)$ is well-defined and bounded and that \eqref{ACF'} holds for a.e. $r$.
Choose such an $r\neq 1$ and assume additionally that $\disp \pr{\int_{\BB{H}_r} V_{1}^2 y_0^a} \pr{\int_{\BB{H}_r} V_{2}^2 y_0^a} \neq 0$.
Define $V_{i,r}(Y)=r^{-\frac{1-a}{4}} V_i(rY)$. 
For $0< \rho,r<1$, observe that
\[
\phi(\rho;V_{1,r},V_{2,r})
=\frac{1}{\rho^{1-a}} \pr{\int_{\BB{D}_\rho}  |\nabla V_{1,r}|^2 \Ga \, y_0^a} \pr{\int_{\BB{D}_\rho} |\nabla V_{2,r}|^2 \Ga \, y_0^a}
= \phi(r\rho;V_1,V_2).
\]
Thus
\[
\frac{d}{d\rho} \phi(\rho, V_{1,r}, V_{2,r}) = r \phi'(r\rho; V_1, V_2)
\]
and taking $\rho=1$ gives
\[
\phi'(1,V_{1,r},V_{2,r})=r\phi'(r;V_1,V_2).
\]

With $K_{i, r}\pr{Y} =r^{2-\frac{1-a}{4}}K_i(rY)$, we have from \eqref{strangecondition} that for $\Psi\in C_0^{2}(\BB{B}_{R/r})$ with $\Psi\ge 0$,
\begin{align*}
\int_{\R^{N+1}_+} \brac{ \nabla V_{i,r} \cdot \nabla (V_{i,r}\Psi) + K_{i,r}(V_{i,r}\Psi)} y_0^a ,
&\le 0
\end{align*}
which shows that $K_{i, r}$ is a suitable rescaling of $K_i$.
Applying the derivative estimates above to the pair $V_{1, r}$, $V_{2, r}$ shows that 
\begin{align*}
\phi'(r)
&=\frac{1}{r}\phi'(1;V_{1,r},V_{2,r}) \\
&\ge  - \frac{2}{r(N - 1+a)} \frac{\pr{
\int_{\BB{D}_1}K_{1,r}^- V_{1,r} \Ga \, y_0^a}
\pr{\int_{\BB{D}_1} |\nabla V_{2,r}|^2 \Ga \, y_0^a} \pr{\int_{\BB{H}_1} |\nabla V_{1,r}|^2 y_0^a} }{\pr{\int_{\BB{H}_1} V_{1,r}^2 y_0^a}} \\
&- \frac{2}{r(N - 1+a)} \frac{\pr{\int_{\BB{D}_1} |\nabla V_{1,r}|^2 \Ga \, y_0^a} \pr{\int_{\BB{D}_1} K_{2,r}^- V_{2,r} \Ga \, y_0^a} \pr{\int_{\BB{H}_1} |\nabla V_{2,r}|^2 y_0^a} }{\pr{\int_{\BB{H}_1} V_{2,r}^2 y_0^a}} ,
\end{align*}
which, after rescaling, is the result described by \eqref{phiTDer}.

\end{proof}

\subsection{Alt-Caffarelli-Friedman frequency functions in the parabolic setting}
\label{SS:ParaACF}

Here we prove our degenerate parabolic ACF monotonicity formula. 
As a first step, we establish that the transformation maps, $F_n$, relate subsolutions of the degenerate parabolic equation to subsolutions of the related nonhomogeneous degenerate elliptic equations.
In the elliptic setting, the appropriate notion of a subsolution is described by \eqref{strangecondition}.
For the parabolic setting, our notion of a subsolution is given below in \eqref{superweakParab}.

Throughout this section, we think of $F_{n}$ as being defined in the whole space $\R^{dn+1}$.

\begin{lem}[Subsolutions relationships]
\label{weaktosuperweak}
For $U(X,t)$ defined on $\R^{d+1}_+ \times (0,T)$, let $V_n : \BB{D}^{dn+1}_{\sqrt{MT}} \to \R$ be given by $V_n \pr{Y} = U\pr{F_{n}\pr{Y}}$.
For $\xi \in C^2\pr{\R^{d+1} \times (0, T)}$, 
define $\Psi_n : \R^{dn+1} \to \R$ so that $\Psi_n \pr{Y} = \xi\pr{F_{n}\pr{Y}}$.
Assume that $\xi$ is chosen so that $\Psi_n \in C^2_0\pr{\BB{B}_{\sqrt{MT}}}$.
If for each $t \in (0, T)$, $\mathcal{H}(t)$, $\mathcal{D}(t)$, $\mathcal{T}(t)$, and $\mathcal{M}(t)$ all belong to $L^1\pr{(0, t)}$, then with $K_n$ as in \eqref{KnDef}, it holds that
\begin{align*}
\int_{\R^{dn + 1}_+} \brac{\gr V_n \cdot \gr \pr{V_n \Psi_n} + K_n V_n \Psi_n} y_0^{a} \, dY 
&= \frac{\bar{C} n\sqrt M}{2} \int_0^T \int_{\R^{d+1}_+} \brac{ \gr U \cdot \gr (U \xi\mathcal{G}_n) - \del_t U (U \xi\mathcal{G}_n) } t^{\frac{dn-1+a}{2}} x_0^a dX d{t} .
\end{align*}
\end{lem}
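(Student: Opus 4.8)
The plan is to prove the identity by direct computation using the chain-rule relations from Lemma~\ref{ChainRuleLem} together with the change-of-variables formula from the Variable~$t$ bridge, Lemma~\ref{PFT}. The left-hand side expands as
$$\int_{\R^{dn+1}_+} \brac{\gr V_n \cdot \gr\pr{V_n \Psi_n} + K_n V_n \Psi_n} y_0^a dY
= \int_{\R^{dn+1}_+} \brac{\Psi_n \abs{\gr V_n}^2 + V_n \gr V_n \cdot \gr \Psi_n + K_n V_n \Psi_n} y_0^a dY,$$
and the strategy is to transform each of the three summands separately into an integral over $\R^{d+1}_+ \times (0,T)$ weighted by $t^{(dn-1+a)/2} x_0^a \mathcal{G}_n(X,t)$, then recombine. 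The integrability hypotheses on $\mathcal{H}, \mathcal{D}, \mathcal{T}, \mathcal{M}$ guarantee (via Lemma~\ref{functionClassLemma} and the bounds behind Lemma~\ref{bridgeLemmaResults}) that all the integrals in play are finite, so that splitting and recombining is legitimate.

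First I would handle the pure gradient term. Applying Lemma~\ref{PFT} to $\phi(X,t) = \xi(X,t) \abs{\gr V_n}^2$ — more precisely, using the identity \eqref{gradExp} from Lemma~\ref{ChainRuleLem} which gives $\abs{\gr V_n}^2 = n\abs{\gr U}^2 + \tfrac{4t}{M}\abs{\del_t U}^2 + \tfrac{4}{M}(X\cdot \gr U)\del_t U$, and noting $\Psi_n(Y) = \xi(F_n(Y))$ — converts $\int_{\R^{dn+1}_+} \Psi_n \abs{\gr V_n}^2 y_0^a dY$ into $\tfrac{\bar C \sqrt M}{2}\int_0^T\int_{\R^{d+1}_+} \xi \brac{n\abs{\gr U}^2 + \tfrac{4t}{M}\abs{\del_t U}^2 + \tfrac{4}{M}(X\cdot\gr U)\del_t U} t^{(dn-1+a)/2} x_0^a \mathcal{G}_n \, dX\, dt$. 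For the cross term $V_n \gr V_n \cdot \gr \Psi_n$, I would use the chain rule on $\Psi_n = \xi\circ F_n$ and $V_n = U\circ F_n$ together with the derivative formulas \eqref{Vy0Deriv}--\eqref{VyijDeriv}; one expects $\gr V_n \cdot \gr \Psi_n$ to produce $n \gr U \cdot \gr\xi$ plus $\del_t$-type correction terms mirroring those in \eqref{gradExp}, so that after applying Lemma~\ref{PFT} this summand becomes the $\gr U\cdot\gr\xi$ piece of $\gr U \cdot \gr(U\xi\mathcal{G}_n)$ plus corrections. For the $K_n$ term, recall $K_n(Y) = \tfrac{4}{M} J(F_n(Y))$ with $J(X,t) = (X,t)\cdot\gr_{(X,t)}\del_t U$, so Lemma~\ref{PFT} turns $\int_{\R^{dn+1}_+} K_n V_n \Psi_n y_0^a dY$ into $\tfrac{\bar C \sqrt M}{2}\int_0^T\int_{\R^{d+1}_+} \tfrac{4}{M} U\xi J \, t^{(dn-1+a)/2} x_0^a \mathcal{G}_n\, dX\, dt$.

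The main obstacle — and the heart of the computation — is showing that after summing the three transformed integrands, the parabolic integrand collapses to $n\brac{\gr U\cdot\gr(U\xi\mathcal{G}_n) - \del_t U(U\xi\mathcal{G}_n)}$. Here one must use the defining properties of $\mathcal{G}_n$: away from its singularity $\mathcal{G}_n$ satisfies (an $n$-dependent analogue of) the degenerate parabolic equation, in particular $\gr \mathcal{G}_n$ is essentially $-\tfrac{X}{2t}\mathcal{G}_n$ times a factor, and the power $t^{(dn-1+a)/2}$ interacts with $\del_t \mathcal{G}_n$. Concretely, expanding $\gr U\cdot\gr(U\xi\mathcal{G}_n) = \xi\mathcal{G}_n\abs{\gr U}^2 + U\mathcal{G}_n \gr U\cdot\gr\xi + U\xi\gr U\cdot\gr\mathcal{G}_n$ and $\del_t U(U\xi\mathcal{G}_n)$, one checks that the discrepancy between $n$ times this combination and the sum of the three transformed pieces is a total $t$-derivative of $t^{(dn+1+a)/2} \xi\mathcal{G}_n \abs{\del_t U}^2$-type quantities (or, after an integration by parts in $t$ and in $X$, vanishes because $\xi$ has compact support in $\R^{d+1}\times(0,T)$ and $\mathcal{G}_n$ is supported in $\BB{D}^{d+1}_{\sqrt{Mnt}}$). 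The delicate bookkeeping is matching the $\del_t U$ correction terms from \eqref{gradExp} and from $K_n$ against the $\del_t U(U\xi\mathcal{G}_n)$ and $\gr\mathcal{G}_n$ contributions; I would organize this by writing everything in terms of the integrand $t^{(dn-1+a)/2}x_0^a\mathcal{G}_n$ and performing an integration by parts in $t$ (using that $\del_t(t^{(dn-1+a)/2}\mathcal{G}_n)$ relates to $\di(x_0^a\gr\mathcal{G}_n)$ modulo the correct powers) to absorb the leftover terms, exactly as in the proof of Lemma~\ref{ChainRuleLem} where \eqref{yNorm} and \eqref{MDef} were used to make the analogous cancellation. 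The compact support of $\xi$ (inherited by $\Psi_n$, as assumed) kills all boundary contributions in $t$, and the parabolic/elliptic boundary behaviour on $\set{x_0=0}$ is not needed since no integration by parts in $x_0$ producing a thin-set term is required here.
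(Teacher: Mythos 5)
Your overall strategy — transform via the Variable-$t$ Bridge Lemma \ref{PFT} after applying the chain-rule identities of Lemma \ref{ChainRuleLem}, then integrate by parts in $(X,t)$ to shift derivatives off $U\xi$ and onto $\mathcal{G}_n$ so that the parabolic integrand collapses to $n\brac{\gr U\cdot\gr(U\xi\mathcal{G}_n) - \del_t U (U\xi\mathcal{G}_n)}$ — matches the paper's proof in substance. Whether one expands $\gr V_n\cdot\gr(V_n\Psi_n)$ as $\Psi_n|\gr V_n|^2 + V_n\gr V_n\cdot\gr\Psi_n$ (your choice) or computes it directly by the chain rule applied to $V_n\Psi_n = (U\xi)\circ F_n$ (the paper's choice) is cosmetic. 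You are also right that the eventual cancellation hinges on identities of the type $-X\del_t(t^{(dn-1+a)/2}\mathcal{G}_n) = \tfrac{Mn}{2}t^{(dn-1+a)/2}\gr\mathcal{G}_n$ and $t^{(dn-1+a)/2}\di(X x_0^a\mathcal{G}_n) + 2\del_t(t^{1+(dn-1+a)/2}x_0^a\mathcal{G}_n) = \tfrac{Mn}{2}t^{(dn-1+a)/2}x_0^a\mathcal{G}_n$, which is what you gesture at with ``the power $t^{(dn-1+a)/2}$ interacts with $\del_t\mathcal{G}_n$.''

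However, there is a genuine gap. You assert that ``the parabolic/elliptic boundary behaviour on $\{x_0=0\}$ is not needed since no integration by parts in $x_0$ producing a thin-set term is required here.'' This is false. To shift the derivative off $U\xi$ in a term like $\int \del_t U\, (X\cdot\gr(U\xi))\, t^{(dn-1+a)/2}x_0^a\mathcal{G}_n\, dX\, dt$, one must perform a divergence-theorem integration by parts over $\R^{d+1}_+$ (really over $\{x_0>\eps\}$ followed by $\eps\to 0^+$), and the $x_0$-component of $X$ produces a boundary term of the form $\lim_{\eps\to 0^+}\int_{\{x_0=\eps\}} U\,\del_t U\,\xi\, x_0^{1+a}\mathcal{G}_n\,dx\,dt$. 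It is tempting to dismiss this because $x_0^{1+a}\to 0$, but $\del_t U$ may not be controlled pointwise near $x_0=0$, so a genuine trace-type estimate is required — the paper converts $U\del_t U$ to $\tfrac12\del_t(U^2)$, integrates by parts in $t$, and then uses the $L^1$ control of $\mathcal{H}$ and $\mathcal{D}$ (exactly as in Lemma \ref{traceLemma}) to show the limit vanishes. Your proof, as written, skips this step and would not be rigorous. A related imprecision: you claim compact support of $\Psi_n$ kills all $t$-boundary contributions; in fact it forces $\xi$ to vanish only near $t=T$. The boundary term at $t=0$ must be handled separately — the paper introduces a cutoff $\mu_\eps(t)$ and shows the resulting error vanishes using the integrability of $\mathcal{H}$ and $\mathcal{T}$ together with the fact that $\mathcal{G}_n(\cdot,t)$ has shrinking support as $t\to 0^+$. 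Both of these technical steps — the thin-set limit and the $t=0$ limit — are where the hypotheses $\mathcal{H},\mathcal{D},\mathcal{T},\mathcal{M}\in L^1$ earn their keep, and your proposal does not engage with them.
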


\begin{proof}
From \eqref{VyijDeriv}, we get that
\begin{align*}
\gr_Y V_n \cdot \gr_Y \pr{V_n \Psi_n} 
&= n \gr_X U \cdot \gr_X \pr{U \xi}
+ \frac 2 M \del_t U X \cdot \gr_X \pr{U \xi}
+ \frac{4}{M} t \del_t U \del_t \pr{U \xi}
+ \frac 2 M X \cdot \gr_X U \del_t \pr{U\xi} .
\end{align*}
Since $\supp \Psi_n \su \BB{B}_{\sqrt{MT}}^{dn+1}$, then an application of Lemma \ref{PFT} shows that
\begin{equation}
\label{solutionTransf}
\begin{aligned}
&\int_{\R^{dn + 1}_+} \brac{\gr V_n \cdot \gr \pr{V_n \Psi_n} + K_n V_n \Psi_n} y_0^{a} \, dY 
= \int_{\BB{D}^{dn + 1}_{\sqrt{MT}}} \brac{\gr V_n \cdot \gr \pr{V_n \Psi_n} + K_n V_n \Psi_n} y_0^{a} \, dY \\
&= \frac{\bar{C} \sqrt M}{2}\int_0^T \int_{\R^{d+1}_+} \brac{n \pr{\gr U \cdot \gr \pr{U\xi}}
+ \frac 4 M \pr{\pr{X, t} \cdot \gr_{(X,t)} \del_t U} U \xi
} t^{\frac{dn-1+a}{2}} \dGnt d{t} \\
&+ \frac{\bar{C}}{\sqrt M} \int_0^T \int_{\R^{d+1}_+} \brac{ \del_t U \pr{X \cdot \gr\pr{U \xi}}
+ 2 t \del_t U \del_t \pr{U\xi}
+ \pr{X \cdot \gr U} \del_t \pr{U\xi} } t^{\frac{dn-1+a}{2}} \dGnt dt,
\end{aligned}
\end{equation}
where the assumptions on $\mathcal{H}(t)$, $\mathcal{D}(t)$, $\mathcal{T}(t)$, and $\mathcal{M}(t)$ ensure that the righthand side is finite.
Note that because $\Psi_n \equiv 0$ in a neighborhood of $\del \BB{B}_{\sqrt{MT}}$, then $\xi \equiv 0$ in a neighborhood of $t = T$.

To remove the derivatives from $U \xi$, we integrate each of the terms in the last line of \eqref{solutionTransf} by parts.
Since $\mathcal{G}_n(\cdot, t)$ is compactly supported in $\R^{d+1}$ for each $t \in (0, T)$, then when we integrate by parts with respect to $X$, the only boundary terms come from the spatial boundary $\set{x_0 = 0}$.
Thus, for the first term on the last line of \eqref{solutionTransf}, we get
\begin{equation}
\label{partialXjust}
\begin{aligned}
&\int_0^T \int_{\R^{d+1}_+} \del_t U \pr{X \cdot \gr\pr{U \xi}} t^{\frac{dn-1+a}{2}} \dGnt d{t} \\
&= - \lim_{\eps \to 0^+} \int_0^T \int_{\R^{d+1}_+ \cap \set{x_0 = \eps}} U \del_t U \xi \, t^{\frac{dn-1+a}{2}} x_0^{1+a} \mathcal{G}_n(X, t) dx dt \\
&- \lim_{\eps \to 0^+} \int_0^T t^{\frac{dn-1+a}{2}}\int_{\R^{d+1}\cap \set{x_0 > \eps}} U \brac{\pr{ X \cdot \gr \del_t U} \xi \,  x_0^a \mathcal{G}_n + \del_t U  \xi \di \pr{X x_0^a \mathcal{G}_n} } dX dt.
\end{aligned}
\end{equation}
With the first term on the right, we change the order of integration, integrate by parts with respect to $t$, then change the order of integration back to get
\begin{align*}
&-2 \int_0^T \int_{\R^{d+1}_+ \cap \set{x_0 = \eps}} U \del_t U \xi \, t^{\frac{dn-1+a}{2}} x_0^{1+a} \mathcal{G}_n(X, t) dx dt
= - \int_{\R^{d+1}_+ \cap \set{x_0 = \eps}} \int_0^T \del_t \pr{U^2} \xi \, t^{\frac{dn-1+a}{2}} x_0^{1+a} \mathcal{G}_n(X, t) dt dx \\
&= \int_{\R^{d+1}_+ \cap \set{x_0 = \eps}} \int_0^T U^2 \del_t\pr{ \xi \, t^{\frac{dn-1+a}{2}} x_0^{1+a} \mathcal{G}_n(X, t)} dt dx
= \int_0^T \int_{\R^{d+1}_+ \cap \set{x_0 = \eps}}  U^2 \del_t\pr{ \xi \, t^{\frac{dn-1+a}{2}} x_0^{1+a} \mathcal{G}_n(X, t)} dx dt,
\end{align*}
where we use that $\supp \mathcal{G}_n(\eps, \cdot, 0) = \emptyset$ and $\supp \xi(\cdot, T) = \emptyset$ to eliminate the boundary terms at $t = 0$ and $t = T$, respectively.
Since 
\begin{align}
\label{derivComp1}
\del_t  \pr{t^{\frac{dn-1+a}{2}} \mathcal{G}_n(X,t)}
&= \frac{dn-d-2} {2t} \pr{1 - \frac{\abs{X}^2}{M n t}}^{-1} t^{\frac{dn-1+a}{2}} \mathcal{G}_n(X,t),
\end{align}
then by \eqref{pointwiseGBound} and that $\xi$ is $C^2$ and compactly supported, we have
\begin{align*}
\abs{ \int_{\R^{d+1}_+ \cap \set{x_0 = \eps}} U \del_t U \xi \, x_0^{1+a} \mathcal{G}_n(X, t) dx}    
&\le \int_{\R^{d+1}_+ \cap \set{x_0 = \eps}} U^2 \abs{\del_t \xi + \xi \frac{dn-d-2} {2t}  \pr{1 - \frac{\abs{X}^2}{M n t}}^{-1}} x_0^{1+a} \mathcal{G}_n(X, t) dx \\
&\lesssim \int_{\BB{B}_{\sqrt{Mnt}} \cap \set{x_0 = \eps}} U^2 x_0^{1+a} \mathcal{G}(X, t) dx,
\end{align*}
where we recall the support of $\mathcal{G}_n$.
For $\eps > 0$, set $R = \sqrt{Mnt} + 2\eps$ and let $\zeta_\eps \in C^\iny_0(\BB{B}_{R'})$ be a nonnegative cutoff function that is supported on the $2\eps$-neighborhood of $\BB{B}_{\sqrt{Mnt}} \cap \set{x_ 0 = \eps}$ with $\zeta_\eps \equiv 1$ on the $\eps$-neighborhood of $\BB{B}_{\sqrt{Mnt}} \cap \set{x_ 0 = \eps}$.
Then
\begin{align*}
& \int_{\BB{B}_{\sqrt{Mnt}} \cap \set{x_ 0 = \eps}} U^2 x_0^{1+a} \mathcal{G}(X,t) dx
\le 
\int_{\BB{B}_{R} \cap \set{x_ 0 = \eps}} \zeta_\eps U^2 x_0^{1+a} \mathcal{G}(X,t) dx
= - \int_{\BB{B}_R \cap \set{x_ 0 > \eps}} \del_{x_0} \pr{\zeta_\eps U^2 x_0^{1+a} \mathcal{G}} dX \\
&= - \int_{\BB{B}_R \cap \set{x_ 0 > \eps}} \brac{x_0 \del_{x_0} \zeta_\eps + \zeta_\eps \pr{1 + a - \frac{x_0^2}{2t}}}  U^2 \dGt
- \int_{\BB{B}_R \cap \set{x_ 0 > \eps}}  2 x_0 \zeta_\eps U \del_{x_0} U \dGt \\
&\lesssim \int_{\BB{B}_R \cap \set{\eps < x_ 0 < 3 \eps}} \pr{U^2 + \abs{\gr U}^2} \dGt,
\end{align*}
where we have used Young's inequality and that $x_0 \abs{\gr \zeta_\eps} \lesssim 1$ because $\abs{\gr \zeta_\eps} \lesssim \eps^{-1}$.
Since $\mathcal{H}(t)$ and $\mathcal{D}(t)$ are well-defined, then the right-hand side goes to zero in the limit and we may conclude that
\begin{align*}
\lim_{\eps \to 0^+} \int_0^T \int_{\R^{d+1}_+ \cap \set{x_0 = \eps}} U \del_t U \xi \, t^{\frac{dn-1+a}{2}} x_0^{1+a} \mathcal{G}_n(X, t) dx dt = 0.
\end{align*}
Therefore, from \eqref{partialXjust},
\begin{equation}
\label{intbp1}
\begin{aligned}
&\int_0^T \int_{\R^{d+1}_+} \del_t U \pr{X \cdot \gr\pr{U \xi}} t^{\frac{dn-1+a}{2}} \dGnt d{t} \\
&= - \int_0^T \int_{\R^{d+1}_+} U \brac{\pr{ X \cdot \gr \del_t U} x_0^a \mathcal{G}_n + \del_t U  \di \pr{X x_0^a \mathcal{G}_n} }\xi  t^{\frac{dn-1+a}{2}} dX dt.
\end{aligned}
\end{equation}

We now return to the other terms in the last line of \eqref{solutionTransf}.
For any $\eps > 0$, let $\mu_\eps \in C^\iny(\R^+)$ be supported in $\R_{\ge \eps}$ with $\mu_\eps(t) \equiv 1$ on $t > 2 \eps$ and $\abs{\mu_\eps'(t)} \lesssim \eps^{-1}$ on $\brac{\eps, 2 \eps}$.
By integrability and Fubini, we get
\begin{align*}
\int_0^T \int_{\R^{d+1}_+} t \del_t U \del_t \pr{U\xi}  t^{\frac{dn-1+a}{2}} x_0^a \mathcal{G}_n(X,t) dX dt 
&= \lim_{\eps \to 0^+} \int_0^T \int_{\R^{d+1}_+} t \del_t U  \del_t \pr{U\xi}  t^{\frac{dn-1+a}{2}} \mu_\eps(t) x_0^a \mathcal{G}_n(X,t) dX dt \\
&= \lim_{\eps \to 0^+} \int_{\R^{d+1}_+} \int_0^T t \del_t U \del_t \pr{U\xi}  t^{\frac{dn-1+a}{2}} \mu_\eps(t) x_0^a \mathcal{G}_n(X,t) dt dX.
\end{align*}
An integration by parts applied to the inner integral then shows that
\begin{align*}
\int_0^T t \del_t U \del_t \pr{U\xi}  t^{\frac{dn-1+a}{2}} \mu_\eps(t) x_0^a \mathcal{G}_n(X,t) dt
&= - \int_0^T U\xi  \del_t \pr{\del_t U  t^{1 + \frac{dn-1+a}{2}} \mathcal{G}_n(X,t)} x_0^a \mu_\eps(t)  dt \\
&- \int_0^T \xi \mu_\eps'(t) U \del_t U t^{\frac{dn+1+a}{2}}  x_0^a \mathcal{G}_n(X,t) dt,
\end{align*}
where there are no boundary terms because $\mu_\eps$ vanishes in a neighborhood of $t = 0$, while $\xi$ vanishes in a neighborhood of $t = T$.
For the last term, we apply Fubini followed by \eqref{pointwiseGBound}, the bound on $\mu_\eps'$, and an application of H\"older's inequality to get that
\begin{align*}
&\abs{\int_{\R^{d+1}_+} \int_0^T \xi \mu_\eps'(t) U \del_t U t^{\frac{dn+1+a}{2}}  x_0^a \mathcal{G}_n(X,t) dt dX}
= \abs{\int_{\eps}^{2\eps} \mu_\eps'(t) t^{\frac{dn+1+a}{2}} \int_{\R^{d+1}_+} \xi U \del_t U  x_0^a \mathcal{G}_n(X,t) dX dt } \\
&\lesssim C_0 \eps^{\frac{dn-1+a}{2}} \norm{\xi}_{L^\iny\pr{\BB{K}_{n}(T)}} \pr{\int_{\eps}^{2\eps} \mathcal{H}(t) dt}^{\frac 1 2} \pr{\int_{\eps}^{2\eps} \mathcal{T}(t) dt}^{\frac 1 2},
\end{align*}
where $\BB{K}_{n}(T) := \set{(X, t) \in \R^{d+1}_+ \times (0, T) : \abs{X}^2 \le M n t}$ corresponds to the space-time support of $\mathcal{G}_n$.
Since this integral vanishes in the limit as $\eps \to 0^+$, then we may conclude that
\begin{align}
\label{intbp2}
\int_0^T \int_{\R^{d+1}_+} 2t \del_t U \del_t \pr{U\xi}  t^{\frac{dn-1+a}{2}} x_0^a \mathcal{G}_n(X,t) dX dt 
&= - \int_0^T \int_{\R^{d+1}_+} 2 U \del_t \pr{ \del_t U  t^{\frac{dn+1+a}{2}} \mathcal{G}_n(X,t)}\xi x_0^a dX dt.
\end{align}
Integrating the last term from \eqref{solutionTransf} by parts in a similar way, then putting it together with \eqref{intbp1} and \eqref{intbp2} shows that
\begin{align*}
& \int_0^T \int_{\R^{d+1}_+} \brac{ \del_t U \pr{X \cdot \gr\pr{U \xi}}
+ 2 t \del_t U \del_t \pr{U\xi}
+ \pr{X \cdot \gr U} \del_t \pr{U\xi} } t^{\frac{dn-1+a}{2}} \dGnt d{t} \\
&= - \int_0^T \int_{\R^{d+1}_+} U \brac{\pr{ X \cdot \gr \del_t U} x_0^a \mathcal{G}_n + \del_t U  \di \pr{X x_0^a \mathcal{G}_n} } \xi t^{\frac{dn-1+a}{2}} dX dt \\
&- 2 \int_0^T \int_{\R^{d+1}_+} U t \del_t^2 U \xi t^{\frac{dn-1+a}{2}} x_0^a \mathcal{G}_n dX d{t}
- 2 \int_0^T \int_{\R^{d+1}_+} U \del_t U \xi \del_t \pr{t^{1+\frac{dn-1+a}{2}} \mathcal{G}_n}  x_0^a dX d{t}\\
&- \int_0^T \int_{\R^{d+1}_+} U \pr{ X \cdot \gr \del_t  U} \xi  t^{\frac{dn-1+a}{2}} x_0^a \mathcal{G}_n dX d{t} 
- \int_0^T \int_{\R^{d+1}_+} U \pr{ X \cdot \gr U} \xi  \del_t \pr{t^{\frac{dn-1+a}{2}}\mathcal{G}_n}  x_0^a dX d{t} \\
&= - 2\int_0^T \int_{\R^{d+1}_+} U \pr{\pr{X, t} \cdot \gr_{(X,t)} \del_t U} \xi t^{\frac{dn-1+a}{2}} x_0^a \mathcal{G}_n dX d{t}
+ \frac{Mn}{2} \int_0^T \int_{\R^{d+1}_+} U \gr U \cdot \gr \mathcal{G}_n \xi t^{\frac{dn-1+a}{2}} x_0^a dX d{t} \\
&- \frac{Mn}{2} \int_0^T \int_{\R^{d+1}_+} U \del_t U  \xi t^{\frac{dn-1+a}{2}} x_0^a \mathcal{G}_n dX d{t},
\end{align*}
where we have used \eqref{MDef}, the computations above in \eqref{derivComp1}, and
\begin{align*}
\gr \mathcal{G}_n(X,t)
&= - \frac{2X}{Mn} \frac{dn-d-2}{2t} \pr{1 - \frac{\abs{X}^2}{Mnt}}^{-1} \mathcal{G}_n(X,t) ,
\end{align*}
to get that
\begin{align*}
- X \del_t\pr{t^{\frac{dn-1+a}{2}} \mathcal{G}_n} 
&= \frac{Mn}{2} t^{\frac{dn-1+a}{2}} \gr \mathcal{G}_n
\end{align*}
and
\begin{align*}
t^{\frac{dn-1+a}{2}} \di \pr{X x_0^a \mathcal{G}_n}
+ 2 \del_t  \pr{t^{1+ \frac{dn-1+a}{2}} x_0^a \mathcal{G}_n}
&= \frac{Mn}{2} t^{\frac{dn-1+a}{2}} x_0^a \mathcal{G}_n,
\end{align*}
then simplified. 
Substituting these expressions into \eqref{solutionTransf} gives
\begin{equation*}
\begin{aligned}
&\int_{\R^{dn + 1}_+} \brac{\gr V_n \cdot \gr \pr{V_n \Psi_n} + K_n V_n \Psi_n} y_0^{a} \, dY \\
&= \frac{\bar{C} n \sqrt M}{2} \brac{\int_0^T  \int_{\R^{d+1}_+} \pr{\gr U \cdot \gr \pr{U\xi}} t^{\frac{dn-1+a}{2}} \dGnt d{t} 
+ \int_0^T \int_{\R^{d+1}_+} \gr U \cdot \gr \mathcal{G}_n U \xi t^{\frac{dn-1+a}{2}} x_0^a dX d{t}} \\
&- \frac{\bar{C} n \sqrt M}{2} \int_0^T \int_{\R^{d+1}_+} U \del_t U  \xi t^{\frac{dn-1+a}{2}} x_0^a \mathcal{G}_n dX d{t},
\end{aligned}
\end{equation*}
as required.
\end{proof}

Before stating the main theorem of this section, we define our ACF class of functions.

\begin{defn}[ACF function class]
We say that a function $U = U(X, t)$ defined on $\R^{d+1}_{\ge 0} \times \brac{0, T}$ belongs to the function class $\mathfrak{C}\pr{\R^{d+1}_+ \times \pr{0, T}}$ if 
\begin{itemize}
    \item $U$ is continuous on $\R^{d+1}_{\ge 0}\times [0,T]$,
    \item $U$ has moderate $a$-growth at infinity (see Definition \ref{aGrowth}),
\item for every $t_0 \in \pr{0, T}$
\begin{equation}
\label{ACFClass2}   
\mathcal{M} \in L^{1}\pr{\pr{0, t_0}},
\end{equation}
\item for every $t_0 \in \pr{0, T}$, there exists $\eps \in \pr{0, t_0}$ so that
\begin{equation}
\label{ACFClass1}
\mathcal{D}, \mathcal{T} \in L^\iny\pr{\brac{t_0 - \eps, t_0}}.
\end{equation}
\end{itemize}
Recall that these functionals are introduced in \eqref{12functionals}, see Definition \ref{functionalDefs}.    
\end{defn}

\begin{thm}[Parabolic ACF Monotonicity]
\label{T:parabolicACF}
For $i = 1, 2$, let $U_i = U_i\pr{X,t} \in \mathfrak{C}\pr{\R^{d+1}_+ \times \pr{0, T}}$ be non-negative functions with $U_i(0, 0)=0$ and $U_1U_2=0$ in $\{x_0=0\}\times \R^{d} \times (0,T)$. 
Assume that each $U_i$ is a subsolution in the sense that for all $\Upsilon \in C^2\pr{\R^{d+1} \times (0, T)}$ with each $\Upsilon(\cdot, t)$ compactly supported and $\Upsilon \ge 0$, it holds that
\begin{equation}
\label{superweakParab}
\int_0^T \int_{\R^{d+1}_+} \brac{\nabla U_i \cdot \nabla \pr{U_i\Upsilon} - \del_t U_i \pr{U_i \Upsilon}} x_0^a \, dX dt \le 0.  
\end{equation}
For almost every $t \in \pr{0, T}$, the function $\Phi(t) = \Phi(t; U_1, U_2)$ given by
\[
\Phi(t)
= \frac{1}{t^{\frac{1-a}{2}}} \prod_{i=1}^2\int_0^t \int_{\R^{d+1}_+} |\nabla U_i(X, \tau)|^2 x_0^a \mathcal{G}(X,\tau) dX d\tau=\frac{1}{t^{\frac{1-a}{2}}}\prod_{i=1}^2\int_0^t\mathcal{D}(\tau,U_i)d\tau
\]
is monotonically non-decreasing in $t$.
\end{thm}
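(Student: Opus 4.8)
The plan is to deduce Theorem~\ref{T:parabolicACF} from the nonhomogeneous elliptic result, Theorem~\ref{C:ACFellipticnonzero}, via the high-dimensional limiting machinery developed in Sections~\ref{S:elliptictopara}--\ref{S:IntRel}, exactly along the seven-step scheme used for the Almgren and Weiss formulas. First, for each $i=1,2$ and each $n\in\N$, set $V_{i,n}(Y)=U_i(F_n(Y))$ on $\BB{D}^{dn+1}_{\sqrt{MT}}$ (extended to $\R^{dn+1}$ by even reflection in $y_0$ as needed), and set $K_{i,n}$ as in \eqref{KnDef}. Since each $U_i\in\mathfrak{C}$ has moderate $a$-growth at infinity, Lemma~\ref{functionClassLemma} gives $V_{i,n}\in H^1(\BB{D}^{dn+1}_{\sqrt{Mt}},y_0^a)$ for every $t\in(0,T)$; continuity of $U_i$ with $U_i(0,0)=0$ transfers to $V_{i,n}(0)=0$ via $F_n(0)=0$, and the condition $U_1U_2\equiv 0$ on $\{x_0=0\}$ transfers to $V_{1,n}V_{2,n}\equiv 0$ on $\BB{B}^{dn}_{\sqrt{Mt}}$ (since $F_n$ maps the thin set to the thin set).

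The crucial structural input is Lemma~\ref{weaktosuperweak}: applied with the test function $\xi=\Upsilon$ generating $\Psi_n=\Upsilon\circ F_n$, it shows that the parabolic subsolution inequality \eqref{superweakParab} for $U_i$ implies, for each $n$, the elliptic subsolution inequality \eqref{strangecondition} for $V_{i,n}$ with weight $K_{i,n}$ — because the right-hand side of Lemma~\ref{weaktosuperweak} is $\frac{\bar C n\sqrt M}{2}$ times an integral that, by the same integration-by-parts identity already used, equals $\int_0^T\int \gr U_i\cdot\gr(U_i\Upsilon)-\del_t U_i(U_i\Upsilon)$ weighted by $t^{(dn-1+a)/2}\mathcal G_n$, hence is $\le 0$. (One must check that radial $\Psi_n$ suffice, which is exactly Remark~\ref{radialTestFunc}; radial $\xi$ on the space-time side means $\xi$ depending only on $|X|^2+|y\text{-norms}|$, i.e. on $t$ and $|X|$ through $|Y|$, and the family of such $\xi$ generated by $F_n$ is rich enough to conclude \eqref{strangecondition} for radial test functions.) The integrability hypotheses of Lemma~\ref{weaktosuperweak} — that $\mathcal H,\mathcal D,\mathcal T,\mathcal M\in L^1(0,t)$ — follow from $U_i\in\mathfrak{C}$ (moderate $a$-growth plus \eqref{ACFClass2}). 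One also needs $\int_{\BB{D}_r}K_{i,n}^- V_{i,n}|Y|^{-(dn-1+a)}y_0^a<\iny$ for a.e.\ $r$; by \eqref{ballKnVnWeighted} and $|Y|=\sqrt{M\tau}$ this integral is a constant times $\int_0^t\mathcal M_n^-(\tau)\,d\tau$, which is controlled by $\|\mathcal M\|_{L^1}$ via \eqref{pointwiseGBound}.

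With \eqref{strangecondition} verified for the pair $(V_{1,n},V_{2,n})$, Theorem~\ref{C:ACFellipticnonzero} applies at radius $r=\sqrt{Mt}$: the elliptic ACF functional $\phi(\sqrt{Mt};V_{1,n},V_{2,n})$ from \eqref{ACF}, upon substituting $|Y|=\sqrt{M\tau}$ in the bridge identity \eqref{bridgeResultwithY}, equals a fixed $n$-dependent constant times $\Phi_n(t):=t^{-(1-a)/2}\prod_{i}\int_0^t\mathcal D_n(\tau;U_i)\,d\tau$; hence define $\Phi_n(t)$ by this rescaling of $\phi(\sqrt{Mt};V_{1,n},V_{2,n})$ and note $\lim_{n\to\iny}\Phi_n(t)=\Phi(t)$ by Lemma~\ref{GaussianLimit} and the remark following Definition~\ref{functionalDefs}. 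The differential inequality \eqref{phiTDer} for $\phi'$, after the chain rule $\tfrac{d}{dt}\Phi_n(t)=c_n\,\tfrac{\sqrt M}{2\sqrt t}\phi'(\sqrt{Mt})$, becomes a lower bound $\tfrac{d}{dt}\Phi_n(t)\ge -E_n(t)$ where $E_n$ is built from the error integrals $\int_0^t(\tau/t)^{(dn-1+a)/2}\mathcal M_n^\mp$, $\int_0^t(\tau/t)^{(dn-1+a)/2}\mathcal D_n$, divided by sphere-integral quantities that (by \eqref{HnExpr}-type identities and the nonvanishing assumption on $\int_{\BB{H}_r}V_{i}^2$) converge to positive limits. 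The final step is to show $E_n\to 0$ uniformly on $[t_0-\de,t_0]$ for a.e.\ $t_0$ and then integrate the inequality and pass to the limit, exactly as in the proofs of Theorems~\ref{pMono} and \ref{T:parabolicweiss}; here \eqref{ACFClass2} controls the $\mathcal M$-integrals through Lemma~\ref{weightedLpBound} (with $p=1$, after checking the proof of that lemma accommodates the boundary case, or by using the $L^\iny$ bound \eqref{ACFClass1} on $\mathcal D,\mathcal T$ near $t_0$ to upgrade local integrability), and the Lebesgue differentiation theorem converts the resulting integral inequality into the a.e.\ monotonicity statement.

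The main obstacle I anticipate is verifying the transfer of the subsolution property cleanly: one must ensure that the family of radial test functions $\Psi_n=\Upsilon\circ F_n$ arising from smooth compactly-supported $\Upsilon(X,t)$ is adequate to conclude \eqref{strangecondition} against \emph{all} radial $\Psi\in C^\iny_0(\BB{B}^{dn+1}_{\sqrt{MT}})$ — since a general radial $\Psi(|Y|)$ is of the form $\xi(t)$ with $t=|Y|^2/M$, and such $\xi\circ(|\cdot|^2/M)$ may not literally be $\Upsilon\circ F_n$ for a single space-time $\Upsilon$, one needs the observation that Lemma~\ref{weaktosuperweak} applies with $\xi$ depending only on $t$ (a legitimate choice of smooth space-time function), which does produce every radial $\Psi$. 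A secondary technical point is justifying the $L^1$ (rather than $L^p$, $p>1$) version of the uniform-convergence lemma for the $\mathcal M$-term; this is where the $L^\iny$ hypothesis \eqref{ACFClass1} on $\mathcal D$ and $\mathcal T$ near $t_0$ is needed, since $\mathcal M$ is controlled by $\mathcal H^{1/2}\mathcal T^{1/2}$ and hence lies in $L^\iny$ near $t_0$ once one also has $\mathcal H\in L^\iny$ there (which follows from $\mathcal H(t)\le\int_0^t(\mathcal D+\text{lower order})$ or can be added to the function class), allowing a direct dominated-convergence argument in place of Lemma~\ref{weightedLpBound}.
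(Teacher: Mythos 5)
Your plan follows the same route as the paper's proof: push the parabolic subsolution condition forward to an elliptic one via Lemma \ref{weaktosuperweak} (taking $\xi$ to depend only on $t$, so that radial $\Psi_n$ arise — exactly what Remark \ref{radialTestFunc} permits), apply Theorem \ref{C:ACFellipticnonzero} to $(V_{1,n},V_{2,n})$ at $r=\sqrt{Mt}$, rescale $\phi(\sqrt{Mt};V_{1,n},V_{2,n})$ into $\Phi_n(t)$, and pass to the limit via a uniform bound on the error term. Two points, however, deserve correction.

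First, you misanticipate the structure of the error term and hence worry about an unnecessary difficulty. You expect integrals weighted by $(\tau/t)^{(dn-1+a)/2}$ (as in the Almgren and Weiss proofs), and therefore consider invoking Lemma \ref{weightedLpBound} with $p=1$, which does not apply. But the ACF functional carries the weight $\Ga(Y)=|Y|^{-(dn-1+a)}$, and in the bridge identity \eqref{bridgeResultwithY} this exactly cancels the $\tau^{(dn-1+a)/2}$ factor; the resulting quantities are the \emph{unweighted} integrals $\int_0^t\mathcal M^-\,d\tau$, $\int_0^t\mathcal D\,d\tau$, etc.\ appearing in \eqref{ballKnVnWeighted} and \eqref{ballgradVnWeighted}. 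These do not become small as $n\to\iny$. The decay of $E_n$ comes instead from the explicit $\tfrac{1}{dn-1+a}$ prefactor in \eqref{phiTDer}, which — after the constants $\bigl(\tfrac{nM\aSp{dn+a}}{4\ga_a}\bigr)^{\pm 2}$ cancel in the rescaling — produces $E_n\lesssim 1/n$ once you bound the $L^1$-integrals of $\mathcal M^-$, $\mathcal D$, $\mathcal T$ via \eqref{ACFClass2} and moderate $a$-growth, bound $\mathcal D(t),\mathcal T(t)$ pointwise via \eqref{ACFClass1}, and bound $\mathcal H_n(t)$ below via Lemma \ref{HnLowerBoundLemma}. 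So the ``$L^1$ versus $L^p$'' issue you flag is not there; the argument is actually simpler than you expect.

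Second, there is a real gap: Theorem \ref{C:ACFellipticnonzero} yields \eqref{phiTDer} only under the nondegeneracy hypothesis $\bigl(\int_{\BB{H}_r}V_1^2\,y_0^a\bigr)\bigl(\int_{\BB{H}_r}V_2^2\,y_0^a\bigr)\neq 0$, which by \eqref{HnExpr} corresponds to $\mathcal H_n(t;U_1)\,\mathcal H_n(t;U_2)\neq 0$. This is not a hypothesis of Theorem \ref{T:parabolicACF} and does not follow from the assumptions, so it must be treated. Your plan silently uses it (you speak of ``the nonvanishing assumption on $\int_{\BB{H}_r}V_i^2$'') but does not justify or dispatch the complementary case. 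The paper handles this separately: if, say, $\int_{\R^{d+1}_+}|U_1(X,\tau)|^2\,\dGta=0$ at some time $\tau$, then the inequality \eqref{usenegativepart} (which holds without the nondegeneracy condition) combined with \eqref{ballgradVnWeighted}, \eqref{ballKnVnWeighted}, Cauchy–Schwarz, and the bound \eqref{pointwiseGBound} gives
\[
\int_0^\tau\int_{\R^{d+1}_+}|\gr U_1|^2\,\dGnt\,dt
\le \frac{C}{n}\Bigl(\norm{\mathcal M^-(\cdot;U_1)}_{L^1(0,\tau)}+\norm{\mathcal T(\cdot;U_1)}_{L^1(0,\tau)}\Bigr),
\]
so letting $n\to\iny$ forces $\int_0^\tau\mathcal D(\cdot;U_1)=0$ and hence $\Phi\equiv 0$ on $(0,\tau]$, giving monotonicity trivially there. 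Your proposal should be supplemented with this case analysis.

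Finally, a minor clarity issue: your opening sentence about the transfer of \eqref{strangecondition} sets ``$\xi=\Upsilon$,'' which is not how the dictionary goes. Starting from a radial $\Psi_n\in C^\iny_0(\BB{B}^{dn+1}_{\sqrt{MT}})$ one sets $\xi(t)=\Psi_n(\sqrt{Mt})$, and the parabolic test function in \eqref{superweakParab} is then $\Upsilon_n(X,t)=\xi(t)\,t^{(dn-1+a)/2}\mathcal G_n(X,t)$, not $\xi$ itself. You do ultimately state the right thing in your final paragraph (``$\xi$ depending only on $t$''), but the earlier phrasing would confuse a reader.
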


\begin{proof}
Let $U_i : \R^{d+1}_+ \times \pr{0, T} \to \R$ be as given in the theorem.
For each $n \in \N$, define $V_{i,n} : \BB{D}^{dn+1}_{\sqrt{MT}} \to \R$ by
$$V_{i,n} \pr{Y} = U_i\pr{F_n\pr{Y}} = U_i\pr{X, t}.$$
Since each $U_i$ is non-negative with $U_i(0,0) = 0$, then each $V_{i,n}$ is non-negative with $V_{i,n}(0) = 0$.
The continuity of $U_i$ on $\R^{d+1}_{\ge 0} \times [0, T]$ implies that each $V_{i,n}$ belongs to $ C^0\pr{\overline{\BB{D}}_R^{dn+1}}$, while the assumption that each $U_i$ has moderate $a$-growth at infinity shows that each $V_{i,n} \in H^1\pr{\BB{D}_R^{dn+1}, y_0^a}$, see Lemma \ref{functionClassLemma}.
Because $U_1 U_2 \equiv 0$ in $\{x_0=0\}\times \R^{d} \times (0,T)$, then for each $n \in \N$, $V_{1,n} V_{2,n} \equiv 0$ in $\BB{B}^{dn+1}_{\sqrt{MT}} \cap \set{y_0 = 0}$ as well.

Set $J_i(X, t) = \pr{X,t} \cdot \gr_{(X,t)} \del_t U_i$ and for each $n \in \N$, define $K_{i,n} : \BB{D}_{\sqrt{MT}}^{dn+1} \to \R$ by
$$K_{i,n} \pr{Y} = \frac 4 M J_i\pr{F_n\pr{Y}}.$$
For each $t \in (0, T)$, applications of \eqref{ballKnVn} and \eqref{ballKnVnWeighted} show that
$$\int_{\BB{D}^{dn+1}_{\sqrt{Mt}}} V_n K_n y_0^a
= \frac{2\bar{C}}{\sqrt M} \int_{0}^{t} \tau^{\frac{dn-1+a}2} \mathcal{M}_n(\tau; U_i) d\tau$$
and
$$\int_{\BB{D}_{\sqrt{Mt}}^{dn+1}}K_{i,n}^- V_{i,n}|Y|^{-(dn-1+a)}y_0^a
= \frac{M \aSp{dn+a}}{4 \ga_{a}} \int_0^t {\mathcal{M}^-_n(\tau; U_i)} d\tau,$$
respectively.
Since $\mathcal{M}(\cdot; U_i) \in L^1\pr{(0, t)}$, then both of the above integrals are finite and each $K_i$ satisfies the preliminary assumptions of Theorem \ref{C:ACFellipticnonzero}.

Let $\Psi_n \in C^2_0\pr{\mathbb{B}^{dn+1}_{\sqrt{MT}}}$ be a non-negative radial function, a test function for each $V_{i,n}$; see Remark \ref{radialTestFunc}.
We abuse notation and write $\Psi_n(Y) = \Psi_n(|Y|)$.
With $\xi(t) := \Psi_n\pr{\sqrt{Mt}}$, it holds that $\xi \in C^2\pr{(0, T)}$ is non-negative and $\Psi_n(Y) = \xi\pr{F_n(Y)}$.
In particular, Lemma \ref{weaktosuperweak} is applicable and shows that
\begin{align*}
\frac{2}{\bar{C} n\sqrt M} \int_{\R^{dn + 1}_+} \brac{\gr V_{i, n} \cdot \gr \pr{V_{i,n} \Psi_n} + K_{i,n} V_{i,n} \Psi_n} y_0^{a}
&= \int_0^T \int_{\R^{d+1}_+} \brac{ \gr U_i \cdot \gr (U _i\xi\mathcal{G}_n) - \del_t U_i (U_i \xi\mathcal{G}_n) } t^{\frac{dn-1+a}{2}} x_0^a dX d{t} \\
&= \int_0^T \int_{\R^{d+1}_+} \brac{ \gr U_i \cdot \gr (U _i \Upsilon_n) - \del_t U_i (U_i \Upsilon_n) } x_0^a dX d{t},
\end{align*}
where we have introduced $\Upsilon_n(X, t) := \xi(t) t^{\frac{dn-1+a}{2}} \mathcal{G}_n(X,t)$.

Since $\Upsilon_n \in C^2\pr{\R^{d+1} \times (0, T)}$, $\Upsilon_n(\cdot, t)$ is compactly supported for each $t \in (0, T)$, and $\Upsilon_n \ge 0$, then \eqref{superweakParab} holds with $\Upsilon = \Upsilon_n$.
That is,
\begin{equation*}
\int_0^T \int_{\R^{d+1}_+} \brac{\nabla U_i\cdot \nabla \pr{U_i \xi \mathcal{G}_n} - \del_t U_i \pr{U_i \xi \mathcal{G}_n}} t^{\frac{dn-1+a}{2}} x_0^a \, dX dt \le 0
\end{equation*}
so that
$$\int_{\R^{dn + 1}_+} \brac{\gr V_{i, n} \cdot \gr \pr{V_{i,n} \Psi_n} + K_{i,n} V_{i,n} \Psi_n} y_0^{a} \le 0.$$

As $\Psi_n$ was an arbitrary test function (according to Remark \ref{radialTestFunc}), then each $V_{i,n}$ is a subsolution in the sense of \eqref{strangecondition}.
In particular, we have shown that Theorem \ref{C:ACFellipticnonzero} is applicable for every $n \in \N$ with $V_{i, n}$ and $K_{i,n}$ in place of $V_i$ and $K_i$, respectively.

With $\phi(r)$ is given in \ref{ACF}, define $\disp \Phi_n(t) = M^{\frac{1-a}{2}} \pr{\frac{4\gamma_a}{ n M|\mathbb{S}^{dn+a}|}}^2\phi\pr{\sqrt{Mt}; V_{1, n}, V_{2, n}}$.
Theorem \ref{C:ACFellipticnonzero} shows that $\Phi_n(t)$ is well-defined and bounded for a.e. $t \in (0, T)$.
For such values of $t$, \eqref{ballgradVnWeighted} shows that
\begin{align*}
\Phi_n(t)
&=\pr{\frac{4\gamma_a}{ n M|\mathbb{S}^{dn+a}|}}^2 \frac{M^{\frac{1-a}{2}}}{(Mt)^{\frac{1-a}{2}}}\prod_{i=1}^2\int_{\BB{D}^{dn+1}_{\sqrt{Mt}}} |\nabla V_{i,n}(Y)|^2 |Y|^{-(dn-1+a)} y_0^a \, dY \\
&= \frac{1}{t^{\frac{1-a}{2}}}\prod_{i=1}^2 \int_0^t \int_{\R^{d+1}_+} \brac{ \abs{\gr U_i}^2 + \frac {4}{Mn} \pr{\tau \abs{\del_\tau U_i}^2 + \pr{X \cdot \gr U_i} \del_\tau U_i}} \dGnta d\tau.
\end{align*}
It follows from Lemma \ref{GaussianLimit} that
\[
\lim\limits_{n\rightarrow\infty}\Phi_n(t)
= \frac{1}{t^{\frac{1-a}{2}}}\prod_{i=1}^2\int_0^t \int_{\R^{d+1}_+} |\nabla U_i|^2 x_0^a \mathcal{G}(X,\tau) dX d\tau
= \Phi(t).
\]

If we assume that $\disp \pr{\int_{\R^{d+1}_+}  \abs{U_1(X,t)}^2 \dGt} \pr{\int_{\R^{d+1}_+}  \abs{U_2(X,t)}^2 \dGt} \ne 0$, then for $n \in \N$ sufficiently large, $\disp \pr{\int_{\R^{d+1}_+}  \abs{U_1(X,t)}^2 \dGnt} \pr{\int_{\R^{d+1}_+}  \abs{U_2(X,t)}^2 \dGnt} \ne 0$ as well.
It then follows from \eqref{HnExpr} that $\disp \pr{\int_{\BB{H}^{dn}_{\sqrt{M t}}} \abs{V_{1,n}\pr{Y}}^2 y_0^a } \pr{\int_{\BB{H}^{dn}_{\sqrt{M t}}} \abs{V_{2,n}\pr{Y}}^2 y_0^a } \ne 0$.
That is, for such a choice of $t \in (0,T)$ and $n \in \N$ sufficiently large, we can apply Theorem \ref{C:ACFellipticnonzero} to each $V_{i,n}$ with $K_i = K_{i,n}$ on the half-ball $\BB{D}_{\sqrt{Mt}}$ to get that 

\begin{align*}
\phi'(\sqrt{Mt})
&\ge -\frac{2(Mt)^{\frac{a}{2}}}{{dn-1+a}} \sum_{\substack{i,j=1 \\ i\neq j}}^2
  \frac{\pr{\int_{\BB{D}^{dn+1}_{\sqrt{Mt}}} {K_{i,n}^-V_{i,n}}{\Ga \,} y_0^a}
\pr{\int_{\BB{D}^{dn+1}_{\sqrt{Mt}}} {|\nabla V_{j,n}|^2}{\Ga \,} y_0^a } \pr{\int_{\BB{H}^{dn}_{\sqrt{Mt}}} |\nabla V_{i,n}|^2 y_0^a}}{\pr{\int_{\BB{H}^{dn}_{\sqrt{Mt}}} V_{i,n}^2 y_0^a}},
\end{align*}
where we recall the notation $\Ga(Y) = |Y|^{-(dn-1+a)}$. 
Putting the expressions \eqref{HnExpr}, \eqref{sphereGradientSquared}, \eqref{ballKnVnWeighted} and \eqref{ballgradVnWeighted} from Lemma \ref{bridgeLemmaResults}  
into the previous inequality shows that
\begin{align*}
\phi'(\sqrt{Mt})
&\ge -\frac{8(Mt)^{\frac{a}{2}}}{{dn-1+a}} \pr{\frac{n M\aSp{dn+a}}{4\ga_{a}}}^2 \sum_{\substack{i,j=1 \\ i\neq j}}^2
  \int_0^t \mathcal{M}^-_n(\tau; U_j) d\tau \\
&\times \pr{\int_0^t \mathcal{D}_n(\tau; U_j) d\tau
+ \frac{4}{nM} \int_0^t \tau \mathcal{T}_n(\tau; U_j) d\tau }
\frac{\mathcal{D}_n(t; U_i) + \frac{4t}{Mn} \mathcal{T}_n(t; U_i)}{\mathcal{H}_n(t; U_i)}.
\end{align*}

By the chain rule,
 \[
\Phi_n'(t) = M^{\frac{1-a}{2}} \pr{\frac{4\gamma_a}{Mn|\mathbb{S}^{dn+a}|}}^2\phi'\pr{\sqrt{Mt}}\frac{\sqrt{M}}{2\sqrt{t}}.
\]
An application of \eqref{MDef} and the bound \eqref{pointwiseGBound} from Lemma \ref{GaussianLimit} shows that $\Phi_n'(t)  \ge - E_n(t)$, where
\begin{align*}
E_n(t) 
&:=  \frac{10 C_0^3}{n t^{\frac{1-a}{2}}} \sum_{\substack{i,j=1 \\ i\neq j}}^2 \frac{\pr{\int_0^t \mathcal{M}^-(\tau; U_i)  d\tau} 
\pr{ \int_0^t \mathcal{D}(\tau; U_j)  d\tau 
+ \frac{4}{nM} \int_0^t \tau \mathcal{T}(\tau; U_j) d\tau}\pr{\mathcal{D}(t; U_i)
+ \frac{4t}{n M} \mathcal{T}(t; U_i)}}{ \mathcal{H}_n(t; U_i) }.
\end{align*}

To show that $\Phi(t)$ is monotone non-decreasing, it suffices to show that given any $t_0 \in (0, T]$, there exists $\de \in \pr{0, t_0}$ so that $E_n$ converges uniformly to $0$ on $\brac{t_0 - \de, t_0}$.
Indeed, since $\frac{d}{dt}\Phi_n(t) \ge - E_n(t)$, then for any $t \in \brac{t_0 - \de, t_0}$, it holds that
\[
\Phi_n(t_0)-\Phi_n(t)\ge -\int_{t}^{t_0} E_n(\tau) d\tau.
\]
Since $\Phi_n(t)$ converges pointwise to $\Phi(t)$, it follows that 
\[
\Phi(t_0) - \Phi(t)
= \lim_{n \to \iny} \brac{\Phi_n(t_0)-\Phi_n(t)} 
\ge - \lim_{n \to \iny} \int_{t}^{t_0} E_n(\tau) d\tau.
\]
Assuming the local uniform convergence of $ E_n$ to 0 on $\brac{t_0 - \de, t_0} \supset \brac{t, t_0}$, we see that
$$\lim_{n \to \iny} \int_{t}^{t_0} E_n(\tau) d\tau 
=  \int_{t}^{t_0} \lim_{n \to \iny} E_n(\tau) d\tau = 0$$
and we may conclude that $\Phi(t_0) \ge \Phi(t)$, as desired.

It remains to justify the local uniform convergence of $E_n(t)$ to $0$, as described above. 
If $t_0 \in (0, T)$ is such that $\disp H_i = \frac 1 8 \int_{\R^{d+1}_+} \abs{U_i\pr{X, t_0}}^2 x_0^a \, dX> 0$, then an application of Lemma \ref{HnLowerBoundLemma} shows that there exists $N_i \in \N$ and $\de \in (0, \frac{t_0} 2]$ so that whenever $n \ge N_i$, \eqref{HnLowerBound} holds with $U_i$, $N_i$, and $H_i$ in place of $U$, $N$, and $H$, respectively.
Since $U_1, U_2 \in \mathfrak{C}\pr{\R^{d+1}_+ \times \pr{0, T}}$, then \eqref{ACFClass2} and \eqref{ACFClass1} hold and $\mathcal{D}(\cdot, U_i)$, $\mathcal{T}(\cdot, U_i)$ are in $L^1\pr{(0, t_0 )}$.
Assuming that $\de \le \eps$, we see that whenever $n \ge \max\set{N_1, N_2}$
\begin{align*}
\sup_{t \in \brac{t_0 - \de, t_0}} E_n(t)
&\le \frac{C_1  t_0^{a + \frac{d} 2}}{n} \sum_{\substack{i,j=1 \\ i\neq j}}^2 \frac{e^{\frac{d N_i \ln 2}{3} }}{H_i} \pr{\norm{\mathcal{M}^-(\cdot; U_i)}_{L^1\pr{(0, t_0)}} }
\pr{ \norm{\mathcal{D}(\cdot; U_j)}_{L^1\pr{(0, t_0)} }
+ \frac{4 T}{nM} \norm{\mathcal{T}(\cdot; U_j)}_{L^1\pr{(0, t_0)}}} \times \\
&\times \pr{ \norm{\mathcal{D}\pr{\cdot; U_i}}_{L^\iny\pr{[t_0 - \de, t_0]}}
+ \frac{4 T}{n M} \norm{\mathcal{T}\pr{\cdot; U_i}}_{L^\iny\pr{[t_0 - \de, t_0]}}}
\end{align*}
and the required version of uniform convergence follows.

Now assume that 
$\disp \pr{\int_{\R^{d+1}_+}  \abs{U_1(X,t)}^2 \dGt} \pr{\int_{\R^{d+1}_+}  \abs{U_2(X,t)}^2 \dGt} = 0$ for some values of $t$; let $\tau$ be the largest such $t$-value.
Without loss of generality, $\disp {\int_{\R^{d+1}_+}  \abs{U_1(X,\tau)}^2 \dGta} = 0$, from which it follows that $\disp {\int_{\R^{d+1}_+}  \abs{U_1(X,\tau)}^2 \dGnta} = 0$ for all $n \in \N$.
By \eqref{HnExpr}, for each $n \in \N$, $\disp {\int_{\BB{H}^{dn}_{\sqrt{M t}}} \abs{V_{1,n}\pr{Y}}^2 y_0^a } = 0$.
The arguments leading to \eqref{usenegativepart} are valid and the first two terms on the right-hand side vanish, so we see that
\begin{equation*}
\begin{aligned}
\int_{\BB{D}_{\sqrt{M\tau}}} |\nabla V_{1,n}|^2\Ga \, y_0^a
&\le \int_{\BB{D}_{\sqrt{M\tau}}} K_{1,n}^- V _{1,n}\Ga \, y_0^a.
\end{aligned}
\end{equation*}
Combining this inequality with \eqref{ballgradVnWeighted} and \eqref{ballKnVnWeighted} shows that
\begin{align*}
&\int_0^\tau \int_{\R^{d+1}_+} \abs{\gr U_1}^2 \dGnt dt \\
&\le \frac 1 n \int_0^\tau \int_{\R^{d+1}_+} U_1(X, t) J_1^-(X, t) \, \dGnt dt
- \frac 4 {Mn} \int_0^\tau \int_{\R^{d+1}_+} \pr{t \abs{\del_t U_1}^2 + \pr{X \cdot \gr U_1} \del_t U_1} \dGnt dt \\
&\le \frac{C_0}{n} \int_0^\tau \mathcal{M}^-\pr{t; U_1} dt 
+ \frac{4C_0}{Mn} \int_0^\tau t \mathcal{T}\pr{t; U_1} dt
+ \frac 1 2 \int_0^\tau \int_{\R^{d+1}_+} \abs{\gr U_1}^2 \dGnt dt,
\end{align*}
where we have used Cauchy-Schwarz, that $\abs{X}^2 \le M n t$ on the support of $\mathcal{G}_n$, and \eqref{pointwiseGBound}.
That is,
\begin{align*}
\int_0^\tau \int_{\R^{d+1}_+} \abs{\gr U_1}^2 \dGt dt
&= \lim_{n \to \iny} \int_0^\tau \int_{\R^{d+1}_+} \abs{\gr U_1}^2 \dGnt dt \\
&\le \lim_{n \to \iny} \frac {2 C_0}{n} \pr{\norm{\mathcal{M}^-\pr{\cdot; U_1}}_{L^1\pr{(0, \tau)}}
+ \frac {4T} M \norm{\mathcal{T}\pr{\cdot; U_1}}_{L^1\pr{(0, \tau)}}}
= 0,
\end{align*}
where we have used the assumption \eqref{ACFClass2} and that $\mathcal{T}(\cdot \ ; U_1) \in L^1\pr{(0, \tau)}$.

We conclude that $\Phi(t)=0$ for all $t\le \tau$, showing that $\Phi(t)$ is monotonically non-decreasing on $(0, \tau]$.
For $t \in (\tau, T)$, the arguments from the first case are valid, and the proof is complete.

\end{proof}

\begin{appendix}

\section{Limit lemmas}
\label{AppA}

Here we present the technical limit lemmas that are used in our elliptic-to-parabolic arguments.

\begin{lem}[Local uniform lower bound on $\mathcal{H}_n$]
\label{HnLowerBoundLemma}
Let $U$ be continuous on $\R^{d+1}_+ \times (0, T)$.
For $t_0 \in (0, T)$, if $\disp \int_{\R^{d+1}_+} \abs{U(X,t_0)}^2 x_0^a dX = 8H > 0$, then there exist $N \in \N$ and $\de > 0$ so that whenever $n \ge N$ and $t \in [t_0 - \de, t_0]$, it holds that
\begin{equation}
\label{HnLowerBound}
\mathcal{H}_n(t) \ge \mathcal{C} H  t_0^{- \frac{d+1+a} 2} e^{- \frac{d N \ln 2}{3} }.
\end{equation}
\end{lem}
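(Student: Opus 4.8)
The statement is a lower bound that says: if the weighted $L^2$ mass of $U$ at time $t_0$ is bounded below by $8H$, then for $n$ large and $t$ near $t_0$, the quantity $\mathcal{H}_n(t) = \int_{\R^{d+1}_+} |U(X,t)|^2 \mathcal{G}_n(X,t)\,x_0^a\,dX$ stays bounded below by a fixed positive constant. The approach splits into two independent ingredients: (1) control the integrand $|U(X,t)|^2 x_0^a$ on a fixed compact set near $t_0$ using continuity of $U$, and (2) get a pointwise \emph{lower} bound for the kernel $\mathcal{G}_n(X,t)$ on that same compact set, uniformly in $n \geq N$ and $t \in [t_0-\delta, t_0]$.

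\textbf{Step 1: Restrict to a compact set and use continuity.} First I would fix a large radius $\rho > 0$ so that $\int_{\mathbb{D}^{d+1}_\rho} |U(X,t_0)|^2 x_0^a\,dX \geq 4H$ — this is possible because the full integral is $8H$ and the integrand is in $L^1(x_0^a\,dX)$. Since $U$ is continuous on $\R^{d+1}_+ \times (0,T)$ (hence uniformly continuous on a compact neighborhood of $\overline{\mathbb{D}^{d+1}_\rho} \times \{t_0\}$ — here one has to be slightly careful about the set $\{x_0 = 0\}$, but continuity is only asserted on the open half-space, so I would shrink $\rho$ and work on a set bounded away from $\{x_0 = 0\}$, or carry along the $x_0^a$ weight which is integrable near $0$), there is $\delta_1 > 0$ so that $\int_{\mathbb{D}^{d+1}_\rho} |U(X,t)|^2 x_0^a\,dX \geq 2H$ for all $t \in [t_0 - \delta_1, t_0]$. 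Here I would invoke dominated convergence / uniform continuity: $|U(X,t)|^2 \to |U(X,t_0)|^2$ uniformly on the compact set as $t \to t_0$, and the weight $x_0^a$ is integrable on $\mathbb{D}^{d+1}_\rho$.

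\textbf{Step 2: Lower bound for $\mathcal{G}_n$ on the compact set.} Recall $\mathcal{G}_n(X,t) = \mathcal{C}_n t^{-\frac{d+1+a}{2}}\bigl(1 - \frac{|X|^2}{Mnt}\bigr)^{\frac{dn-d-2}{2}}\chi_{\mathbb{D}^{d+1}_{\sqrt{Mnt}}}(X)$. On the compact set $\mathbb{D}^{d+1}_\rho \times [t_0-\delta_1, t_0]$ with $n$ large, the indicator is identically $1$ (since $Mnt \geq M n (t_0 - \delta_1) \gg \rho^2$ for $n$ large). For the power term, I would use $\log\bigl(1 - \frac{|X|^2}{Mnt}\bigr) \geq -\frac{2|X|^2}{Mnt}$ for $\frac{|X|^2}{Mnt} \leq \frac12$, so that $\bigl(1 - \frac{|X|^2}{Mnt}\bigr)^{\frac{dn-d-2}{2}} \geq \exp\bigl(-\frac{(dn-d-2)|X|^2}{Mnt}\bigr) \geq \exp\bigl(-\frac{d\rho^2}{M(t_0-\delta_1)}\bigr)$, a fixed constant. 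Combined with $\mathcal{C}_n \to \mathcal{C}$ (Lemma \ref{GaussianLimit}, so $\mathcal{C}_n \geq \frac12 \mathcal{C}$ for $n \geq N_1$, say) and $t^{-\frac{d+1+a}{2}} \geq t_0^{-\frac{d+1+a}{2}}$ on $[t_0-\delta_1, t_0]$ (assuming $a$ makes the exponent positive — $\tfrac{d+1+a}{2} > 0$ since $a > -1$, so $t \mapsto t^{-\frac{d+1+a}{2}}$ is decreasing), one gets $\mathcal{G}_n(X,t) \geq \frac{\mathcal{C}}{2} t_0^{-\frac{d+1+a}{2}} e^{-c_0}$ on the compact set, with $c_0$ depending only on $\rho, M, t_0, \delta_1, d$. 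Then $\mathcal{H}_n(t) \geq \frac{\mathcal{C}}{2} t_0^{-\frac{d+1+a}{2}} e^{-c_0} \cdot 2H = \mathcal{C} H t_0^{-\frac{d+1+a}{2}} e^{-c_0}$.

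\textbf{Reconciling the constant.} The target bound has the specific form $e^{-\frac{dN\ln 2}{3}}$, so at the end I would choose the parameters ($\rho$, then $\delta$, then $N$) so that the accumulated exponential constant $e^{-c_0}$ (times the factor $\frac12$ from $\mathcal{C}_n$, which I can absorb as $e^{-\ln 2}$) is at least $e^{-\frac{dN\ln 2}{3}}$ — essentially this is a bookkeeping exercise: pick $\rho$ first, then $N$ large enough that $\frac{dN\ln 2}{3}$ exceeds $c_0 + \ln 2$, then finally set $\delta = \min(\delta_1, \tfrac{t_0}{2})$ and shrink further as needed so all the "$n$ large" conditions hold for $n \geq N$. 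The main obstacle is purely the careful matching of constants and the mild technical point about continuity of $U$ only on the open half-space (handled by keeping the integration region bounded away from $\{x_0=0\}$ or, more cleanly, by noting the $x_0^a$-weighted mass near $\{x_0=0\}$ is small and the argument only needs a compact subset of the open half-space); there is no deep difficulty.
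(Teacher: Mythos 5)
Your proposal is correct and takes essentially the same approach as the paper: restrict to a compact set bounded away from $\{x_0=0\}$ (the paper uses the exhaustion $D_m=\{|X|\le m,\ x_0\ge 1/m\}$, matching your stated fix for the thin-set issue), use continuity of $U$ on that compact set crossed with a short time interval to propagate the mass lower bound from $t_0$ to $t\in[t_0-\delta,t_0]$, and lower-bound $\mathcal{G}_n$ on that set via $\mathcal{C}_n\to\mathcal{C}$, the monotonicity of $t\mapsto t^{-(d+1+a)/2}$, and a logarithm/Taylor expansion of the power term. Your ``reconciling the constant'' step is also exactly what the paper does (taking $N$ large enough that $dN\ln 2/3$ dominates the accumulated constant from the compact set's radius and the time window), so the only difference from the paper is cosmetic bookkeeping.
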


\begin{proof}
We introduce
\begin{equation}
\label{HntDefn}
\begin{aligned}
\widetilde{\mathcal{H}}_n\pr{t} 
&:= \int_{\BB{D}^{d+1}_{\sqrt{Mnt}}}  \abs{U\pr{X, t}}^2 x_0^{a} \pr{1 - \frac{\abs{X}^2}{M n t}}^{\frac{dn-d-2} 2}  d{X}
\end{aligned}
\end{equation} 
so that $\disp \mathcal{H}_n(t) = \mathcal{C}_n t^{-\frac{d + 1+a}{2}} \widetilde{\mathcal{H}}_n\pr{t}$.
As shown in the proof of Lemma \ref{GaussianLimit}, $\disp \lim_{n \to \iny} \mathcal{C}_n = \mathcal{C}$, so there exists $N \in \N$ such that whenever $n \ge N$, it holds that $\disp \mathcal{C}_n \ge \frac 1 2 \mathcal{C}$.
Moreover, for all $t \in \brac{t_0 - \de, t_0}$, $t^{\frac{d + 1+a}{2}} \le t_0^{\frac{d + 1+a}{2}}$.
Therefore, it suffices to show that $\widetilde{\mathcal{H}}_n(t) \ge 2 H  e^{- \frac{d N \ln 2}{3} }$.
Observe that for any $n \in \N$, we have that 
\begin{equation}
\label{denomFn}
\begin{aligned}
\widetilde{\mathcal{H}}_n\pr{t} 
&\ge \int_{\BB{D}_{\sqrt{\frac{Mnt}2}}}  \abs{U\pr{X, t}}^2 x_0^{a} \pr{1 - \frac{\abs{X}^2}{M n t}}^{\frac{dn-d-2} 2}  d{X} 
\ge \int_{\BB{D}_{\sqrt{\frac{Mnt}2}}}  \abs{U\pr{X, t}}^2 x_0^{a} \, e^{- \frac{\ln 2\abs{X}^2}{3 t} } d{X},
\end{aligned}
\end{equation}
where have used a Taylor expansion to show that if $\abs{X}^2 \le \frac{M n t}{2}$, then
\begin{align*}
\log \brac{\pr{1 - \frac{\abs{X}^2}{M n t}}^{\frac{d n - d - 2}{2}}}
&\ge - \frac{d n - d - 2}{d n + 1+a} \frac{\abs{X}^2}{2 t} \brac{ 1 + \frac 1 2 \pr{\frac 1 2} + \frac 1 3 \pr{\frac 1 2}^2 + \ldots }
\ge - \frac{\ln 2 \abs{X}^2}{3 t} .
\end{align*}
Set $D_m = \set{X \in \R^{d+1}_+ : \abs{X} \le m, \ x_0 \ge \frac 1 m}$ and note that each $D_m$ is compact, the sets are nested, and $\disp \R^{d+1}_+ = \bigcup _{m \in \N} D_m$.
Thus, if $\disp 0 < 8H = \int_{\R^{d+1}_+} \abs{U\pr{X, t_0}}^2 x_0^a \, dX$, then there exists $m \in \N$ so that 
$$\int_{D_m} \abs{U\pr{X, t_0}}^2 x_0^a \, dX \ge 6 H.$$
Fix some $\mu < \min\set{\frac {t_0} 2, T - t_0}$.
Since $U$ and $x_0^a \,$ are continuous and $D_m \times \brac{t_0 - \mu, t_0 + \mu}$ is compact, then there exists $\de \in (0, \mu]$ so that whenever $x \in D_m$ and $\abs{t - t_0} \le \de$, it holds that
$$\abs{U(X, t) - U(X, t_0)} x_0^{\frac{a}{2}} < \sqrt{\frac{H}{\abs{D_m}}}.$$
In particular, if $t \in \brac{t_0 - \de, t_0}$, then
\begin{align*}
 \int_{D_m} \abs{U\pr{X, t}}^2 x_0^a \, dX
 &\ge \frac 1 2 \int_{D_m} \abs{U\pr{X, t_0}}^2 x_0^a \, dX
 - \int_{D_m} \abs{U\pr{X,t} - U\pr{X,t_0}}^2  x_0^a \, dX
 \ge 2H.
\end{align*}
Recall from \eqref{MDef} that $M \ge 2d$.
Thus, if $N \in \N$ satisfies $N \ge \frac{m^2}{d \pr{t_0 - \de}}$, then $D_m \su \BB{D}_{\sqrt{dn \pr{t_0 - \de}}} \su \BB{D}_{\sqrt{\frac{M N\pr{t_0 - \de}}{2} }}$.
It follows that for any $n \ge N$ and any $t \in \brac{t_0 - \de, t_0}$, we have from \eqref{denomFn} that
\begin{equation*}
 \begin{aligned}
 \widetilde{\mathcal{H}}_n\pr{t} 
&\ge \int_{\BB{D}_{\sqrt{\frac{MNt}2}}}  \abs{U\pr{X, t}}^2 x_0^{a} \, \exp\pr{- \frac{\ln 2\abs{X}^2}{3 t} } d{X} 
\ge e^{- \frac{d N \ln 2}{3} } \int_{D_m} \abs{U\pr{X,t}}^2 x_0^a \,  dX
\ge 2H e^{- \frac{d N \ln 2}{3} }
> 0,
\end{aligned}   
\end{equation*}
which leads to the conclusion.
    
\end{proof}

\begin{lem}[First uniform convergence lemma]
\label{weightedLpBound}
Given $\mathcal{E} : \pb{0, t_0} \to \R$, assume that for some $\al \in \R$ and some $p > 1$, we have $t^\al \mathcal{E}(t) \in L^p\pr{\pr{0, t_0}}$.
Then there exists $C_1\pr{a, \al,p, N}$ so that for any $\de \in \pr{0, t_0}$, it holds that
\begin{equation}
\label{fIntBound}
\sup_{t \in \brac{t_0 - \de, t_0}} \int_0^t \pr{\frac \tau t}^{\frac{N - 1+a}2} \mathcal{E}(\tau) d\tau 
\le C_1 \pr{t_0 - \de}^{-\al} \pr{ \frac{t_0}{N}}^{1 - \frac 1 p} \norm{t^\al \mathcal{E}}_{L^p\pr{[0, t_0]}}.
\end{equation}
Moreover, for any $N \ge N_0$, we can take $C_1 = C_1(a, \al, p, N_0)$.
\end{lem}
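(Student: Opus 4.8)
\textbf{Proof plan for Lemma~\ref{weightedLpBound}.} The plan is to estimate the integral directly using H\"older's inequality, extracting the weight $\tau^\al$ so that it combines with $\mathcal{E}$ to form the assumed $L^p$ quantity, while the leftover power of $\tau$ is controlled by an explicit $L^{p'}$ computation. First I would write, for $t \in [t_0-\de, t_0]$,
\[
\int_0^t \pr{\frac \tau t}^{\frac{N-1+a}2} \mathcal{E}(\tau) \, d\tau
= t^{-\frac{N-1+a}2} \int_0^t \tau^{\frac{N-1+a}2 - \al} \brac{\tau^\al \mathcal{E}(\tau)} \, d\tau,
\]
and then apply H\"older's inequality with exponents $p$ and $p' = \frac{p}{p-1}$ to bound this by
\[
t^{-\frac{N-1+a}2} \pr{\int_0^t \tau^{\pr{\frac{N-1+a}2 - \al} p'} \, d\tau}^{\frac 1 {p'}} \norm{t^\al \mathcal{E}}_{L^p\pr{[0, t_0]}}.
\]

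Next I would evaluate the elementary integral. Writing $\ga = \pr{\frac{N-1+a}2 - \al} p'$, the integral $\int_0^t \tau^\ga \, d\tau = \frac{t^{\ga+1}}{\ga+1}$ is finite provided $\ga > -1$, which holds once $N$ is large enough (for fixed $a, \al, p$); this is where the hypothesis $N \ge N_0$ and the dependence $C_1 = C_1(a,\al,p,N_0)$ enter, since $\ga + 1 = \pr{\frac{N-1+a}{2} - \al}p' + 1$ grows linearly in $N$ and is bounded below by a positive constant for $N \ge N_0$. Raising to the power $\frac 1 {p'}$ and combining with the prefactor gives
\[
t^{-\frac{N-1+a}2} \cdot \frac{t^{\frac{\ga+1}{p'}}}{(\ga+1)^{1/p'}} = \frac{t^{-\frac{N-1+a}2 + \pr{\frac{N-1+a}2 - \al} + \frac 1 {p'}}}{(\ga+1)^{1/p'}} = \frac{t^{-\al + \frac 1 {p'}}}{(\ga+1)^{1/p'}} = \frac{t^{-\al}\, t^{1 - \frac 1 p}}{(\ga+1)^{1/p'}},
\]
using $\frac 1 {p'} = 1 - \frac 1 p$. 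Then I would bound $t^{-\al} \le \pr{t_0 - \de}^{-\al}$ when $\al \ge 0$ and $t^{-\al} \le t_0^{-\al} \le \pr{t_0-\de}^{-\al}$ when $\al < 0$ (or more simply, since $t_0 - \de \le t \le t_0$, one of these bounds applies in either case — a short case check), and bound $t^{1-\frac 1 p} \le t_0^{1 - \frac 1 p}$.

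The remaining point is to absorb the factor $(\ga+1)^{-1/p'}$ into the claimed constant and produce the stated $\pr{t_0/N}^{1 - \frac 1 p}$ dependence. Since $\ga + 1 = \pr{\frac{N-1+a}{2} - \al}p' + 1 \ge c\, N$ for some constant $c = c(a,\al,p) > 0$ once $N \ge N_0$, we have $(\ga+1)^{-1/p'} \le (cN)^{-1/p'} = (cN)^{-(1-\frac1p)}$, so that $t_0^{1-\frac1p}(\ga+1)^{-1/p'} \le c^{-(1-\frac1p)} \pr{t_0/N}^{1 - \frac 1 p}$. Setting $C_1 = C_1(a,\al,p,N_0) = c^{-(1 - \frac 1 p)}$ (taking the supremum of all relevant constants over $N \ge N_0$, which is attained since the bound is monotone in $N$) yields \eqref{fIntBound}, and the estimate is uniform in $t \in [t_0-\de,t_0]$ and in $\de \in (0, t_0)$ because every bound above was taken in terms of $t_0$ and $t_0 - \de$ only.

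I do not anticipate a serious obstacle here; the only mild subtlety is bookkeeping the exponent arithmetic so that the powers of $t$ collapse exactly to $t^{-\al + 1 - 1/p}$, and confirming that the threshold $N_0$ can be chosen so that $\ga + 1$ stays bounded away from $0$ uniformly, which is immediate from its linear growth in $N$.
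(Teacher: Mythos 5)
Your argument follows the paper's proof step by step: factor out the weight $\tau^\al$, apply H\"older with exponents $p$ and $p' = \frac{p}{p-1}$, evaluate the elementary power integral, and observe that $\gamma + 1 = \pr{\frac{N-1+a}{2}-\al}p' + 1$ grows linearly in $N$, so that the remaining constant scales like $N^{-(1-1/p)}$. The only cosmetic difference is that the paper keeps the ratio $\tau/t$ inside the conjugate integral and substitutes $x = \tau/t$, whereas you factor all powers of $t$ out at the start; the exponent bookkeeping is identical and both routes produce the prefactor $t^{-\al+1-1/p}$.

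One small slip to correct: for $\al < 0$ the chain $t^{-\al} \le t_0^{-\al} \le (t_0-\de)^{-\al}$ you wrote is false, since $s \mapsto s^{-\al}$ is then increasing and $t_0 > t_0 - \de$, so the second inequality reverses. For $\al < 0$ the supremum of $t^{-\al}$ over $[t_0-\de, t_0]$ is $t_0^{-\al}$, not $(t_0-\de)^{-\al}$, and the displayed bound \eqref{fIntBound} as stated holds only when $\al \ge 0$ (or after replacing $(t_0-\de)^{-\al}$ by $\max\set{t_0^{-\al},\,(t_0-\de)^{-\al}}$). The paper's proof passes over this case without comment, so this is really a caveat about the statement rather than a flaw specific to your argument, but you should not assert the false inequality.
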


\begin{proof}
For any $t \in (0, t_0]$, an application of H\"older's inequality shows that
\begin{align*}
\int_0^t \pr{\frac \tau t}^{\frac{N -1+a}2} \mathcal{E}(\tau) d\tau 
&= t^{-\al} \int_0^t \pr{\frac \tau t}^{\frac{N -1+a - 2\al}2} \tau^\al \mathcal{E}(\tau) d\tau 
\le t^{-\al} \brac{\int_0^t \pr{\frac \tau t}^{\frac{N-1+a-2\al} 2 \frac{p}{p-1}}d \tau }^{1 - \frac 1 p} \norm{\tau^\al \mathcal{E}}_{L^p\pr{[0, t]}} \\
&= t^{1 - \frac{1}{p}- \al}\brac{\int_0^1 x^{\frac{\pr{N-1+a-2\al}p}{2(p-1)}} dx }^{1 - \frac 1 p} \norm{\tau^\al \mathcal{E}}_{L^p\pr{[0, t]}} \\
&= t^{-\al} \brac{ \frac{2t(p-1)}{\pr{N+1+a-2\al}p -2}}^{1 - \frac 1 p} \norm{\tau^\al \mathcal{E}}_{L^p\pr{[0, t]}},
\end{align*}
and the conclusion follows.   
\end{proof}

\begin{lem}[Second uniform convergence lemma]
\label{uniformConvLemma}
For $f : \R^{d+1}_+ \times \pr{0, T} \to \R$, define
\begin{align*}
\mathcal{E}_n(t) &= \int_{\R^{d+1}_+} \abs{f(X, t)} \, \dGnt \\
\mathcal{E}(t) &= \int_{\R^{d+1}_+} \abs{f(X, t)} \, \dGt.
\end{align*}
If there exist $t_0 \in (0, T)$ and $\de_0 \in (0, t_0)$ such that $\mathcal{E} \in L^\iny\pr{\brac{t_0 - \de_0, t_0}}$, then there exists $\de \in (0, \de_0]$ so that the sequence $\set{\mathcal{E}_n}_{n=1}^\iny$ converges uniformly to $\mathcal{E}$ on $\brac{t_0 - \de, t_0}$.
\end{lem}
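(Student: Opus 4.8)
\textbf{Proof proposal for Lemma~\ref{uniformConvLemma}.}

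The plan is to exploit the pointwise bound $\mathcal{G}_n(X,t) \le C_0 \mathcal{G}(X,t)$ from \eqref{pointwiseGBound} in Lemma~\ref{GaussianLimit} together with the uniform convergence $\mathcal{G}_n \to \mathcal{G}$ on $\{t \ge t_0 - \de_0\}$, and to handle the $X$-integral by splitting into a fixed large ball and its complement, using the essential boundedness of $\mathcal{E}$ on $[t_0-\de_0,t_0]$ to control the tail. First I would fix $\de = \de_0$ (or possibly shrink it later) and write, for $t \in [t_0-\de_0,t_0]$,
\[
\abs{\mathcal{E}_n(t) - \mathcal{E}(t)} \le \int_{\R^{d+1}_+} \abs{f(X,t)}\, \abs{\mathcal{G}_n(X,t) - \mathcal{G}(X,t)}\, x_0^a\, dX.
\]
Given $\mu > 0$, choose $\rho$ large enough that $\int_{\R^{d+1}_+ \setminus \BB{D}_\rho} \abs{f(X,t)}\, \mathcal{G}(X,t)\, x_0^a\, dX$ is small uniformly in $t \in [t_0-\de_0,t_0]$; this is where the hypothesis $\mathcal{E} \in L^\iny([t_0-\de_0,t_0])$ enters, combined with the fact that $\mathcal{G}(X,t)$ has a Gaussian tail that is uniformly integrable in $t$ on the compact interval $[t_0-\de_0,t_0]$ (so the tail mass of $\abs{f}\,\mathcal{G}\,x_0^a$ can be made small using absolute continuity of the integral plus a sup bound). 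On $\R^{d+1}_+ \setminus \BB{D}_\rho$ we bound $\abs{\mathcal{G}_n - \mathcal{G}} \le (C_0+1)\mathcal{G}$ via \eqref{pointwiseGBound}, so the tail contributes at most $(C_0+1)$ times the small quantity.

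On the remaining region $\BB{D}_\rho$, I would use that $\mathcal{G}_n \to \mathcal{G}$ uniformly on $\BB{D}_\rho \times \{t \ge t_0 - \de_0\}$ by Lemma~\ref{GaussianLimit} (applied with $t_0$ replaced by $t_0-\de_0$), so $\sup_{(X,t) \in \BB{D}_\rho \times [t_0-\de_0,t_0]} \abs{\mathcal{G}_n(X,t) - \mathcal{G}(X,t)} \to 0$ as $n \to \iny$. Then
\[
\int_{\BB{D}_\rho} \abs{f(X,t)}\, \abs{\mathcal{G}_n(X,t) - \mathcal{G}(X,t)}\, x_0^a\, dX
\le \frac{\sup_{\BB{D}_\rho \times [t_0-\de_0,t_0]} \abs{\mathcal{G}_n - \mathcal{G}}}{\inf_{\BB{D}_\rho \times [t_0-\de_0,t_0]} \mathcal{G}} \int_{\BB{D}_\rho} \abs{f(X,t)}\, \mathcal{G}(X,t)\, x_0^a\, dX,
\]
and the last integral is at most $\norm{\mathcal{E}}_{L^\iny([t_0-\de_0,t_0])}$, which is finite; the positive infimum of $\mathcal{G}$ on the compact set $\overline{\BB{D}}_\rho \times [t_0-\de_0,t_0]$ is a fixed constant. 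Hence for $n$ large this term is uniformly small. Combining the two regions, given $\mu$ we find $N$ so that $\sup_{t \in [t_0-\de_0,t_0]} \abs{\mathcal{E}_n(t) - \mathcal{E}(t)} < \mu$ for $n \ge N$, which is the claimed uniform convergence (taking $\de = \de_0$).

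The main obstacle I anticipate is making the tail estimate genuinely uniform in $t$: one needs that the family $\{\abs{f(\cdot,t)}\,\mathcal{G}(\cdot,t)\,x_0^a\}_{t \in [t_0-\de_0,t_0]}$ is uniformly integrable at spatial infinity, and this must be extracted from $\mathcal{E} \in L^\iny$ rather than from any pointwise control on $f$. The clean way around this is to note that $\mathcal{G}(X,t) = \mathcal{C}\,t^{-(d+1+a)/2} e^{-\abs{X}^2/(4t)}$ and, for $t \in [t_0-\de_0,t_0]$ and $\abs{X} \ge \rho$, one has $\mathcal{G}(X,t) \le e^{-\abs{X}^2/(4t_0)} e^{\abs{X}^2/(4t) - \abs{X}^2/(8t)} \cdots$ — more simply, $\mathcal{G}(X,t) \le C' e^{-c\abs{X}^2} \mathcal{G}(X,t_0)$ uniformly for such $t$ and $\abs{X}\ge \rho$ with $\rho$ large, so the tail of $\abs{f}\,\mathcal{G}\,x_0^a$ at time $t$ is dominated by $C' e^{-c\rho^2}$ times a quantity comparable to $\mathcal{E}(t) \le \norm{\mathcal{E}}_{L^\iny}$. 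This reduces the tail bound to a single elementary estimate on Gaussians, after which the argument closes as above.
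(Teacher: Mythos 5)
Your treatment of the bounded region is sound and arguably cleaner than the paper's: you fix $\rho$ and invoke uniform convergence of $\mathcal{G}_n\to\mathcal{G}$ on the compact set $\overline{\BB{D}}_\rho\times[t_0-\de_0,t_0]$ together with the positive lower bound $\inf\mathcal{G}>0$ there, whereas the paper lets the cut-off radius $r_n$ grow with $n$ and Taylor-expands $\mathcal{G}_n/\mathcal{G}$ on the enlarging ball. The gap is entirely in the tail. The claimed bound $\mathcal{G}(X,t)\le C'e^{-c\abs{X}^2}\mathcal{G}(X,t_0)$ with a fixed $c>0$ uniform over $t\in[t_0-\de_0,t_0]$ is false: the ratio is $(t_0/t)^{(d+1+a)/2}\exp\pr{-\abs{X}^2(t_0-t)/(4tt_0)}$, and the exponential decay rate vanishes as $t\to t_0$. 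Even a weakened version would not help, since $\int_{\R^{d+1}_+}\abs{f(X,t)}\,\mathcal{G}(X,t_0)\,x_0^a\,dX$ --- the time-$t$ integrand weighted by the time-$t_0$ Gaussian --- is not controlled by $\norm{\mathcal{E}}_{L^\iny}$; indeed $\mathcal{G}(\cdot,t_0)/\mathcal{G}(\cdot,t)\to\iny$ as $\abs{X}\to\iny$ for $t<t_0$, so that mixed integral can be infinite even when $\mathcal{E}(t)$ is finite.

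The deeper issue is that $\mathcal{E}\in L^\iny([t_0-\de_0,t_0])$ by itself gives no uniform-in-$t$ spatial tail control: the mass of $\abs{f(\cdot,t)}\,\mathcal{G}(\cdot,t)\,x_0^a$ can migrate to larger $\abs{X}$ as $t$ varies while $\mathcal{E}(t)$ stays bounded, in which case no fixed $\rho$ makes the tail uniformly small. You also explicitly take $\de=\de_0$ at the end, discarding the freedom to shrink $\de$ that the conclusion grants. That freedom is exactly what the paper exploits: it sets $\eps_n=\sup_t\int_{\R^{d+1}_+\setminus\BB{D}_{r_n}}\abs{f}\,x_0^a\,\mathcal{G}\,dX$ with $r_n\to\iny$, and splits cases. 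If $\eps_n\to 0$, the tail is under control; otherwise it tracks the problematic times, argues that their supremum stays bounded away from $t_0$ (using that the tail at $t_0$ itself tends to $0$ since $\mathcal{E}(t_0)<\iny$), and shrinks $\de$ to avoid them. Without this case analysis and without shrinking $\de$, your argument does not close, and the proposed Gaussian-decay shortcut does not repair it.
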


\begin{proof}
For each $n \in \N$, let $r_n = \sqrt{4 (t_0 - \de) \sqrt{\frac{ n}{\log n}}}$ so that $\set{r_n}_{n=1}^\iny$ is an increasing sequence for which $\disp \lim_{n \to \iny} r_n = \iny$. 
Since
\begin{align*}
\mathcal{E}_n(t) - \mathcal{E}(t)
&= \int_{\R^{d+1}_+} \abs{f(X, t)} \, x_0^a \, \brac{\mathcal{G}_n(X,t) - \mathcal{G}(X,t)} dX \\
&= \int_{\BB{D}_{r_n}} \abs{f(X, t)}  \pr{\frac{\mathcal{G}_n(X,t)}{\mathcal{G}(X,t)} - 1}  \dGt
+ \int_{\R^{d+1}_+ \setminus \BB{D}_{r_n}} \abs{f(X, t)}  \, x_0^a \, \brac{\mathcal{G}_n(X,t) - \mathcal{G}(X,t)} dX, 
\end{align*}
then Lemma \ref{GaussianLimit} shows that
\begin{align}
\label{EfuncDifEst}
\abs{\mathcal{E}_n(t) - \mathcal{E}(t)}
&\le \norm{\frac{\mathcal{G}_n(\cdot, t)}{\mathcal{G}(\cdot, t)} - 1}_{L^\iny(\BB{D}_{r_n})} \mathcal{E}(t)
+ \pr{1 + C_0} \int_{\R^{d+1}_+ \setminus \BB{D}_{r_n}} \abs{f(X, t)} \dGt.
\end{align}

We start by bounding the second term.
Because $\mathcal{E} \in L^\iny\pr{\brac{t_0 - \de_0, t_0}}$, then for each $n \in \N$ we may define
$$\eps_n = \sup_{t \in \brac{t_0 - \de_0, t_0}} \int_{\R^{d+1}_+ \setminus \BB{D}_{r_n}} \abs{f(X, t)} \dGt \ge 0.$$
As $\set{\eps_n}_{n=1}^\iny$ is decreasing and bounded from below, then set $\disp \eps_0 := \lim_{n \to \iny} \eps _n$.

If $\eps_0 = 0$, then for any $\eps > 0$ there exists $N \in \N$ so that for all $t \in \brac{t_0 - \de_0, t_0}$ and all $n \ge N$, we have
\begin{equation}
\label{exteriorIntegralBound}
\int_{\R^{d+1}_+ \setminus \BB{D}_{r_n}} \abs{f(X, t)} \dGt < \eps.
\end{equation}

If $\eps_0 > 0$, fix some $\eps \in \pr{0, \eps_0}$.
Then for every $n \in \N$, there exists a non-empty set $T_n \su \brac{t_0 - \de_0, t_0}$ so that for every $t \in T_n$,
$$\int_{\R^{d+1}_+ \setminus \BB{D}_{r_n}} \abs{f(X, t)} \dGt \ge \eps.$$ 
With $t_n = \sup T_n$, it holds that $\disp \int_{\R^{d+1}_+ \setminus \BB{D}_{r_n}} \abs{f(X, t_n)} x_0^a \, \mathcal{G}(X, t_n) dX \ge \eps$ as well.
Since the sets $\set{T_n}_{n=1}^\iny$ are nested, then $\set{t_n}_{n=1}^\iny$ is decreasing, so there exists $\disp t_* = \lim_{n \to \iny} t_n \in \brac{t_0 - \de_0, t_0}$.
If $t_* = t_0$, then $t_n = t_0$ for all $n \in \N$ and we see that $\disp \int_{\R^{d+1}_+ \setminus \BB{D}_{r_n}} \abs{f(X, t_0)} x_0^a \, \mathcal{G}(X, t_0) dX \ge \eps$ for all $n \in \N$.
However, for each $t \in \brac{t_0 - \de_0, t_0}$, it holds that $\disp \lim_{n \to \iny} \int_{\R^{d+1}_+ \setminus \BB{D}_{r_n}} \abs{f(X, t)} \dGt = 0$, so this is impossible and we may conclude that $t_* < t_0$.
Therefore, there exists $N \in \N$ so that whenever $n \ge N$, we have $t_n < \frac 1 2 \pr{t_0 + t_*}$ which means that $T_n \su [t_0 - \de_0, \frac 1 2 \pr{t_0 + t_*})$.
In particular, for all $t \in \brac{\frac 1 2 \pr{t_0 + t_*}, t_0}$ and all $n \ge N$, \eqref{exteriorIntegralBound} holds.

In either case, we have shown that for any $\eps > 0$, there exists positive $\disp \de = \begin{cases} \de_0 & \eps_0 = 0 \\
    \frac{t_0 - t_*} 2 & \eps_0 > 0
\end{cases}$ and $N \in \N$ so that for all $n \ge N$ and all $t \in \brac{t_0 - \de, t_0}$, \eqref{exteriorIntegralBound} holds.
Next, we estimate the first term in \eqref{EfuncDifEst}, namely $\disp \norm{\frac{\mathcal{G}_n(\cdot, t)}{\mathcal{G}(\cdot, t)} - 1}_{L^\iny(\BB{D}_{r_n})}$.

Since $\BB{D}_{r_n} \su \BB{D}^{d+1}_{\sqrt{Mnt}}$ for all $t \in \brac{t_0 - \de, t_0}$, then from \eqref{GsdnDefn}, on $\BB{D}_{r_n}$ it holds that
\begin{align*}
  & \frac{\mathcal{G}_n(X,t) }{\mathcal{G}(X,t) }
  = \frac{\mathcal{C}_n}{\mathcal{C}} \exp\brac{z + (m-\al) \log\pr{1 - \frac z m}},
\end{align*}
where we have recalled \eqref{MDef} and introduced $m = \frac{dn+1+a}{2}$, $\al = \frac {d+3 + a} 2$, and $z = \frac{\abs{X}^2}{4 t} \in \brac{0, z_n}$ with $z_n = \frac{r_n^2}{4 t}$.
Let $g(z) = z + (m-\al) \log\pr{1 - \frac z m}$ and note that $g$ increases in $\pr{0, \al}$, then decreases in $\pr{\al, z_n}$.
A Taylor expansion shows that
\begin{align*}
    g(z) 
    &\le g(\al)
    = \al + (m-\al) \log\pr{1 - \frac \al m}
    = \al - (m - \al) \brac{\pr{\frac \al m} + \frac 1 2 \pr{\frac \al m}^2 + \frac 1 3 \pr{\frac \al m}^3 + \ldots} \\
    &= \frac{\al^2} m\brac{1 
    - \pr{1 - \frac{\al}{m}} \pr{ \frac 1 2  + \frac 1 3 \frac{\al}{m} + \ldots}}
    \le \frac{\al^2} m
    \le \frac{c_1}{n}
\end{align*}
while $g(z) \ge \min\set{0, g(z_n)}$ and 
\begin{align*}
    g(z_n)
    &= z_n + (m-\al) \log\pr{1 - \frac {z_n} m}
    = z_n - (m - \al) \brac{\pr{\frac {z_n} m} + \frac 1 2 \pr{\frac {z_n} m}^2 + \frac 1 3 \pr{\frac {z_n} m}^3 + \ldots} \\
    &= - \frac {z_n^2} m \brac{\pr{1 - \frac{\al} m} \pr{\frac 1 2  + \frac 1 3 \pr{\frac {z_n} m} + \ldots}
    - \frac{\al}{z_n}  }.
\end{align*}
Since $r_n = \sqrt{4 (t_0 - \de) \sqrt{\frac{ n}{\log n}}} \le \sqrt{M n (t_0 - \de)}$, then $z_n = \frac{r_n^2}{4t} = \frac{4 (t_0 - \de) }{4 t} \sqrt{\frac{ n}{\log n}} \le \sqrt{\frac{ n}{\log n}}$ and we see that $\frac{z_n} m \le \frac{c_2}{\sqrt {n \log n}}$ while $\frac{z_n^2} m \le \frac{c_3}{\log n}$.
Therefore, $g(z_n) \ge - \frac{c_4}{\log n}$ and it follows that
\begin{align*}
  \frac{\mathcal{C}_n}{\mathcal{C}} e^{- \frac{c_4}{\log n}}
  &\le \frac{\mathcal{G}_n(X,t) }{\mathcal{G}(X,t) }
  \le \frac{\mathcal{C}_n}{\mathcal{C}} e^{\frac{c_1}{n}}.
\end{align*}
That is, for any $(X,t) \in \BB{D}_{r_n} \times \brac{t_0 - \de_0, t_0}$, we have
\begin{align*}
    \abs{\frac{\mathcal{G}_n(X,t) }{\mathcal{G}(X,t) } - 1}
    &\le \abs{\frac{\mathcal{C}_n}{\mathcal{C}} e^{- \frac{c_4}{\log n}} - 1}
    = \frac{\abs{ \mathcal{C}_n e^{- \frac{c_4}{\log n}} - \mathcal{C}}}{\mathcal{C}}.
\end{align*}
As $\disp \lim_{n \to \iny} \mathcal{C}_n = \mathcal{C}$ and $\disp \lim_{n \to \iny} e^{- \frac{c_4}{\log n}} = 1$, then for any $\eps > 0$, there exists $N \in \N$ so that for all $n \ge N$ and all $t \in \brac{t_0 - \de, t_0}$, it holds that
\begin{align*}
    \norm{\frac{\mathcal{G}_n(\cdot, t)}{\mathcal{G}(\cdot, t)} - 1}_{L^\iny(\BB{D}_{r_n})}
 < \eps.
\end{align*}
Combining this estimate with \eqref{exteriorIntegralBound} in \eqref{EfuncDifEst} leads to the conclusion.
\end{proof}

\end{appendix}

\bibliography{refs}

@article{ABDG23,
author = {Arya, V. and Banerjee, A. and Danielli D. and Garofalo, N.},
title = {Space-like strong unique continuation for some fractional parabolic equations},
journal = {J. Funct. Anal.},
volume = {284},
year = {2023},
number = {1},
}

@unpublished{Bus25,
	Author = {Ignacio Bustamante},
	Keywords = {available at arXiv:2504.10383},
	Note = {Preprint, arXiv:2504.10383},
	Title = {A monotonicity formula for a semilinear fractional parabolic equation},
	Year = {2025}}

@article{Sta25,
author = {Stan, D.},
title = {Fractional Powers of the Backward Heat Operator and Carleman Type Inequalities},
journal = {J Dyn Diff Equat},
year = {2025},
}

@article {Kil94,
    AUTHOR = {Kilpel{a}inen, T.},
     TITLE = {Weighted Sobolev spaces and capacity},
   JOURNAL = {Ann. Acad. Sci. Fenn. Ser. A I Math.},
  FJOURNAL = {Annales Academiae Scientiarum Fennicae. Series A I.
              Mathematica},
    VOLUME = {19},
      YEAR = {1994},
    NUMBER = {1},
     PAGES = {95--113},
      ISSN = {0066-1953},
   MRCLASS = {46E35 (31C15)},
MRREVIEWER = {W. P. Ziemer},
}

@article {NS16,
    AUTHOR = {Nystr\"{o}m, K. and Sande, O.},
     TITLE = {Extension properties and boundary estimates for a fractional
              heat operator},
   JOURNAL = {Nonlinear Anal.},
  FJOURNAL = {Nonlinear Analysis. Theory, Methods \& Applications. An
              International Multidisciplinary Journal},
    VOLUME = {140},
      YEAR = {2016},
     PAGES = {29--37},
      ISSN = {0362-546X},
   MRCLASS = {35R11 (35K65)},
       DOI = {10.1016/j.na.2016.02.027},
       URL = {https://doi.org/10.1016/j.na.2016.02.027},
}

@book {KS80,
    AUTHOR = {Kinderlehrer, D. and Stampacchia, G.},
     TITLE = {An introduction to variational inequalities and their
              applications},
    SERIES = {Pure and Applied Mathematics},
    VOLUME = {88},
 PUBLISHER = {Academic Press, Inc. [Harcourt Brace Jovanovich, Publishers],
              New York-London},
      YEAR = {1980},
     PAGES = {xiv+313},
      ISBN = {0-12-407350-6},
   MRCLASS = {49A29 (35R35 47H15)},
MRREVIEWER = {Erich\ Miersemann},
}

@article {FKS82,
    AUTHOR = {Fabes, E. B. and Kenig, C. E. and Serapioni, R. P.},
     TITLE = {The local regularity of solutions of degenerate elliptic
              equations},
   JOURNAL = {Comm. Partial Differential Equations},
  FJOURNAL = {Communications in Partial Differential Equations},
    VOLUME = {7},
      YEAR = {1982},
    NUMBER = {1},
     PAGES = {77--116},
      ISSN = {0360-5302,1532-4133},
   MRCLASS = {35J70 (35D10)},
MRREVIEWER = {M.-T. Lacroix},
       DOI = {10.1080/03605308208820218},
       URL = {https://doi.org/10.1080/03605308208820218},
}

@article{JP20,
author ={ Jeon, S. and Petrosyan, A.},
title = {Almost minimizers for certain fractional variational problems},
journal = {Algebra i Analiz},
year = {2020},
volume = {32},
number = {4},
pages = {166-199},
}

@article{BET20,
author = {Badger, M. and Engelstein, M. and Toro, T.},
title = {Regularity of the singular set in a two-phase problem for harmonic measure with {H}\"{o}lder data},
journal = {Rev. Mat. Iberoam.},
volume = {36},
year = {2020},
number = {5},
pages = {1375–1408},
}

@article{FPS,
    author = {Felli, V. and Primo, A. and Siclari, G.},
    title = {On fractional parabolic equations with Hardy-type potentials},
    journal = {Commun. Contemp. Math.},
    year = {2025},
volume = {27},
number = {2},
}

@article {DSVG24,
AUTHOR = {Davey, B. and Smit Vega Garcia, M.},
TITLE = {Variable-coefficient parabolic theory as a high-dimensional limit of elliptic theory},
JOURNAL = {Calc. Var.},
VOLUME = {63}, 
NUMBER = {40},
YEAR = {2024},
}

@article{EP08,
AUTHOR = {Edquist, A. and Petrosyan, A.},
TITLE = {A parabolic almost monotonicity formula},
JOURNAL = {Math. Ann.},
VOLUME = {341},
PAGES = {429--454},
YEAR = {2008},
}

@article {CS07,
    AUTHOR = {Caffarelli, L. and Silvestre, L.},
     TITLE = {An extension problem related to the fractional {L}aplacian},
   JOURNAL = {Comm. Partial Differential Equations},
  FJOURNAL = {Communications in Partial Differential Equations},
    VOLUME = {32},
      YEAR = {2007},
    NUMBER = {7-9},
     PAGES = {1245--1260},
      ISSN = {0360-5302},
   MRCLASS = {35J70},
MRREVIEWER = {Francesco Petitta},
       DOI = {10.1080/03605300600987306},
       URL = {https://doi.org/10.1080/03605300600987306},
}

@article{CJK02,
AUTHOR = {Caffarelli, L. A. and Jerison, D. and Kenig, C. E.},
TITLE = {Some new monotonicity theorems with applications to free boundary problems},
JOURNAL = {Ann. of Math.},
VOLUME = {2},
YEAR = {2002}, 
NUMBER = {155}, 
PAGES = {369–-404},
}

@article{CK98,
AUTHOR = {Caffarelli, L.A. and Kenig, C.E.}, 
TITLE = {Gradient estimates for variable coefficient parabolic equations and singular perturbation problems}, 
JOURNAL = {Amer. J. Math.},
VOLUME = {120},
YEAR = {1998}, 
NUMBER = {2}, 
PAGES = {391--439},
}

@article{All12,
author  = {Allen, M.},
title = {Separation of a lower dimensional free boundary in a two-phase problem},
journal = {Math. Res. Lett.},
volume = {19},
year = {2012}, 
number = {5}, 
pages = {1055–1074},
}

@article{ALP15,
    author = {Allen, M. and Lindgren, E. and Petrosyan, A.},
    title = {The two-phase fractional obstacle problem},
    journal = {SIAM J. Math. Anal.},
    year = {2015},
number = {3},
pages = {1879–1905},
}

@article {ST17,
    AUTHOR = {Stinga, P. R. and Torrea, J. L.},
     TITLE = {Regularity theory and extension problem for fractional
              nonlocal parabolic equations and the master equation},
   JOURNAL = {SIAM J. Math. Anal.},
  FJOURNAL = {SIAM Journal on Mathematical Analysis},
    VOLUME = {49},
      YEAR = {2017},
    NUMBER = {5},
     PAGES = {3893--3924},
      ISSN = {0036-1410},
   MRCLASS = {35R11 (26A33 35B65 35R09 47G20 58J35)},
MRREVIEWER = {Kai Diethelm},
       DOI = {10.1137/16M1104317},
       URL = {https://doi.org/10.1137/16M1104317},
}

@unpublished{Tao08,
    author = {Tao, T.},
    title = {285g, lecture 9: Comparison geometry, the high-dimensional limit, and {P}erelman reduced volume},
year = {2008},
}

@article{GR19,
AUTHOR = {Garofalo, N. and Ros-Oton, X.},
TITLE = {Structure and regularity of the singular set in the obstacle problem for the fractional {L}aplacian}, 
JOURNAL = {Rev. Mat. Iberoam.},
VOLUME = {35},
NUMBER = {5},
YEAR = {2019},
PAGES = {1309–1365},
}

@article{GP09,
AUTHOR = {Garofalo, N. and Petrosyan, A.},
TITLE = {Some new monotonicity formulas and the singular set in the lower dimensional obstacle problem},
JOURNAL = {Invent. Math.},
VOLUME = {177},
NUMBER = {2},
PAGES = {415-461}, 
YEAR = {2009},
}

@article {Wie23,
AUTHOR = {Wiener, N.},
TITLE = {Differential space},
JOURNAL = {J. Math. and Phys.},
FJOURNAL = {Journal of Mathematical Physics},
YEAR = {1923},
NUMBER = {2}, 
PAGES = {132–174},
}

@article {Dav18,
    AUTHOR = {Davey, B.},
     TITLE = {Parabolic theory as a high-dimensional limit of elliptic
              theory},
   JOURNAL = {Arch. Ration. Mech. Anal.},
  FJOURNAL = {Archive for Rational Mechanics and Analysis},
    VOLUME = {228},
      YEAR = {2018},
    NUMBER = {1},
     PAGES = {159--196},
      ISSN = {0003-9527},
   MRCLASS = {35A25 (35B60 35B65 35K10 58J05 58J35)},
       DOI = {10.1007/s00205-017-1187-z},
       URL = {https://doi-org.ccny-proxy1.libr.ccny.cuny.edu/10.1007/s00205-017-1187-z},
}

@unpublished{Sve11,
	Author = {Sverak, V.},
	Date-Added = {2015-09-18 19:38:15 +0000},
	Date-Modified = {2015-09-18 19:38:15 +0000},
	Note = {http://www.math.umn.edu/~sverak/course-notes.pdf},
	Title = {{T}heory of {PDE} {C}ourse {N}otes, {L}ecture 55},
	Year = {2011}}

@article{ACF84,
	Author = {Alt, H. W. and Caffarelli, L. A. and Friedman, A.},
	Coden = {TAMTAM},
	Date-Added = {2015-09-18 19:37:53 +0000},
	Date-Modified = {2015-09-18 19:37:53 +0000},
	Doi = {10.2307/1999245},
	Fjournal = {Transactions of the American Mathematical Society},
	Issn = {0002-9947},
	Journal = {Trans. Amer. Math. Soc.},
	Mrclass = {49A29 (35J85)},
	Mrreviewer = {Peter Tolksdorf},
	Number = {2},
	Pages = {431--461},
	Title = {Variational problems with two phases and their free boundaries},
	Url = {http://dx.doi.org/10.2307/1999245},
	Volume = {282},
	Year = {1984},
	Bdsk-Url-1 = {http://dx.doi.org/10.2307/1999245}}

@incollection{Caf93,
	Author = {Caffarelli, L. A.},
	Booktitle = {Boundary value problems for partial differential equations and applications},
	Date-Added = {2015-09-18 19:37:53 +0000},
	Date-Modified = {2015-09-18 19:37:53 +0000},
	Mrclass = {35K57 (31B05 80A22 80A25)},
	Mrreviewer = {J. R. Ockendon},
	Pages = {53--60},
	Publisher = {Masson, Paris},
	Series = {RMA Res. Notes Appl. Math.},
	Title = {A monotonicity formula for heat functions in disjoint domains},
	Volume = {29},
	Year = {1993}}

@article{Gui09,
TITLE = {Optimal regularity for the {S}ignorini problem},
AUTHOR = {Guillen, N.},
JOURNAL = {Calculus of Variations and Partial Differential Equations},
VOLUME = {36},
NUMBER = {533},
YEAR = {2009},
}

@article{GPSVG16,
title = {An epiperimetric inequality approach to the regularity of the free boundary in the {S}ignorini problem with variable coefficients},
journal = {Journal de Mathématiques Pures et Appliquées},
volume = {105},
number = {6},
pages = {745-787},
year = {2016},
issn = {0021-7824},
doi = {https://doi.org/10.1016/j.matpur.2015.11.013},
url = {https://www.sciencedirect.com/science/article/pii/S0021782415001634},
author = {Garofalo, N. and Petrosyan, A. and {Smit Vega Garcia}, M.},
keywords = {Signorini problem, Regularity of free boundaries, Weiss-type monotonicity formula, Epiperimetric inequality},
}

@article{Wei98,
    author = {Weiss, G.S.},
    title = {Partial regularity for weak solutions of an elliptic free boundary problem},
    journal = {Comm. Partial Differential Equations},
    year = {1998},
volume = {23},
number = {3-4},
pages = {439--455},
}

@article{Wei99,
    author = {Weiss, G.S.},
    title = {A homogeneity improvement approach to the obstacle problem},
    journal = {Invent. Math.},
    year = {1999},
volume = {138},
number = {1},
pages = {23--50},
}

@article{BBG22,
AUTHOR = {Banerjee, A. and Buseghin, F. and Garofalo, N.},
TITLE = {The thin obstacle problem for some variable coefficient degenerate elliptic operators},
JOURNAL = {Nonlinear Analysis},
YEAR = {2022},
VOLUME = {223},
}

@article{BDGP20,
AUTHOR = {Banerjee, A. and Danielli, D. and Garofalo, N. and Petrosyan, A.},
TITLE = {The regular free boundary in the thin obstacle
problem for degenerate parabolic equations},
JOURNAL = {St. Petersburg Math. Jour.},
ISSUE = {Special issue in occasion of the
85th birthday of Nina Uraltseva}, 
VOLUME = {32},
NUMBER = {2},
YEAR = {2020},
}

@article{AT24,
AUTHOR = {Audrito, A. and Terracini, S.},
TITLE = { On the nodal set of solutions to a class of nonlocal parabolic reaction-diffusion equations},
journal = {Mem. Amer. Math. Soc.},
volume = {301},
year = {2024},
number = {1512}, 
}

@article{JPSVG24,
TITLE = {Almost minimizers for the thin obstacle problem with variable coefficients},
AUTHOR = {Seongmin, J. and Petrosyan, A. and Smit Vega Garcia, M.},
url = {https://arxiv.org/abs/2007.07349},
YEAR = {2024},
journal = {Interfaces Free Bound.},
volume = {26},
number = {3},
pages = {321–380},
}

@article{DT23,
AUTHOR = {Dias, M. and Tavares, H.},
TITLE = {Optimal uniform bounds for competing variational elliptic systems with variable coefficients},
journal = {Nonlinear Anal.},
volume = {235},
year = {2023}, 
}

@ARTICLE{GPSVG18,
    author = "Garofalo, N. and Petrosyan, A. and Smit Vega Garcia, M.",
     title = "The singular free boundary in the {S}ignorini problem for variable coefficients",
   journal = "Indiana Univ. Math. J.",
  fjournal = "Indiana University Mathematics Journal",
    volume = 67,
      year = 2018,
     issue = 5,
     pages = "1893--1934",
      issn = "0022-2518",
     coden = "IUMJAB",
}

@incollection{Gar19,
  author    = {Garofalo, N.},
  title     = {Fractional thoughts},
  booktitle = {New Developments in the Analysis of Nonlocal Operators},
  year      = {2019},
  editor    = {Danielli, D. and Petrosyan, A. and Pop, C. A},
publisher = {AMS},
  volume    = {723},
  series    = {Contemporary Mathematics},
}

@article{GSVG14,
title = {New monotonicity formulas and the optimal regularity in the {S}ignorini problem with variable coefficients},
journal = {Advances in Mathematics},
volume = {262},
pages = {682-750},
year = {2014},
issn = {0001-8708},
doi = {https://doi.org/10.1016/j.aim.2014.05.021},
url = {https://www.sciencedirect.com/science/article/pii/S0001870814002047},
author = {Garofalo, N. and {Smit Vega Garcia}, M.},
}

@article {DGPT17,
AUTHOR = {Danielli, D. and Garofalo, N. and Petrosyan, A. and To, T.},
TITLE = {Optimal regularity and the free boundary in the parabolic {S}ignorini problem},
JOURNAL = {Mem. Amer. Math. Soc.},
YEAR = {2017},
VOLUME = {249},
NUMBER = {1181},
}

@article {Poon96,
AUTHOR = {Poon, C.C.},
TITLE = {Unique continuation for parabolic equations},
JOURNAL = {Comm. Partial Differential
Equations},
VOLUME = {21},
NUMBER = {3-4},
PAGES = {521–539},
YEAR = {1996},
}

@phdthesis{Zil14,
author = {Zilio, A.},
title = {On monotonicity formulæ, fractional operators and strong competition}, 
school = {Politecnico di Milano}, 
year = {2014},
}

@article{Cha88,
    author = {Chang, S. X.},
    title = {Two-dimensional area minimizing integral currents are classical minimal surfaces},
    journal = {J. Amer. Math. Soc.},
    year = {1988},
volume = {1},
number = {4},
pages = {699--778},
}

@article{GPPSVG17,
    author = {Garofalo, N. and Petrosyan, A. and Pop, C. and Smit Vega Garcia, M.},
    title = {Regularity of the free boundary for the obstacle problem for the fractional {L}aplacian with drift},
    journal ={Annales Henri Poincare, Analyse Non Lineaire},
    year = {2017},
volume = {34},
number = {3},
pages = {533--570},
}

@article{Car24,
    author = {Carducci, M.},
    title = {Epiperimetric inequalities in the obstacle problem for the fractional {L}aplacian},
    journal = {Calc Var Partial Differ Equ.},
    year = {2024},
volume = {63},
number = {6},
}

@article{FS16,
    author = {Focardi, M. and Spadaro, E.},
    title = {An epiperimetric inequality for the thin obstacle problem},
    journal = {Adv. Differential Equations},
    year = {2016},
volume = {21},
pages = {153--200},
}

@article{CSV20,
    author = {Colombo, M. and Spolaor, L. and Velichkov, B.},
    title = {Direct epiperimetric inequalities for the thin obstacle problem and applications},
    journal = {Communications on Pure and Applied Mathematics},
volume = {73},
number = {2},
pages = {384--420},
    year = {2020},
}

@article{CSV18,
    author = {Colombo, M. and Spolaor, L. and Velichkov, B.},
    title = {A logarithmic epiperimetric inequality for the obstacle problem},
    journal = {Geom. Funct. Anal.},
    year = {2018},
volume = {28},
number = {4},
pages = {1029--1061},
}

@article{ESV20,
    author = {Engelstein, M. and Spolaor, L. and Velichkov, B.},
    title = {Uniqueness of the blowup at isolated singularities for the {A}lt–{C}affarelli functional},
    journal = {Duke Math. J.},
    year = {2020},
volume = {169},
number = {8},
pages = {1541--1601},
}

@article{SV19,
    author = {Spolaor, L. and Velichkov, B.},
    title = {An epiperimetric inequality for the regularity of some free boundary problems: the 2-dimensional case},
    journal = {Comm. Pure Appl. Math.},
    year = {2019},
volume = {72},
number = {2},
pages = {375--421},
}

@article{DLS11,
    author = {De Lellis, C. and Spadaro, E.},
    title = {Q-valued functions revisited},
    journal = {Mem. Amer. Math. Soc.},
    year = {2011},
number = {991},
volume = {211},
}

@article{Whi83,
    author = {White, B.},
    title = {Tangent cones to two-dimensional area-minimizing integral currents are unique},
    journal = {Duke Math. J.},
    year = {1983},
volume = {50},
number = {1},
pages = {143--160},
}

@article{Rei64,
    author = {Reifenber, E. R.},
    title = {An epiperimetric inequality related to the analyticity of minimal surfaces},
    journal = {Ann. of Math.},
    year = {1964},
Volume = {2},
number = {80},
pages = {1--14},
}

@article{Tay76,
    author = {Taylor, J.},
    title = {The structure of singularities in soap-bubble-like and soap-film-like minimal surfaces},
    journal = {Ann. of Math.},
    year = {1976},
volume = {2},
number = {3},
pages = {489-539},
}

@article{Per02,
author = {Perelman, G.},
title = {The entropy formula for the {R}icci flow and its geometric applications},
year= {2002},
}

@article{CSS08,
AUTHOR = {Caffarelli, L. and Salsa, S. and Silvestre, L.},
TITLE = {Regularity estimates for the solution and the free boundary of the obstacle problem for the fractional {L}aplacian},
JOURNAL = {Invent. math.},
VOLUME = {171}, 
PAGES = {425–461},
YEAR = {2008},
}

@book{DL76,
author = {Duvaut, G. and Lions, J.L.},
title = {Inequalities in mechanics and physics},
publisher = {SpringerVerlag},
address = {Berlin-New York},
year = {1976},
}

@article{Rie38,
author = {Riesz, M.},
title = {Integrales de {R}iemann-{L}iouville et potentiels},
journal = {Acta Sci. Math. Szeged},
volume = {9},
year = {1938},
pages = {1-42},
}

@article{Rie49,
author = {Riesz, M.},
title = {L'integrale de {R}iemann-{L}iouville et le probleme de {C}auchy},
journal = {Acta Math},
volume = {81},
year = {1949},
pages = {1-223},
}

@article{AC04,
author = {Athanasopoulos, I. and Caffarelli, L.A.}, 
title = {Optimal regularity of lower dimensional obstacle
problems, Boundary-value problems of mathematical physics and related problems of function
theory.},
journal = {Zap. Nauchn. Sem. St. Petersburg, (POMI)},
year = {2004},
pages = {49-66},
volume = {310},
number = {35},
}

@article{ACS08,
AUTHOR = {Athanasopoulos, I. and Caffarelli, L. A. and  Salsa, S.},
TITLE = {The structure of the free boundary for
lower dimensional obstacle problems},
JOURNAL = {Amer. J. Math.},
VOLUME  = {130},
YEAR = {2008},
NUMBER = {2},
PAGES = {485-498},
}

@article{BG18,
AUTHOR = {Banerjee, A. and Garofalo, N.},
TITLE = {Monotonicity of generalized frequencies and the strong unique continuation property for fractional parabolic equations},
JOURNAL = {Adv. Math.},
VOLUME = {336}, 
PAGES = {149–241},
YEAR = {2018},
}

@article {BDGP21,
    AUTHOR = {Banerjee, A. and Danielli, D. and Garofalo, N.
              and Petrosyan, A.},
     TITLE = {The structure of the singular set in the thin obstacle problem
              for degenerate parabolic equations},
   JOURNAL = {Calc. Var. Partial Differential Equations},
  FJOURNAL = {Calculus of Variations and Partial Differential Equations},
    VOLUME = {60},
      YEAR = {2021},
    NUMBER = {3},
     PAGES = {Paper No. 91, 52},
      ISSN = {0944-2669,1432-0835},
   MRCLASS = {35R35 (35K65 35K85 35R11)},
       DOI = {10.1007/s00526-021-01938-2},
       URL = {https://doi.org/10.1007/s00526-021-01938-2},
}

@article{TTS18,
journal = {Analysis \& PDE},
VOLUME = {11},
NUMBER = {7},
YEAR = {2018},
AUTHOR = {Terracini, S. and Tortone, G. and Vita, S.},
title = {On s-Harmonic functions on cones},
}

@article{TVZ14,
  author = {Terracini, S. and Verzini, G. and Zilio, A.},
  title = {Uniform {H}ölder regularity with small exponent in competition-fractional diffusion systems},
  journal = {Discrete and Continuous Dynamical Systems},
  year = {2014},
  volume = {34},
  issue = {6},
  pages = {2669-2691},
  doi = {10.3934/dcds.2014.34.2669}
}

@article{TV18,
title = {On the asymptotic growth of positive solutions to a nonlocal elliptic blow-up system involving strong competition},
journal = {Annales de l'Institut Henri Poincaré C, Analyse non linéaire},
volume = {35},
number = {3},
pages = {831-858},
year = {2018},
issn = {0294-1449},
doi = {https://doi.org/10.1016/j.anihpc.2017.08.004},
url = {https://www.sciencedirect.com/science/article/pii/S0294144917301002},
author = {Terracini, S. and Vita, S.},
}

@article{TVZ2,
author = {Terracini, S. and Verzini, G. and Zilio, A.}, 
title = {Uniform {H}ölder bounds for strongly competing systems involving the square root of the {L}aplacian},
journal = {J. Eur. Math. Soc.},
volume = {18},
year = {2016}, 
number = {12},
pages ={2865–2924},
}

@unpublished{BR25,
	Author = {Bustamante, I. and Reiris, M.},
	Note = {Preprint, arXiv:2501.12949},
	Title = {Deriving {P}erelman’s Entropy from {C}olding’s Monotonic Volume},
	Year = {2025}}

@unpublished{DED24,
	Author = {de Dios Pont, J.},
	Note = {Preprint, arXiv:2412.06344},
	Title = {Convex sets can have interior hot spots},
	Year = {2024}}

@book {CS05,
    AUTHOR = {Caffarelli, L. and Salsa, S.},
     TITLE = {A geometric approach to free boundary problems},
    SERIES = {Graduate Studies in Mathematics},
    VOLUME = {68},
 PUBLISHER = {American Mathematical Society, Providence, RI},
      YEAR = {2005},
     PAGES = {x+270},
      ISBN = {0-8218-3784-2},
   MRCLASS = {35R35 (35-01 35J25 35K20)},
  MRNUMBER = {2145284},
MRREVIEWER = {Luca\ Lorenzi},
       DOI = {10.1090/gsm/068},
       URL = {https://doi.org/10.1090/gsm/068},
}
\bibliographystyle{alpha}

\end{document}